\pdfoutput=1
\documentclass[11pt,reqno]{amsart}
\usepackage{amsthm,amsmath,amssymb,booktabs,graphicx,longtable,array,xcolor}
\usepackage[margin=0.675in]{geometry}
\usepackage[utf8]{inputenc}
\usepackage[T1]{fontenc}
\PassOptionsToPackage{hyphens}{url}
\usepackage[hidelinks]{hyperref}
\usepackage{url}
\numberwithin{equation}{section}
\newtheorem{theorem}{Theorem}[section]
\newtheorem{proposition}[theorem]{Proposition}
\newtheorem{lemma}[theorem]{Lemma}
\newtheorem{corollary}[theorem]{Corollary}
\theoremstyle{definition}
\newtheorem{definition}[theorem]{Definition}
\newtheorem{example}[theorem]{Example}
\theoremstyle{remark}
\newtheorem*{remark}{Remark}
\newcommand{\R}{\mathbb R}
\newcommand{\Z}{\mathbb Z}
\newcommand{\dist}{\operatorname{dist}}
\allowdisplaybreaks
\hypersetup{pdftitle={Local Extrema and Infinite Derivatives of the Delange Function of Base 3},
pdfauthor={Chengyi Zhu},
pdfsubject={MSC2020 26A27; 05C35, 11A63, 26A30, 28A80},
pdfkeywords={Edge isoperimetric problem, Hamming graph, digit sum,
Takagi function, Delange function, local extrema, infinite derivatives,
Dini derivatives, variation, level sets, Assouad dimension}}
\title[Local Extrema and Infinite Derivatives of the Delange Function of
Base~3]{Local Extrema and Infinite Derivatives\\of the Delange Function of
Base~3}
\author[Zhu]{Chengyi Zhu}
\address{Department of Mathematics, University of Wisconsin--Madison,
Madison, WI 53706, USA}
\email{czhu284@wisc.edu}
\keywords{Edge isoperimetric problem, Hamming graph, digit sum, Takagi function,
Delange function, local extrema, infinite derivatives, Dini derivatives,
variation, level sets, Assouad dimension}
\thanks{\textbf{AI Usage Disclosure.}
ChatGPT was used for language editing and grammatical correction only.
All mathematical content is the author's own work, and the author takes full
responsibility for the entire manuscript.}
\subjclass[2020]{Primary 26A27; Secondary 05C35, 11A63, 26A30, 28A80}
\date{}
\begin{document}

\begin{abstract}
The Delange function of base $3$ is the continuous function
$T(x)=\sum_{j\ge0}3^{-j}\tau(3^jx)$ of Takagi type, where
$\tau(x)=2\min\{\dist(x,\Z),1/3\}$. Multiplied by $3^n$, its values at the
points $m/3^n$ are the smallest numbers $\xi_n(m)=2(nm-S_3(m))$ of edges
leaving a set of $m$ vertices in the ternary Hamming cube $Q_n^3$, where
$S_3(m)=\sum_{j<m}s_3(j)$ and $s_3$ is the ternary digit sum. Both facts
are known. We show that the local minima of $T$ are exactly the ternary
rationals $m/3^n$, and that the local maxima are exactly the points
$(m+1/2)/3^n+k$ with integers $k$, $m$, $n$ such that $n\ge0$,
$0\le m<3^n$ and $s_3(m)=n$. We also characterize, in terms of the
ternary digits $\xi_1,\xi_2,\ldots$ of $x$, when $T$ has an infinite
derivative at $x$ from one or both sides. At a ternary rational the right
derivative is $+\infty$ and the left derivative is $-\infty$. At every
other point, the right derivative is $+\infty$ exactly when
$\sum_{j\le n}(1-\xi_j)\to+\infty$. The condition for the left derivative
to be $+\infty$ is stronger and also involves the lengths of the maximal
strings of zeros among the digits. For the value $-\infty$ the two sides
are exchanged and the roles of zeros and twos are swapped. In particular, $T$ has an infinite derivative at every rational
point that is not a ternary rational and whose period has different numbers
of zeros and twos. We also determine the maximum of $\xi_n(m)$ over every block
of sizes $a3^k\le m\le(a+1)3^k$, describe by the ternary digits of $h$ the
sizes $h$ for which the $h$-extra edge connectivity of $Q_n^3$ equals
$\xi_n(h)$, and compute the Fourier coefficients of $T$ and its variation
at each level.
Finally, we determine the level set at height $3/4$, which has
dimension $\log2/\log9$, and show that no level set has a larger upper box
dimension and that $\{x\in[0,1]:T(x)=y\}$ is finite for almost every $y$.
\end{abstract}
\maketitle

\setcounter{tocdepth}{1}
\tableofcontents

\section{Introduction}
\label{intro}

Let $\dist(x,\Z)$ be the distance from $x$ to the nearest integer, let
$\tau(x)=2\min\{\dist(x,\Z),1/3\}$, and let
\begin{equation}
 T(x)=\sum_{j=0}^{\infty}3^{-j}\tau(3^jx).
 \label{T0}
\end{equation}
This series defines a continuous function of Takagi type. In base $2$ the
same construction gives twice Takagi's
function~\cite{AllaartKawamura2012,Takagi1903}, because
$\min\{\dist(x,\Z),1/2\}=\dist(x,\Z)$. In base $3$, however, the minimum
is essential, and $T$ is not a multiple of the Takagi--van~der~Waerden
function $\sum_{j\ge0}3^{-j}\dist(3^jx,\Z)$. Indeed, both functions
satisfy an equation of the form $f(x)-\frac13f(3x)=g(x)$, with $g=\tau$
for $T$ and $g=\dist(\cdot,\Z)$ for the Takagi--van~der~Waerden function,
and $\tau$ is constant on $[1/3,2/3]$ while $\dist(\cdot,\Z)$ is not. The
function $T$ has appeared
under several names. Up to a constant factor, it is the auxiliary function
that Delange~\cite{Delange1975} introduced in 1975 in his formula for the
summatory function of the digit sum, taken in base $3$, and Galkin and
Galkina~\cite{GalkinGalkina2020} call it the Delange function $D_3$. In
2000 the function $T/2$ arose in a model of deterministic diffusion
studied by Gilbert, Dorfman and
Gaspard~\cite[equation~(6)]{GilbertDorfmanGaspard2000}. The function $T$
is $2h_3$ in Allaart's
2014 work on digital sum inequalities~\cite{Allaart2014}, and $2\omega_3$
in Lev's 2015 study of the edge isoperimetric problem for Cayley
graphs~\cite{Lev2015}.

Our starting point is the connection with the edge isoperimetric problem.
Let $Q_n^3$ be the ternary
Hamming cube: the graph on $\{0,1,2\}^n$ in which two vertices are adjacent
when they differ in exactly one coordinate. This graph is the Cartesian
product of $n$ triangles. For a set $A$ of vertices, the edge boundary
$\partial A$ is the set of edges with exactly one endpoint in $A$. The edge
isoperimetric problem asks how small the boundary of a set of $m$ vertices
can be:
\[
 \xi_n(m)=\min\bigl\{|\partial A|:A\subseteq\{0,1,2\}^n,\ |A|=m\bigr\},
 ~0\le m\le3^n.
\]
We call the numbers $\xi_n(m)$ the boundary counts of $Q_n^3$. In 1964
Lindsey~\cite[pp.~510--515]{Lindsey1964} solved this problem for Cartesian
products of complete graphs: when the factors are ordered by increasing
size, initial segments of the lexicographic order have the fewest boundary
edges. For $Q_n^3$ we can write the minimum in terms of ternary digit sums.
Let $s_3(k)$ be the sum of the ternary digits of $k$, and let
$S_3(m)=\sum_{k<m}s_3(k)$. Then
\begin{equation}
 \xi_n(m)=2\bigl(nm-S_3(m)\bigr),
 \label{count}
\end{equation}
a formula known in several forms~\cite{Lev2015,Mohanty2023,Xu2022};
Table~\ref{tab1} lists these values for $n=3$ and $m\le13$. The
boundary counts of all the cubes are described by the single function
$T$:
\begin{equation}
 \xi_n(m)=3^nT\!\left(\frac m{3^n}\right),~0\le m\le3^n,
 \label{samp0}
\end{equation}
that is, after $m$ and $\xi_n(m)$ are both divided by $3^n$, the boundary
counts become values of $T$ (Theorem~\ref{limitthm}). We call \eqref{samp0} the
sampling equation. The partial sums $F_n(x)=\sum_{j<n}3^{-j}\tau(3^jx)$
are the linear interpolants of these values. Up to a factor $2$,
\eqref{samp0} is Lev's Theorem~1.3 combined with Lindsey's theorem (see
Section~\ref{sec:T}), so \eqref{count} and \eqref{samp0} are both known.
For the binary cube the count corresponding to \eqref{count} is
$nk-2\sum_{i<k}s_2(i)$, where $s_2$ is the binary digit sum. This count
is given by Hart~\cite[Theorem~1.5]{Hart1976}, who in a note added in
proof points to earlier work of Harper and of Bernstein on the same
problem. Guu~\cite[page~165]{Guu2000} showed that the rescaled counts of
the binary cube converge pointwise to Takagi's function. By \eqref{samp0}, every
statement about the values
of $T$ at the points $m/3^n$ is a statement about the boundary counts of
the finite cubes, and conversely.

For Takagi's function, the local extrema were determined by
Kahane~\cite{Kahane1959} and Kr\"uppel~\cite{Kruppel2007}, and the points
with an infinite derivative were characterized by Allaart and
Kawamura~\cite{AllaartKawamura2010} and Kr\"uppel~\cite{Kruppel2010a}. To
the best of our knowledge, the corresponding results for the Delange
function of base $3$ are new. In this paper we study how the ternary digits
of a point determine the local structure of $T$ at that point. They do so
through the slopes of the interpolants $F_n$. Since $F_n$ interpolates the
values $\xi_n(m)/3^n$, its slope on the interval $[m/3^n,(m+1)/3^n]$ is
$\xi_n(m+1)-\xi_n(m)$, which by \eqref{count} and
$S_3(m+1)-S_3(m)=s_3(m)$ equals $2(n-s_3(m))$. The
rest of the series is $\sum_{j\ge n}3^{-j}\tau(3^jx)=3^{-n}T(3^nx)$, a
copy of $T$ scaled by $3^{-n}$ in both directions, and since $T$ has
period $1$ it equals $3^{-n}T(t)$ at $x=(m+t)/3^n$. Hence we obtain
\begin{equation}
 T\!\left(\frac{m+t}{3^n}\right)
 =T\!\left(\frac m{3^n}\right)
 +3^{-n}\bigl(2(n-s_3(m))\,t+T(t)\bigr),
~0\le m<3^n,~0\le t\le1
 \label{cell0}
\end{equation}
(Proposition~\ref{prop-cell}). Kr\"uppel~\cite[Theorem~5.2]{Kruppel2010b}
proved \eqref{cell0} for a class of Knopp functions that contains $T$. We
call a number $m/3^n$ with $m\in\Z$ and $n\ge0$ a ternary rational, and
for $0\le m<3^n$ we call $[m/3^n,(m+1)/3^n]$ a ternary interval of level
$n$. Most of our arguments rest on \eqref{cell0} and on the way the even
integers $2(n-s_3(m))$ change with $m$ and $n$.

Throughout the paper we use the following notation. When we write the
digits of a number as a string,
the digit of the highest power of the base comes first. A subscript gives
the base, as in $12=(110)_3$. Each point $x\in[0,1)$ has a ternary
expansion $x=\sum_{j\ge1}\xi_j3^{-j}$ with digits $\xi_j\in\{0,1,2\}$, and
at a ternary rational we use the terminating one. Note that the digit
$\xi_j$ is written without an argument, which distinguishes it from the
boundary count $\xi_n(m)$. A run of
zeros of $x$ is a maximal string of consecutive digits $\xi_j$ equal to
$0$, and runs of twos are defined in the same way. A bar marks the
repeating part of an expansion, as in $1/4=(0.\overline{02})_3$. For
$x\ne y$ we write
\[
 \Delta(x,y)=\frac{T(y)-T(x)}{y-x}=\Delta(y,x)
\]
for the slope of $T$ between $x$ and $y$. Finally, $h\downarrow0$ means
that $h>0$ tends to $0$, and $h\uparrow0$ means that $h<0$ tends to $0$. We
write $y\downarrow x$ and $y\uparrow x$ in the same sense. The phrase
``almost every'' always refers to Lebesgue measure.

Our first main result describes all local extrema of $T$ (see
Figure~\ref{extpic}).

\begin{theorem}
\label{thm1}
The local minima of $T$ are exactly the ternary rationals. At each
ternary rational $a$ we have $T(a+h)-T(a)=2|h|\log_3(1/|h|)+O(|h|)$ as
$h\to0$, where the implied constant depends on $a$. The local maxima of $T$
are exactly the points
\[
 \frac{m+1/2}{3^n}+k,~k\in\Z,~n\ge0,~0\le m<3^n,~
 s_3(m)=n.
\]
All local extrema are strict, and both sets are countable and dense
in $\R$.
\end{theorem}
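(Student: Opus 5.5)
All three parts of Theorem~\ref{thm1} rest on the cell equation \eqref{cell0}, together with two facts: $T\ge0$ with $T(t)=0$ only at integers, and $T$ is symmetric, $T(1-t)=T(t)$. The symmetry follows termwise from $\tau(-x)=\tau(x)$, and it lets us handle left neighbourhoods by reflection.

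\textbf{Ternary rationals are strict local minima, with the stated expansion.} Fix $a=m/3^n$; by periodicity we may assume $0\le a<1$. For $0<h\le3^{-N}$ with $N\ge n$, write $a=m3^{N-n}/3^N$ and choose $N$ with $3^{-N-1}<h\le 3^{-N}$. The index $m3^{N-n}$ has the same digit sum $s_3(m)$, so \eqref{cell0} at level $N$ with $t=3^Nh\in(1/3,1]$ gives
\[
T(a+h)-T(a)=2(N-s_3(m))h+3^{-N}T(t).
\]
Since $N=\log_3(1/h)+O(1)$ and $3^{-N}T(t)=O(h)$, the right side is $2h\log_3(1/h)+O(h)$, which is positive for small $h$. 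For $h<0$ I apply the same argument to the reflected point $1-a$, which is also a ternary rational. Hence every ternary rational is a strict local minimum. Conversely, let $x$ be a local minimum that is not a ternary rational. Its digits are then not eventually all $0$ or all $2$ in any way that makes $x$ an endpoint. Some cell $[m/3^n,(m+1)/3^n]$ contains $x$ in its interior, and inside it $T(x)-T(m/3^n)=3^{-n}(c t+T(t))$ with $c=2(n-s_3(m))$. I will show that every such $x$ has a nearby point with smaller $T$. Concretely, take $n$ large with $\xi_{n+1}\neq0$, so $x$ lies in the upper part of a cell. Within the subcell one compares $T(x)$ with the value at the ternary rational obtained by truncating $x$ after digit $n+1$. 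The local expansion at that truncation shows that $T$ rises steeply to the right of it, but that alone does not settle the comparison. I expect the cleanest route to be a slope argument: the one-sided Dini behaviour at non-ternary-rational points can be read off from \eqref{cell0}, and by continuity a minimum inside an open cell forces the slopes $\Delta$ to change sign. The main obstacle is exactly this: ruling out local minima at non-ternary-rational points requires controlling $T(t)$ near interior points, and this becomes a recursive self-similarity argument.

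\textbf{Local maxima.} On a cell, \eqref{cell0} gives $T=\text{const}+3^{-n}(ct+T(t))$. When $c=0$, i.e.\ $s_3(m)=n$, this is a scaled copy of $T$, and the cell centre corresponds to $t=1/2$. So it suffices to show that $1/2$ is a strict local maximum of $T$ and that these are the only maxima. We have $T(1/2)=\tau(1/2)+\frac13T(3/2)=2/3+\frac13T(1/2)$, hence $T(1/2)=1$. Near $1/2$, on the middle third where $\tau\equiv2/3$, the equation $T(1/2+u)=2/3+\frac13T(3/2+3u)=2/3+\frac13T(1/2+3u)$ reduces the claim to showing $T\le1$ on $[1/3,2/3]$ with equality only at $1/2$. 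That in turn follows from $\max T=1$ with the maximum attained only at $1/2+\Z$. I would prove this via the sampling equation \eqref{samp0}, using $\xi_n(m)\le3^n$ with equality only at $m=(3^n\pm1)/2$, or by iterating the functional equation. Conversely, suppose $x$ is a local maximum. Take the largest $n$ such that the level-$n$ cell containing $x$ has $c=0$; this exists because $c=2(n-s_3(m))$ is $0$ at level $0$. At level $n+1$ the subcell containing $x$ has $c\neq0$, and in fact $c>0$ unless the new digit is $2$. Iterating \eqref{cell0} shows $T$ restricted there is a scaled copy of $ct+T(t)$. For $|c|\ge2$ this function has no interior local maximum: its only candidates are in the middle, where I would check via the functional equation that the slope never vanishes. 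This forces $x$ to be the centre of the level-$n$ cell. Strictness is already contained in the above, and density and countability are immediate from the explicit descriptions, since digit strings $m$ with $s_3(m)=n$ (for instance $m=(11\cdots1)_3$) exist at every level and refine to any given cell.
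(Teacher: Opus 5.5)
\textbf{Minima.} Your treatment of the ternary rationals is sound. Letting the level $N$ vary with $h$, so that $3^Nh\in(1/3,1]$, is a harmless variant of the paper's fixed-level use of \eqref{Thlog}, and reflecting $a\mapsto 1-a$ handles $h<0$. The sufficiency direction for maxima is also fine: a cell with $s_3(m)=n$ is a scaled copy of $T$, and its midpoint is a strict local maximum once the maximum of $T$ is known to be unique.

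However, you leave open the claim that no other point is a local minimum. No recursion is needed, because the formula you wrote already settles it. Let $x$ be a point that is not a ternary rational, write $x=(m+t)/3^n$ with $0<t<1$, and put $d=n-s_3(m)$. Then $T(t)>0$, so
\begin{itemize}
\item if $d\ge0$, then $T(x)-T(m/3^n)=3^{-n}(2dt+T(t))>0$;
\item if $d\le0$, then $T(x)-T((m+1)/3^n)=3^{-n}(2d(t-1)+T(t))>0$.
\end{itemize}
In either case an endpoint within $3^{-n}$ of $x$ has a smaller value, and $n$ is arbitrary, so $x$ is not a local minimum.

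\textbf{Maxima.} The converse direction fails as written, for two reasons.

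First, the ``largest $n$'' with $c=0$ need not exist: a nonempty set of levels need not have a largest element. Worse, for every point $(K+1/2)/3^N$ with $s_3(K)=N$, all levels $n\ge N$ have slope $0$. So your argument never reaches the very points it is meant to identify.

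Second, the claim that $t\mapsto ct+T(t)$ has no interior local maximum when $|c|\ge2$ is false. By \eqref{tilt2} with $a=1$, $T(t)+2t$ attains its maximum $7/3$ only at the interior point $t=5/6$. This corresponds to the local maximum $5/18$ of $T$ inside $[0,1/3]$, which is a cell of slope $2$. In fact such functions have a dense set of interior local maxima.

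The missing idea is to localize and then use a sharp tilted-maximum lemma.
\begin{itemize}
\item A local maximum $x$ is not a ternary rational, since those are strict minima. So for large $n$ the level-$n$ cell containing $x$ lies in a neighbourhood where $T\le T(x)$. Hence $t=3^nx-m$ is a global maximizer of $T(t)+2dt$ on $[0,1]$.
\item You then need the following: for each integer $a\ge0$, $\max_{[0,1]}(T(t)-2at)=3^{-a}$, attained only at $t=1/(2\cdot3^a)$. By $T(1-t)=T(t)$, the maximum of $T(t)+2at$ is attained only at $1-1/(2\cdot3^a)$.
\item This follows from two facts. First, $\tau(y)\le2y$, with equality iff $0\le y\le1/3$, applied to the first $a$ terms. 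Second, $3^{-a}T(3^at)\le3^{-a}$, with equality only when $3^at=1/2$.
\item Finally, append $|d|$ zeros to $m$ if $d<0$, or $d$ twos if $d>0$. This writes $x=(K+1/2)/3^N$ with $s_3(K)=N$.
\end{itemize}

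\textbf{The case $a=0$.} Uniqueness of the maximum of $T$ at $1/2$ should come from your functional-equation alternative. If $T(t)=1$, every term must be maximal, so the fractional part of $3^jt$ lies in $[1/3,2/3]$ for every $j$, which forces $t=1/2$. The sampling route only controls values at ternary rationals, so it cannot by itself rule out $T(x)=1$ elsewhere. Also, the correct bound there is $\xi_n(m)\le3^n-1$, not $3^n$.
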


In terms of ternary expansions, a point $x\in(0,1)$ is a local maximum
exactly when $x=0.\xi_1\ldots\xi_n111\ldots$ with
$\xi_1+\cdots+\xi_n=n$. Our proof rests on Theorem~\ref{cellmax}, which
gives the maximum of $T$ on each ternary interval. For $0\le m<3^n$ let
$d=n-s_3(m)$. Then the maximum on $[m/3^n,(m+1)/3^n]$ exceeds the larger of
the two endpoint values by $3^{-n-|d|}$. It is attained at one point only.
Since $T((m+1)/3^n)-T(m/3^n)=2d\,3^{-n}$, the value of $T$ is larger at
the right endpoint if $d>0$ and at the left endpoint if $d<0$. In both
cases the maximum is attained at the midpoint of the ternary interval of
level $n+|d|$ that lies in $[m/3^n,(m+1)/3^n]$ and contains the endpoint
with the larger value. When $d=0$ the two endpoint values are equal, and
the maximum is attained at the midpoint of $[m/3^n,(m+1)/3^n]$ itself. By
\eqref{cell0}, finding this maximum amounts to maximizing $T(t)+2dt$ over
$[0,1]$. This maximum can also be read off from the concave hull of the
Delange function on $[0,1]$, the smallest concave function on $[0,1]$
that lies above it. Galkin and Galkina determined this
hull~\cite[Theorem~9]{GalkinGalkina2020}; in base $3$ it is the polygon
whose corners lie above $0$, $1$ and the points $1/(2\cdot3^a)$ and
$1-1/(2\cdot3^a)$, $a\ge0$. By Proposition~\ref{incr}(b), the edges of
this polygon have the odd slopes $\pm1,\pm3,\pm5,\ldots$, so a line of
the even slope $-2d$ touches it from above at exactly one corner, and
$T(t)+2dt$ attains its maximum only at the point $t$ above which this
corner lies. We give a direct proof in Lemma~\ref{lem:tilt}.

Through the sampling equation \eqref{samp0}, results on $T$ translate
into exact statements about the finite cubes. Theorem~\ref{cellmax}
yields Corollary~\ref{blockmax}, which gives the maximum of $\xi_n(m)$
over every block of sizes $a3^k\le m\le(a+1)3^k$. As far as we know, this
corollary is new. In the same way, the bound $T(x)\ge2x$ for $0\le
x\le1/2$ (Proposition~\ref{T2x}) yields Corollary~\ref{xi2m}: we have
$\xi_n(m)\ge2m$ for $m\le3^n/2$, and we determine all sizes $m$ for which
equality holds. The inequality $\xi_n(m)\ge2m$ itself follows from a very
recent result of Abiad et~al.~\cite[Proposition~5.1]{Abiad2026}. As we
show in Section~\ref{sec:max}, the sizes where equality holds can also be
derived from a decomposition of $\xi_n$ due to Xu, Zhou and
Hsieh~\cite[Theorem~3.8]{Xu2022}. Theorem~\ref{rightmin} describes the points
$x\in[0,1/2]$ at which $T$ attains its minimum over $[x,1/2]$. For
$1\le h\le(3^n-1)/2$, the $h$-extra edge connectivity of $Q_n^3$ is the
smallest number of edges whose removal disconnects $Q_n^3$ into
components that all have at least $h$ vertices. It is the smallest of the
boundary counts $\xi_n(m)$ with $h\le m\le3^n-h$ (see
Section~\ref{sec:max}), so by \eqref{samp0} and \eqref{cell0} it equals
$\xi_n(h)$ exactly
when $T$ attains its minimum over $[h/3^n,1/2]$ at $h/3^n$. Thus
Theorem~\ref{rightmin} gives Corollary~\ref{lamdig}, a description of
these sizes $h$ by their ternary digits. Proposition~\ref{optcount}
counts these sizes: their proportion among $1\le h\le(3^n-1)/2$ is
asymptotically $\sqrt{3/(\pi n)}$.

Our second main result concerns infinite derivatives. As Delange
proved~\cite[Section~3]{Delange1975}, $T$ has no finite derivative
anywhere, and in Theorem~\ref{nodiff} we give a different short proof,
based on \eqref{cell0} and the symmetry $T(1-t)=T(t)$. However, the slopes of $T$
may still tend to $+\infty$ or $-\infty$, as those of Takagi's function
do at many points. Let $x\in(0,1)$ be a point that is not a ternary rational,
with ternary digits $\xi_1,\xi_2,\ldots$. We call the numbers
$d_n(x)=2\sum_{j\le n}(1-\xi_j)$ the digit slopes of $x$. Thus $d_n(x)$
is twice the difference between the numbers of zeros and of twos among
$\xi_1,\ldots,\xi_n$, and it is also the slope of $F_n$ on the ternary
interval of level $n$ containing $x$. Note that $d_n(x)$ is the slope
itself, whereas the number $d=n-s_3(m)$ used above is half of it. Let
$T'_+(x)$ and $T'_-(x)$ denote the right and left
derivatives, that is, the limits of $\Delta(x,x+h)$ as $h\downarrow0$ and
as $h\uparrow0$, taken in the extended real line. If both equal $+\infty$
we write $T'(x)=+\infty$, and similarly for $-\infty$. At a ternary
rational $a$, Proposition~\ref{cusp} gives $T'_+(a)=+\infty$ and
$T'_-(a)=-\infty$. For all other points we prove the following theorem,
which Figures~\ref{fig6} and~\ref{sparsepic} illustrate.

\begin{theorem}
\label{thm2}
Let $x\in(0,1)$ be a point that is not a ternary rational, with ternary digits
$\xi_1,\xi_2,\ldots$ and digit slopes $d_n(x)=2\sum_{j\le n}(1-\xi_j)$,
let $n_1<n_2<\cdots$ be the positions of the nonzero digits of $x$, and let
$m_1<m_2<\cdots$ be the positions
of the digits different from $2$.
\begin{enumerate}
\item[(a)] $T'_+(x)=+\infty$ if and only if $d_n(x)\to+\infty$, and
 $T'_-(x)=-\infty$ if and only if $d_n(x)\to-\infty$.
\item[(b)] $T'_-(x)=+\infty$ if and only if
 \[
  d_{n_k}(x)-2(n_{k+1}-n_k)+2\log_3(n_{k+1}-n_k)\to+\infty
  ~(k\to\infty),
 \]
 and $T'_+(x)=-\infty$ if and only if
 \[
  d_{m_k}(x)+2(m_{k+1}-m_k)-2\log_3(m_{k+1}-m_k)\to-\infty
  ~(k\to\infty).
 \]
\item[(c)] Consequently $T'(x)=+\infty$ if and only if the first
 condition in (b) holds, and $T'(x)=-\infty$ if and only if the second
 one holds.
\end{enumerate}
\end{theorem}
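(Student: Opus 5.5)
The plan is to read everything off the cell equation \eqref{cell0}. Write $x=(m+t_n)/3^n$ with $t_n=(0.\xi_{n+1}\xi_{n+2}\ldots)_3$. Let $a_n=m/3^n$ and $b_n=(m+1)/3^n$ be the endpoints of the level-$n$ interval containing $x$, so that $d_n=d_n(x)$ is the slope of $F_n$ there. From \eqref{cell0} I get the two basic identities
\[
 \Delta(a_n,x)=d_n+\frac{T(t_n)}{t_n},\qquad
 \Delta(x,b_n)=d_n-\frac{T(t_n)}{1-t_n}.
\]
Together with $T\ge0$, $T(0)=T(1)=0$, $T(1-t)=T(t)$ and the cusp asymptotics $T(h)=2h\log_3(1/h)+O(h)$ from Theorem~\ref{thm1}, these drive every part. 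Whenever a statement about $T'_-$ or about $-\infty$ is just the mirror image of one about $T'_+$ or $+\infty$, I obtain it by applying the proved case to $1-x$. This reflection swaps zeros and twos and changes $d_n$ to $-d_n$.

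For (a), necessity is easy. If $d_n\not\to+\infty$, then along a subsequence $d_n\le M$, and $\Delta(x,b_n)\le d_n\le M$ with $b_n-x\le3^{-n}\to0$, so $T'_+(x)\ne+\infty$. For sufficiency, take $y\downarrow x$ and let $n$ be the largest level at which $x$ and $y$ lie in a common ternary interval. I then split $T(y)-T(x)$ through ternary rationals. The first piece is from $x$ up to the right endpoints of the nested intervals of $x$ at levels $k>n$, each contributing at least $d_k\cdot(\text{length})-C3^{-k}$ by the second identity, since $T$ is bounded. The remaining pieces go across full level-$k$ cells, contributing exactly the slope $d_k$ or its neighbours times their length, and finally from the ternary rational $c\le y$ to $y$, where $T(y)-T(c)\ge(\text{slope of }F_k)(y-c)$ because $T\ge0$. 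All the slopes involved differ from $d_k(x)$ for $k\ge n$ by at most a bounded amount. Since the lengths sum to $y-x$ and the errors $C3^{-k}$ are dominated by $y-x$ up to a constant, I get $\Delta(x,y)\ge\inf_{k\ge n}d_k-C\to+\infty$.

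For (b) I treat $T'_-(x)=+\infty$; the other case follows by reflection. Fix $k$ and put $L=n_{k+1}-n_k$. After position $n_k$ the digits $\xi_{n_k+1},\dots,\xi_{n_{k+1}-1}$ are zeros, so inside the level-$n_k$ interval $x$ sits at $t=t_{n_k}\asymp3^{-L}$. For $y=(m+s)/3^{n_k}$ with $s<t$, \eqref{cell0} gives
\[
 \Delta(y,x)=d_{n_k}-\frac{T(s)-T(t)}{t-s}.
\]
So the whole question reduces to the sharp estimate
\[
 \sup_{0\le s<t}\frac{T(s)-T(t)}{t-s}=2L-2\log_3L+O(1)
\]
for $t$ with exactly $L-1$ leading zeros followed by a nonzero digit. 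Granting this, the rest of the argument is as follows. Points $y$ to the left of $a_{n_k}$, or at intermediate scales inside the zero run, are handled as in (a), since along the run $d_j$ only increases. Hence the infimum of left slopes over the scales $3^{-n_{k+1}}\lesssim x-y\lesssim3^{-n_k}$ equals $d_{n_k}-2L+2\log_3L+O(1)$, which gives both directions of (b). Part (c) is then bookkeeping. If the condition in (b) holds, then $d_{n_k}\to+\infty$, and since $d$ increases along zero runs, $d_n\to+\infty$; by (a) this gives $T'_+(x)=+\infty$, and (b) gives $T'_-(x)=+\infty$. Conversely, $T'(x)=+\infty$ requires $T'_-(x)=+\infty$, hence the condition.

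The main obstacle is the sharp estimate above. For the upper bound I would use $T(s)\le2s\log_3(1/s)+Cs$ together with a lower bound $T(t)\ge2t(L-1)-Ct$, read from \eqref{cell0} at level $L-1$. Maximizing $2s\log_3(1/s)-2tL$ over $s$ puts the optimum at $t-s\asymp t/L$. This is exactly where the $-2\log_3L$ correction arises: the secant from $t$ back to $s$ gains $2L$ times the length but loses a factor of order $\log_3L$ from the concavity of $s\log(1/s)$. For the matching lower bound I would exhibit explicit $s$, namely the local maxima $1/(2\cdot3^{j})$ of Theorem~\ref{thm1} or the corners of the Galkin--Galkina hull, with $3^{-j}\approx t/L$. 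I would then evaluate $T$ there exactly using Theorem~\ref{cellmax}, so that the $O(1)$ terms stay uniform in $L$ and in the digits of $t$ after the run.
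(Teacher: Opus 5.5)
The reduction in (b) targets the wrong points, so as planned neither direction of (b) goes through.

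\textbf{The claimed estimate is false on $0\le s<t$.} For these $s$ the point $y$ lies in $[a_{n_k},x)$, and there the zero run works in your favour. Write $t=3^{1-L}t'$ and $s=3^{1-L}u$ with $t'\in(1/3,1)$. Then \eqref{cell0} at level $L-1$ with $m=0$ gives
\[
 \frac{T(s)-T(t)}{t-s}=-2(L-1)+\frac{T(u)-T(t')}{t'-u}.
\]
At the scales $t-s\gtrsim t$ that index $k$ is responsible for, this quotient is $-2L+O(1)$, not $2L-2\log_3L+O(1)$, and these $y$ give $\Delta(y,x)=d_{n_k}+2L+O(1)$. For a clean counterexample, take $t=1/(2\cdot3^{L-1})$, which has exactly $L-1$ leading zeros. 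The supporting line of slope $2L-1$ of the concave hull at this corner gives $(T(s)-T(t))/(t-s)\le-(2L-1)$ for every $0\le s<t$.

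\textbf{The decisive points are the ones you set aside.} Since $x-a_{n_k}=3^{-n_k}t<3^{1-n_{k+1}}$, every $y<x$ with $3^{1-n_{k+1}}\le x-y\lesssim3^{-n_k}$ lies to the \emph{left} of $a_{n_k}$. These are exactly the points you dismiss as ``handled as in (a)'', and they are not. Because $\xi_{n_k}\ne0$, the neighbouring cell $[a_{n_k}-3^{-n_k},a_{n_k}]$ has slope $d_{n_k}+2$, and \eqref{cell2} gives
\[
 \frac{T(a_{n_k})-T(a_{n_k}-3^{-n_k}v)}{3^{-n_k}v}=d_{n_k}+2-\frac{T(v)}{v}\approx d_{n_k}-2\log_3\frac1v .
\]
This is the left branch of the cusp at the local minimum $a_{n_k}$. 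In (a) the scaled copy of $T$ is added when you move right from a ternary rational, so $T\ge0$ helps you. Moving left from $a_{n_k}$ it is subtracted. That is the whole source of the asymmetry between (a) and (b), and your plan never uses it.

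\textbf{The computation that works.} Put $h=x-a_{n_k}$ and $y=a_{n_k}-\lambda h$. Then $\Delta(y,x)$ is the mean, with weights $1$ and $\lambda$, of $\Delta(a_{n_k},x)=d_{n_k}+T(t)/t=d_{n_k}+2(L-1)+O(1)$ and the cusp slope above with $v=\lambda t$. This gives
\[
 \Delta(y,x)=d_{n_k}-2L+\frac{4L+2\lambda\log_3\lambda}{1+\lambda}+O(1),
\]
which is minimized at $\lambda\asymp L$ with value $\Phi_k+O(1)$. So your target $2L-2\log_3L$ is correct, but it comes from $x-y\approx L(x-a_{n_k})$ on the far side of $a_{n_k}$, not from $t-s\asymp t/L$ inside the cell.

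\textbf{Consequences for your argument.} Your sufficiency proof leaves uncontrolled exactly the points that can make the left slopes small. Your necessity witnesses, the local maxima $1/(2\cdot3^j)$ with $3^{-j}\approx t/L$ inside $[0,t)$, give slopes near $d_{n_k}+2L$, so they cannot show that $\Phi_k\to+\infty$ is necessary. The witnesses that work are $y_k=a_{n_k}-L(x-a_{n_k})$.

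\textbf{A smaller point in (a).} An error of size $C3^{-n-1}$ is not dominated by $y-x$ when a run of $\ell$ twos follows position $n+1$, since then $b_{n+1}-x\approx3^{-n-1-\ell}$. Either apply your second identity at the end of that run, where the loss turns into $d_{n+1+\ell}$, or telescope over the right endpoints $c_j$, crossing only full cells of slope at least $d_j-2$.
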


Theorem~\ref{thm2} is the ternary analogue of earlier characterizations of
the points with an infinite derivative. For Takagi's function these are due
to Allaart and Kawamura~\cite{AllaartKawamura2010} and
Kr\"uppel~\cite{Kruppel2010a}. Ferrera, G\'omez Gil and
Llorente~\cite{FerreraGomezGilLlorente2019} gave the corresponding
characterization for the Takagi--van~der~Waerden functions of every base.
Dalaklis, Kawamura, Mathis and Paizanis~\cite{Dalaklis2023} characterized
the infinite derivatives of the ternary series
$K(x)=\sum_{n\ge0}3^{-n}\psi(3^nx)$, where $\psi$ is the continuous
function of period $1$ with $\psi(0)=0$ and with slopes $3$, $-6$ and $3$
on the three thirds of $[0,1]$.

As in the case of Takagi's function, in Theorem~\ref{thm2} only the
digit slopes
$d_n(x)$ matter from one side, while from the other side a long run of
zeros or of twos also matters, because it brings $x$ close to a ternary
rational. In (b), the number $n_{k+1}-n_k-1$ is the number of zeros
between the $k$th nonzero digit and the next one, and $m_{k+1}-m_k-1$
counts the twos in the same way. Thus a long run of zeros lowers the
expression in the first condition by
about twice its length, up to a logarithmic correction, and a long run of
twos raises the expression in the second condition in the same way. Since
$\tau$ is
constant on $[1/3,2/3]$, the digit $1$ has no effect on the digit slopes.
For the value $+\infty$ of the left derivative the runs that matter are
runs of zeros, and for the value $-\infty$ of the right derivative they
are runs of twos. By contrast, for the Takagi--van~der~Waerden function
of an odd base $r$, runs of the middle digit $(r-1)/2$ also matter, so
runs of digits enter the conditions from both
sides~\cite[Theorems~3.11 and~3.13]{FerreraGomezGilLlorente2019}.
Corollary~\ref{rat} records two
special cases of Theorem~\ref{thm2}: points whose runs of zeros, or whose
runs of twos, have
bounded length, and rational points.

Section~\ref{sec:inf} ends with three further examples. First, we
construct an irrational point $x$ with $T'(x)=-\infty$, whose ternary
digits are obtained by modifying the residues of $s_3(k)$ modulo $3$
(Example~\ref{ex:mod3}). The second example is the point
$1/4=(0.\overline{02})_3$, whose period has as many zeros as twos. At
$1/4$ the upper and lower limits of $\Delta(1/4,1/4+h)$, which are the
upper and lower Dini derivatives, are $1$ and $-1$ as $h\downarrow0$, and
$3$ and $1$ as $h\uparrow0$ (Proposition~\ref{quarter}). This illustrates
Corollary~\ref{rat}(b), by which all four Dini derivatives are finite
when the period has as many zeros as twos. Finally, for the points
$\sum2\cdot3^{-p_j}$ with sparse ternary expansions and suitable
exponents, the right derivative is $+\infty$, while the lower left Dini
derivative is $-\infty$ (Proposition~\ref{sparse}).

Sections~\ref{sec:var} and~\ref{sec:fourier} give two further results on
$T$ over a full period. They are ternary versions of two results on
Takagi's function presented in the survey of
Lagarias~\cite[Theorems~6.1 and~11.4]{Lagarias2012}. Let $V_n$ be the
variation of $T$ over the partition of $[0,1]$ into the ternary intervals
of level $n$, that is, the sum of $|T((m+1)/3^n)-T(m/3^n)|$ over
$0\le m<3^n$. Then $V_{n+1}-V_n=4C_n/3^{n+1}$, where $C_n$ is the $n$th
central trinomial coefficient, and $V_n/\sqrt n\to4/\sqrt{3\pi}$
(Theorem~\ref{varthm}). We obtain the formula for $V_{n+1}-V_n$ by counting the integers
$0\le m<3^n$ according to the sum of their ternary digits, and the limit
from the size of the central trinomial coefficients. By
Theorem~\ref{fourier}, the Fourier coefficients
$\widehat T(k)=\int_0^1T(x)e^{-2\pi ikx}\,dx$ of $T$ satisfy
$\widehat T(0)=\int_0^1T=2/3$ and $\widehat T(k)=-3^{1-v}/(2\pi^2m^2)$ for
$|k|=3^vm$ with $3\nmid m$.

Section~\ref{sec:level} studies the level sets $L(y)=\{x\in[0,1]:T(x)=y\}$.
The level set $L(3/4)$ is the union of a closed set of Hausdorff
dimension $\log2/\log9$, described by its ternary digits, and an explicit
countable set (Theorem~\ref{threequarters}). The Assouad dimension of
$L(3/4)$ is
also $\log2/\log9$ (Proposition~\ref{assouadlevel}). By
Theorem~\ref{boxbound}, every level set of $T$ has upper box dimension,
and hence Hausdorff dimension, at most $\log2/\log9$. This is the analogue
of the theorem of de Amo et~al.~\cite[Theorem~3.4]{deAmo2011} that the
level sets of Takagi's function have upper box dimension, and hence
Hausdorff dimension, at most $1/2$. We prove it by their method, in the
form developed by Allaart~\cite[Section~3]{AllaartGeneralized2014}; the main step is
a finite computation with explicit rational vectors (Lemma~\ref{omega}).
By Theorem~\ref{levelthm}, $L(y)$ is finite for almost every $y$, while
$L(T(x))$ is uncountable for almost every $x$. In the proof of
Theorem~\ref{levelthm} we follow the
arguments that Allaart~\cite{AllaartLevelSets2014} gave for the
Takagi--van~der~Waerden functions. The second statement rests on the fact
that for almost every $x$ there are infinitely many $n$ such that the
first $n$ digits of $x$ contain as many zeros as twos. For the first
statement we need in addition that $T$ is strictly increasing on the set
of points that are not ternary rationals and whose first $n$ digits
contain more zeros than twos for every $n\ge1$ (Lemma~\ref{monopos}),
which is the ternary analogue of a theorem of Lagarias and
Maddock~\cite[Theorem~4.8]{LagariasMaddock2012} on Takagi's function.
Our proof of the first statement also gives a bound: for every $M\ge2$,
the heights $y$ for which $L(y)$ has at least $M$ points form a set of
Lebesgue measure at most $3/(\log_2(M+2)-1)$
(Corollary~\ref{levelcount}).

The paper is organized as follows. In Section~\ref{sec:graph} we recall
\eqref{count} and its first difference \eqref{diff}, and in
Section~\ref{sec:T} we derive \eqref{samp0} and \eqref{cell0} by counting
the boundary edges coordinate by coordinate. Section~\ref{sec:holder}
collects uniform estimates, namely the exact supremum
$\sup_x|T(x+h)-T(x)|=T(h)$ of the increments with step $h$, Lev's sharp
logarithmic bounds~\cite{Lev2015} and H\"older continuity of every order
below $1$, and proves Theorem~\ref{nodiff}. We prove Theorem~\ref{thm1},
Corollaries~\ref{blockmax}, \ref{xi2m}
and~\ref{lamdig}, Theorem~\ref{rightmin} and Proposition~\ref{optcount} in
Section~\ref{sec:max}, and Theorem~\ref{thm2} in Section~\ref{sec:inf}.
Section~\ref{sec:var}
computes the variation at each level and gives a direct proof that the
graph of $T$ has
Hausdorff and box dimension $1$ and infinite length (Corollary~\ref{dim});
its Assouad dimension is $1$ as well (Proposition~\ref{assouadgraph}).
Section~\ref{sec:fourier} computes the Fourier coefficients, and
Section~\ref{sec:level} treats the level sets. In
Section~\ref{sec:questions} we extend the description of the local maxima
to the Delange functions of odd base and discuss some open questions.
Appendix~\ref{coefdata} contains the data for the computation in
Lemma~\ref{omega}.

\section{Boundary counts and ternary digit sums}
\label{sec:graph}

In this section we recall the boundary count \eqref{count} and its first
difference $\xi_n(m+1)-\xi_n(m)$, on which the rest of the paper depends.
Throughout this section we assume $n\ge1$.

\begin{definition}
\label{cube}
The ternary Hamming cube $Q_n^3$ is the graph on the vertex set
$\{0,1,2\}^n$ in which two vertices are adjacent when they differ in
exactly one coordinate. For a set $A\subseteq\{0,1,2\}^n$, let $e(A)$ be
the number of edges with both endpoints in $A$, and let the edge boundary
$\partial A$ be the set of edges with exactly one endpoint in $A$. For
$0\le m\le3^n$, the boundary count $\xi_n(m)$ is the smallest size of the
edge boundary of a set of $m$ vertices:
\[
 \xi_n(m)=\min\bigl\{|\partial A|:A\subseteq\{0,1,2\}^n,\ |A|=m\bigr\}.
\]
\end{definition}

Each of the $3^n$ vertices of $Q_n^3$ has degree $2n$, and $Q_n^3$ is the
Cartesian product of $n$ triangles. If we add up the degrees of the
vertices of a set $A$, we count every edge inside $A$ twice and every
boundary edge once, so
\begin{equation}
 |\partial A|=2n|A|-2e(A).
 \label{edges}
\end{equation}
By \eqref{edges}, a set of size $m$ has the smallest boundary exactly
when it has the largest number of internal edges, and every value
$\xi_n(m)$ is even. Since a set and its complement have the same
boundary, we also have $\xi_n(m)=\xi_n(3^n-m)$.

\begin{definition}
\label{labels}
The label of the vertex $(x_0,\ldots,x_{n-1})$ of $Q_n^3$ is the integer
$\sum_{i=0}^{n-1}x_i3^i$, and the word of the label is the string
$x_{n-1}\ldots x_0$, with leading zeros allowed. For
$0\le m\le3^n$, the initial segment $I_n(m)$ is the set of the $m$
vertices with labels $0,\ldots,m-1$.
\end{definition}

Thus the labels are $0,1,\ldots,3^n-1$, and $x_i$ is the ternary digit of
the label at the place of $3^i$. In the proof of his Theorem~1,
Lindsey~\cite[pp.~510--515]{Lindsey1964}
showed that these initial segments have the largest possible number of
internal edges; his argument covers every Cartesian product of complete
graphs. In his terminology, the initial segments have maximal
\emph{connectedness}. We refer to~\cite{Bezrukov1999} for the statement in
modern notation and for further references on this theorem.

\begin{lemma}[Lindsey {\cite[pp.~510--515]{Lindsey1964}}]
\label{lindsey}
For $0\le m\le3^n$, the set $I_n(m)$ satisfies
$e(I_n(m))=\max\{e(A):|A|=m\}$.
\end{lemma}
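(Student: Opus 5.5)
We prove Lemma~\ref{lindsey} by induction on $n$, using compression. The idea is to show that any set can be pushed, without losing internal edges, into a set that is ``down-closed'' in every coordinate, and then to compare such a set with the initial segment. The case $n=1$ is trivial, since every set of $m$ vertices of a triangle has $\binom m2$ internal edges.

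For the inductive step, fix $n\ge2$ and a coordinate $i$. The $3^{n-1}$ copies of $Q_{n-1}^3$ are obtained by fixing $x_i$. For a set $A$, let the \emph{$i$-compression} $C_i(A)$ replace each section $A_c=\{x\in A:x_i=c\}$ by the initial segment of the same size in the lexicographic order on the remaining coordinates. By the induction hypothesis, the edges inside each section do not decrease.

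The edges between sections are also controlled. Between sections $c$ and $c'$, the number of such edges is $|A_c\cap A_{c'}|$ after identification, which is at most $\min(|A_c|,|A_{c'}|)$. After compression, initial segments are nested, so this minimum is attained. Hence $e(C_i(A))\ge e(A)$.

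Repeating this over all coordinates strictly decreases $\sum_{a\in A}\mathrm{label}(a)$ unless nothing changes, so we reach a set that is compressed in every coordinate. We may also sort the sections so that $|A_0|\ge|A_1|\ge|A_2|$ in each coordinate, which is a relabeling of digit values and an automorphism.

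The main obstacle is the final step: showing that a set $A$ compressed in every coordinate has $e(A)\le e(I_n(m))$. We will reduce this to a numerical inequality. Let $f(m)=e(I_n(m))$. Splitting by the top coordinate $x_{n-1}$ with section sizes $a\ge b\ge c$, $a+b+c=m$, the compressed set has
\[
 e=g(a)+g(b)+g(c)+b+2c,\qquad g=e(I_{n-1}(\cdot)),
\]
where the cross terms are $\min(a,b)+\min(a,c)+\min(b,c)=b+2c$. The initial segment corresponds to the ``greedy'' choice: $a=\min(m,3^{n-1})$, then $b$, then $c$.

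So we will prove that $F(a,b,c)=g(a)+g(b)+g(c)+b+2c$ is maximized by the greedy triple. We do this by a local exchange argument. Suppose $a<3^{n-1}$ and $b>0$. Then moving one unit from $c$ to $b$, or from $b$ to $a$, does not decrease $F$. To verify this, we will use the increments
\[
 g(k+1)-g(k)=s_3(k)+\#\{\text{digit-}2\text{ contributions}\},
\]
that is, the explicit formula $e(I_{n}(m))=S_3(m)$ obtained by counting, for each new vertex $k$, its $s_3(k)$ neighbors with smaller label. Each digit $x_j$ of $k$ contributes $x_j$ such neighbors. The exchange inequality then becomes a digit-sum inequality of the form $s_3(a)+1\ge s_3(b)$ whenever $a\ge b$ and $a<3^{n-1}$. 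This inequality fails in general, so the exchange argument will need to move blocks rather than single units.

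We expect this to be the genuinely delicate part, and we will first try to handle it via the subadditivity-type identity $S_3(a)+S_3(b)+\min(a,b)\le S_3(a+b)$ for $a+b\le 3^{n-1}\cdot 2$ and its three-term analogue. If that proves awkward, we will instead cite Lindsey's argument as presented in~\cite{Bezrukov1999}.
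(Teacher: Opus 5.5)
\paragraph{Verdict.} The paper does not prove this lemma. It quotes Lindsey's theorem \cite[pp.~510--515]{Lindsey1964} and points to \cite{Bezrukov1999} for a modern statement, so your fallback of citing Lindsey is exactly the paper's treatment. As a self-contained proof, however, your proposal has a genuine gap, and it sits at the only nontrivial point.

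\paragraph{What you have and what is missing.} Your reduction is sound. Two small slips: fixing $x_i$ gives three copies of $Q_{n-1}^3$, not $3^{n-1}$; and $g(k+1)-g(k)=s_3(k)$ exactly, with no extra term. One split along the top coordinate, together with the induction hypothesis and $e(I_{n-1}(k))=S_3(k)$, already gives $e(A)\le G(a,b,c)$, where
\[
 G(a,b,c)=S_3(a)+S_3(b)+S_3(c)+\min\{a,b\}+\min\{a,c\}+\min\{b,c\}.
\]
So the iterated compressions and the sorting are never used. What remains is the inequality $G(a,b,c)\le S_3(a+b+c)$. This is the entire inductive step, and you do not prove it. The three approaches you mention all fail to supply it:
\begin{itemize}
\item The unit-exchange claim is false, as you suspect. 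For $n=3$ one has $F(3,3,0)=9$ but $F(4,2,0)=7$, while the greedy value is $F(6,0,0)=S_3(6)=9$. So $F$ is not monotone along unit moves toward the greedy triple.
\item The two-term inequality, applied twice in either grouping, yields at best $S_3(a)+S_3(b)+S_3(c)+c+\min\{a,b+c\}\le S_3(a+b+c)$. This is the needed bound only when $a\ge b+c$. For $a<b+c$ it falls short by $c$, and there the target can be an equality, e.g.\ $G(2,2,2)=9=S_3(6)$.
\item The ``three-term analogue'' you would turn to is the statement itself.
\end{itemize}

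\paragraph{How to close the gap.} Prove $G(a,b,c)\le S_3(a+b+c)$ for all integers $a,b,c\ge0$ by strong induction on $a+b+c$, re-splitting along the last ternary digit.

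For $x\ge0$ and $r\in\{0,1,2\}$, let $x^{(r)}=\lceil(x-r)/3\rceil$, the number of $k<x$ with $k\equiv r\pmod 3$. These satisfy:
\begin{itemize}
\item $x^{(0)}\ge x^{(1)}\ge x^{(2)}\ge x^{(0)}-1$;
\item $\min\{x,y\}^{(r)}=\min\{x^{(r)},y^{(r)}\}$;
\item \eqref{S3rec} can be rewritten as $S_3(x)=\sum_rS_3(x^{(r)})+x^{(1)}+2x^{(2)}$.
\end{itemize}
Put $M_r=a^{(r)}+b^{(r)}+c^{(r)}$, so that $M_0\ge M_1\ge M_2\ge M_0-3$. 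Then
\[
 G(a,b,c)=\sum_{r=0}^{2}G\bigl(a^{(r)},b^{(r)},c^{(r)}\bigr)+M_1+2M_2
 \le S_3(M_0)+S_3(M_1)+S_3(M_2)+M_1+2M_2=G(M_0,M_1,M_2).
\]
The induction hypothesis applies here because $M_r<a+b+c$ unless $a,b,c\le1$, and that case is checked directly.

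It remains to verify $G(q+u_0,q+u_1,q)\le S_3(3q+u_0+u_1)$ for the ten pairs $3\ge u_0\ge u_1\ge0$. By \eqref{S3rec}, each of these reduces to $s_3(k+1)\le s_3(k)+1$. For example, for $(u_0,u_1)=(3,3)$ the difference is $s_3(q)+s_3(q+1)+3-2s_3(q+2)\ge0$.

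Together with your split along the top coordinate, this proves the lemma without appeal to Lindsey.
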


In an initial segment, the number of internal edges is a sum of digit
sums. Let $s_3(k)$ be the sum of the ternary digits of the
nonnegative integer $k$, and let
\[
 S_3(m)=\sum_{k=0}^{m-1}s_3(k),~ S_3(0)=0.
\]

\begin{theorem}
\label{xiformula}
For $0\le m\le3^n$,
\begin{equation}
 \max_{|A|=m}e(A)=S_3(m)
 ~\text{and}~
 \xi_n(m)=2\bigl(nm-S_3(m)\bigr).
 \label{xi}
\end{equation}
Consequently, for $0\le m<3^n$,
\begin{equation}
 \xi_n(m+1)-\xi_n(m)=2\bigl(n-s_3(m)\bigr).
 \label{diff}
\end{equation}
\end{theorem}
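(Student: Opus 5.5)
By Lemma~\ref{lindsey}, it suffices to count the internal edges of the initial segment $I_n(m)$ and to show that $e(I_n(m))=S_3(m)$. The formula for $\xi_n(m)$ then follows from \eqref{edges}, since $\xi_n(m)=2nm-2\max_{|A|=m}e(A)$. The difference formula \eqref{diff} is then immediate, because $S_3(m+1)-S_3(m)=s_3(m)$.

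To count $e(I_n(m))$, I would add the vertices in increasing order of their labels and count the new internal edges created at each step. When the vertex with label $k$ is added to $I_n(k)$, its neighbours are the vertices whose labels differ from $k$ in exactly one ternary digit. Such a neighbour lies in $I_n(k)$, that is, has a smaller label, exactly when the differing digit has been lowered. At the place of $3^i$ the digit $x_i$ can be replaced by any of the $x_i$ smaller digits, and each replacement gives a smaller label. Replacing $x_i$ by a larger digit gives a larger label. So the number of new edges is $\sum_i x_i=s_3(k)$. Summing over $0\le k<m$ gives $e(I_n(m))=\sum_{k<m}s_3(k)=S_3(m)$.

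The only point that needs care is the order comparison: two labels differing in exactly one digit compare in the same way as that digit. This is clear, since their difference is $(x_i'-x_i)3^i$. I do not expect any real obstacle. The substantive input, the optimality of initial segments, is supplied by Lemma~\ref{lindsey}.
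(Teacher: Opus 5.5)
Your proof is correct and follows the paper's route: Lindsey's lemma gives the optimality of initial segments, then the count $e(I_n(m))=S_3(m)$, then \eqref{edges}. The only difference is that the paper cites Mohanty et~al.\ for the count, whose proof adds the vertices one at a time as you do, whereas you carry out that short count yourself.
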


\begin{proof}
Mohanty et~al.~\cite[Theorem~3.1]{Mohanty2023} proved that in a Hamming
graph over any alphabet, the initial segment with $m$ vertices has as many
internal edges as the sum of the digit sums of $0,\ldots,m-1$ in the
corresponding base. Hence $e(I_n(m))=S_3(m)$, and by Lemma~\ref{lindsey}
this is the maximum of $e(A)$ over sets of size $m$. The formula for
$\xi_n(m)$ then follows from \eqref{edges}. Since
$S_3(m+1)-S_3(m)=s_3(m)$, we also get \eqref{diff}.
\end{proof}

The same count appears in the literature on the extra edge
connectivity of $Q_n^3$, where the basic quantity is twice the number
of internal edges. Xu, Zhou and Hsieh~\cite[Lemma~2.4]{Xu2022} record
that the initial segments attain the maximum. They write $2e(I_n(m))$
as an explicit sum over the nonzero ternary digits of $m$, and they
note the increment $2s_3(m)$~\cite[Lemma~2.7]{Xu2022}. In their work,
the boundary is minimized over sets $A$ such that both $A$ and its
complement are connected. This restriction does not change the minimum,
because for $1\le m<3^n$ both $I_n(m)$ and its complement are
connected~\cite[Lemma~2.3]{Xu2022}.

By \eqref{diff}, adding the vertex $m$ changes the boundary count by
$2(n-s_3(m))$, an even integer determined by the ternary digits of $m$.
The values of $S_3$ are also easy to compute. If $q\ge0$ and $r\in\{0,1,2\}$,
then $r$ is the last ternary digit of $3q+r$, so
$s_3(3q+r)=s_3(q)+r$. We split the nonnegative integers below $3q+r$ into
two groups. First there are $q$ complete triples $3j,3j+1,3j+2$ with
$0\le j<q$. In each triple the digit sums add up to $3s_3(j)+3$. Then
there are the $r$ integers $3q,\ldots,3q+r-1$, which contribute
$rs_3(q)+0+1+\cdots+(r-1)$. Hence
\begin{equation}
 S_3(3q+r)=3S_3(q)+3q+rs_3(q)+\frac{r(r-1)}2,
 ~ r\in\{0,1,2\},
 \label{S3rec}
\end{equation}
so we can compute $S_3(m)$, and with it $\xi_n(m)$, one ternary digit
at a time. Very recently, Alpay~\cite[Lemma~5.1]{Alpay2026} proved
\eqref{S3rec} and its analogue in every base. Earlier,
Allaart~\cite[Lemma~7]{Allaart2014} had proved the case $r=0$, again in
every base. By induction on $L$, \eqref{S3rec} gives
$S_3(3^L)=L\,3^L$ for every $L\ge0$. We can also see this directly, since
below $3^L$ each of the $L$ digit positions takes each value equally often.
Inserted into \eqref{xi}, this gives $\xi_n(3^L)=2(n-L)3^L$ for
$0\le L\le n$, which is indeed the number of edges leaving the subcube
$I_n(3^L)$.

\begin{table}[htbp]
\centering
\caption{The boundary counts $\xi_3(m)=6m-2S_3(m)$ of $Q_3^3$ for
$0\le m\le13$.}
\label{tab1}
\renewcommand{\arraystretch}{1.1}
\begin{tabular}{@{}r c r r r@{}}
\toprule
$m$ & ternary digits of $m$ & $s_3(m)$ & $S_3(m)$ & $\xi_3(m)$\\
\midrule
 0 & $000$ & 0 &  0 &  0\\
 1 & $001$ & 1 &  0 &  6\\
 2 & $002$ & 2 &  1 & 10\\
 3 & $010$ & 1 &  3 & 12\\
 4 & $011$ & 2 &  4 & 16\\
 5 & $012$ & 3 &  6 & 18\\
 6 & $020$ & 2 &  9 & 18\\
 7 & $021$ & 3 & 11 & 20\\
 8 & $022$ & 4 & 14 & 20\\
 9 & $100$ & 1 & 18 & 18\\
10 & $101$ & 2 & 19 & 22\\
11 & $102$ & 3 & 21 & 24\\
12 & $110$ & 2 & 24 & 24\\
13 & $111$ & 3 & 26 & 26\\
\bottomrule
\end{tabular}
\end{table}

Table~\ref{tab1} lists the values for $n=3$ and $m\le13$; the remaining
values follow from $\xi_3(m)=\xi_3(27-m)$. Its second column gives the
vertex with label $m$ as the string $x_2x_1x_0$ of its ternary digits, with
the digit of $3^2$ first. The rows are therefore in lexicographic order, and the
$m$ vertices in the rows above row $m$ form the set $I_3(m)$, which has
exactly $\xi_3(m)$ boundary edges. For $1\le m\le13$, the values of $\xi_3(m)$
also appear in~\cite[Table~II]{Xu2022}. In Section~\ref{sec:max} we use
these values to illustrate Corollaries~\ref{blockmax} and~\ref{xi2m}. The
sequence $s_3(0),s_3(1),\ldots$ is listed in~\cite{OEIS-A053735}. The term
with index $k$ of the sequence in~\cite{OEIS-A094345} is the sum of the
ternary digits of $0,\ldots,k$ inclusive. Hence for $m\ge1$ the value
$S_3(m)$ is the term with index $m-1$ of that sequence.

\section{A Takagi function on ternary intervals}
\label{sec:T}

To derive the sampling equation \eqref{samp0} and equation \eqref{cell0},
we count the boundary edges of an initial segment in another way. In the
proof of~\cite[Theorem~3.1]{Mohanty2023} the vertices are added one at a
time, while here we sort the edges by the coordinate in which their two
endpoints differ. Given integers $i,m\ge0$, we write $r_i(m)$ for the
remainder of $m$ modulo $3^{i+1}$, taken in $\{0,1,\ldots,3^{i+1}-1\}$.

\begin{proposition}
\label{coord}
For $n\ge1$ and $0\le m\le3^n$,
\begin{equation}
 \xi_n(m)=2\sum_{i=0}^{n-1}
 \min\bigl\{r_i(m),\,3^i,\,3^{i+1}-r_i(m)\bigr\}.
 \label{coord-sum}
\end{equation}
\end{proposition}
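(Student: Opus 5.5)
By Theorem~\ref{xiformula}, $\xi_n(m)=|\partial I_n(m)|$. The plan is to count the boundary edges of $I_n(m)$ by sorting them according to the coordinate $i$ in which their endpoints differ, and to show that coordinate $i$ contributes exactly $2\min\{r_i(m),3^i,3^{i+1}-r_i(m)\}$ edges.

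Fix $i$ with $0\le i\le n-1$. The edges in direction $i$ form disjoint triangles, one for each choice of the other $n-1$ coordinates. For such a choice, the three vertices have labels $u+c3^i$ with $c\in\{0,1,2\}$, where $u$ is the label with $x_i=0$. A triangle with $j$ of its three vertices in $A$ contributes $j(3-j)$ boundary edges. This number is $0,2,2,0$ for $j=0,1,2,3$.

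Next we decide how many vertices of each triangle lie in $I_n(m)$, that is, have label below $m$. Write $m=q3^{i+1}+r$ with $r=r_i(m)$, and $u=p3^{i+1}+v$ with $0\le v<3^i$. We split into cases on $p$:
\begin{itemize}
\item If $p<q$, all three labels are below $m$.
\item If $p>q$, none of the labels is below $m$.
\item If $p=q$, the vertex $u+c3^i$ lies in $I_n(m)$ exactly when $v+c3^i<r$.
\end{itemize}
So only the triangles with $p=q$ contribute, and in that case the count $j$ is the number of $c\in\{0,1,2\}$ with $c3^i+v<r$. When $m=3^n$ we have $r=0$ and no triangle contributes, which agrees with the right side of \eqref{coord-sum} vanishing. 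The values of $v$ range over $0\le v<3^i$, each occurring once, since the remaining coordinates above $i$ are fixed by $p=q$.

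The main step is counting the $v$ with $j\in\{1,2\}$. We split into three cases on $r$.
\begin{itemize}
\item For $r\le3^i$, we get $j=1$ exactly when $v<r$, and $j=0$ otherwise. This gives $r$ triangles.
\item For $3^i<r\le2\cdot3^i$, we always have $j\ge1$, and $j=1$ or $j=2$ in every case. This gives $3^i$ triangles.
\item For $r>2\cdot3^i$, we get $j=3$ exactly when $v<r-2\cdot3^i$, and $j=2$ otherwise. This gives $3^{i+1}-r$ triangles.
\end{itemize}
In each case the number of contributing triangles equals $\min\{r,3^i,3^{i+1}-r\}$, and each of them contributes $2$ edges.

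Summing over $i$ then gives \eqref{coord-sum}. The only delicate point is this case bookkeeping, together with checking the boundary values $r=3^i$ and $r=2\cdot3^i$, where adjacent cases agree.
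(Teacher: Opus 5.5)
Your proof is correct and is essentially the paper's argument. The paper also reduces to counting $|\partial I_n(m)|$ via Lemma~\ref{lindsey} and \eqref{edges}. It then groups the direction-$i$ triangles into blocks of $3^{i+1}$ consecutive labels, which is your index $p$, and observes that only the block cut by $m$ contributes. Finally it performs the same three-case count on $r=r_i(m)$.
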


\begin{proof}
By Lemma~\ref{lindsey} and \eqref{edges}, it suffices to count the
edges leaving $I_n(m)$. Fix $i$, put $a=3^i$ and $r=r_i(m)$, and consider the
edges that change coordinate $i$. We split the labels into ranges of $3a$
consecutive integers on which the digits above position $i$ are
constant. Within such a range, the $a$ choices of the digits below
position $i$ index $a$ triangles. Each triangle has one vertex in each
third of the range. These three thirds have length $a$ and correspond to
$x_i=0,1,2$. A triangle with one or two vertices in $I_n(m)$ contributes
two boundary edges, and a triangle with zero or three contributes none.
In particular, a range entirely inside or entirely outside $I_n(m)$
contributes nothing. If $m$ cuts one of these ranges, the first $r$ labels
of that range lie in $I_n(m)$. If $0\le r\le a$, exactly $r$ triangles
contain one vertex in $I_n(m)$, giving $2r$ edges. If $a\le r\le 2a$, every
triangle contains one or two vertices in $I_n(m)$, giving $2a$ edges. If
$2a\le r<3a$, exactly $3a-r$ triangles contain two vertices in $I_n(m)$
and the others contain three, giving $2(3a-r)$ edges. In all cases there are
$2\min\{r,a,3a-r\}$ such edges. Since every edge changes exactly one
coordinate, summing over $i$ proves \eqref{coord-sum}.
\end{proof}

To illustrate Proposition~\ref{coord}, let $n=2$. Figure~\ref{cubepic}
shows $Q_2^3$ with the
vertex $m=x_0+3x_1$ in column $x_0$ and row $x_1$. Each row and each column
is a triangle, and in each triangle the edge between the two outer vertices
is drawn as an arc; the edges in a row
change coordinate~$0$, and those in a column change coordinate~$1$. The
filled vertices form $I_2(3)$ and $I_2(4)$. Their internal edges are
black, their boundary edges are red and dashed in coordinate~$0$ and green
and dash dotted in coordinate~$1$, and the remaining edges are grey. By
Lemma~\ref{lindsey} and \eqref{edges}, the set $I_2(m)$ has $\xi_2(m)$
boundary edges, and below each panel we write this number as the number of
red edges plus the number of green edges, as in the proof of
Proposition~\ref{coord}. In the
left panel, $I_2(3)$ is the top row. It meets every row in zero or three
vertices and every column in one vertex, so its boundary consists of six
column edges and $\xi_2(3)=0+6=6$. In \eqref{coord-sum} this is
$2(\min\{0,1,3\}+\min\{3,3,6\})$, since $r_0(3)=0$ and $r_1(3)=3$. The set
$I_2(4)$ is obtained from $I_2(3)$ by adding the vertex
$3=(10)_3$. Of the neighbours $0$, $4$, $5$ and $6$ of this vertex, only
$0$ lies in $I_2(3)$. Adding this vertex makes the edge between $0$ and $3$
internal and creates the boundary edges from $3$ to $4$, $5$ and $6$. Now
the middle row meets the set in one vertex and gives two boundary edges in
coordinate~$0$, while every column still meets it in one or two vertices
and gives two boundary edges in coordinate~$1$. Thus
$\xi_2(4)=2+6=8$, in agreement with $r_0(4)=1$ and $r_1(4)=4$, which give
$2(1+3)$, and
$\xi_2(4)-\xi_2(3)=2=2(2-s_3(3))$, as \eqref{diff} states. The
two sets have $S_3(3)=3$ and $S_3(4)=4$ internal edges, as
Lemma~\ref{lindsey} and Theorem~\ref{xiformula} require.

\begin{figure}[htbp]
\centering
\includegraphics{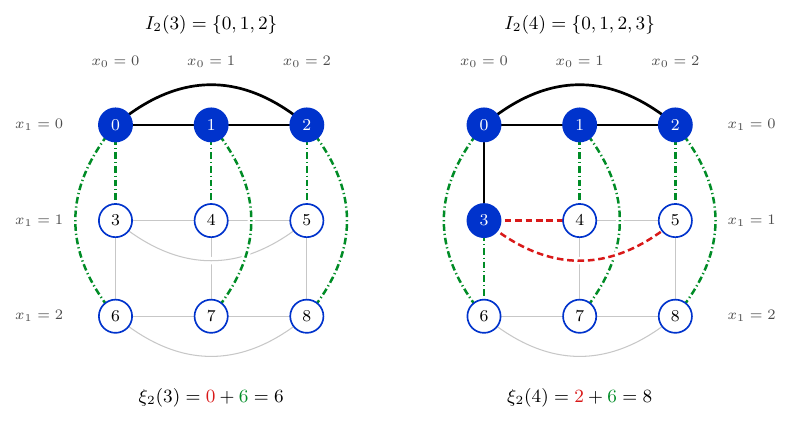}
\caption{The initial segments $I_2(3)$ (left) and $I_2(4)$ (right) in the
ternary Hamming cube $Q_2^3$.}
\label{cubepic}
\end{figure}

Since the change in \eqref{diff} can be negative, a larger set may have a
smaller boundary. Figure~\ref{Q3pic} shows an example in $Q_3^3$, drawn in
the same way with the vertex $m=x_0+3x_1+9x_2$ in column $x_0$, row $x_1$
and layer $x_2$. Here the layer $x_2=0$ is in front, and the boundary edges
in coordinate~$2$ are dotted in orange. The set $I_3(9)$ is the front
layer, a copy of $Q_2^3$, and $I_3(8)$ is this layer without the vertex
$8=(022)_3$. Four of the six neighbours of $8$, namely $2$, $5$, $6$ and
$7$, lie in $I_3(8)$; the other two are $17$ and $26$. Thus adding $8$
removes four boundary edges and creates two, so
$\xi_3(9)=\xi_3(8)-2=18$, in agreement with $2(3-s_3(8))=-2$. Below each
panel the boundary count is split by coordinate.

\begin{figure}[htbp]
\centering
\includegraphics{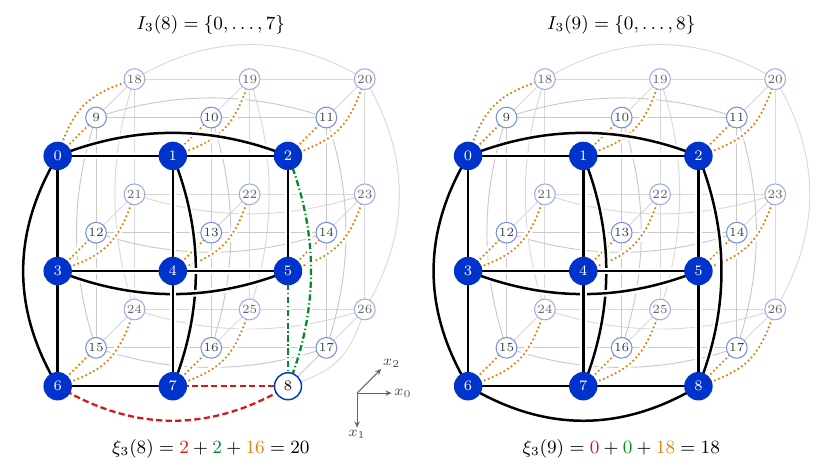}
\caption{The initial segments $I_3(8)$ (left) and $I_3(9)$ (right) in the
ternary Hamming cube $Q_3^3$.}
\label{Q3pic}
\end{figure}

As a function of $m$, each summand in \eqref{coord-sum} is a trapezoid with
period $3^{i+1}$. After rescaling, all of them have the shape of the
function $\tau$ in the next definition.

\begin{definition}
\label{delange}
Let $\dist(x,\Z)$ be the distance from $x$ to the nearest integer, and let
\begin{equation}
 \tau(x)=2\min\{\dist(x,\Z),1/3\}.
 \label{tau}
\end{equation}
For $n\ge0$ let
\begin{equation}
 F_n(x)=\sum_{j=0}^{n-1}3^{-j}\tau(3^jx),
 ~
 T(x)=\sum_{j=0}^{\infty}3^{-j}\tau(3^jx),
 \label{T}
\end{equation}
so that $F_0=0$. As in~\cite{GalkinGalkina2020}, we call $T$ the Delange
function of base~$3$, and $F_n$ its partial sum of order $n$.
\end{definition}

On $[0,1]$ the function $\tau$ increases with slope $2$ on $[0,1/3]$,
equals $2/3$ on $[1/3,2/3]$, and decreases with slope $-2$ on $[2/3,1]$;
it is continuous, even and periodic with period $1$. Since
$0\le\tau\le2/3$, the series in \eqref{T} converges uniformly on $\R$.
Hence $T$ is continuous, even and periodic with period
$1$, and in particular $T(1-x)=T(x)$.

\begin{definition}
\label{ternint}
A ternary rational is a real number of the form $m/3^n$ with $m\in\Z$
and $n\ge0$. For $n\ge0$ and $0\le m<3^n$, the interval
$[m/3^n,(m+1)/3^n]$ is a ternary interval of level $n$, or simply a level
$n$ interval.
\end{definition}

The partial sums $F_n$ turn out to be the linear interpolants of the
rescaled boundary counts (see Figures~\ref{fig2} and~\ref{partialpic}).

\begin{theorem}
\label{Fn}
Let $n\ge1$. For $0\le m\le3^n$,
\begin{equation}
 F_n\!\left(\frac m{3^n}\right)=\frac{\xi_n(m)}{3^n},
 \label{interp}
\end{equation}
and on each ternary interval $[m/3^n,(m+1)/3^n]$ of level $n$ the
function $F_n$ is linear with slope
\begin{equation}
 2\bigl(n-s_3(m)\bigr).
 \label{slope}
\end{equation}
\end{theorem}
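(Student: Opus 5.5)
The plan is to compare \eqref{coord-sum} with the definition of $F_n$ term by term, and then read off the slope from \eqref{diff}.

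For \eqref{interp}, fix $0\le m\le3^n$ and $0\le j\le n-1$, and set $i=n-1-j$. Then
\[
 3^{-j}\tau\bigl(3^j m/3^n\bigr)=3^{-j}\tau\bigl(m/3^{i+1}\bigr),
\]
and since $\tau$ has period $1$ we may replace $m$ by its remainder $r_i(m)$. On $[0,1]$ the formula $\tau(y)=2\min\{y,1/3,1-y\}$ holds, because there $\dist(y,\Z)=\min\{y,1-y\}$. With $y=r_i(m)/3^{i+1}\in[0,1)$ this gives
\[
 3^{-j}\tau(y)=\frac{2}{3^{j+i+1}}\min\bigl\{r_i(m),\,3^i,\,3^{i+1}-r_i(m)\bigr\}
 =\frac{2}{3^n}\min\bigl\{r_i(m),\,3^i,\,3^{i+1}-r_i(m)\bigr\}.
\]
As $j$ runs over $0,\dots,n-1$, the index $i$ runs over the same range. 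Summing over $j$ and invoking Proposition~\ref{coord} therefore yields $F_n(m/3^n)=\xi_n(m)/3^n$.

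For linearity, note that each summand $3^{-j}\tau(3^jx)$ with $j\le n-1$ is piecewise linear. Its possible breakpoints are the points with $3^jx\in\frac13\Z$, that is, $x\in3^{-j-1}\Z\subseteq3^{-n}\Z$. So no summand has a breakpoint in the interior of a level $n$ interval, and $F_n$ is linear there. The slope on $[m/3^n,(m+1)/3^n]$ is then the difference quotient of the endpoint values, namely
\[
 3^n\bigl(F_n((m+1)/3^n)-F_n(m/3^n)\bigr)=\xi_n(m+1)-\xi_n(m)=2\bigl(n-s_3(m)\bigr)
\]
by \eqref{interp} and \eqref{diff}.

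The only step needing care is the index bookkeeping $i=n-1-j$ together with the reduction modulo $3^{i+1}$. In particular, at $m=3^n$ all remainders vanish, both sides equal $0$, and this is consistent with $\xi_n(3^n)=0$. I do not expect any real obstacle.
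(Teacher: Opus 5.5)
Your proposal is correct and follows the paper's proof essentially step for step. You match each term of $F_n(m/3^n)$ with a summand of \eqref{coord-sum} via $i=n-1-j$ and reduction modulo $3^{i+1}$. You get linearity from the breakpoints of $x\mapsto\tau(3^jx)$ lying in $3^{-j-1}\Z\subseteq3^{-n}\Z$, and you read the slope off the endpoint values using \eqref{diff}.
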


\begin{proof}
Let $0\le j<n$ and $i=n-1-j$. Since $\tau$ has period $1$ and
$\dist(m/3^{i+1},\Z)=\min\{r_i(m),3^{i+1}-r_i(m)\}/3^{i+1}$, we have
\[
 3^{n-j}\tau\!\left(\frac{m}{3^{n-j}}\right)
 =3^{i+1}\tau\!\left(\frac m{3^{i+1}}\right)
 =2\min\{r_i(m),3^i,3^{i+1}-r_i(m)\}.
\]
The left side is $3^n$ times the term with index $j$ of
$F_n(m/3^n)$. When we sum over $0\le j<n$, the index $i$ takes the values
$0,\ldots,n-1$, and \eqref{coord-sum} gives
\eqref{interp}. For $j<n$ the function
$x\mapsto\tau(3^jx)$ is linear between consecutive multiples of
$3^{-j-1}$, all of which are multiples of $3^{-n}$. Thus
$F_n$ is linear on each ternary interval of level $n$. Its slope there
is $3^n$ times the difference of its values at the two endpoints, which
is \eqref{slope} by \eqref{diff}.
\end{proof}

\begin{definition}
\label{intslope}
The slope of a ternary interval $[m/3^n,(m+1)/3^n]$ of level $n$ is the
slope of $F_n$ on this interval. By \eqref{slope}, and since $F_0=0$, it
is the even integer $2(n-s_3(m))$.
\end{definition}

By \eqref{samp} below, the slope of a ternary interval is also the slope
of the line through the two points of the graph of $T$ above its
endpoints.

The next theorem shows that passing from $F_n$ to $T$ does not change the
values at the points $m/3^n$, and that $T$ is the unique bounded solution
of a functional equation.

\begin{theorem}
\label{limitthm}
The function $T$ is the unique bounded function on $\R$ with
\begin{equation}
 T(x)=\tau(x)+\tfrac13T(3x).
 \label{fe}
\end{equation}
For every $n\ge0$ and $x\in\R$,
\begin{equation}
 T(x)-F_n(x)=3^{-n}T(3^nx),
 ~
 \sup_{y\in\R}|T(y)-F_n(y)|=3^{-n},
 \label{eq:tail}
\end{equation}
and $0\le T\le1$ with $T(1/2)=1$. In particular,
\begin{equation}
 \xi_n(m)=3^nT\!\left(\frac m{3^n}\right),
 ~ n\ge1,\ 0\le m\le3^n.
 \label{samp}
\end{equation}
\end{theorem}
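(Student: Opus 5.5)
We will prove the statements of Theorem~\ref{limitthm} in the order in which they are listed; each one is a short consequence of the definition \eqref{T} and of Theorem~\ref{Fn}.

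\emph{The functional equation.} We split off the term with $j=0$ in \eqref{T} and reindex the rest:
\[
 T(x)=\tau(x)+\sum_{j\ge1}3^{-j}\tau(3^jx)=\tau(x)+\tfrac13\sum_{j\ge0}3^{-j}\tau(3^{j}\cdot3x)=\tau(x)+\tfrac13T(3x).
\]
The same reindexing of $\sum_{j\ge n}3^{-j}\tau(3^jx)$ gives $T(x)-F_n(x)=3^{-n}T(3^nx)$, which is the first part of \eqref{eq:tail}.

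\emph{Uniqueness.} Suppose $U$ is another bounded solution of \eqref{fe}, and put $D=T-U$. Then $D$ is bounded and $D(x)=\frac13D(3x)$. Iterating gives $D(x)=3^{-n}D(3^nx)$, so $|D(x)|\le3^{-n}\sup|D|$ for every $n$, and hence $D=0$.

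\emph{Bounds and the value at $1/2$.} Since $\tau\ge0$, we have $T\ge0$, and $T(0)=0$. For the upper bound, note that $\tau(1/2)=2/3$ and $3^j/2\equiv1/2\pmod 1$ for every $j$, because $3^j$ is odd. Hence
\[
 T(1/2)=\sum_{j\ge0}3^{-j}\cdot\tfrac23=1.
\]
Since $\tau\le2/3$, the same geometric sum gives $T(x)\le1$ for all $x$.

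\emph{The supremum in \eqref{eq:tail}.} From $0\le T\le1$ we get
\[
 \sup_y|T(y)-F_n(y)|=3^{-n}\sup_y T(3^ny)=3^{-n}.
\]
The supremum is attained at $y=3^{-n}/2$, where $3^ny=1/2$.

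\emph{The sampling equation \eqref{samp}.} Take $x=m/3^n$ with $n\ge1$. Then $3^nx=m$ is an integer, so $T(3^nx)=T(0)=0$ by periodicity, and $T(m/3^n)=F_n(m/3^n)$. Theorem~\ref{Fn} gives $F_n(m/3^n)=\xi_n(m)/3^n$, and \eqref{samp} follows.

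No step is a real obstacle. The only point that needs care is the value $T(1/2)=1$, which rests on the parity observation that $3^j/2\equiv1/2\pmod1$.
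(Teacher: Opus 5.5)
Your proof is correct and follows the paper's argument essentially step by step: you split off the term $j=0$ (and the terms $j<n$), bound each term by $\frac23\,3^{-j}$, use the parity observation at $x=1/2$, and note that the tail vanishes at $m/3^n$. The only cosmetic difference is in the uniqueness step, where you iterate $D(x)=3^{-n}D(3^nx)$ while the paper uses the one-step estimate $\sup|U-T|\le\frac13\sup|U-T|$; both rest on the same contraction.
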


\begin{proof}
We obtain \eqref{fe} by separating the term $j=0$ in \eqref{T}.
If $U$ is bounded and satisfies the same equation, then
$U(x)-T(x)=\tfrac13\bigl(U(3x)-T(3x)\bigr)$, so
$\sup|U-T|\le\frac13\sup|U-T|$, and hence $U=T$. Splitting the
series at $j=n$ gives the first equation in \eqref{eq:tail}. Each term
of \eqref{T} lies between $0$ and $\frac23\,3^{-j}$, so $0\le T\le1$.
At $x=1/2$ every $3^jx$ has distance $1/2$ from $\Z$, so every term
attains its maximum and $T(1/2)=1$. Together with the first equation, this
gives the second equation in
\eqref{eq:tail}. Finally, if $x=m/3^n$, then $3^jx$ is an integer for
$j\ge n$, so the tail is zero and \eqref{samp} follows from
\eqref{interp}.
\end{proof}

Equation \eqref{samp} appears in the literature in several forms.
Kr\"uppel~\cite[Theorem~5.2]{Kruppel2010b} proved the value formula
$G(m/q^n)=(S(q)mn-qS(m))/q^n$ for every Knopp function
$G(x)=\sum_{j\ge0}q^{-j}g(q^jx)$ whose generator $g$ is continuous,
has period $1$, is zero at the integers and is linear on each interval
$[k/q,(k+1)/q]$. Here $S(m)$ is the sum, over $0\le k<m$, of the
weighted digit sums of $k$ in base $q$, in which a digit $\delta$ has the
weight $g(1/q)+g(\delta/q)-g((\delta+1)/q)$; the digit $0$ has the weight
$0$~\cite[equations~(5.4), (5.6) and~(5.10)]{Kruppel2010b}. Since $T$ is a
Knopp function of this kind, in which the digits $1$ and $2$ have the
weights $\frac23$ and $\frac43$, we have $S(m)=\frac23S_3(m)$ and $S(3)=2$,
and his formula gives $T(m/3^n)=2(nm-S_3(m))/3^n$. Together with
\eqref{xi} this is \eqref{samp}. Lev~\cite{Lev2015}
studies the edge isoperimetric problem for Cayley graphs of finite abelian
groups, and for the groups $\Z_q^n$ he uses the functions
$\omega_q(x)=\sum_{j\ge0}q^{-j}\min\{\dist(q^jx,\Z),1/q\}$, so that
$T=2\omega_3$. For the generators $e_1,\ldots,e_n$ of $\Z_3^n$, his directed
Cayley graph is the orientation of $Q_n^3$ in which each edge
$\{a,a+e_i\}$ appears once, as $a\to a+e_i$, and he notes that his count of
the directed edges leaving $A$ is half of
$|\partial A|$~\cite[page~2390]{Lev2015}. His
Theorem~1.3 computes this count for the sets $I_n(m)$, and his
equation~(1.1), which for $\Z_3^n$ is Lindsey's theorem, says that these
sets are minimizers. After doubling, the two statements give
\eqref{samp}~\cite[Theorem~1.3 and equation~(1.1)]{Lev2015}.
Allaart~\cite[Section~3]{Allaart2014} writes $h_3$ for $T/2$, notes that
the partial sum of order $n$ of his series has slope $n-s_3(m)$ on
$(m/3^n,(m+1)/3^n)$, which is half of \eqref{slope}, and derives
$h_3(k/3^n)=3^{-n}(kn-S_3(k))$ for
$0\le k\le3^n$~\cite[equation~(27)]{Allaart2014}. In a question from
evolutionary biology, Mohanty et~al.~\cite[equation~(3.3)]{Mohanty2023}
encounter the function as the member $D_3$ of a family defined there, when they
compute the largest number of edges among $m$ vertices of a Hamming
graph. It enters
their computation through Delange's formula for $S_3$, which writes
$S_3(m)$ as $m\log_3m$ plus $m$ times a continuous function of $\log_3m$
with period $1$ that is expressed through $T$
(see Definition~\ref{Psidef}). Galkin and
Galkina~\cite{GalkinGalkina2020} call it the Delange function $D_3$ and
state \eqref{fe} and the first equation in \eqref{eq:tail} for every
Delange function $D_q$ of Definition~\ref{Dq}~\cite[equations~(2.6)
and~(2.4)]{GalkinGalkina2020}.
Delange himself~\cite[Section~1]{Delange1975} denotes the auxiliary
function of his formula by $h$, and in base $3$ this function equals
$-T/2$.

Summing \eqref{samp} over $m$ gives the total of the boundary counts.
Since $F_n$ is linear on each ternary interval of level $n$,
$F_n(m/3^n)=3^{-n}\xi_n(m)$ by \eqref{interp}, and $F_n(0)=F_n(1)=0$, we
have
$\int_0^1F_n=3^{-n}\sum_{m=0}^{3^n}F_n(m/3^n)=9^{-n}\sum_{m=0}^{3^n}\xi_n(m)$.
On the other hand $\int_0^1\tau=4/9$, so
\begin{equation}
 \sum_{m=0}^{3^n}\xi_n(m)=9^n\int_0^1F_n(x)\,dx
 =9^n\cdot\frac49\sum_{j<n}3^{-j}=\frac23\,(9^n-3^n).
 \label{edgesum}
\end{equation}
The left side is also the smallest value of $\sum_{uv}|f(u)-f(v)|$,
summed over the edges $uv$ of $Q_n^3$, over all bijections $f$ from the
vertex set onto $\{0,1,\ldots,3^n-1\}$. Indeed, an edge $uv$ with
$f(u)<f(v)$ contributes $|f(u)-f(v)|$, which is the number of integers $k$
with $f(u)<k\le f(v)$, and for each such $k$ the edge leaves the set
$\{w:f(w)<k\}$ of $k$ vertices. Hence the sum equals the sum, over
$1\le k\le3^n-1$, of the numbers of edges that leave the sets
$\{w:f(w)<k\}$. At least $\xi_n(k)$ edges leave a set of $k$ vertices, so
the sum is at least $\sum_{k=1}^{3^n-1}\xi_n(k)$, with equality when $f$
is the labelling of Section~\ref{sec:graph}, since every $I_n(k)$ is a
minimizer. For the Hamming graphs over an alphabet of
size $q$, Lindsey~\cite[page~515]{Lindsey1964} found the value
$\frac16(q^n-1)q^n(q+1)$, which is \eqref{edgesum} for $q=3$. Letting
$n\to\infty$ in
\eqref{edgesum} gives $\int_0^1T=2/3$, the constant term of the Fourier
series in Theorem~\ref{fourier}.

With the bound $T\le1$ we determine the largest of the values
$\xi_n(m)$. By \eqref{samp}, $\xi_n(m)\le3^n$, and in fact $\xi_n(m)\le3^n-1$,
since $\xi_n(m)$ is even and $3^n$ is odd. Let $m_\pm=(3^n\pm1)/2$ and
$x_\pm=m_\pm/3^n=1/2\pm1/(2\cdot3^n)$. For $0\le j<n$ the point $3^jx_\pm$
lies within $1/6$ of a half integer, and for $j\ge n$ it is an integer, so
$T(x_\pm)=\frac23\sum_{j<n}3^{-j}=1-3^{-n}$. Thus
\begin{equation}
 \max_{0\le m\le3^n}\xi_n(m)=3^n-1,
 \label{peak}
\end{equation}
attained at $m=(3^n\pm1)/2$, and only there. Indeed, each summand in
\eqref{coord-sum} is at most $3^i$, and $2\sum_{i<n}3^i=3^n-1$. Hence
$\xi_n(m)=3^n-1$ forces $\min\{r_i(m),3^i,3^{i+1}-r_i(m)\}=3^i$, that is,
$3^i\le r_i(m)\le2\cdot3^i$, for every $i<n$. Since $r_{n-1}(m)\ge3^{n-1}>0$,
we have $m<3^n$. Let
$x_0,\ldots,x_{n-1}$ be the ternary digits of $m$. When $i=0$, the condition
says $x_0\in\{1,2\}$. For $1\le i<n$ we have $r_i(m)=x_i3^i+r_{i-1}(m)$
with $3^{i-1}\le r_{i-1}(m)<3^i$, so $x_i=0$ would give
$r_i(m)<3^i$, and $x_i=2$ would give $r_i(m)>2\cdot3^i$. Hence
$x_i=1$ for $1\le i<n$, so $m=m_-$ or $m=m_+$. We return to statements of
this kind about the finite cubes in Corollaries~\ref{blockmax}
and~\ref{xi2m}.

The boundary counts $\xi_{15}(m)$ are plotted in Figure~\ref{fig1}, and
the function $F_{15}$ in Figure~\ref{fig2}; by \eqref{interp}, the second
graph is the first one with both axes divided by $3^{15}$. Both graphs
show the symmetry under reflection in the midpoint. Their corners come
from carries. By \eqref{diff}, the piecewise linear function with the
values $\xi_{15}(m)$ at the integers $m$ changes its slope at $m$ by
$2(s_3(m-1)-s_3(m))=2(2v-1)$, where
$v$ is the exponent of $3$ in $m$, that is, the number of carries when
$1$ is added to $m-1$. Thus the slope decreases by $2$ at each $m$ with
$3\nmid m$ and increases by $2(2v-1)$ at each multiple of $3$, and the
large increases at the multiples of high powers of $3$ produce the
visible dips. On the level $3$ interval $[19/27,20/27]$, enlarged in
Figure~\ref{fig2}, the function $F_3$ is constant, equal to
$\xi_3(19)/27=20/27$, so there the graph of $F_{15}$ is a copy of the
graph of $F_{12}$ scaled by $1/27$ in both directions and raised by
$20/27$.

\begin{figure}[htbp]
  \centering
  \includegraphics{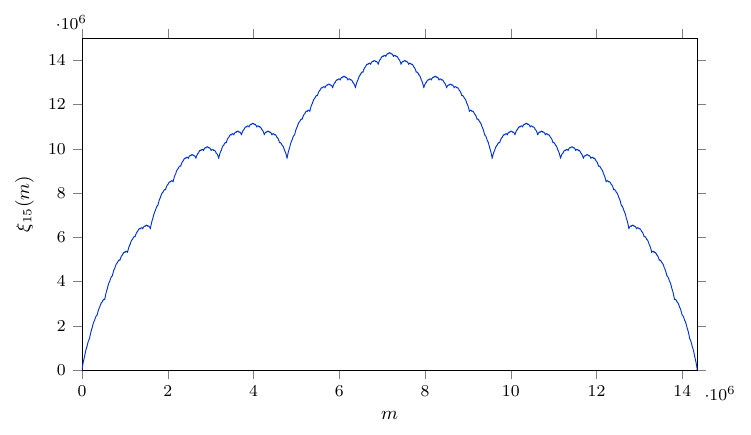}
  \caption{The minimum number $\xi_{15}(m)$ of edges leaving a set of
  $m$ vertices in $Q_{15}^3$, plotted at the multiples $m$ of $3^7$.}
  \label{fig1}
\end{figure}

\begin{figure}[htbp]
  \centering
  \includegraphics{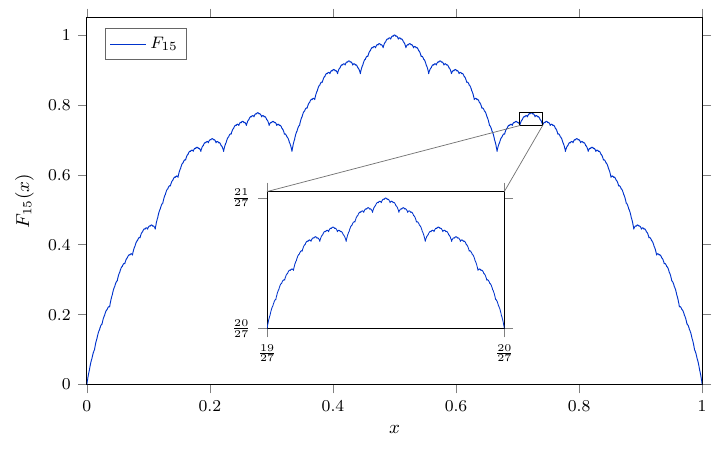}
\caption{The function $F_{15}$, plotted at the multiples of $3^{-8}$, with
an enlarged view of the level $3$ interval $[19/27,20/27]$, plotted at the
multiples of $3^{-11}$.}
  \label{fig2}
\end{figure}

\begin{figure}[htbp]
\centering
\includegraphics{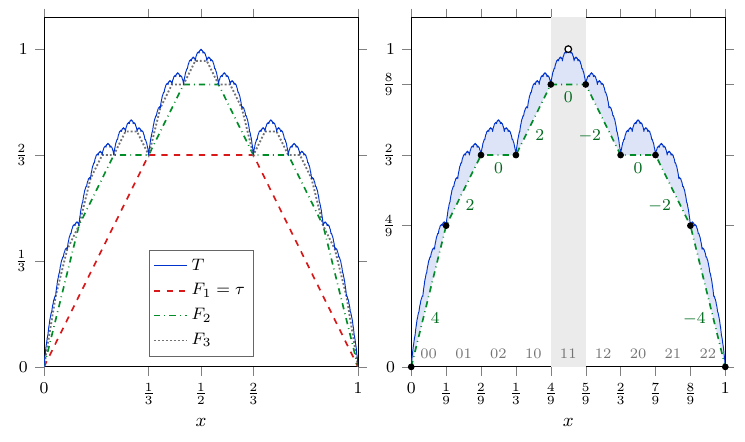}
\caption{Left: $T$ and the partial sums $F_1=\tau$, $F_2$ and $F_3$.
Right: $T$ and $F_2$, with the slope of $F_2$ on each ternary interval of
level $2$.}
\label{partialpic}
\end{figure}

The functional equation \eqref{fe} describes the graph of
$T$ above each third of $[0,1]$ in terms of the whole graph. If we write
$x=(a+y)/3$ with $a\in\{0,1,2\}$ and $0\le y\le1$, then
\begin{equation}
 T\!\left(\frac{a+y}3\right)
 =\begin{cases}
   \bigl(T(y)+2y\bigr)/3,&a=0,\\
   \bigl(T(y)+2\bigr)/3,&a=1,\\
   \bigl(T(y)+2-2y\bigr)/3,&a=2.
  \end{cases}
 \label{branches}
\end{equation}
On every ternary interval we find the same structure, with the slope of
the interval in place of the slopes $2$, $0$ and $-2$. By
Theorem~\ref{Fn} and \eqref{samp}, this number is the slope of $F_n$ on
the interval, and also the slope of $T$ between its two endpoints. We
state the resulting equation as Proposition~\ref{prop-cell} below.
Kr\"uppel proved its first part for all Knopp functions of the kind
described above~\cite[Theorem~5.2]{Kruppel2010b}, and its second part
follows from the first and the symmetry $T(1-t)=T(t)$.
Proposition~\ref{prop-cell} is the ternary version
of Kr\"uppel's
equations for Takagi's function~\cite[Proposition~2.1]{Kruppel2007}.

\begin{proposition}
\label{prop-cell}
Let $n\ge0$ and $0\le t\le1$. For $0\le m<3^n$,
\begin{equation}
 T\!\left(\frac{m+t}{3^n}\right)
 =T\!\left(\frac m{3^n}\right)
  +3^{-n}\bigl(2(n-s_3(m))\,t+T(t)\bigr),
 \label{cell}
\end{equation}
and for $1\le m\le3^n$,
\begin{equation}
 T\!\left(\frac{m-t}{3^n}\right)
 =T\!\left(\frac m{3^n}\right)
  +3^{-n}\bigl(2(s_3(m-1)-n)\,t+T(t)\bigr).
 \label{cell2}
\end{equation}
\end{proposition}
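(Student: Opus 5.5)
Both identities follow from the tail identity \eqref{eq:tail} and the linearity of $F_n$ on ternary intervals (Theorem~\ref{Fn}), together with periodicity and symmetry of $T$.

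For \eqref{cell}, fix $n\ge0$, $0\le m<3^n$, $0\le t\le1$, and put $x=(m+t)/3^n$. By the first equation in \eqref{eq:tail},
\[
 T(x)=F_n(x)+3^{-n}T(3^nx)=F_n(x)+3^{-n}T(m+t)=F_n(x)+3^{-n}T(t),
\]
where the last step uses that $T$ has period $1$. The point $x$ lies in the level $n$ interval $[m/3^n,(m+1)/3^n]$, on which $F_n$ is linear with slope $2(n-s_3(m))$ by Theorem~\ref{Fn}. Hence $F_n(x)=F_n(m/3^n)+2(n-s_3(m))\,t/3^n$. Since $m/3^n$ is a ternary rational of level $n$, the tail at $m/3^n$ vanishes, because $T$ vanishes at integers. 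Therefore $F_n(m/3^n)=T(m/3^n)$, and combining these gives \eqref{cell}.

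For $n=0$ there is no level-$0$ statement in Theorem~\ref{Fn}, which assumes $n\ge1$. In that case $F_0=0$ and $m=0$, so \eqref{cell} reduces to $T(t)=T(0)+T(t)$, which holds since $T(0)=0$.

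For \eqref{cell2}, take $1\le m\le3^n$ and apply \eqref{cell} to $m-1$ with parameter $1-t$. This gives
\[
 T\!\left(\frac{m-t}{3^n}\right)=T\!\left(\frac{m-1}{3^n}\right)+3^{-n}\bigl(2(n-s_3(m-1))(1-t)+T(1-t)\bigr).
\]
Taking $t=0$ in this identity, or equivalently $t=1$ in \eqref{cell} for $m-1$, yields $T(m/3^n)=T((m-1)/3^n)+2(n-s_3(m-1))3^{-n}$. Subtracting the two relations and using $T(1-t)=T(t)$ gives \eqref{cell2}, with the coefficient of $t$ equal to $-2(n-s_3(m-1))=2(s_3(m-1)-n)$.

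There is no real obstacle in this argument. The only points needing care are the edge case $n=0$ and the use of linearity of $F_n$ on the closed interval, which includes the right endpoint $t=1$.
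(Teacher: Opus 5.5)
Your proof is correct and is essentially the paper's argument. You get \eqref{cell} from the tail identity \eqref{eq:tail}, periodicity, and the linearity of $F_n$ in Theorem~\ref{Fn}. You get \eqref{cell2} by applying \eqref{cell} on $[(m-1)/3^n,m/3^n]$ with $1-t$, eliminating $T((m-1)/3^n)$ via the case $t=1$ (implicitly using $T(1)=0$), and using $T(1-t)=T(t)$. The only cosmetic differences are two. You obtain $F_n(m/3^n)=T(m/3^n)$ directly from the vanishing tail rather than citing \eqref{samp}. Your derivation of \eqref{cell2} from \eqref{cell} also covers $n=0$, whereas the paper treats that case separately.
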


\begin{proof}
For $n=0$ both equations follow from $T(0)=T(1)=0$ and $T(1-t)=T(t)$.
Let $n\ge1$ and $x=(m+t)/3^n$. By \eqref{eq:tail} we have
$T(x)=F_n(x)+3^{-n}T(m+t)=F_n(x)+3^{-n}T(t)$. Theorem~\ref{Fn}
and \eqref{samp} show that $F_n$ is linear on
$[m/3^n,(m+1)/3^n]$ with slope $2(n-s_3(m))$ and value $T(m/3^n)$ at
the left endpoint, and \eqref{cell} follows. To prove
\eqref{cell2}, we apply \eqref{cell} on the interval
$[(m-1)/3^n,m/3^n]$ with $1-t$ in place of $t$. Then we express
$T((m-1)/3^n)$ through $T(m/3^n)$ by \eqref{cell} at $t=1$, and
use $T(1-t)=T(t)$.
\end{proof}

By \eqref{cell}, on every ternary interval
$[m/3^n,(m+1)/3^n]$ of level $n$ the function $T$ is a straight line of slope
$2(n-s_3(m))$ plus a copy of $T$ scaled by $3^{-n}$. At the endpoints
the copy contributes nothing, which gives the sampling equation
\eqref{samp} again.

In the left panel of Figure~\ref{partialpic} we show this for $n=1,2,3$. By
Theorem~\ref{Fn} and \eqref{samp}, the straight lines form the graph of
$F_n$, and by \eqref{eq:tail} and $T\ge0$ the copy $T-F_n=3^{-n}T(3^nx)$
lies between $0$ and $3^{-n}$, with the value $3^{-n}$ at $1/2$. Take
$n=2$, as in the right panel. Written with two ternary digits, the integers
$m=0,1,\ldots,8$ are $00$, $01$, $02$, $10$, $11$, $12$, $20$, $21$ and
$22$; these strings are written at the bottom of the panel, and their digit
sums are $0$, $1$, $2$, $1$, $2$, $3$, $2$, $3$ and $4$. Thus, by
\eqref{slope}, the nine intervals of level $2$ have the slopes
\[
 4,\ 2,\ 0,\ 2,\ 0,\ -2,\ 0,\ -2,\ -4.
\]
By \eqref{diff}, their partial sums $0$, $4$, $6$, $6$, $8$, $8$, $6$, $6$,
$4$, $0$ are the boundary counts $\xi_2(0),\ldots,\xi_2(9)$, and the dots
are the points $(m/9,\xi_2(m)/9)$. Since $T-F_2=T(9x)/9$ by
\eqref{eq:tail}, the shaded region between the graphs of $F_2$ and $T$ has
height $T(t)/9$ at $x=(m+t)/9$. On the middle interval $[4/9,5/9]$, shaded
grey, the slope is $0$, and \eqref{cell} with $n=2$ and $m=4$ reads
$T((4+t)/9)=8/9+T(t)/9$ for $0\le t\le1$. In other words, the graph of
$T$ there is
the whole graph scaled by $1/9$ in both directions and raised by $8/9$,
with its top at $(1/2,1)$ (the open circle). On $[0,1/9]$, \eqref{cell}
reads $T(t/9)=(4t+T(t))/9$, so there the same copy sits on a line of
slope $4$. In Sections~\ref{sec:holder} to~\ref{sec:inf} we study how the
line and the copy combine.
\section{Increments, H\"older continuity and nondifferentiability}
\label{sec:holder}

Before we look at individual points, we collect estimates that hold
uniformly in $x$. For a fixed step the supremum of the increments of $T$
can be computed exactly, because $T$ is subadditive:
$T(x+y)\le T(x)+T(y)$ for all real $x$ and $y$.

\begin{proposition}
\label{incr}
\begin{enumerate}
\item[(a)] For every $h\in\R$,
 $\sup_{x\in\R}|T(x+h)-T(x)|=T(h)$.
\item[(b)] For every integer $L\ge1$, $T(3^{-L})=2L\,3^{-L}$, and for
 every integer $r\ge0$, $T\bigl(1/(2\cdot3^r)\bigr)=(r+1)3^{-r}$.
\item[(c)] For $0<h\le1$ and $N=\lfloor\log_3(1/h)\rfloor$,
 \begin{equation}
  2Nh\le T(h)\le2Nh+3h.
  \label{Thbd}
 \end{equation}
 In particular, as $h\downarrow0$,
 \begin{equation}
  T(h)=2h\log_3(1/h)+O(h).
  \label{Thlog}
 \end{equation}
\end{enumerate}
\end{proposition}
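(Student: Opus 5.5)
We split the proposition into its three parts. Part (a) rests on subadditivity, part (b) on the cell equation \eqref{cell}, and part (c) on \eqref{cell} combined with part (b).

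\emph{Part (a).} First we prove that $T$ is subadditive. It suffices to show $\tau(x+y)\le\tau(x)+\tau(y)$, since each term of \eqref{T} then inherits the inequality.
\begin{itemize}
\item The function $\dist(\cdot,\Z)$ is subadditive.
\item For nonnegative $a,b,c$ with $a\le b+c$, the map $u\mapsto\min\{u,1/3\}$ is increasing and subadditive. Hence $\min\{a,1/3\}\le\min\{b,1/3\}+\min\{c,1/3\}$.
\end{itemize}
Subadditivity gives $T(x+h)-T(x)\le T(h)$. Since $T$ is even, it also gives $T(x)-T(x+h)\le T(-h)=T(h)$. So the supremum is at most $T(h)$, and it is attained at $x=0$ because $T(0)=0$.

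\emph{Part (b).} Both values are direct sums of the series.
\begin{itemize}
\item For $h=3^{-L}$, the term with index $j$ is $3^{-j}\tau(3^{j-L})$. For $j\le L-1$ we have $3^{j-L}\le1/3$, so the term equals $2\cdot3^{-L}$. For $j\ge L$ the argument is an integer and the term vanishes. Summing gives $2L\,3^{-L}$. This also agrees with \eqref{samp}, since $\xi_L(1)=2L$.
\item For $h=1/(2\cdot3^r)$ and $j<r$, the point $3^jh$ has distance $3^{j-r}/2\le1/6$ from $\Z$. Its $\tau$-value is therefore $3^{j-r}$, and the term is $3^{-r}$. For $j\ge r$ the point $3^jh$ is a half-integer, so the term is $\frac23 3^{-j}$. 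The total is $r\,3^{-r}+\frac23\cdot\frac32\,3^{-r}=(r+1)3^{-r}$.
\end{itemize}

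\emph{Part (c).} Write $h=(1+t)/3^{N+1}$ with $0\le t<2$, which is possible because $3^{-N-1}<h\le3^{-N}$.
\begin{itemize}
\item If $t\le1$, apply \eqref{cell} at level $N+1$ with $m=1$. This gives
\[
T(h)=T(3^{-N-1})+3^{-N-1}\bigl(2N t+T(t)\bigr)=3^{-N-1}\bigl(2N(1+t)+2+T(t)\bigr)=2Nh+3^{-N-1}\bigl(2+T(t)\bigr).
\]
\item If $1<t<2$, use $m=1$ and $m=2$ with $t-1$ in place of $t$. The case $m=2$ gives the slope $2(N+1-2)=2(N-1)$, and we obtain the analogous identity with a bounded error.
\end{itemize}
It remains to check that the error term $3^{-N-1}(\cdot)$ lies in $[0,3h]$.
\begin{itemize}
\item In the case $t\le1$ the error is between $0$ and $3\cdot3^{-N-1}\le3h$. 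This uses $T\le1$ and $h\ge3^{-N-1}$.
\item The case $1<t<2$ is the fiddliest part. There $T(2\cdot3^{-N-1})=2(2N+1)3^{-N-1}$ follows from \eqref{samp} and \eqref{xi}, because $S_3(2)=1$. Adding $2(N-1)(t-1)3^{-N-1}$ and $T(t-1)3^{-N-1}$ gives $T(h)-2Nh=3^{-N-1}\bigl(4-2(t-1)+T(t-1)\bigr)$. This lies in $[0,\,5\cdot3^{-N-1}]$, and $5\cdot3^{-N-1}\le 3h$ because $h>2\cdot3^{-N-1}$.
\end{itemize}
So both cases give \eqref{Thbd}. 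Since $N=\log_3(1/h)+O(1)$, the estimate \eqref{Thlog} follows.
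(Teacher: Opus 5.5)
Parts (a) and (b) follow the paper's argument. In (a) you get subadditivity of $\tau$ from that of $\dist(\cdot,\Z)$ and of truncation at $1/3$, then use evenness and $T(0)=0$. In (b) you evaluate the series directly; the paper writes the tail for $h=1/(2\cdot3^r)$ as $3^{-r}T(1/2)=3^{-r}$ where you sum the geometric series, which is the same computation.

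For (c) you take a different and longer route. The paper splits the series at $j=N$. Since $3^jh\le1/3$ for $j<N$, each of the first $N$ terms equals $2h$. The tail $3^{-N}T(3^Nh)$ lies in $[0,3^{-N}]$, and $3^{-N}<3h$. This is \eqref{cell} at level $N$ with $m=0$, where $T(0)=0$ and the slope is exactly $2N$, so it needs no case distinction and no auxiliary values. Working at level $N+1$ forces you to split into the thirds $m=1$ and $m=2$ and to compute $T(3^{-N-1})$ and $T(2\cdot3^{-N-1})$. In return you get exact formulas for $T(h)-2Nh$ on each third, for instance $T(h)-2Nh\ge2\cdot3^{-N-1}$ when $h\le2\cdot3^{-N-1}$, which the statement does not need.

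Your version has two slips to fix.

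\emph{The range of $t$.} The condition $3^{-N-1}<h\le3^{-N}$ gives $0<t\le2$, not $0\le t<2$. Your case split therefore misses $t=2$, that is, $h=3^{-N}$. This point is covered by (b), or by allowing $t-1=1$ in \eqref{cell}.

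\emph{The identity for $t>1$.} With $u=t-1$ you have $T(h)=3^{-N-1}\bigl(2(2N+1)+2(N-1)u+T(u)\bigr)$ and $2Nh=3^{-N-1}\cdot2N(2+u)$. Hence
\[
 T(h)-2Nh=3^{-N-1}\bigl(2-2u+T(u)\bigr)=3^{-N-1}\bigl(4-2t+T(t-1)\bigr),
\]
not $3^{-N-1}\bigl(4-2(t-1)+T(t-1)\bigr)$. As a check, at $t=2$ your expression gives $2\cdot3^{-N-1}$, while $T(3^{-N})-2N\,3^{-N}=0$; at $h=5/6$ it gives $T(5/6)=4/3$ instead of $2/3$. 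The correct quantity lies in $[0,3^{-N}]$ because $0\le u\le1$ and $0\le T\le1$. So \eqref{Thbd} still follows and your conclusion is unaffected.
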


\begin{proof}
The distance to the nearest integer satisfies
$\dist(x+y,\Z)\le\dist(x,\Z)+\dist(y,\Z)$, and for nonnegative $a,b,c$
we have $\min\{a+b,c\}\le\min\{a,c\}+\min\{b,c\}$. Hence
$\tau(x+y)\le\tau(x)+\tau(y)$, and summing with the positive weights in
\eqref{T} gives $T(x+y)\le T(x)+T(y)$. Applied to $x$ and $h$, this
gives $T(x+h)-T(x)\le T(h)$; applied to $x+h$ and $-h$, together with
$T(-h)=T(h)$, it gives $T(x)-T(x+h)\le T(h)$. Therefore
$|T(x+h)-T(x)|\le T(h)$ for all $x$, with equality at $x=0$ because
$T(0)=0$. This proves (a).

Let $h=3^{-L}$. In \eqref{T}, each term with $j<L$ has $3^jh\le1/3$ and
therefore equals $2h$, while the terms with $j\ge L$ are zero, because
$3^jh$ is then an integer. Hence $T(3^{-L})=2Lh$. Now let
$h=1/(2\cdot3^r)$. As before, the terms with $j<r$ equal $2h$, and by
\eqref{eq:tail} the tail is
$3^{-r}T(1/2)=3^{-r}$. Thus $T(h)=2rh+3^{-r}=(r+1)3^{-r}$. This
proves (b).

If $1/3<h\le1$, then $N=0$, and \eqref{Thbd} says
$0\le T(h)\le3h$, which holds because $T\le1$. If $0<h\le1/3$, then
$N\ge1$ and we have $3^jh\le1/3$ for $0\le j<N$, so each of the first $N$
terms of \eqref{T} equals $2h$. By \eqref{eq:tail} the tail is
$3^{-N}T(3^Nh)\in[0,3^{-N}]$, and thus $0\le T(h)-2Nh\le3^{-N}<3h$.
Since $|N-\log_3(1/h)|<1$, \eqref{Thlog} follows.
\end{proof}

Lev proved the sharp bounds (see Figure~\ref{fig3})
\begin{equation}
 2h\log_3(1/h)\le T(h)\le2h\log_3\frac{3}{2h},
 ~0<h\le1,
 \label{levbd}
\end{equation}
which are the inequalities of~\cite[equation~(1.5)]{Lev2015} multiplied by
$2$. Galkin and Galkina~\cite[Theorems~11 and~12 and
Table~1]{GalkinGalkina2020} give the same bounds in the form
$\log_32-1\le S_3(m)/m-\log_3m\le0$, $m\ge1$, for the summatory
function, and this statement is equivalent to \eqref{levbd}.
Indeed, let $f_3(x)=2\log_3x+T(x)/x$ for $0<x\le1$. Since
$2\log_3\frac3{2h}=2\log_3\frac1h+2(1-\log_32)$, dividing \eqref{levbd}
by $h$ shows that \eqref{levbd} is equivalent to
$0\le f_3\le2(1-\log_32)$ on $(0,1]$. Now let $x=m/3^n$ with
$3^{n-1}\le m<3^n$. By \eqref{samp}, $T(x)/x=2n-2S_3(m)/m$, and
$2\log_3x=2\log_3m-2n$, so that $S_3(m)/m-\log_3m=-\tfrac12f_3(x)$; this
is Delange's formula~\cite{Delange1975} in the form
of~\cite[equation~(1.5)]{GalkinGalkina2020}. Thus the statement of Galkin
and Galkina says that $0\le f_3\le2(1-\log_32)$ at these points $x$. They
are dense in $[1/3,1]$, and $f_3$ is continuous. Moreover, for
$0<x\le1/3$ we have $\tau(x)=2x$, so \eqref{fe} gives
$T(x)/x=2+T(3x)/(3x)$ and hence $f_3(x)=f_3(3x)$, as noted
in~\cite[Proposition~10]{GalkinGalkina2020}. Consequently the inequality
at these points is equivalent to the inequality on $(0,1]$, that is, to
\eqref{levbd}. As Lev
notes~\cite[page~2392]{Lev2015}, and as Proposition~\ref{incr}(b) shows, the
lower bound is attained at every $h=3^{-L}$ and the upper bound at every
$h=1/(2\cdot3^r)$, including $h=1/2$. Consequently $2(1-\log_32)$ is the
smallest constant $c$ for
which $T(h)\le2h\log_3(1/h)+ch$ for all $0<h\le1/2$. Together with
Proposition~\ref{incr}(a), the upper bound in \eqref{levbd} also gives
\begin{equation}
 |T(x)-T(y)|\le\frac4{\log_36}\,|x-y|\log_3\frac1{|x-y|},
 ~0<|x-y|\le\frac16,
 \label{loghold}
\end{equation}
because $\log_3(3/(2h))=\log_3(1/h)+\log_3(3/2)\le(2/\log_36)\log_3(1/h)$
for $0<h\le1/6$. The constant $4/\log_36\approx2.453$ is the smallest
one for this range, since $T(1/6)=2/3$ by Proposition~\ref{incr}(b).

In the left panel of Figure~\ref{fig3} we plot $T$ with both bounds. Its
dots mark the points $h=1,1/3,1/9,1/27$, where $T$ meets the lower bound,
and $h=1/2,1/6,1/18$, where it meets the upper bound. Subtracting
$2h\log_3(1/h)$ and dividing by $h$ turns the graph of $T$ into that of
$f_3$ and the bounds into the constants $0$ and $2(1-\log_32)\approx0.738$.
In the right panel $h$ is on a logarithmic scale, and since
$f_3(h/3)=f_3(h)$ the picture there repeats each time $h$ is divided
by $3$.

\begin{figure}[htbp]
\centering
\includegraphics{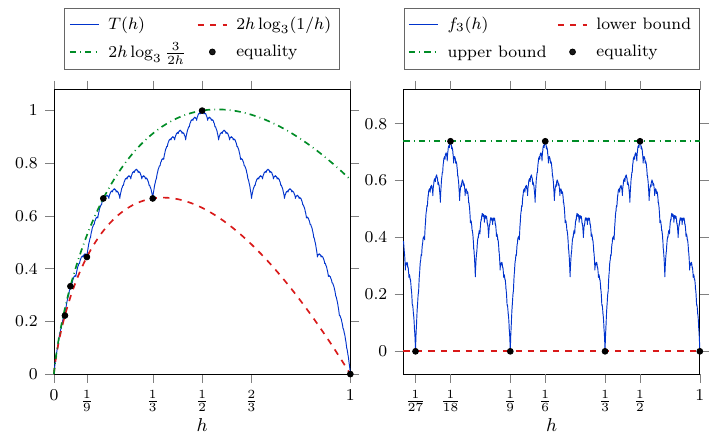}
\caption{Left: $T$ and the two bounds in \eqref{levbd}. Right: the
function $f_3$ and the constants $0$ and $2(1-\log_32)$.}
\label{fig3}
\end{figure}

Part (a) of Proposition~\ref{incr} concerns a fixed step $h$. If we let
the step range over $|h|\le\delta$, we get the usual modulus of
continuity, which equals $\max_{0\le h\le\delta}T(h)$. Since $T$ is
not monotone on $[0,1/2]$, this modulus exceeds $T(\delta)$ for some
$\delta$ (see Figure~\ref{runpic}). For instance, Theorem~\ref{cellmax},
applied to the level $1$
interval $[0,1/3]$, where $m=0$ and $d=1$, shows that the maximum of $T$
on $[0,1/3]$ is $T(1/3)+3^{-2}=7/9$, attained only at $1/3-1/18=5/18$.
Thus for $\delta=1/3$ the modulus of continuity is $T(5/18)=7/9$, while
$T(\delta)=2/3$.

The estimate \eqref{Thlog} and part (a) of
Proposition~\ref{incr} give H\"older continuity of every order
below $1$.

\begin{corollary}
\label{holder}
For every $0<\alpha<1$ there is a constant $K_\alpha$ such that
$|T(x)-T(y)|\le K_\alpha|x-y|^\alpha$ for all real $x,y$. The function
$T$ is not Lipschitz.
\end{corollary}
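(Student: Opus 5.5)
The plan is to combine part (a) of Proposition~\ref{incr} with the asymptotic bound \eqref{Thlog}, or more conveniently with Lev's explicit upper bound in \eqref{levbd}. By Proposition~\ref{incr}(a), $|T(x)-T(y)|\le T(|x-y|)$ for all real $x,y$, since $T$ is even. It therefore suffices to show that $T(h)\le K_\alpha h^\alpha$ for all $h>0$.

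We split into two ranges of $h$.

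\begin{enumerate}
\item[(i)] For $h\ge1$, or more generally for $h$ bounded below, we use $T\le1$ from Theorem~\ref{limitthm}. This gives $T(h)\le1\le h^\alpha$ when $h\ge1$.
\item[(ii)] For $0<h\le1$, the upper bound in \eqref{levbd} gives $T(h)\le 2h\log_3\frac{3}{2h}$. Hence $T(h)/h^\alpha\le 2h^{1-\alpha}\log_3\frac3{2h}$. The right-hand side is a continuous function of $h\in(0,1]$ that tends to $0$ as $h\downarrow0$, because $h^{1-\alpha}\log(1/h)\to0$ for $\alpha<1$. So it is bounded on $(0,1]$, and we let $K_\alpha$ be the maximum of this bound and $1$.
\end{enumerate}

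If one prefers to avoid \eqref{levbd}, the bound \eqref{Thbd} works just as well: it gives $T(h)\le 2Nh+3h\le 2h\log_3(1/h)+3h$, and the same calculus argument applies.

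For the failure of the Lipschitz property we use Proposition~\ref{incr}(b). It gives $T(3^{-L})-T(0)=2L\cdot3^{-L}$, so the difference quotient between $0$ and $3^{-L}$ equals $2L$. This is unbounded as $L\to\infty$, so no Lipschitz constant exists.

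There is no real obstacle here. The only point needing care is to state the elementary limit $h^{1-\alpha}\log(1/h)\to0$ and to handle large $|x-y|$ by boundedness; periodicity is not even needed once Proposition~\ref{incr}(a) is used.
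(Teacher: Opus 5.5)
Your proof is correct and is essentially the paper's argument. Proposition~\ref{incr}(a) reduces the H\"older bound to bounding $T(h)/h^\alpha$; the logarithmic bound handles small $h$ (the paper uses part (c), you use \eqref{levbd} or \eqref{Thbd}), and the boundedness of $T$ handles the remaining $h$. Part (b), via $T(3^{-L})=2L\,3^{-L}$, then rules out a Lipschitz constant.
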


\begin{proof}
Since $h^{1-\alpha}\log(1/h)$ is bounded for small $h>0$, parts (a)
and (c) of Proposition~\ref{incr} give the H\"older bound for small
$|x-y|$. For the remaining pairs the bound holds
because $T$ is bounded. By (b), $T(3^{-L})/3^{-L}=2L$ is unbounded, so
no Lipschitz constant exists.
\end{proof}

For Takagi's function the corresponding H\"older estimate is
classical; see the survey~\cite{AllaartKawamura2012}. Recall from the
proof of Corollary~\ref{holder} that $\Delta(0,h)=T(h)/h$ is unbounded
as $h\downarrow0$. In fact, Delange~\cite[Section~3]{Delange1975} proved
that his
auxiliary function, and hence $T$, has no finite derivative at any
point. His proof works in every base and uses the slopes of his
function between the endpoints of intervals that contain the point.
Kr\"uppel~\cite[Proposition~5.4]{Kruppel2010b} derives from his
functional equation that every Knopp function of the kind described in
Section~\ref{sec:T} with a nonzero generator has no finite derivative
from either side at any point.

We give a different short proof, based on \eqref{cell} and the symmetry
$T(1-t)=T(t)$. By these two facts, the slope of $T$ between a point and
its reflection in the midpoint of a ternary interval that contains it,
when the two points differ, is the slope of this interval, an even
integer. If $T$ had a finite
derivative at the point, these slopes for the intervals of level $n$
containing the point would converge to it as $n$ grows, so they would be
the same for all large $n$. This forces the ternary digits of the point to
be eventually all $1$, so the point is the midpoint of a ternary interval,
and we exclude these midpoints by a direct computation at $1/2$. In this
section ``derivative'' means a finite derivative, with $h$ tending to
$0$ from both sides. We discuss infinite derivatives from one side in
Section~\ref{sec:inf}.

\begin{theorem}
\label{nodiff}
The function $T$ is continuous on $\R$ and has no finite derivative at
any point.
\end{theorem}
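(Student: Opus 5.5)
Continuity was already established after Definition~\ref{delange}, so only nondifferentiability needs proof. We follow the strategy outlined just before the statement. Since $T$ has period $1$ and is even, it suffices to consider $x\in[0,1)$. We argue by contradiction: suppose $T'(x)=c$ is finite.

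\emph{Step 1: a symmetric slope identity.} Let $x$ lie in a ternary interval $I=[m/3^n,(m+1)/3^n]$ of level $n$, and write $x=(m+t)/3^n$. Let $x^*=(m+1-t)/3^n$ be the reflection of $x$ in the midpoint of $I$. Applying \eqref{cell} at $t$ and at $1-t$ and using $T(1-t)=T(t)$, the copy terms $3^{-n}T(t)$ cancel. Hence
\[
 T(x^*)-T(x)=3^{-n}\cdot2(n-s_3(m))(1-2t),
\]
and whenever $t\ne1/2$ we get $\Delta(x,x^*)=2(n-s_3(m))$, the slope of $I$.

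\emph{Step 2: convergence of these slopes.} Take $I=I_n$ to be a level $n$ interval containing $x$; if $x$ is a ternary rational, choose the interval having $x$ as its left endpoint. Both $x$ and $x^*$ lie in $I_n$, so $|x^*-x|\le3^{-n}$. A finite derivative at $x$ gives $\Delta(x,y)\to c$ as $y\to x$ with $y\ne x$. Therefore the even integers $2(n-s_3(m_n))$, taken over those $n$ with $x\neq x^*$, converge to $c$, so they are eventually constant. Consecutive slopes differ by $2(1-\xi_{n+1})$, where $\xi_{n+1}$ is the digit selecting the child interval. Thus if $x\ne x^*$ for all large $n$, then $\xi_{n}=1$ for all large $n$, so $x$ is the midpoint of some level $n$ interval, and then $x=x^*$ at that level after all. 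Either way, $x$ is the midpoint of some $I_n$, that is $t=1/2$ at level $n$. The ternary rational case is covered here too: the slopes of the intervals $[x,x+3^{-n}]$ are $2(n-s_3(m))$ with digits eventually $0$, so they tend to $+\infty$.

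\emph{Step 3: midpoints.} Let $x=(m+1/2)/3^n$. By \eqref{cell}, near $x$ we have
\[
 T(x+3^{-n}u)=\mathrm{const}+3^{-n}\bigl(2d\,u+T(1/2+u)\bigr),
\]
so differentiability at $x$ is equivalent to differentiability of $T$ at $1/2$. Since $T$ is even and periodic, $T(1/2+u)=T(1/2-u)$. A derivative at $1/2$ would therefore have to be $0$. But $1/2$ is a strict maximum with $T(1/2)=1$, and the drop is too large to allow derivative $0$. By \eqref{eq:tail} and Proposition~\ref{incr}(b)-type computations, $1-T(1/2\pm1/(2\cdot3^r))=3^{-r}$, using $T(x_\pm)=1-3^{-r}$ computed before \eqref{peak}. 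The step here is $h=1/(2\cdot3^r)$, so the difference quotient equals $-2$, which is not $0$. This is the contradiction.

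The main obstacle is Step 2. It requires handling the degenerate case $x=x^*$ carefully and making sure that enough levels $n$ remain to force the digits to be eventually $1$. We expect the trichotomy "digits eventually $1$, hence a midpoint, hence Step 3" to settle it.
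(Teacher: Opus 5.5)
Your proof is correct and follows the paper's argument: the reflection identity gives the even-integer slopes $2(n-s_3(m_n))$, these force the digits to be eventually $1$, and \eqref{cell} reduces the resulting midpoints to $1/2$. The only difference is the final step: you use $T(1/2+u)=T(1/2-u)$ to force $T'(1/2)=0$ and then the single sequence $h=1/(2\cdot3^r)$ with quotient $-2$, whereas the paper uses $Q(h)=Q(3h)$ to get the two different limits $-1$ along $3^{-k}$ and $-2$ along $3^{-k}/2$.
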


\begin{proof}
Continuity was noted after \eqref{T}. By periodicity we may assume
that $x_0\in[0,1)$. Suppose that $T$ has a derivative $T'(x_0)$ at $x_0$,
and let $\xi_1,\xi_2,\ldots$ be the ternary digits of $x_0$, with the
terminating expansion if $x_0$ is a ternary rational. For $n\ge0$ let
$m_n=\lfloor3^nx_0\rfloor$ and $t_n=3^nx_0-m_n$, so that
$m_{n+1}=3m_n+\xi_{n+1}$ and $t_n=0.\xi_{n+1}\xi_{n+2}\ldots$, and let
$y_n=(m_n+1-t_n)/3^n$ be the reflection of $x_0$ in the midpoint of the
ternary interval $[m_n/3^n,(m_n+1)/3^n]$. By \eqref{cell} with $t=t_n$
and with $t=1-t_n$, and since $T(1-t_n)=T(t_n)$,
\[
 T(y_n)-T(x_0)=2\bigl(n-s_3(m_n)\bigr)(y_n-x_0).
\]
Suppose that the digits $\xi_n$ are not eventually all $1$. Then
$t_n\ne1/2$, and hence $y_n\ne x_0$, for every $n$. Since
$|y_n-x_0|\le3^{-n}$, the slopes $\Delta(x_0,y_n)=2(n-s_3(m_n))$ converge
to $T'(x_0)$. They are even integers, so they are eventually constant,
that is, $s_3(m_{n+1})=s_3(m_n)+1$ for all large $n$. As
$s_3(m_{n+1})=s_3(m_n)+\xi_{n+1}$, this means that $\xi_{n+1}=1$ for all
large $n$, a contradiction. Hence there is an $N\ge0$ with $\xi_n=1$ for
all $n>N$, so that $t_N=1/2$ and $x_0=(m_N+1/2)/3^N$.

It remains to exclude these midpoints. By \eqref{cell} with $n=N$ and
$m=m_N$, $T(x)$ is a linear function of $x$ plus $3^{-N}T(3^Nx-m_N)$ on
the level $N$ interval $[m_N/3^N,(m_N+1)/3^N]$, which contains $x_0$ in
its interior. Since $3^Nx_0-m_N=1/2$, it suffices to show that $T$ is not
differentiable at $1/2$. For $0<|h|\le1/6$ we have $\tau(1/2+h)=2/3$,
and \eqref{fe} gives
$T(1/2+h)-1=\frac13\bigl(T(1/2+3h)-1\bigr)$. Thus the slope
$Q(h)=(T(1/2+h)-T(1/2))/h$ satisfies $Q(h)=Q(3h)$ for
$0<|h|\le1/6$. From \eqref{branches} with $a=2$ and $y=1/2$ or $y=0$ we
get $T(5/6)=T(2/3)=2/3$, so $Q(1/3)=3\bigl(T(5/6)-1\bigr)=-1$ and
$Q(1/6)=6\bigl(T(2/3)-1\bigr)=-2$. Iterating,
$Q(3^{-k})=-1$ and $Q(3^{-k}/2)=-2$ for every $k\ge1$. These two sequences
of steps tending to $0$ give different limits, so $T'(1/2)$ does not
exist, and neither does $T'(x_0)$.
\end{proof}

\section{Local extrema and the maximum on each ternary interval}
\label{sec:max}

In this section we use equation \eqref{cell} to answer two questions
about a ternary interval: how $T$ behaves near its endpoints, and where
$T$ is largest between them. Together the answers determine every local
extremum of $T$. For Takagi's function, the local extrema were
determined by Kahane~\cite[Section~1]{Kahane1959} and by
Kr\"uppel~\cite[Sections~3 and~4]{Kruppel2007}. Kr\"uppel's cusp
estimate, interval maximum and description of the local minima are the
binary versions of Proposition~\ref{cusp}, Theorem~\ref{cellmax} and the
statement on local minima in Theorem~\ref{thm1}. In base $2$ the maximum
on a dyadic interval on which the partial sum of Takagi's series has
slope zero is attained on a Cantor set, and the maximum on every other
dyadic interval is attained on such a set in a smaller dyadic interval of
this kind~\cite[Proposition~4.3]{Kruppel2007}. Consequently Takagi's
function has
no strict local maximum~\cite[Proposition~4.4]{Kruppel2007}. For the power
Takagi functions $\sum_{n\ge0}\bigl(2^{-n}\dist(2^nx,\Z)\bigr)^p$ with
$0<p<1$, Galkin, Galkina and
Mulyar~\cite[Theorem~4.2]{GalkinGalkinaMulyar2023} showed that the local
minima are exactly the dyadic rationals and are strict. They also showed
that the points $q/(3\cdot2^n)$ with $3\nmid q$ are strict local maxima.
We shall see that $T$ attains its maximum on $[0,1]$ only at $1/2$, and
that all its local maxima are strict. We begin with the behaviour of $T$
near the endpoints of a ternary interval.

\begin{proposition}
\label{cusp}
At every ternary rational $a$,
\begin{equation}
 T(a+h)-T(a)=2|h|\log_3\frac1{|h|}+O(|h|)~(h\to0),
 \label{cusp1}
\end{equation}
with an implied constant depending on $a$. Consequently $a$ is a strict
local minimum of $T$, with $T'_+(a)=+\infty$ and $T'_-(a)=-\infty$.
\end{proposition}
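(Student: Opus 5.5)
By periodicity we may take $a=m/3^n$ with $0\le m<3^n$, and we may assume $n\ge1$ by writing $a=3m/3^{n+1}$ if needed. The plan is to apply Proposition~\ref{prop-cell} on the level $n$ interval to the right of $a$ and on the one to its left. For $h>0$ small, say $0<h\le3^{-n}$, put $t=3^nh\in(0,1]$. Then \eqref{cell} gives
\[
 T(a+h)-T(a)=3^{-n}\bigl(2(n-s_3(m))t+T(t)\bigr)=2(n-s_3(m))h+3^{-n}T(3^nh).
\]
Similarly, for $h<0$ with $|h|\le3^{-n}$ and $t=3^n|h|$, \eqref{cell2} gives
\[
 T(a+h)-T(a)=2(s_3(m-1)-n)|h|+3^{-n}T(3^n|h|).
\]
This requires $m\ge1$. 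For $m=0$ we use $T(-h)=T(h)$ and periodicity directly, or apply \eqref{cell2} with $m=3^n$ on the interval ending at $1\equiv0$. In both cases the linear term is $O(|h|)$ with a constant depending only on $a$, through $n$ and the two digit sums.

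Next we estimate the tail term. Proposition~\ref{incr}(c) gives $T(t)=2t\log_3(1/t)+O(t)$ for $0<t\le1$, uniformly. With $t=3^n|h|$ this becomes
\[
 3^{-n}T(3^n|h|)=2|h|\bigl(\log_3(1/|h|)-n\bigr)+O(|h|)=2|h|\log_3(1/|h|)+O(|h|).
\]
Combining this with the linear term proves \eqref{cusp1}, with an implied constant of the form $2n+2\max\{|n-s_3(m)|,|n-s_3(m-1)|\}+3$.

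The consequences follow quickly. Dividing \eqref{cusp1} by $h$ gives $\Delta(a,a+h)=\pm\bigl(2\log_3(1/|h|)+O(1)\bigr)$, with sign $+$ for $h>0$ and $-$ for $h<0$. So $T'_+(a)=+\infty$ and $T'_-(a)=-\infty$. For strictness, note that $2|h|\log_3(1/|h|)$ dominates the $O(|h|)$ term once $|h|$ is small enough that $2\log_3(1/|h|)$ exceeds the implied constant. Hence $T(a+h)>T(a)$ for all small $h\ne0$, and $a$ is a strict local minimum. An alternative route to strictness uses the lower bound in \eqref{levbd}, which gives $T(t)\ge2t\log_3(1/t)$ exactly, so an explicit threshold for $|h|$ is available.

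The only step needing care is the bookkeeping at the endpoint $m=0$, or equivalently $m=3^n$, for the left side. There is no real analytic obstacle, because the uniform estimate of Proposition~\ref{incr}(c) does all the work.
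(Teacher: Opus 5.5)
Your proposal is correct and follows essentially the paper's proof. You apply \eqref{cell} and \eqref{cell2} on the two level $n$ intervals adjacent to $a$, estimate the tail $3^{-n}T(3^n|h|)$ with Proposition~\ref{incr}(c), and treat $a=0$ by the evenness of $T$ (your alternative via \eqref{cell2} with $m=3^n$, using $s_3(3^n-1)=2n$, also works).
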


\begin{proof}
By periodicity we may take $a=k/3^n\in[0,1)$. If $a=0$, the claim
follows from \eqref{Thlog} and the evenness of $T$.
Otherwise $1\le k<3^n$, and for $0<h\le3^{-n}$ we apply \eqref{cell} with
$t=3^nh$ and get
\[
 T(a+h)-T(a)=2\bigl(n-s_3(k)\bigr)h+3^{-n}T(3^nh),
\]
where $3^{-n}T(3^nh)=2h\log_3(1/h)-2nh+O(h)$ by
\eqref{Thlog}. If $-3^{-n}\le h<0$, then \eqref{cell2} with
$t=3^n|h|$ gives the same expression with
$2(s_3(k-1)-n)|h|$ in place of $2(n-s_3(k))h$. In both cases the
logarithmic term dominates. This proves \eqref{cusp1} and the three
consequences.
\end{proof}

Consider, for example, $a=1/3$. Here the level $1$ interval $[1/3,2/3]$
has slope $0$, while the interval $[0,1/3]$ to its left has slope $2$.
Therefore \eqref{cell} and \eqref{cell2} with $n=m=1$ give
$T(1/3+t/3)=2/3+T(t)/3$ and $T(1/3-t/3)=2/3+(T(t)-2t)/3$ for
$0\le t\le1$. We take $t=3^{1-L}$ and use $T(3^{1-L})=2(L-1)3^{1-L}$,
which is Proposition~\ref{incr}(b) for $L\ge2$ and $T(1)=0$ for $L=1$.
For every $L\ge1$ this gives
\[
 T\Bigl(\frac13+3^{-L}\Bigr)-T\Bigl(\frac13\Bigr)=2(L-1)\,3^{-L},
 ~
 T\Bigl(\frac13-3^{-L}\Bigr)-T\Bigl(\frac13\Bigr)=2(L-2)\,3^{-L}.
\]
With $h=3^{-L}$ these increments are $2h\log_3(1/h)-2h$ and
$2h\log_3(1/h)-4h$. Thus the two sides of the cusp at $1/3$ have the
same leading term $2h\log_3(1/h)$ and differ already in the term of
order~$h$.

The following lemma gives the maximum of the sum of $T$ and a line of even
integer slope. Its first statement was proved by Galkin and
Galkina~\cite[Theorem~2]{GalkinGalkina2020}, and the second follows from
their work on the concave hull of their function $D_3=T$.
By~\cite[Theorem~9]{GalkinGalkina2020}, the concave hull of $T$ on
$[0,1]$ is the polygon whose corners lie above $0$, $1$ and the points
$1/(2\cdot3^a)$ and $1-1/(2\cdot3^a)$, $a\ge0$. Since
$T(1/(2\cdot3^a))=(a+1)3^{-a}$ by Proposition~\ref{incr}(b), the edge of
this polygon between the corners above $1/(2\cdot3^{a+1})$ and
$1/(2\cdot3^a)$ has the slope
\[
 \frac{(a+1)3^{-a}-(a+2)3^{-a-1}}{3^{-a-1}}=2a+1,
\]
and by the symmetry $T(1-t)=T(t)$ the edges over $[1/2,1]$ have the
slopes $-1,-3,-5,\ldots$. Thus at the corner above $1/(2\cdot3^a)$ the
edge on the left has slope $2a+1$ and the edge on the right has slope
$2a-1$, so a line of even slope $2a$ touches the polygon from above at
this corner only, which gives \eqref{tilt1}. A line of odd slope that
touches the polygon from above contains a whole edge of it, so the
maximum is attained at least at both ends of that edge. For instance,
the maximum of $T(x)+x$ on $[0,1]$ is attained on an uncountable
set~\cite[Theorem~4]{GalkinGalkina2020}. We need only even slopes, and
for these we give a short direct proof (see the left panel of
Figure~\ref{fig4}).

\begin{lemma}[{cf.~\cite[Theorems~2 and~9]{GalkinGalkina2020}}]
\label{lem:tilt}
The maximum of $T$ on $[0,1]$ is attained only at $1/2$. For every
integer $a\ge0$,
\begin{align}
 \max_{0\le t\le1}\bigl(T(t)-2at\bigr)&=3^{-a},
 \label{tilt1}\\
 \max_{0\le t\le1}\bigl(T(t)+2at\bigr)&=2a+3^{-a},
 \label{tilt2}
\end{align}
and the maxima are attained only at $t=1/(2\cdot3^a)$ and at
$t=1-1/(2\cdot3^a)$, respectively.
\end{lemma}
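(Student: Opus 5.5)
I will prove \eqref{tilt1} by induction on $a$, using the functional equation in the form \eqref{branches}; \eqref{tilt2} then follows from $T(1-t)=T(t)$. Substituting $t\mapsto1-t$ turns $T(t)+2at$ into $T(t)-2at+2a$, so the maximum in \eqref{tilt2} is $2a+3^{-a}$ and its unique maximizer is the reflection of that of \eqref{tilt1}.

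\emph{Base case $a=0$: the maximum of $T$ is attained only at $1/2$.} We know $T\le1$ and $T(1/2)=1$, so only uniqueness is at issue. Every term of \eqref{T} is at most $\frac23 3^{-j}$, and $T(t)=1$ forces equality in every term. That means $\dist(3^jt,\Z)\ge1/3$ for all $j\ge0$, i.e. every ternary digit of $t$ is $1$: the first digit must be $1$ (with boundary cases handled by the closed condition), and the same reasoning applies to $3^jt$ modulo $1$. Hence $t=1/2$. To avoid expansion ambiguities I will phrase this via \eqref{branches}. If $t=(b+y)/3$ with $b=0$, then $T(t)=(T(y)+2y)/3\le(1+2/3)/3<1$ unless $y=1$, and $y=1$ gives $t=1/3$ with $T(1/3)=2/3$. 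The case $b=2$ is symmetric. So $b=1$ and $T(y)=1$, and iterating shows all digits are $1$.

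\emph{Inductive step.} Let $g_a(t)=T(t)-2at$ with $a\ge1$, and write $t=(b+y)/3$. By \eqref{branches}:
\begin{itemize}
\item for $b=0$: $g_a(t)=\tfrac13\bigl(T(y)+2y-2ay\bigr)=\tfrac13 g_{a-1}(y)$;
\item for $b=1$: $g_a(t)=\tfrac13\bigl(T(y)+2-2a-2ay\bigr)\le\tfrac13\bigl(1+2-2a\bigr)$, which is at most $1/3$ and strictly less than $3^{-a}$ once $a\ge2$;
\item for $b=2$: $g_a(t)\le\tfrac13(1+2)-\tfrac43a<0$.
\end{itemize}
By induction the first branch has maximum $3^{-1}\cdot3^{-(a-1)}=3^{-a}$, attained only at $y=1/(2\cdot3^{a-1})$, that is, at $t=1/(2\cdot3^a)$.

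The case $a=1$ needs more care, since the middle branch bound $1/3$ equals $3^{-1}$. There I sharpen the estimate: $T(y)-2y+2-2\le T(y)$ has maximum $1$ only at $y=1/2$, where the middle branch value is $\tfrac13(1+2-2-1)=0<1/3$. More precisely, the middle-branch value $\tfrac13(T(y)-2y)=\tfrac13 g_1(y)$ is bounded by $\tfrac13\max g_1<1/3$. This works provided I first establish $\max g_1\le 1/3$ using the left branch, for instance by a fixed-point argument on $M=\max g_1$ satisfying $M=\max\{M_0/3,\,M/3,\dots\}$ with $M_0=1$.

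The main obstacle is this boundary bookkeeping: making sure no other branch ties at the top, and that the points $t=1/3$ and $t=2/3$, which lie in two branches, are not counted as extra maximizers. These checks are routine once the three branch bounds are written out.
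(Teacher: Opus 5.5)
Your proof is correct, but it takes a different route from the paper.

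\textbf{How the paper proves the case $a\ge1$.} The paper does not induct on $a$. It splits $T(t)=\sum_{j<a}3^{-j}\tau(3^jt)+3^{-a}T(3^at)$ by \eqref{eq:tail}. It then uses $\tau(y)\le2y$ for all $y\ge0$, with equality exactly for $y\le1/3$, together with $T\le1$. This gives $T(t)\le2at+3^{-a}$ on all of $[0,1]$ in one stroke. Equality forces $t\le3^{-a}$ and $T(3^at)=1$, hence $3^at=1/2$ by the case $a=0$. The middle and right thirds are thus handled automatically, and no special case $a=1$ arises.

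\textbf{How your route compares.} Your induction via \eqref{branches} is the one-step-at-a-time version of the same identity on the left third, namely $g_a(y/3)=\tfrac13g_{a-1}(y)$. The price is that you must exclude the other two thirds by hand at every level. This is the same mechanism the paper uses later for Lemma~\ref{tiltbelow}. Your base case is essentially the paper's nested-interval argument with the intervals $J_r$, written through the middle branch.

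\textbf{Two small corrections.}
\begin{itemize}
\item In the base case the bound should read $(T(y)+2y)/3\le(1+2y)/3$, not $(1+2/3)/3$. Your conclusion, that the value is $<1$ unless $y=1$, is unaffected.
\item For $a=1$ your first observation already suffices. Equality in $T(y)-2y\le T(y)\le1$ would require $y=0$ and $y=1/2$ simultaneously. So the middle third satisfies $g_1<1/3$ pointwise, which gives both the value of the maximum and the uniqueness of the maximizer. The fixed-point argument on $\max g_1$ is valid but unnecessary.
\end{itemize}
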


\begin{proof}
If $T(t)=1$, every term in \eqref{T} attains its maximum $\frac23\,3^{-j}$,
so the fractional part of $3^jt$ lies in $[1/3,2/3]$ for every
$j\ge0$. We show by induction on $r$ that the conditions for $j\le r$
confine $t$ to
$J_r=[1/2-1/(2\cdot3^{r+1}),1/2+1/(2\cdot3^{r+1})]$. For $0\le
t\le1$ the condition for $j=0$ says
$t\in[1/3,2/3]=J_0$. If $t\in J_r$, then $3^{r+1}t$ lies between the
consecutive integers $(3^{r+1}-1)/2$ and $(3^{r+1}+1)/2$, and the
condition for $j=r+1$ puts it in the middle third of that interval,
which means $t\in J_{r+1}$. As the intervals $J_r$ shrink to
$\{1/2\}$, we obtain the first assertion, which is also the case $a=0$
of \eqref{tilt1}.

Let $a\ge1$ and $0\le t\le1$. For $y\ge0$ we have $\tau(y)\le2y$, with
equality exactly for $0\le y\le1/3$. Hence the first $a$ terms of
\eqref{T} are at most $2t$ each, and the tail is at most $3^{-a}$
by \eqref{eq:tail}:
\[
 T(t)=\sum_{j=0}^{a-1}3^{-j}\tau(3^jt)+3^{-a}T(3^at)\le2at+3^{-a}.
\]
Equality requires $3^{a-1}t\le1/3$, that is, $t\le3^{-a}$, and
$T(3^at)=1$, that is, $3^at=1/2$ by the first assertion. Conversely,
$t=1/(2\cdot3^a)$ gives equality in every term. This proves
\eqref{tilt1}. Since $T(1-t)=T(t)$, replacing $t$ by $1-t$ turns
\eqref{tilt1} into \eqref{tilt2}.
\end{proof}

Lemma~\ref{lem:tilt} and \eqref{cell} locate the maximum of $T$ on every
ternary interval. This is the main step in the
classification of the local maxima (see the right panel of
Figure~\ref{fig4}).

\begin{theorem}
\label{cellmax}
Let $n\ge0$, $0\le m<3^n$ and $d=n-s_3(m)$, so that $2d$ is the slope of
the ternary interval $[m/3^n,(m+1)/3^n]$. Then
\begin{equation}
 \max_{m/3^n\le x\le(m+1)/3^n}T(x)
 =\max\bigl\{T(m/3^n),T((m+1)/3^n)\bigr\}+3^{-n-|d|},
 \label{cellmax1}
\end{equation}
and the maximum is attained at exactly one point, namely
\begin{equation}
 \frac m{3^n}+\frac1{2\cdot3^{n+|d|}}~\text{if }d\le0,
 ~
 \frac{m+1}{3^n}-\frac1{2\cdot3^{n+|d|}}~\text{if }d\ge0;
 \label{cellmax2}
\end{equation}
the two expressions agree when $d=0$.
\end{theorem}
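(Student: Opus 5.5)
The plan is to reduce everything to Lemma~\ref{lem:tilt} via \eqref{cell}. Writing $x=(m+t)/3^n$ with $0\le t\le1$, equation \eqref{cell} gives
\[
 T(x)=T\!\left(\frac m{3^n}\right)+3^{-n}\bigl(T(t)+2dt\bigr),
\]
so maximizing $T$ over the ternary interval is the same as maximizing $g(t)=T(t)+2dt$ over $[0,1]$. The map $t\mapsto(m+t)/3^n$ is a bijection, so the maximizer of $T$ is unique exactly when that of $g$ is.

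I split according to the sign of $d$.

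If $d\le0$, put $a=-d\ge0$. By \eqref{tilt1}, $\max g=3^{-a}$, attained only at $t=1/(2\cdot3^a)$. This gives the maximum $T(m/3^n)+3^{-n-|d|}$ at $x=m/3^n+1/(2\cdot3^{n+|d|})$. Since $T((m+1)/3^n)=T(m/3^n)+2d\,3^{-n}\le T(m/3^n)$ (this is \eqref{cell} at $t=1$, using $T(1)=0$), the left endpoint carries the larger of the two endpoint values. That is exactly \eqref{cellmax1} and the first formula in \eqref{cellmax2}.

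If $d\ge0$, put $a=d$. By \eqref{tilt2}, $\max g=2d+3^{-d}$, attained only at $t=1-1/(2\cdot3^d)$. So the maximum is $T(m/3^n)+2d\,3^{-n}+3^{-n-d}=T((m+1)/3^n)+3^{-n-|d|}$. Here the right endpoint has the larger value, and the maximizer is $(m+1)/3^n-1/(2\cdot3^{n+|d|})$.

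When $d=0$, both cases apply. The two formulas both give the midpoint $t=1/2$, so they agree, consistent with the first assertion of Lemma~\ref{lem:tilt}.

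There is no real obstacle once Lemma~\ref{lem:tilt} is available. The only points that need care are the bookkeeping identity $T((m+1)/3^n)-T(m/3^n)=2d\,3^{-n}$, which identifies which endpoint value is larger, and the case $n=0$. For $n=0$ we have $m=0$ and $d=0$, and the statement reduces to the first assertion of Lemma~\ref{lem:tilt}; it is also covered by \eqref{cell} with $n=0$.
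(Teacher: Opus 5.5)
Your proposal is correct and follows the paper's proof essentially step for step. Like the paper, you use \eqref{cell} to reduce to maximizing $T(t)+2dt$ on $[0,1]$. You then apply \eqref{tilt1} with $a=|d|$ when $d\le0$ and \eqref{tilt2} with $a=d$ when $d\ge0$, and use $T((m+1)/3^n)-T(m/3^n)=2d\,3^{-n}$ to identify the larger endpoint value.
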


\begin{proof}
By \eqref{cell}, $T((m+t)/3^n)-T(m/3^n)=3^{-n}\bigl(T(t)+2dt\bigr)$ for
$0\le t\le1$, so the maximum of $T$ on the interval exceeds $T(m/3^n)$
by $3^{-n}\max_{0\le t\le1}\bigl(T(t)+2dt\bigr)$. If $d\le0$, then
\eqref{tilt1} with $a=|d|$ gives $\max\bigl(T(t)+2dt\bigr)=3^{-|d|}$, and
since $T((m+1)/3^n)-T(m/3^n)=2d\,3^{-n}\le0$, the larger endpoint value
is $T(m/3^n)$. If $d\ge0$, then \eqref{tilt2} with $a=d$ gives
$\max\bigl(T(t)+2dt\bigr)=2d+3^{-d}$, and $2d\,3^{-n}$ is the increase
from the left endpoint to the right one. In both cases the maximum
exceeds the larger endpoint value by $3^{-n-|d|}$. From the unique
maximizers in Lemma~\ref{lem:tilt} we obtain \eqref{cellmax2}.
\end{proof}

Lemma~\ref{lem:tilt} and Theorem~\ref{cellmax} are illustrated in
Figure~\ref{fig4}. As \eqref{tilt1} asserts, each function $T(t)-2at$ in
the left panel stays below the dotted line at height $3^{-a}$ and
touches it only at $t=1/(2\cdot3^a)$. In the right panel we plot $T$ on
$[1/9,2/9]$, where $n=2$, $m=1$ and $d=1$, together with the line
through the endpoints, of slope $2d=2$. By Theorem~\ref{cellmax}, $T$ stays
below $T(2/9)+3^{-3}=19/27$ there and reaches this value only at
$11/54$. This point is the midpoint of the shaded interval $[5/27,2/9]$ of
level $n+|d|=3$, which ends at the higher endpoint $2/9$. By
\eqref{cell2}, the graph read from $2/9$ towards $1/9$ is the curve for
$a=1$ in the left panel, scaled by $1/9$ in both directions and raised
by $2/3$.

\begin{figure}[htbp]
\centering
\includegraphics{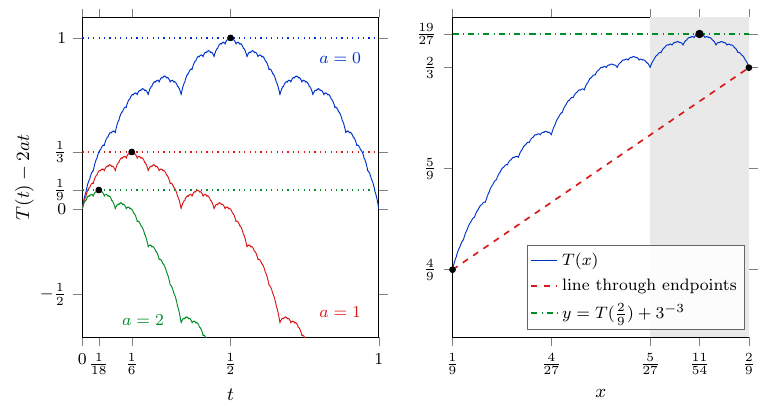}
\caption{Left: the functions $T(t)-2at$ for $a=0,1,2$. Right: $T$ on the
ternary interval $[1/9,2/9]$.}
\label{fig4}
\end{figure}

The sampling equation \eqref{samp} translates
Theorem~\ref{cellmax} into an exact statement about the boundary counts
of the finite cubes.

\begin{definition}
\label{blocks}
Let $N\ge1$, $1\le k\le N$ and $0\le a<3^{N-k}$. The block of $a$ is the
range of sizes $a3^k\le m\le(a+1)3^k$.
\end{definition}

If we write sizes with $N$ ternary digits, allowing leading zeros, then
the block of $a$ consists of the sizes whose leading $N-k$ digits form the
expansion of $a$ with $N-k$ digits, together with the size $(a+1)3^k$. By
\eqref{split} below,
$\xi_N((a+1)3^k)-\xi_N(a3^k)=2d\,3^k$ with $d=(N-k)-s_3(a)$. In other
words, $2d$
is the mean slope of the boundary counts over the block. The left panel
of Figure~\ref{Q4pic} illustrates the next corollary.

\begin{corollary}
\label{blockmax}
Let $N\ge1$, $1\le k\le N$, $0\le a<3^{N-k}$ and $d=(N-k)-s_3(a)$. Then
\begin{equation}
 \max_{a3^k\le m\le(a+1)3^k}\xi_N(m)
 =\max\bigl\{\xi_N(a3^k),\xi_N((a+1)3^k)\bigr\}+\max\{3^{k-|d|}-1,0\}.
 \label{blockmax1}
\end{equation}
If $d>k$, the maximum is attained only at $m=(a+1)3^k$, and if
$d<-k$, only at $m=a3^k$. If $0\le d\le k$, it is attained exactly at the two
sizes $m=(a+1)3^k-\tfrac12(3^{k-d}\pm1)$, and if $-k\le d\le0$,
exactly at the two sizes $m=a3^k+\tfrac12(3^{k-|d|}\pm1)$; for $d=0$
the two descriptions agree.
\end{corollary}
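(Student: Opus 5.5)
We translate the block into a ternary interval and apply Theorem~\ref{cellmax}, keeping track of which sizes in the block are integers. Put $n=N-k$, so that $0\le a<3^n$ and $d=n-s_3(a)$ is the $d$ of Theorem~\ref{cellmax} for the level $n$ interval $[a/3^n,(a+1)/3^n]$. By \eqref{samp}, $\xi_N(m)=3^NT(m/3^N)$, and the sizes $a3^k\le m\le(a+1)3^k$ correspond to the points $x=m/3^N$ of level $N$ in that interval. Writing $m=a3^k+j$ with $0\le j\le3^k$, equation \eqref{cell} gives
\[
 \xi_N(a3^k+j)=\xi_N(a3^k)+3^k\bigl(2d\,u+T(u)\bigr),~u=j/3^k .
\]
This also yields the mean slope identity $\xi_N((a+1)3^k)-\xi_N(a3^k)=2d\,3^k$. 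Hence we must maximize $g(u)=T(u)+2du$ over the grid $u\in\{j/3^k:0\le j\le3^k\}$, not over all of $[0,1]$.

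We treat the case $d\le0$ and write $a'=|d|$. The case $d\ge0$ follows from the reflection $u\mapsto1-u$ via $T(1-u)=T(u)$, exactly as \eqref{tilt2} follows from \eqref{tilt1}. We need the maximum of $T(u)-2a'u$ on the grid of level $k$. In the proof of Lemma~\ref{lem:tilt}, $T(u)-2a'u=\sum_{j<a'}3^{-j}\bigl(\tau(3^ju)-2u\bigr)+3^{-a'}T(3^{a'}u)$, and every term of the first sum is $\le0$.

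If $a'>k$, we split at $j=k$ instead. For grid points, $3^ju$ is an integer when $j\ge k$, so $T(u)=\sum_{j<k}3^{-j}\tau(3^ju)\le2ku$. Hence $T(u)-2a'u\le-2(a'-k)u\le0$, with equality only at $u=0$. This gives a unique maximizer at $m=a3^k$ and an added term $0=\max\{3^{k-a'}-1,0\}$.

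If $a'\le k$, we bound the tail instead. The grid point $v=3^{a'}u$ is a point of level $k-a'$, and by \eqref{peak}, applied through \eqref{samp} with $n$ replaced by $k-a'$, we have $T(v)\le1-3^{-(k-a')}$. Equality holds only at $v\equiv\tfrac12\pm\tfrac1{2\cdot3^{k-a'}}$ modulo $1$, and for $k=a'$ the bound $T(v)\le0$ means that $v$ is an integer. The first $a'$ terms vanish exactly when $u\le3^{-a'}$, which forces $v\in[0,1]$. Therefore
\[
 \max\bigl(T(u)-2a'u\bigr)=3^{-a'}\bigl(1-3^{a'-k}\bigr)=3^{-a'}-3^{-k},
\]
attained exactly at $u=3^{-a'}\bigl(\tfrac12\pm\tfrac1{2\cdot3^{k-a'}}\bigr)$. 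Multiplying by $3^k$ gives the increment $3^{k-a'}-1$ and the sizes $m=a3^k+\tfrac12(3^{k-a'}\pm1)$.

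The main obstacle is the boundary case $d=-k$, where the two formulas $j=\tfrac12(3^0\pm1)\in\{0,1\}$ must both be genuine maximizers. Here the maximum value $0$ is attained at $u=0$ and at $u=3^{-k}$, since $T(3^{-k})=2k3^{-k}$ by Proposition~\ref{incr}(b). This is consistent with the statement. For $d=0$ both descriptions coincide by the symmetry of $\pm$.
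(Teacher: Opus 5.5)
Your proof is correct, but it handles the middle range of $d$ differently from the paper.

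\textbf{Shared reduction.} The overall reduction matches the paper. Equation \eqref{cell} at level $N-k$ (the paper's \eqref{split}), followed by a reflection, turns the problem into maximizing $T(t)-2|d|t$ over the grid $3^{-k}\Z\cap[0,1]$. The case $|d|>k$ follows from $T(t)\le 2kt$ on that grid. The exact value comes from \eqref{peak} at level $k-|d|$.

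\textbf{Where you differ.} The paper handles $d=0$ directly by \eqref{peak} and puts $d=k$ together with $d>k$. For $1\le d<k$ it applies \eqref{cell} at level $d$. On $[0,3^{-d}]$ the function is exactly the scaled copy $3^{-d}T(3^dt)$, so $G(u)=\xi_{k-d}(u)$ there. On every other level-$d$ interval, \eqref{tilt1} with $a=1$ gives the bound $3^{-d-1}$, which is strictly below the maximum $3^{-d}-3^{-k}$.

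You instead discretize the proof of Lemma~\ref{lem:tilt}. The head terms are nonpositive because $\tau(y)\le2y$, and they all vanish exactly when $t\le3^{-|d|}$. The tail $3^{-|d|}T(3^{|d|}t)$ is bounded by \eqref{peak}.

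\textbf{What each buys.} Your argument treats $d=0$, $1\le|d|<k$ and $|d|=k$ in one stroke, and the maximizers come straight from the equality cases. The paper's argument, in exchange, shows the exact self-similarity $G=\xi_{k-d}$ on the first subinterval. It also gets uniqueness from an explicit numerical gap.

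\textbf{One slip.} The head sum should read $\sum_{j<a'}\bigl(3^{-j}\tau(3^ju)-2u\bigr)=\sum_{j<a'}3^{-j}\bigl(\tau(3^ju)-2\cdot3^ju\bigr)$. As you wrote it, $\sum_{j<a'}3^{-j}\bigl(\tau(3^ju)-2u\bigr)$ does not equal $T(u)-2a'u-3^{-a'}T(3^{a'}u)$, and its terms need not be nonpositive. Your subsequent equality analysis, with vanishing exactly for $u\le3^{-a'}$, already uses the correct form.
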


\begin{proof}
Write $m=a3^k+i$ with $0\le i\le3^k$. By \eqref{samp} and
\eqref{cell} at level $N-k$ with $t=i/3^k$, we have
\begin{equation}
 \xi_N(a3^k+i)=\xi_N(a3^k)+2di+\xi_k(i),~0\le i\le3^k,
 \label{split}
\end{equation}
since $3^N\cdot3^{-(N-k)}T(i/3^k)=3^kT(i/3^k)=\xi_k(i)$. By \eqref{xi},
equation \eqref{split} is equivalent to
$S_3(a3^k+i)=S_3(a3^k)+S_3(i)+s_3(a)\,i$.
Taking $i=3^k$ in \eqref{split} gives
$\xi_N((a+1)3^k)-\xi_N(a3^k)=2d\,3^k$, since $\xi_k(3^k)=0$. The
symmetry $\xi_N(m)=\xi_N(3^N-m)$ maps the block of $a$ onto the block of
$a'=3^{N-k}-1-a$, for which $s_3(a')=2(N-k)-s_3(a)$, so $d'=-d$. It
also exchanges the two descriptions of the maximizers. We may therefore
assume $d\ge0$. We put $u=3^k-i$ and use
$\xi_k(3^k-u)=\xi_k(u)$ to obtain
\[
 \xi_N(a3^k+i)-\xi_N((a+1)3^k)=\xi_k(u)-2du=:G(u),~0\le u\le3^k.
\]
It remains to show that $\max G=\max\{3^{k-d}-1,0\}$ and to find the
maximizers.

If $d\ge k$, then the bound $\xi_k(u)=2(ku-S_3(u))\le2ku$ gives
$G(u)\le2(k-d)u\le0$, with equality only if $u=0$, or if $d=k$ and
$S_3(u)=0$, that is, $u\le1$. This proves the claim for $d\ge k$.

If $d=0$, then $G=\xi_k$ and \eqref{peak} with its uniqueness
statement gives the claim. Now let $1\le d<k$ and
$t=u/3^k$, so that $G(u)=3^k\bigl(T(t)-2dt\bigr)$. We write
$t=(j+v)/3^d$ with $0\le j<3^d$ and $0\le v\le1$. By \eqref{cell}
at level $d$ and \eqref{samp},
\begin{align*}
 T(t)-2dt&=3^{-d}\bigl(\xi_d(j)-2dj\bigr)+3^{-d}\bigl(T(v)-2s_3(j)v\bigr)\\
 &=-2\cdot3^{-d}S_3(j)+3^{-d}\bigl(T(v)-2s_3(j)v\bigr).
\end{align*}
If $j=0$, this is $3^{-d}T(3^dt)$. For $j\ge1$ we have $s_3(j)\ge1$,
so $T(t)-2dt\le3^{-d}\bigl(T(v)-2v\bigr)\le3^{-d-1}$, since $T(v)-2v\le1/3$
by \eqref{tilt1}. Hence $G(u)=3^{k-d}T(u/3^{k-d})=\xi_{k-d}(u)$
for $0\le u\le3^{k-d}$, by \eqref{samp}, while
$G(u)\le3^{k-d-1}<3^{k-d}-1$ for $u>3^{k-d}$. By \eqref{peak} and
its uniqueness statement, the maximum of $\xi_{k-d}$ is $3^{k-d}-1$,
attained exactly at $u=\tfrac12(3^{k-d}\pm1)$. In terms of $m$, this is
the claim.
\end{proof}

Take, for example, $N=3$, $k=2$ and $a=0$, so that the block is
$0\le m\le9$ and $d=1$. Corollary~\ref{blockmax} gives the maximum
$\xi_3(9)+2=20$ over this block, attained exactly at $m=7$ and $m=8$, in
agreement with Table~\ref{tab1}. Here $0<d<k$, so the largest value in
the block is attained strictly inside it and exceeds both $\xi_3(0)$ and
$\xi_3(9)$. When $k=0$, the range $a\le m\le a+1$ consists of the two
sizes $a$ and $a+1$, and \eqref{blockmax1} holds trivially there, since
then $\max\{3^{k-|d|}-1,0\}=0$.

By Theorem~\ref{cellmax}, a ternary interval on which the slope
\eqref{slope} is zero has its maximum at its midpoint. Every other
ternary interval has its maximum at the midpoint of a smaller ternary
interval that lies in it and shares with it the endpoint at which $T$ is
larger.
Kr\"uppel~\cite[Proposition~4.3]{Kruppel2007} proved the binary version
of this statement, with the Cantor set of maxima in place of the
midpoint.

\begin{proof}[Proof of Theorem~\ref{thm1}]
By periodicity we work in $[0,1]$. Proposition~\ref{cusp} shows that
ternary rationals are strict local minima and also gives the cusp
estimate. If $x$ is not a ternary rational, it is an interior point of
a ternary interval of every level $n$. On that interval, \eqref{cell}
writes $T$ as the linear function through the two endpoint values plus
the quantity $3^{-n}T(t)$, which is positive because $\tau(t)>0$ for
$0<t<1$. The linear part is at least the smaller endpoint value. Thus
one endpoint has a value strictly below $T(x)$ and lies within $3^{-n}$
of $x$, so $x$ is not a local minimum.

If $s_3(m)=n$, the line in \eqref{cell} on $[m/3^n,(m+1)/3^n]$ has slope
$0$, so there the graph of $T$ is a copy of the graph of $T$ over
$[0,1]$, scaled by $3^{-n}$ in both directions. By Lemma~\ref{lem:tilt}, $T$
attains its maximum on this interval only
at the midpoint. Therefore
each point of the stated form is a strict local maximum. Conversely, let
$x$ be a local maximum. It is not a ternary rational, so for large $n$ it
is an interior point of a level $n$ interval $[m/3^n,(m+1)/3^n]$
contained in a neighbourhood on which $T\le T(x)$.
Then Theorem~\ref{cellmax} identifies $x$ as the point
\eqref{cellmax2}. Let $d=n-s_3(m)$. If $d<0$, we append $|d|$
zero digits to $m$; with $K=3^{|d|}m$ and $N=n+|d|$ we then have
$s_3(K)=s_3(m)=N$ and $x=(K+1/2)/3^N$. If $d>0$, we append $d$ digits
equal to $2$; with $K=3^d(m+1)-1$ and $N=n+d$ we then have
$s_3(K)=s_3(m)+2d=N$ and again $x=(K+1/2)/3^N$. If $d=0$, then $x$
already has the required form.

Both sets are indexed by integers, hence countable. The ternary
rationals are dense, and so are the local maxima, because by
Theorem~\ref{cellmax} every ternary interval contains one in its
interior.
\end{proof}

As $1/2=0.111\ldots$ in base $3$, the local maxima in $(0,1)$ are
the points $0.\xi_1\ldots\xi_n111\ldots$ with $\xi_1+\cdots+\xi_n=n$.
For instance, $1/2$, $5/18=0.02111\ldots$ and $13/18=0.20111\ldots$ are
local maxima. However, $1/6=0.0111\ldots$ is not a local
maximum, since the sum of its first $n$ digits is $n-1$ for every
$n\ge1$. On $[0,1/3]$, and on every smaller ternary interval containing
$1/6$, the slope is $2$. By Theorem~\ref{cellmax} the maximum of $T$ on
$[0,1/3]$ is at $1/3-1/18=5/18$. Since $1/6$ is the midpoint of
$[0,1/3]$, we see that the midpoint of a ternary interval need not be a
local maximum of $T$. For $T$, the digit sum condition comes from the
flat top of the trapezoid $\tau$. In contrast, Ferrera, G\'omez Gil and
Llorente~\cite[Theorem~1.2]{FerreraGomezGilLlorente2022} showed that for
the Takagi--van~der~Waerden function $\sum3^{-j}\dist(3^jx,\Z)$, whose
generator has a corner at $1/2$, every midpoint $(m+1/2)/3^n$ is a local
maximum.

Each local maximum has infinitely many representations $(K+1/2)/3^N$,
because appending the digit $1$ to $K$ raises both $N$ and $s_3(K)$ by
one and leaves the point unchanged. Removing a final digit $1$ from $K$ reverses this
step, so in the shortest representation either $N=0$ or $K$ ends in $0$ or
$2$. Conversely, if $N\ge1$ and $K$ ends in $0$ or $2$, then $2K+1$ is
not divisible by $3$, so $(K+1/2)/3^N=(2K+1)/(2\cdot3^N)$ is in lowest
terms and $N$ cannot be smaller. Since $s_3(K)=N$, the line in
\eqref{cell} on $[K/3^N,(K+1)/3^N]$ is horizontal, and \eqref{cell} with
$t=1/2$ gives
\[
 T\Bigl(\frac{K+1/2}{3^N}\Bigr)=T\Bigl(\frac K{3^N}\Bigr)+3^{-N}
 =\frac{\xi_N(K)+1}{3^N},~N\ge1,
\]
by \eqref{samp}. No new point arises for $N=1$, since the only $K<3$ with
$s_3(K)=1$ is $K=1$, which gives $1/2$ again. For $N=2$ we get $5/18$ and
$13/18$ from $K=2,6$, and for $N=3$ the points $11/54$, $23/54$, $31/54$
and $43/54$ from $K=5,11,15,21$. Together with $1/2$, these are all the
local maxima in $(0,1)$ with $N\le3$ in the shortest form. By
\eqref{count}, Table~\ref{tab1} and the symmetry $\xi_n(m)=\xi_n(3^n-m)$,
the values at the six points with $N=2$ and $N=3$ are $7/9$, $7/9$,
$19/27$, $25/27$, $25/27$ and $19/27$. Figure~\ref{extpic} shows them
together with $1/2$, the twelve local maxima with $N=4$, the local minima
$m/9$ with $0\le m\le9$, and the midpoints $1/6$ and $5/6$ of $[0,1/3]$
and $[2/3,1]$, which are not local maxima. The level $2$ interval
$[2/9,1/3]$, enlarged in the inset, has slope $0$, since $s_3(2)=2$. On
this interval \eqref{cell} gives $T((2+t)/9)=2/3+T(t)/9$ for
$0\le t\le1$, so the graph of $T$ there is the whole graph scaled by $1/9$
in both directions and raised by $2/3$. Thus the curve in the inset has
the same shape as the whole curve, and $t\mapsto(2+t)/9$ maps the local
maxima in $(0,1)$ to local maxima in $(2/9,1/3)$. For instance, it maps
$1/2$, $5/18$ and $13/18$ to $5/18$ and to the local maxima $41/162$ and
$49/162$ with $N=4$. It also maps the local minima $m/9$ to the local
minima $(18+m)/81$, which are marked in the inset.

\begin{figure}[htbp]
\centering
\includegraphics{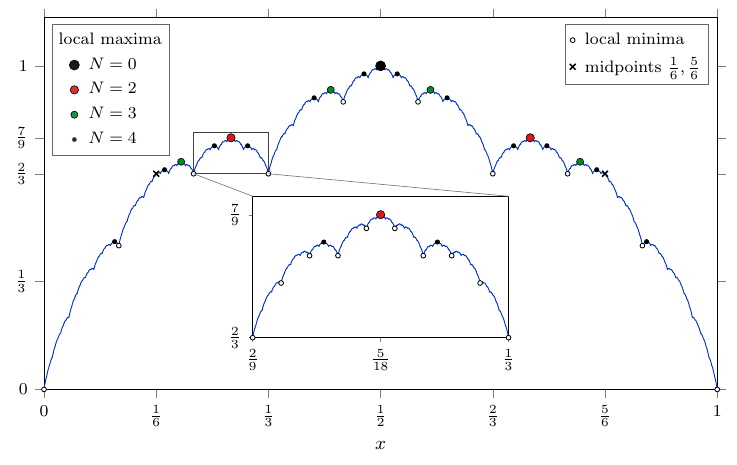}
\caption{The graph of $T$ with the local maxima $(K+1/2)/3^N$, $N\le4$,
and the local minima $m/9$. The inset enlarges the interval $[2/9,1/3]$.}
\label{extpic}
\end{figure}

The following proposition gives a lower bound on the left half of the
period, and the bound is sharp at infinitely many points (see
Figure~\ref{fig5}).
Kr\"uppel~\cite[Lemma~6.1]{Kruppel2007} proved a binary version, for
$0<x<1/3$.

\begin{proposition}
\label{T2x}
For $0\le x\le1/2$ we have $T(x)\ge2x$, with equality exactly at $0$, at
$1/2$ and at the points $(1-3^{-r})/2$ with $r\ge1$.
\end{proposition}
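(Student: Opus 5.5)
Since the bound is $T(x)-2x\ge0$ on $[0,1/2]$, I would prove it by a self-similar descent with the branches \eqref{branches}, writing $g(x)=T(x)-2x$. On $[0,1/3]$ the first branch gives $T(x)=\bigl(T(3x)+6x\bigr)/3$ with $y=3x$, so $g(x)=T(3x)/3\ge0$. Equality there forces $T(3x)=0$, and since $T>0$ off the integers this means $3x\in\{0,1\}$, that is, $x=0$ or $x=1/3$. The point $1/3=(1-3^{-1})/2$ is the case $r=1$. On $[1/3,1/2]$ the second branch with $y=3x-1\in[0,1/2]$ gives $T(x)=\bigl(T(y)+2\bigr)/3$ and $2x=2(y+1)/3$. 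Hence
\[
 g(x)=\tfrac13\bigl(T(y)-2y\bigr)=\tfrac13\,g(y),~y=3x-1\in[0,1/2].
\]

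I would then run the descent. Set $x_0=x$, and while $x_k\in(1/3,1/2]$ put $x_{k+1}=3x_k-1$. Since $x\mapsto 3x-1$ has fixed point $1/2$ and expands distances from $1/2$ by the factor $3$, either $x=1/2$, where the sequence stays put and $g(1/2)=1-1=0$, or after finitely many steps some $x_k$ lies in $[0,1/3]$. In the second case $g(x)=3^{-k}g(x_k)\ge0$. Equality holds iff $x_k\in\{0,1/3\}$. Inverting $x_k=3x_{k-1}-1$ gives $x_{k-1}=(1+x_k)/3$. Starting from $1/3$ this produces $(1-3^{-r})/2$. Starting from $0$ it gives $1/3$ at the first step, which is the same family again; here one should take the stopping convention at $1/3$ consistently, since $1/3$ belongs to both ranges and both branch formulas agree there. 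Together with $0$ and $1/2$, this gives exactly the stated equality set.

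The only delicate points are bookkeeping: the endpoint convention at $1/3$, and checking that $T(y)=0$ only at integers. The latter holds because $\tau(y)>0$ for $y\notin\Z$. I would also check that each point $(1-3^{-r})/2$ really attains equality. It does, since there the chain reaches $1/3$, where $g(1/3)=2/3-2/3=0$. No real obstacle is expected beyond arranging this induction cleanly. One could alternatively do induction on $r$ over the intervals $[(1-3^{-r})/2,(1-3^{-r-1})/2]$.
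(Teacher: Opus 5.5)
Your proof is correct and is essentially the paper's argument: it uses the same function $T(x)-2x$ and the same two branches of \eqref{branches}, and it descends under $u\mapsto 3u-1$ until the orbit enters $[0,1/3]$. Your remark about starting from $0$ is unnecessary: for $k\ge1$ the point $x_{k-1}$ lies in $(1/3,1/2)$, so $x_k=3x_{k-1}-1>0$, and equality can only come from $x_k=1/3$.
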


\begin{proof}
Let $H(x)=T(x)-2x$. On $[0,1/3]$ the first branch of
\eqref{branches} gives $H(x)=T(3x)/3$, which is nonnegative and
is zero exactly at $x=0$ and $x=1/3$. On $[1/3,1/2]$ the second branch
gives $H(x)=H(3x-1)/3$, where $3x-1\in[0,1/2]$ and
$1/2-(3x-1)=3(1/2-x)$. Now let $1/3<x<1/2$. We apply the map
$u\mapsto3u-1$ repeatedly, starting at $x$, and stop at the first point
$x_s$ that lies in $[0,1/3]$. This happens after finitely many steps
$s\ge1$, since each step multiplies the distance from $1/2$ by $3$. The
points before $x_s$ lie in $(1/3,1/2)$, so $x_s$ lies in $(0,1/3]$, and
$H(x)=3^{-s}H(x_s)$. Hence $H(x)\ge0$, with equality exactly when
$x_s=1/3$, that is, when $x$ is obtained from $1/3$ by $s$ applications
of the inverse map $u\mapsto(u+1)/3$. These points are $(1-3^{-r})/2$
with $r=s+1\ge2$. Together with the zeros $0$ and $1/3=(1-3^{-1})/2$ of
$H$ on $[0,1/3]$ and with $H(1/2)=0$, which holds because $T(1/2)=1$,
this proves the proposition.
\end{proof}

Proposition~\ref{T2x} gives a lower bound for the boundary
counts of every cube, together with the exact set of sizes where
equality holds.

\begin{corollary}
\label{xi2m}
Let $n\ge1$. For $0\le m\le3^n/2$,
\begin{equation}
 \xi_n(m)\ge2m,
 \label{xi2m1}
\end{equation}
with equality exactly for $m=0$ and for $m=(3^n-3^{n-r})/2$ with
$1\le r\le n$. Consequently every set $A\subseteq\{0,1,2\}^n$ with
$|A|\le3^n/2$ has at least $2|A|$ boundary edges, that is,
$|\partial A|\ge2|A|$, and equality requires $|A|$ to be one of these
sizes.
\end{corollary}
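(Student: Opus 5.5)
The plan is to translate Proposition~\ref{T2x} through the sampling equation \eqref{samp}. For $0\le m\le3^n/2$ put $x=m/3^n$, so that $0\le x\le1/2$. Then \eqref{samp} gives $\xi_n(m)=3^nT(x)$, and Proposition~\ref{T2x} gives $T(x)\ge2x$. Multiplying by $3^n$ yields $\xi_n(m)\ge2m$, which is \eqref{xi2m1}.

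For the equality cases, Proposition~\ref{T2x} says that equality $T(x)=2x$ on $[0,1/2]$ holds exactly at $x=0$, at $x=1/2$, and at $x=(1-3^{-r})/2$ with $r\ge1$. So I will determine which of these points have the form $m/3^n$ with an integer $m$ in $[0,3^n/2]$.

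The point $x=0$ gives $m=0$. The point $x=1/2$ would need $m=3^n/2$, which is not an integer because $3^n$ is odd, so it never occurs. For $x=(1-3^{-r})/2$ we need $m=(3^n-3^{n-r})/2$ to be an integer.
\begin{itemize}
\item If $r\le n$, then $3^n-3^{n-r}$ is a difference of two odd integers, hence even, so $m$ is an integer. It also lies in $[0,3^n/2]$.
\item If $r>n$, then $m=3^n(1-3^{-r})/2=(3^n-3^{n-r})/2$ and $3^{n-r}$ is not an integer. Hence $2m=3^n-3^{n-r}$ is not an integer, and neither is $m$.
\end{itemize}
This leaves exactly the sizes $m=0$ and $m=(3^n-3^{n-r})/2$ with $1\le r\le n$, as claimed.

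The statement about sets is then immediate from the definition of $\xi_n$. For every set $A$ with $|A|=m\le3^n/2$ we have $|\partial A|\ge\xi_n(m)\ge2m$. If $|\partial A|=2|A|$, both inequalities are equalities, so in particular $\xi_n(m)=2m$, and $m$ must be one of the listed sizes.

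The only step needing any care is the integrality check that excludes $r>n$ and $x=1/2$; everything else is a direct substitution into \eqref{samp} and Proposition~\ref{T2x}.
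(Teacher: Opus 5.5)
Your proposal is correct and follows essentially the same route as the paper: substitute $x=m/3^n$ into the sampling equation \eqref{samp}, apply Proposition~\ref{T2x}, exclude $x=1/2$ because $3^n$ is odd, and keep exactly the points $(1-3^{-r})/2$ with $r\le n$. The statement about sets then follows from $|\partial A|\ge\xi_n(|A|)$, as you say.
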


\begin{proof}
By \eqref{samp}, $\xi_n(m)-2m=3^n\bigl(T(x)-2x\bigr)$ with
$x=m/3^n\le1/2$, and Proposition~\ref{T2x} gives
$T(x)-2x\ge0$ with equality exactly at $x=0$, at $x=1/2$ and at
$x=(1-3^{-r})/2$, $r\ge1$. Since $3^n$ is odd, the point $1/2$ is not
of the form $m/3^n$. Moreover, $(1-3^{-r})/2=m/3^n$ holds for an integer
$m$ exactly when $r\le n$ and $m=(3^n-3^{n-r})/2$. The last statement
follows from $|\partial A|\ge\xi_n(|A|)$.
\end{proof}

By \eqref{xi}, inequality \eqref{xi2m1} says that $S_3(m)\le(n-1)m$ for
$0\le m\le3^n/2$, with equality exactly at $m=0$ and at
$m=(3^n-3^{n-r})/2$, $1\le r\le n$. Inequalities of this kind for the summatory
functions of digit sums are studied in~\cite{Allaart2014}.

In base $3$, the equality points $(1-3^{-r})/2$ of
Proposition~\ref{T2x} are written $0.1\ldots1$ with $r$ ones.
In the same way, the equality sizes $(3^n-3^{n-r})/2$ are written as $r$
ones followed by $n-r$ zeros. For $n=3$ these are the sizes $9$, $12$ and
$13$, with ternary digits $100$, $110$ and $111$. Together with $m=0$,
they are exactly the sizes in Table~\ref{tab1} with $\xi_3(m)=2m$. By
Proposition~\ref{T2x} and the symmetry $T(1-x)=T(x)$, the graph of $T$
lies above the graph of $2\min\{x,1-x\}$ on $[0,1]$ and meets it exactly
at $0$, $1/2$, $1$ and $(1\pm3^{-r})/2$, $r\ge1$ (Figure~\ref{fig5}). The
meeting points $4/9$, $13/27$, $40/81,\ldots$ and $5/9$, $14/27$,
$41/81,\ldots$ accumulate at $1/2$. By the proof of
Proposition~\ref{T2x}, on each interval between consecutive meeting
points, except $[0,1/3]$ and $[2/3,1]$, the gap between the graphs is a
copy of the gap one step farther from $1/2$, reduced by the factor $3$ in
both directions. The inset shows this near $1/2$.

\begin{figure}[htbp]
\centering
\includegraphics{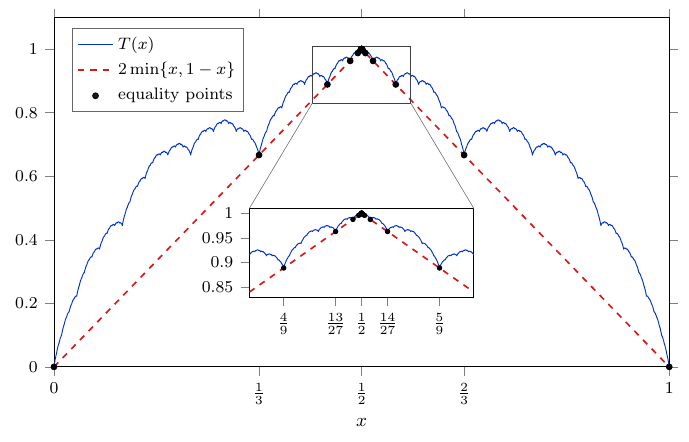}
\caption{The graphs of $T$ and $2\min\{x,1-x\}$, with an enlarged view
near $1/2$.}
\label{fig5}
\end{figure}

The inequality \eqref{xi2m1} is already known: very recently, Abiad
et~al.~\cite[Proposition~5.1]{Abiad2026} determined the isoperimetric
number of every Cartesian product of complete graphs, that is, the
smallest ratio $|\partial A|/|A|$ over the nonempty sets $A$ that
contain at most half of the vertices. For $Q_n^3$ this number is $2$,
which is the inequality $|\partial A|\ge2|A|$ in Corollary~\ref{xi2m}.
Their result also gives the inequality $T(x)\ge2x$ of
Proposition~\ref{T2x}. Indeed, by \eqref{samp} it gives
$T(m/3^n)\ge2m/3^n$ whenever $0\le m\le3^n/2$. These points are dense in
$[0,1/2]$, so the inequality extends to every $0\le x\le1/2$ by the
continuity of $T$.

We can also obtain the equality sizes from a decomposition of $\xi_n$
due to Xu, Zhou and Hsieh. For $p\ge1$ and $0\le t\le3^{p-1}$,
\eqref{xi} gives $\xi_p(t)-2t=2\bigl((p-1)t-S_3(t)\bigr)$. This number
is zero for $t=0$, and also for $t=3^{p-1}$ because
$S_3(3^{p-1})=(p-1)3^{p-1}$. If $p\ge2$, it equals $\xi_{p-1}(t)$, which
is positive for $0<t<3^{p-1}$ because $Q_{p-1}^3$ is connected. With
$p=n$, this shows that the equality sizes $m\le3^{n-1}$ are $0$ and
$3^{n-1}$. Now put
$w_j=(3^n-3^{n-1-j})/2$ for $0\le j\le n-1$. Then $w_0=3^{n-1}$,
$w_{n-1}=(3^n-1)/2$ and $w_{j+1}-w_j=3^{n-j-2}$. For $0\le j\le n-2$ and
$0\le t\le3^{n-j-2}$, Xu, Zhou and Hsieh~\cite[Theorem~3.8]{Xu2022} show
that $\xi_n(w_j+t)=\xi_n(w_j)+\xi_{n-j-1}(t)$. This equation also
follows from \eqref{split} with $N=n$, $k=n-1-j$ and
$a=(3^{j+1}-1)/2$, since then $a3^k=w_j$ and $s_3(a)=j+1=N-k$, so that
$d=0$. Hence, with $p=n-j-1$,
\[
 \xi_n(w_j+t)-2(w_j+t)=\xi_n(w_j)-2w_j+\xi_p(t)-2t,
 ~0\le t\le3^{p-1}=w_{j+1}-w_j.
\]
By the first part of this paragraph, $\xi_p(t)-2t\ge0$, with equality
exactly at $t=0$ and $t=3^{p-1}$. Since $\xi_n(w_0)=2w_0$, induction on
$j$ shows that $\xi_n(w_j)=2w_j$ for $0\le j\le n-1$ and that on each
range $w_j\le m\le w_{j+1}$ equality in \eqref{xi2m1} holds only at its
two ends. Thus for $3^{n-1}\le m\le(3^n-1)/2$ equality holds exactly at
$m=w_0,\ldots,w_{n-1}$. With $r=j+1$, these are the sizes
$(3^n-3^{n-r})/2$ of Corollary~\ref{xi2m}.

\begin{figure}[htbp]
\centering
\includegraphics{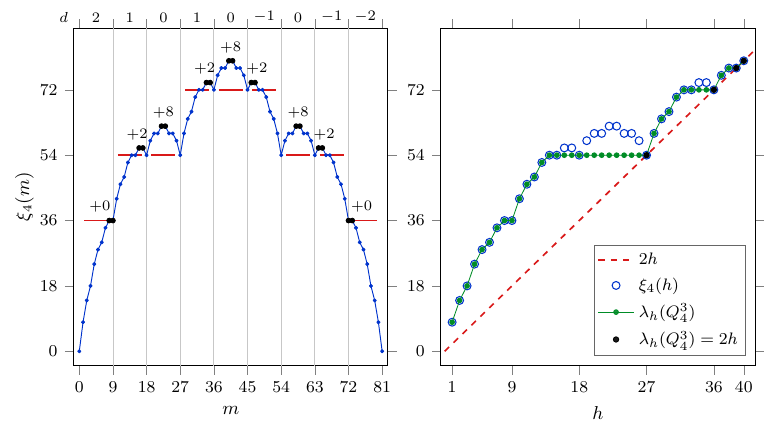}
\caption{Left: the boundary counts $\xi_4(m)$, $0\le m\le81$, and their
maxima over the nine blocks with $k=2$. Right: the $h$-extra edge
connectivity $\lambda_h(Q_4^3)$, together with $\xi_4(h)$ and $2h$, for
$1\le h\le40$.}
\label{Q4pic}
\end{figure}

Extra edge connectivity is used to assess the reliability of
interconnection networks~\cite{Xu2022}.

\begin{definition}
\label{extra}
Let $n\ge1$ and $1\le h\le(3^n-1)/2$. The $h$-extra edge connectivity
$\lambda_h(Q_n^3)$ is the smallest number of edges whose removal
disconnects $Q_n^3$ into components that all have at least $h$ vertices.
\end{definition}

If the removal of a set of edges disconnects $Q_n^3$ into components that
all have at least $h$ vertices, then this set contains the boundary of
each of these components, and since there are at least two components,
each component $A$ satisfies $h\le|A|\le3^n-h$. Therefore
$\lambda_h(Q_n^3)\ge\min\{\xi_n(m):h\le m\le3^n-h\}$. We claim that
equality holds. Indeed, for $1\le m<3^n$ both $I_n(m)$ and its complement
are connected~\cite[Lemma~2.3]{Xu2022}, so for $h\le m\le3^n-h$ the
boundary of $I_n(m)$ is such an edge set. By Corollary~\ref{xi2m} and the
symmetry $\xi_n(m)=\xi_n(3^n-m)$, this minimum is at least $2h$, with
equality exactly when $h=(3^n-3^{n-r})/2$ for some $1\le r\le n$. Hence
$\lambda_h(Q_n^3)\ge2h$, with equality exactly for these $h$. Xu, Zhou and
Hsieh~\cite[Section~III]{Xu2022} give formulas and an
algorithm for this minimum. For $n=3$ and $h\in\{7,8\}$, Table~\ref{tab1}
and the symmetry $\xi_3(m)=\xi_3(27-m)$ give
$\lambda_h(Q_3^3)=\xi_3(9)=18$, in agreement with~\cite[Theorem~3.4]{Xu2022}
and~\cite[page~77]{Ma2021}.
Tables~I and~II of~\cite{Xu2022} give
$\lambda_7(Q_3^3)=\lambda_8(Q_3^3)=20$ instead. This is the value
$\xi_3(7)=\xi_3(8)$, but the subcube $I_3(9)$ has only $18$ boundary
edges (Figure~\ref{Q3pic}).

We illustrate Corollaries~\ref{blockmax} and~\ref{xi2m} for $Q_4^3$ in
Figure~\ref{Q4pic}. In the left panel $N=4$ and $k=2$, and the vertical
lines separate the nine blocks $9a\le m\le9(a+1)$. The corresponding
numbers $d=2-s_3(a)$ are $2,1,0,1,0,-1,0,-1,-2$, written along the top. Since
\eqref{split} with $i=9$ gives $\xi_4(9a+9)-\xi_4(9a)=18d$, the endpoint
values $\xi_4(9a)$ are $0$, $36$, $54$, $54$, $72$, $72$, $54$, $54$,
$36$, $0$. By Corollary~\ref{blockmax}, the maximum over a block exceeds
the larger endpoint value, drawn as a horizontal segment, by $8$ if
$d=0$, by $2$ if $|d|=1$ and by $0$ if $|d|=2$. Black dots mark the sizes
where the maximum is attained, and the excess is written next to them.
For instance, the block of $a=1$ has its maximum $\xi_4(18)+2=56$ exactly
at $m=16$ and $m=17$. The block of $a=4$ has its maximum $72+8=80$ at
$m=40$ and $m=41$, and this maximum is the largest boundary count $3^4-1$
of \eqref{peak}. In the right panel we compare $\lambda_h(Q_4^3)$ with
$\xi_4(h)$ and with $2h$ for $1\le h\le40$. We have $\lambda_h(Q_4^3)=2h$
exactly for $h=27,36,39,40$, the sizes $(81-3^{4-r})/2$ with
$1\le r\le4$, and $\lambda_h(Q_4^3)<\xi_4(h)$ exactly for $h=16,17$, for
$19\le h\le26$ and for $h=34,35$, where an open circle lies above a green
dot. For example, $\lambda_{16}(Q_4^3)=\xi_4(18)=54$, while
$\xi_4(16)=56$.

We can also describe the sizes $h$ with $\lambda_h(Q_n^3)=\xi_n(h)$ by
their ternary digits. By the sampling equation and \eqref{cell}, the
question is whether
$T(h/3^n)$ is the minimum of $T$ on $[h/3^n,1/2]$, and we treat this
minimum for an arbitrary left endpoint in Theorem~\ref{rightmin}
below (see Figure~\ref{runpic}). First we record the following
extension of
Proposition~\ref{T2x} to lines of larger slope, which we use in
Section~\ref{sec:level}.

\begin{lemma}
\label{tiltbelow}
Let $k\ge1$ be an integer and $0\le u\le1$. Then $T(u)\le2ku$ if and only
if $u=0$, or $u\ge1/(2\cdot3^{k-1})$, or $u=(1-3^{-r})/(2\cdot3^{k-1})$
for some integer $r\ge1$.
\end{lemma}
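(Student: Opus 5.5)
We argue by induction on $k$, using the first branch of \eqref{branches}. The case $k=1$ is Proposition~\ref{T2x} on $[0,1/2]$, together with the fact that $T(u)\le 2u$ for $u\in[1/2,1]$. This last fact holds because $T\le1\le 2u$, with equality only at $u=1/2$. So for $k=1$ the set $\{T(u)\le2u\}$ is exactly $\{0\}\cup[1/2,1]\cup\{(1-3^{-r})/2:r\ge1\}$, which matches the statement since $1/(2\cdot3^0)=1/2$.

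For the inductive step, let $k\ge2$ and split $[0,1]$ at $1/3$. For $u\in[1/3,1]$ we have $u\ge 1/3\ge 1/(2\cdot3^{k-1})$. Also $T(u)\le1\le 2k/3\le 2ku$ since $k\ge2$. So every $u\ge1/3$ belongs to the set, which is consistent with the claim.

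For $u\in[0,1/3]$, write $u=y/3$ with $y\in[0,1]$. By \eqref{branches} with $a=0$, we have $T(u)=(T(y)+2y)/3$. Hence $T(u)\le 2ku=2ky/3$ if and only if $T(y)\le 2(k-1)y$. By the induction hypothesis with $k-1\ge1$, this holds if and only if one of the following is true:
\begin{itemize}
\item $y=0$;
\item $y\ge 1/(2\cdot3^{k-2})$;
\item $y=(1-3^{-r})/(2\cdot3^{k-2})$ for some $r\ge1$.
\end{itemize}
Dividing by $3$ turns these into $u=0$, $u\ge1/(2\cdot3^{k-1})$ (with $u\le1/3$), and $u=(1-3^{-r})/(2\cdot3^{k-1})$. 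These are exactly the three alternatives of the lemma restricted to $[0,1/3]$. The listed points indeed lie in $[0,1/3]$, since $1/(2\cdot3^{k-1})\le1/6$.

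Combining the two ranges gives the statement for $k$. The only point needing care is the base case on $[1/2,1]$, handled by $T\le1$ as above. There is no real obstacle: the whole content sits in Proposition~\ref{T2x}, and the self-similarity $T(y/3)=(T(y)+2y)/3$ shifts the slope by $2$ at each step.
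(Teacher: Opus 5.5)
Your proposal is correct and matches the paper's proof essentially step for step. Both argue by induction on $k$: the base case combines Proposition~\ref{T2x} with $T\le1\le2u$ on $[1/2,1]$, and the inductive step splits $[0,1]$ at $1/3$ and uses the first branch of \eqref{branches} to turn $T(u)\le2ku$ into $T(y)\le2(k-1)y$.
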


\begin{proof}
We use induction on $k$. For $k=1$ the claim on $[0,1/2]$ is
Proposition~\ref{T2x}, and on $[1/2,1]$ we have $T(u)\le1\le2u$. Let
$k\ge1$ and suppose that the claim holds for $k$. If $1/3\le u\le1$, then
$2(k+1)u\ge4/3>T(u)$, and $u\ge1/(2\cdot3^k)$. If $0\le u\le1/3$, write
$u=v/3$ with $0\le v\le1$. By the first branch of \eqref{branches},
$T(u)=(T(v)+2v)/3$, so $T(u)\le2(k+1)u$ if and only if $T(v)\le2kv$. By
the induction hypothesis this holds if and only if $v=0$, or
$v\ge1/(2\cdot3^{k-1})$, or $v=(1-3^{-r})/(2\cdot3^{k-1})$ for some
$r\ge1$, that is, if and only if $u=0$, or $u\ge1/(2\cdot3^k)$, or
$u=(1-3^{-r})/(2\cdot3^k)$ for some $r\ge1$. This is the claim for $k+1$.
\end{proof}

In the next theorem we need the digit slopes of Section~\ref{intro} also
at the ternary rationals.

\begin{definition}
\label{digslope}
Let $x\in[0,1)$ have the ternary digits $\xi_1,\xi_2,\ldots$, where we
use the terminating expansion when $x$ is a ternary rational. The digit
slopes of $x$ are the numbers
\[
 d_n(x)=2\sum_{j=1}^n(1-\xi_j),~n\ge0,
\]
and we put $\kappa_n(x)=d_n(x)/2$, the number of zeros minus the number of
twos among $\xi_1,\ldots,\xi_n$.
\end{definition}

Since $\xi_1,\ldots,\xi_n$ are the digits of $k=\lfloor3^nx\rfloor$, we
have $d_n(x)=2\kappa_n(x)=2(n-s_3(k))$. Thus $d_n(x)$ is the slope of the
ternary interval $[k/3^n,(k+1)/3^n]$ of level $n$, the one with
$k/3^n\le x<(k+1)/3^n$. Note that $d_j(x)\ge2$, that is,
$\kappa_j(x)\ge1$, means that there are more zeros than twos among
$\xi_1,\ldots,\xi_j$.

\begin{figure}[htbp]
\centering
\includegraphics{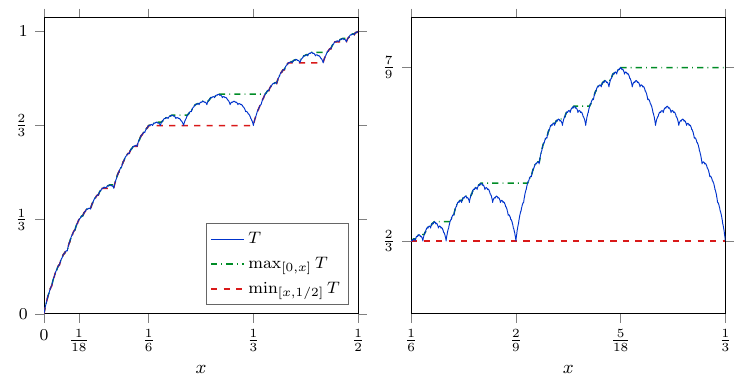}
\caption{Left: $T$ on $[0,1/2]$ together with $\max_{[0,x]}T$ and
$\min_{[x,1/2]}T$ as functions of $x$. The first agrees with $T$ exactly
on the set $X$ of Proposition~\ref{modset}, the second exactly at the
points of Theorem~\ref{rightmin}. Right: the interval $[1/6,1/3]$
enlarged.}
\label{runpic}
\end{figure}

\begin{theorem}
\label{rightmin}
Let $0\le x\le1/2$ have the ternary digits $\xi_1,\xi_2,\ldots$. Then
\begin{equation}
 T(x)=\min_{x\le y\le1/2}T(y)
 \label{rightmin1}
\end{equation}
if and only if $d_j(x)\ge2$ for every $j\ge1$ such that $\xi_i=0$ for
some $i\le j$ and $\xi_i\ne0$ for some $i>j$.
\end{theorem}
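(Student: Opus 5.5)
The plan is to prove necessity by a direct comparison with a right endpoint, and sufficiency by an induction that follows the digits of $x$, using \eqref{cell} together with the self-similarity \eqref{branches}.

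\emph{Necessity.} Suppose some $j$ violates the condition. Then $\kappa_j(x)\le0$, some digit $\xi_i=0$ with $i\le j$, and some digit $\xi_i\ne0$ with $i>j$. Let $k=\lfloor3^jx\rfloor$ and $t=3^jx-k$. The later nonzero digit gives $0<t<1$. Applying \eqref{cell} at $t$ and at $t=1$ gives
\[
T\Bigl(\frac{k+1}{3^j}\Bigr)-T(x)=3^{-j}\bigl(2\kappa_j(x)(1-t)-T(t)\bigr)<0,
\]
because $T(t)>0$ for $0<t<1$. It remains to check that $(k+1)/3^j\le1/2$. This is where the earlier zero is used. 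Since $x\le1/2=0.111\ldots$, the first digit of $x$ that differs from $1$ is a $0$ occurring at some position $\le j$. So the prefix $\xi_1\ldots\xi_j$ is lexicographically below $1\ldots1$, and incrementing it gives at most $0.1\ldots1<1/2$. Hence $T$ takes a smaller value to the right of $x$ within $[x,1/2]$, and \eqref{rightmin1} fails. The degenerate cases, a ternary rational $x$ and $x=1/2$, I would treat separately by the same formula.

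\emph{Sufficiency.} I would first strip leading ones. If $\xi_1=1$, then on $[1/3,1/2]$ the second branch of \eqref{branches} gives $T(y)=(T(3y-1)+2)/3$. The map $y\mapsto3y-1$ carries $[x,1/2]$ onto $[3x-1,1/2]$ and shifts the digits without changing any $d_j$, since a digit $1$ contributes $0$. So we may assume $\xi_1=0$. Then $[x,1/2]=[x,1/3]\cup[1/3,1/2]$. On $[1/3,1/2]$ we have $T\ge2/3=T(1/3)$. On $[0,1/3]$ the first branch turns the problem into minimizing $T(v)+2v$ over $[3x,1]$. This forces a generalized statement, which I would prove by induction: for an integer $c\ge1$, $T(v)+2cv$ is minimal over $[v,1]$ at $v$ exactly when $c+\kappa_j(v)\ge1$ for every $j$ that precedes a later nonzero digit. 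The reduction steps for this generalized statement are as follows.
\begin{itemize}
\item Reading a digit $0$ raises $c$ by one.
\item Reading a digit $1$ leaves $c$ unchanged, but the part of $[v,1]$ to the right of the current third must be compared using the endpoint values.
\item Reading a digit $2$ lowers $c$ by one.
\end{itemize}
The comparison with the thirds to the right uses that each right third starts at a ternary rational whose value is, by \eqref{cell}, the previous endpoint plus $2c'$ times a length with $c'\ge1$. For the region beyond the last nonzero digit I would use Proposition~\ref{T2x}, or more generally Lemma~\ref{tiltbelow}.

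\emph{Main obstacle.} I expect the hardest part to be the bookkeeping of the tilted induction. One has to show that the minimum of $T(w)+2cw$ over $w\ge v$ is never attained strictly inside a later cell. I would attack this by applying \eqref{cell} on each cell to the right: inside such a cell the value is the left-endpoint value plus $3^{-n}(2c't+T(t))$, which is at least the left-endpoint value whenever $c'\ge0$. This reduces everything to the chain of endpoint values, whose increments are controlled by the hypothesis $\kappa_j\ge1$.
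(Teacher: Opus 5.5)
Your proposal is correct and is essentially the paper's argument. Necessity is the same comparison of $T(x)$ with the right endpoint of the level-$j$ cell via \eqref{cell}, where the first zero digit keeps that endpoint below $1/2$. Your sufficiency argument, once the tilted recursion through \eqref{branches} is unwound, is also the paper's argument: $T(v)+2cv=3^cT(v/3^c)$ is just \eqref{cell} on $[0,3^{-c}]$, and stripping the leading ones plays the role of the paper's slope-zero interval containing $1/2$. What remains is exactly the paper's chain of right endpoints $b_i=3^{-i}\lceil3^ix\rceil$, with nonnegative slopes on the cells between $b_{i+1}$ and $b_i$ because $\kappa_i\ge1$.

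Two small points. You should state the closing step, that $T(b_i)\to T(x)$ by continuity, explicitly, because a digit-by-digit induction does not terminate when $x$ has infinitely many nonzero digits. Also, Proposition~\ref{T2x} and Lemma~\ref{tiltbelow} are not needed: after the last nonzero digit the tilt is still nonnegative, and $T\ge0$ already gives the claim.
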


\begin{proof}
For $x=0$ and for $x=1/2$ both statements hold. Let $0<x<1/2$. Since
$1/2=0.111\ldots$, the first digit of $x$ that is not $1$ is a $0$; let it
be $\xi_p$. For
$j\ge0$ put $k_j=\lfloor3^jx\rfloor$, $a_j=k_j/3^j$, $c_j=a_j+3^{-j}$ and
$t_j=3^jx-k_j$, so that $0\le t_j<1$, $[a_j,c_j]$ is the ternary interval
of level $j$ with $a_j\le x<c_j$, its slope is
$d_j(x)=2\kappa_j(x)=2(j-s_3(k_j))$, and $t_j=0.\xi_{j+1}\xi_{j+2}\ldots$.
We write $\kappa_j$ for $\kappa_j(x)$. By \eqref{cell},
\begin{equation}
 T(a_j+3^{-j}u)=T(a_j)+3^{-j}\bigl(2\kappa_ju+T(u)\bigr),~0\le u\le1.
 \label{cellagain}
\end{equation}
The condition of the theorem says that $\kappa_j\ge1$ for every $j\ge p$
with $t_j>0$.

We first reduce \eqref{rightmin1} to the interval $[x,c_p]$. The integer
$k_p$ has the digits $1,\ldots,1,0$, so $\kappa_p=1$, and
$c_p=(1-3^{-p})/2$. The level $p$ interval $[c_p,c_p+3^{-p}]$ has the left
endpoint $(k_p+1)/3^p$ with $s_3(k_p+1)=p$, hence slope $0$, and it
contains $1/2$. By \eqref{cell}, $T\ge T(c_p)$ on this interval, and therefore
\eqref{rightmin1} holds if and only if $T(x)\le T(y)$ for all
$y\in[x,c_p]$.

Suppose that $\kappa_j\le0$ for some $j\ge p$ with $t_j>0$. Then
$x<c_j\le c_p$, and \eqref{cellagain} with $u=1$ and with $u=t_j$ gives
$T(c_j)-T(x)=3^{-j}\bigl(2\kappa_j(1-t_j)-T(t_j)\bigr)<0$, because
$T(t_j)>0$ for $0<t_j<1$. Thus \eqref{rightmin1} fails, which proves
necessity.

For sufficiency, assume that $\kappa_i\ge1$ for every $i\ge p$ with
$t_i>0$. If $t_p=0$, then $x=a_p$ and $\kappa_p=1$, so \eqref{cellagain}
gives $T(y)\ge T(x)$ for $x\le y\le c_p$. Let $t_p>0$, and put
$b_i=3^{-i}\lceil3^ix\rceil$ for $i\ge p$. Then $b_p=c_p$, the numbers $b_i$
are nonincreasing and tend to $x$, and $b_i>x$ exactly when $t_i>0$, in
which case $b_i=c_i$.

Let $i\ge p$ with $b_i>x$. Then $\kappa_i\ge1$, and the thirds of
$[a_i,c_i]$ are ternary intervals of level $i+1$ with the slopes
$2(\kappa_i+1)$, $2\kappa_i$ and $2(\kappa_i-1)$, since
$s_3(3k_i+e)=s_3(k_i)+e$ for $e=0,1,2$. These slopes are nonnegative, so
$F_{i+1}$ is nondecreasing on $[a_i,c_i]$ by Theorem~\ref{Fn}. By
\eqref{eq:tail}, $T\ge F_{i+1}$, with equality at the multiples of
$3^{-i-1}$, and $b_{i+1}$ is one of them. Hence
\begin{equation}
 T(z)\ge F_{i+1}(z)\ge F_{i+1}(b_{i+1})=T(b_{i+1}),
 ~b_{i+1}\le z\le b_i,
 \label{thirdsup}
\end{equation}
and in particular $T(b_i)\ge T(b_{i+1})$. If $b_i=x$, then also
$b_{i+1}=x$. Thus $T(b_p)\ge T(b_{p+1})\ge\cdots$, and since $b_i$ tends to
$x$ and $T$ is continuous, $T(b_i)\ge T(x)$ for every $i\ge p$. Finally,
every $y$ with $x<y\le c_p$ lies in $[b_{i+1},b_i]$ for some $i\ge p$ with
$b_i>x$, and then $T(y)\ge T(b_{i+1})\ge T(x)$ by \eqref{thirdsup}.
\end{proof}

By the symmetry $T(1-x)=T(x)$, Theorem~\ref{rightmin} also describes the
points $x\in[1/2,1]$ with $T(x)=\min_{1/2\le y\le x}T(y)$. Through the
sampling equation we obtain the sizes $h$ with
$\lambda_h(Q_n^3)=\xi_n(h)$. With $h_j$ as in the next corollary, recall from
Section~\ref{sec:graph} that $\xi_j(h_j+1)-\xi_j(h_j)=2(j-s_3(h_j))$, so the
inequality $s_3(h_j)\le j-1$ in the corollary says that adding the vertex
$h_j$ to
$I_j(h_j)$ increases the boundary.

\begin{corollary}
\label{lamdig}
Let $n\ge1$ and $1\le h\le(3^n-1)/2$, write $h=\sum_{i<n}x_i3^i$ with
digits $x_i\in\{0,1,2\}$, and for $1\le j\le n$ let
$h_j=\lfloor h/3^{n-j}\rfloor$ be the integer with the digits
$x_{n-1},\ldots,x_{n-j}$. Then $\lambda_h(Q_n^3)=\xi_n(h)$ if and only if
$s_3(h_j)\le j-1$ for every $j<n$ such that one of the digits
$x_{n-1},\ldots,x_{n-j}$ is $0$ and one of the digits
$x_{n-j-1},\ldots,x_0$ is not $0$. Moreover,
$\lambda_h(Q_n^3)=\xi_n(h^*)$, where $h^*$ is the smallest integer $m$
with $h\le m\le(3^n-1)/2$ that satisfies this condition in place of $h$.
The smallest $h$
that does not satisfy the condition is $3^{(n+1)/2}-2$ if $n\ge3$ is odd
and $2\cdot3^{n/2}-2$ if $n\ge4$ is even; for $n\le2$ every $h$ satisfies
it.
\end{corollary}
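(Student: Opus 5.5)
We plan to prove the three assertions in order, using the reduction described before Definition~\ref{extra} together with Theorem~\ref{rightmin} applied to $x=h/3^n$.

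\emph{First assertion.} We know that $\lambda_h(Q_n^3)=\min\{\xi_n(m):h\le m\le3^n-h\}$. The symmetry $\xi_n(m)=\xi_n(3^n-m)$ maps the part of this range above $(3^n-1)/2$ into the part below it. Hence
\[
 \lambda_h(Q_n^3)=\min\{\xi_n(m):h\le m\le M\},~M=(3^n-1)/2.
\]
So $\lambda_h(Q_n^3)=\xi_n(h)$ exactly when $T(h/3^n)\le T(m/3^n)$ for all $h\le m\le M$. We will show that this discrete condition is equivalent to \eqref{rightmin1} at $x=h/3^n$.

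One direction is trivial. For the other, take a level $n$ interval inside $[h/3^n,M/3^n]$. By \eqref{cell} and $T\ge0$, the function $T$ on it is at least the smaller of its endpoint values. The remaining piece $[M/3^n,1/2]$ lies in the interval $[M/3^n,(M+1)/3^n]$. Since $M=(1\ldots1)_3$ has $s_3(M)=n$, this interval has slope $0$, so $T\ge T(M/3^n)$ there by \eqref{cell}.

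It remains to translate the condition of Theorem~\ref{rightmin}. The ternary digits of $x=h/3^n$ are $\xi_j=x_{n-j}$ for $j\le n$, and $\xi_j=0$ for $j>n$. Thus $d_j(x)=2(j-s_3(h_j))$ for $j\le n$. For $j\ge n$ no later digit is nonzero, so these $j$ impose no condition. Therefore $d_j\ge2$ becomes $s_3(h_j)\le j-1$, with exactly the stated side conditions on zeros.

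\emph{Second assertion.} Note first that $M$ itself satisfies the condition, since its digits are all $1$ and contain no zero; hence $h^*$ exists. Let $m_0$ be the smallest minimizer of $\xi_n$ on $[h,M]$. Then $\xi_n(m_0)\le\xi_n(m')$ for all $m_0\le m'\le M$. By the first part, $m_0$ satisfies the condition, so $m_0\ge h^*$. On the other hand, $h^*$ satisfies the condition, so $\xi_n(h^*)=\min_{[h^*,M]}\xi_n\le\xi_n(m_0)$. Since $h\le h^*$, this gives $\lambda_h(Q_n^3)=\xi_n(h^*)$.

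\emph{Third assertion.} This is a digit optimization and is the main obstacle. Suppose $h$ fails at some $j<n$. Let $z$ be the number of leading zeros of the $n$-digit word of $h$. The prefix $h_j$ contains a zero and has digit sum at least $j$. Its nonzero digits lie among the $j-z$ positions after the leading zeros, and one of those positions is also a zero if $z=0$. In every case $2(j-z)\ge j$, that is, $z\le j/2\le(n-1)/2$. Moreover some digit after position $j$ is nonzero, so $h\ge h_j3^{n-j}+1$.

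We then intend to minimize over $z$ and $j$. The number $h$ has at most $n-z$ significant digits. Taking $z=\lfloor(n-1)/2\rfloor$ as large as allowed forces $j\ge2z$. The prefix must then be $z$ zeros followed by as small a word as possible with digit sum at least $j$, which in effect is all twos. For odd $n=2k-1$ this gives $k-1$ zeros, then $k-1$ twos, then a last digit $1$, i.e.\ $h=3^k-2$. For even $n=2k$, the parity leaves room for one extra digit, forcing a prefix such as $1\,2\ldots2\,0$ and yielding $2\cdot3^k-2$.

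We will check these candidates directly. The lower bound follows by comparing, for each admissible $z$, the smallest possible significant part: fewer leading zeros already force $h\ge3^{n-z-1}$, which exceeds the candidate. Finally, for $n\le2$ no $j<n$ admits a prefix containing a zero with digit sum at least $j$ and a nonzero digit afterwards, so every $h$ satisfies the condition.
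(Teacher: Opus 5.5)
Your proposal is correct and follows the same route as the paper. This covers the reduction of $\lambda_h(Q_n^3)=\xi_n(h)$ to $T(h/3^n)=\min_{[h/3^n,1/2]}T$ and then to Theorem~\ref{rightmin}, the minimizer argument for $h^*$, and the leading-zero count $z\le j/2$ together with $h\ge3^{n-z-1}$ for non-maximal $z$.

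Only the even case $n=2k$ needs tightening. The significant word of $2\cdot3^k-2$ is $1\,2\ldots2\,1$, which fails at $j=2k-1$; it is not $1\,2\ldots2\,0$. With $z=k-1$ you must also rule out the failing index $j=2k-2$: its smallest size is $(0\ldots0\,2\ldots2\,01)_3=3^{k+1}-8$, which exceeds $2\cdot3^k-2$ for $k\ge2$, as the paper checks.
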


\begin{proof}
Put $x=h/3^n$, so that $0<x\le(1-3^{-n})/2<1/2$. The ternary digits of
$x$ are $x_{n-1},\ldots,x_0$ followed by zeros, $\lfloor3^jx\rfloor=h_j$
and $d_j(x)=2(j-s_3(h_j))$ for $j\le n$, and no $j\ge n$ is followed by a
nonzero digit. Consequently the condition of the corollary is the condition of
Theorem~\ref{rightmin} for $x$.

By the discussion after Definition~\ref{extra} and the symmetry
$\xi_n(m)=\xi_n(3^n-m)$,
\[
 \lambda_h(Q_n^3)=\min\{\xi_n(m):h\le m\le3^n-h\}
 =\min\{\xi_n(m):h\le m\le(3^n-1)/2\}.
\]
We claim that this minimum equals $3^n\min_{x\le y\le1/2}T(y)$. The
points $m/3^n$ with $h\le m\le(3^n-1)/2$ lie in $[x,1/2]$, so by
\eqref{samp} the minimum is at least $3^n\min_{[x,1/2]}T$. Conversely,
let $x\le y\le1/2$ and $m=\lfloor3^ny\rfloor$. Then
$h\le m\le(3^n-1)/2$, and by \eqref{cell} and $T\ge0$ we have
$T(y)\ge\min\{T(m/3^n),T((m+1)/3^n)\}$. If $m+1\le(3^n-1)/2$, both
values occur among the $\xi_n(m)/3^n$ considered; otherwise
$m=(3^n-1)/2$ and $T((m+1)/3^n)=T(m/3^n)$ by symmetry. This proves the
claim. It follows that $\lambda_h(Q_n^3)=\xi_n(h)$ if and only if
$T(x)=\min_{[x,1/2]}T$, and Theorem~\ref{rightmin} gives the first
statement.

Let $m'$ with $h\le m'\le(3^n-1)/2$ attain the minimum above. Then
$T(m'/3^n)\le T(y)$ for all $y\in[m'/3^n,1/2]$, so $m'$ satisfies the
condition. The size $(3^n-1)/2$ has only the digit $1$ and satisfies the
condition as well, so $h^*$ exists and $h\le h^*\le m'$. Since $h^*$
satisfies the condition, $\xi_n(h^*)=3^n\min_{[h^*/3^n,1/2]}T\le
\xi_n(m')=\lambda_h(Q_n^3)\le\xi_n(h^*)$. This proves the second
statement.

For the last statement, let $h$ be a size that does not satisfy the
condition, and let $j$ be an index at which it fails. Let $z\ge1$ be the
number of zeros among $x_{n-1},\ldots,x_{n-j}$. Since $s_3(h_j)\ge j$,
these $j$ digits contain at least $z$ twos, so $j\ge2z$, and since a
nonzero digit follows, $n\ge2z+1$. In particular no $h$ fails when
$n\le2$. Let $\ell$ be the number of leading zeros of $h$. If
$\ell\ge j$, then $s_3(h_j)=0<j$; so $\ell<j$, the $z$ zeros include the
$\ell$ leading zeros, and the twos among the first $j$ digits come after
them. Therefore $j\ge\ell+z\ge2\ell$ and $n\ge2\ell+1$. Put
$a=\lfloor(n-1)/2\rfloor$. Then $\ell\le a$, so $h\ge3^{n-a-1}$, and
$h\ge3^{n-a}$ if $\ell<a$. Let $\ell=a$. Then
$a=\ell\le z\le\lfloor j/2\rfloor\le\lfloor(n-1)/2\rfloor=a$, so $z=a$ and
$2a\le j\le n-1$. If $n=2a+1$, then $j=2a$, the digits
$x_a,\ldots,x_1$ are all twos, and $x_0\ne0$; so
$h\ge(0\ldots02\ldots21)_3=3^{a+1}-2$ with $a$ zeros and $a$ twos, and
this size fails the condition. Since $3^{a+1}-2<3^{n-a}$, it is the
smallest size that fails. If $n=2a+2$, then $j\in\{2a,2a+1\}$, the
digits $x_{a+1},\ldots,x_{n-j}$ contain at least $a$ twos and no zero,
and a nonzero digit follows. For $j=2a$ the smallest such $h$ is
$(0\ldots02\ldots201)_3=3^{a+2}-8$, and for $j=2a+1$ it is
$(0\ldots012\ldots21)_3=2\cdot3^{a+1}-2$, which fails the condition.
Since $2\cdot3^{a+1}-2$ is smaller than $3^{a+2}-8$ and than
$3^{n-a}=3^{a+2}$, it is the smallest size that fails.
\end{proof}

Corollary~\ref{lamdig} says that $\lambda_h(Q_n^3)=\xi_n(h)$ exactly when,
for every $j$ such that the first $j$ digits of $h$ contain a zero and
some later digit is nonzero, the first $j$ digits contain more zeros than
twos. For $n=3$ the only exceptions are $7=(021)_3$ and $8=(022)_3$,
whose first two digits are $0$ and $2$, so that they contain as many
zeros as twos;
they were discussed after Definition~\ref{extra}. For $n=4$ the
exceptions are the
sizes $16$ and $17$, whose digits begin with $012$, the sizes $19$ to
$26$, whose digits begin with $02$, and the sizes $34$ and $35$, whose
digits begin with $102$,
as in Figure~\ref{Q4pic}, and $h^*$ is $18$, $27$ and $36$ for the three
groups. Xu, Zhou and Hsieh~\cite[proof of Theorem~3.4]{Xu2022} show that
$\lambda_h(Q_n^3)=\xi_n(h)$ whenever $1\le h\le3^{(n+1)/2}-3$ with
$n\ge3$ odd, or $1\le h\le2\cdot3^{n/2}-3$ with $n\ge2$ even, and their
Theorem~3.9 reduces the sizes $3^{n-1}\le h\le(3^n-1)/2$ to smaller cubes.
For $n\ge4$ and $3^{\lceil n/2\rceil}<h\le3^{n-1}$, their Theorem~3.7
gives a formula for $\lambda_h(Q_n^3)$ in terms of smaller cubes, and Wu, Zhang, Wang,
Qiao and Yang~\cite[Theorem~5.2]{Wu2026} obtain the same formula; the
networks $B_n^3$ that they study include $Q_n^3$~\cite[Section~2.2]{Wu2026}.
Xu, Zhou and Hsieh~\cite[Theorem~3.4]{Xu2022} also give
$\lambda_h(Q_n^3)=(n-1)3^{(n+1)/2}$ for $3^{(n+1)/2}-3\le h\le3^{(n+1)/2}$
when $n\ge3$ is odd, and $\lambda_h(Q_n^3)=(2n-2)3^{n/2}$ for
$2\cdot3^{n/2}-3\le h\le2\cdot3^{n/2}$ when $n\ge4$ is even. These values
are $\xi_n(3^{(n+1)/2})$ and $\xi_n(2\cdot3^{n/2})$ by \eqref{xi}, and by
\eqref{diff} the numbers $\xi_n(h)$ exceed them by $2$ at the two sizes
just below $3^{(n+1)/2}$ and $2\cdot3^{n/2}$, respectively. It follows
that the ranges
$1\le h\le3^{(n+1)/2}-3$ and $1\le h\le2\cdot3^{n/2}-3$ on
which $\lambda_h(Q_n^3)=\xi_n(h)$ are best possible for $n\ge3$; for odd
$n\ge3$ the values of
$\lambda_h(Q_n^3)$ at these two sizes are also reported by Ma, Zhang, Meng
and Ma~\cite[page~71]{Ma2021}. The last statement of
Corollary~\ref{lamdig} finds these sizes again from the digits of $h$.
Zhang, Tian and
Zhang~\cite[Corollary~V.5]{ZhangTianZhang2021} treat the Hamming graphs
over an alphabet of size $L$ and show that the $h$-extra edge connectivity
equals the smallest number of edges leaving a set of $h$ vertices whenever
$1\le h\le L^{\lfloor n/2\rfloor}$; for $L=3$ and $n\ge2$ these sizes lie
in the ranges of Xu, Zhou and Hsieh.
Ma, Zhang, Meng and Ma~\cite[Theorem~1]{Ma2021} show for
$n\ge3$ that $\lambda_h(Q_n^3)$ is constant, equal to
$\xi_n(3^{\lceil n/2\rceil+r})=2(\lfloor n/2\rfloor-r)3^{\lceil n/2\rceil+r}$,
on the range
$3^{\lceil n/2\rceil+r}-\lfloor3^{2r+e+1}/2\rfloor\le h\le3^{\lceil
n/2\rceil+r}$,
where $0\le r<\lfloor n/2\rfloor$, $e=1$ for odd $n$ and $e=0$ for even
$n$, and for $n=9$ their Table~2 lists the sizes $h$ in these ranges with
$\lambda_h(Q_n^3)=\xi_n(h)$. Corollary~\ref{lamdig} yields these sizes
directly from the digits of $h$. For $n=9$ and $r=0$ the range is
$239\le h\le243$; the sizes $239=(000022212)_3$, $240=(000022220)_3$ and
$243=(000100000)_3$ satisfy the condition, while the digits of $241$ and
$242$ begin with $00002222$, followed by a nonzero digit.

We can also count the sizes $h$ with $\lambda_h(Q_n^3)=\xi_n(h)$. Let
$N(n)$ be the number of such $h$ with $1\le h\le(3^n-1)/2$. For $k\ge0$
let $P(k)$ be the number of words of length $k$ over $\{0,1,2\}$ in which,
for every $j\le k$, the first $j$ letters contain at least as many zeros
as twos. Let $C_k$ be the number of words of length $k$ with as many zeros
as twos. These numbers are the central trinomial coefficients of
Section~\ref{sec:var}, and $C_0,C_1,C_2,\ldots$ are $1,1,3,7,19,51,\ldots$. If the letters $0$, $1$
and $2$ are read as the steps $(1,1)$, $(1,0)$ and $(1,-1)$, the words
counted by $P(k)$ become the lattice paths with $k$ steps that start at
height $0$ and never go below it, and $P(k)$ is the term with index $k+1$
of the sequence A005773~\cite{OEIS-A005773}. We include the short proof of
the formula for $P(k)$ below.

\begin{proposition}
\label{optcount}
For $n\ge1$,
\begin{equation}
 N(n)=n+2\sum_{k=0}^{n-2}(n-1-k)P(k),
 ~
 P(k)=\frac{C_k+C_{k+1}}2,
 \label{Ncount}
\end{equation}
and $N(n)\sqrt{\pi n}/3^n\to\sqrt3/2$ as $n\to\infty$. Thus the
proportion of the sizes $1\le h\le(3^n-1)/2$ with
$\lambda_h(Q_n^3)=\xi_n(h)$ is asymptotically $\sqrt{3/(\pi n)}$.
\end{proposition}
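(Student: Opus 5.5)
The plan is to recast the condition of Corollary~\ref{lamdig} as a condition on the word $x_{n-1}\ldots x_0$ of length $n$, then count the words, and finally take asymptotics. Since $h\le(3^n-1)/2=(1\ldots1)_3$, either every digit of $h$ is $1$, or the first digit that is not $1$ is a $0$. The all-ones word gives one size, and it satisfies the condition.

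Otherwise the word has the form $1^{p-1}0\,u$ with $1\le p\le n$ and $|u|=n-p$. Here the prefix of length $p$ has $\kappa=1$. For $j<p$ the first $j$ digits contain no zero, so the condition is vacuous there. Write $u=v0^r$, where $v$ is empty or ends in a nonzero digit.
\begin{itemize}
\item An index $j\ge p$ is followed by a nonzero digit exactly when $j-p<|v|$.
\item For such $j$ the condition reads $1+(\#0-\#2)\ge1$ on the prefix of $v$ of length $j-p$.
\item So the requirement is that every prefix of $v$ of length $0,\ldots,|v|-1$ has at least as many zeros as twos.
\end{itemize}
Write $v=v'c$ with $c\in\{1,2\}$ and $|v'|=k$. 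Then the constraint falls only on $v'$, whose prefixes of every length $\le k$ must have at least as many zeros as twos. This gives $2P(k)$ choices of $v$.

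If $v$ is empty there is one word for each $p$, except that $p=1$ gives $h=0$, which is excluded. For $|v|=k+1$ we need $k+1\le n-p$, and there are $n-1-k$ such $p$. Adding the three contributions gives
\[
N(n)=1+(n-1)+2\sum_{k=0}^{n-2}(n-1-k)P(k),
\]
which is the first formula in \eqref{Ncount}. I must check carefully that the reduction to Theorem~\ref{rightmin} through Corollary~\ref{lamdig} covers ternary-rational endpoints correctly, with trailing zeros imposing no condition. The decomposition $v0^r$ is designed for exactly this.

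For $P(k)$ I will use the lattice path reading and the reflection principle. Let $T(k,e)$ be the number of words of length $k$ whose final excess (zeros minus twos) is $e$. Reflecting at the first visit to height $-1$ shows that the number of nonnegative paths ending at $e\ge0$ equals $T(k,e)-T(k,e+2)$. Summing over $e$ telescopes to $P(k)=T(k,0)+T(k,1)$. Splitting a word of length $k+1$ by its last letter gives $C_{k+1}=T(k,-1)+T(k,0)+T(k,1)=C_k+2T(k,1)$. Hence $P(k)=(C_k+C_{k+1})/2$.

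For the asymptotics I will use the standard estimate $C_k\sim3^{k+1/2}/(2\sqrt{\pi k})$, which is presumably the one used in Theorem~\ref{varthm}. It gives $C_{k+1}/C_k\to3$, so $P(k)\sim2C_k\sim3^{k+1/2}/\sqrt{\pi k}$. Substituting $i=n-1-k$ yields
\[
2\sum_i i\,P(n-1-i)\approx2\cdot3^{n-1/2}(\pi n)^{-1/2}\sum_{i\ge1}i\,3^{-i}.
\]
Since $\sum_{i\ge1}i\,3^{-i}=3/4$, the right side is $3^n(\sqrt3/2)/\sqrt{\pi n}$. To justify exchanging the limit and the sum, I will split the sum at $i\le\sqrt n$, where the ratio $\sqrt{n/(n-1-i)}\to1$ uniformly. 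The remaining tail is $O(3^n n^{2}3^{-\sqrt n})$ by the crude bound $P(k)\le3^k$, and the term $n$ is negligible. Dividing by $(3^n-1)/2$ then gives the proportion $\sqrt{3/(\pi n)}$.

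The main obstacle is the bijective bookkeeping in the second paragraph, in particular the off-by-one issues for $j=p$, for empty $v$, and for excluding $h=0$. The reflection argument and the asymptotics are routine.
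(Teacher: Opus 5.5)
Your proof is correct and follows the paper's argument essentially step by step. You use the same decomposition of the word as $1^{p-1}0v0^{r}$ with $v$ ending in a nonzero digit (the paper's $1^{f-1}0v0^{n-g}$), and the same reflection at the first visit to $-1$, giving $P(k)=C_k+C'_k$ and $C_{k+1}=C_k+2C'_k$. The only real difference is in the asymptotics. You evaluate $2\sum_i i\,P(n-1-i)$ directly from $C_k\sim3^{k+1/2}/(2\sqrt{\pi k})$ by splitting the sum at $i\le\sqrt n$. The paper uses only $P(k+1)/P(k)\to3$ and applies the Stolz--Ces\`aro theorem twice to get $S(n)/P(n-2)\to9/4$. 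Both routes work.
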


The first values $N(1),N(2),\ldots$ are $1,4,11,28,71,184,489$. For $n=4$
the $28$ sizes are the $40-12=28$ sizes $1\le h\le40$ other than the
twelve exceptions listed above.

\begin{proof}
Let $h$ have the word $x_{n-1}\ldots x_0$. By Corollary~\ref{lamdig},
$\lambda_h(Q_n^3)=\xi_n(h)$ exactly when, for every $j$ such that the
first $j$ letters of the word contain a zero and some later letter is
nonzero, the first $j$ letters contain more zeros than twos. The size $(3^n-1)/2$, whose word is $1\ldots1$, satisfies this
condition. Every size $1\le h<(3^n-1)/2$ has a zero in its word, and we
let $f$ be the position of the first zero and $g$ the position of the
last nonzero digit, both counted from the left. If $g<f$, the word is
$1\ldots10\ldots0$ with $f-1\ge1$ ones, because the digits before the
first zero are nonzero and the word does not exceed $1\ldots1$; the
condition holds, and we obtain the $n-1$ sizes $(3^n-3^{n-r})/2$ with
$1\le r\le n-1$. Let $g>f$, and suppose that $h$ satisfies the condition.
The first $f$
letters contain a zero, and a later letter is nonzero, so by the
condition they contain more zeros than twos. Since they contain exactly
one zero, the digits before the first zero are all $1$. Thus the word is
$1^{f-1}0v0^{n-g}$, where $v$ is the word formed by the digits in the
positions $f+1,\ldots,g$. It has length $m=g-f\ge1$, and its last letter
is nonzero. For $f\le j<g$ the first $j$ letters contain a zero,
and a later letter is nonzero; they contain more zeros than twos exactly
when the first $j-f$ letters of $v$ contain at least as many zeros as
twos. For $j\ge g$ all later letters are zeros, and for $j<f$ the first
$j$ letters contain no zero. Conversely, every word $1^{f-1}0v0^{n-g}$ of
this kind with $1\le f<g\le n$ is the word of a size $1\le h<(3^n-1)/2$
that satisfies the condition. For given $f$ and $m$ there are $2P(m-1)$
words $v$, since the first $m-1$ letters of $v$ form a word counted by
$P(m-1)$ and the last letter is $1$ or $2$. Therefore
\[
 N(n)=1+(n-1)+\sum_{f=1}^{n-1}\sum_{m=1}^{n-f}2P(m-1)
 =n+2\sum_{m=1}^{n-1}(n-m)P(m-1),
\]
which is the first formula in \eqref{Ncount} with $k=m-1$.

For the second formula we count the words of length $k$ whose first $j$
letters contain more twos than zeros for some $j$. Reading a word letter
by letter, let
$\kappa$ be the number of zeros minus the number of twos read so far;
each letter changes $\kappa$ by at most $1$. If the word ends with
$\kappa\le-1$, then $\kappa=-1$ at some point. Exchanging the digits $0$ and
$2$ shows that there are as many words ending with $\kappa<0$ as words
ending with $\kappa>0$, so $(3^k-C_k)/2$ words end with
$\kappa\le-1$. Now consider a word ending with $\kappa=j\ge0$ in which
$\kappa=-1$ at some point. Exchanging the digits $0$ and $2$ after the
first point where $\kappa=-1$ gives a word ending with $\kappa=-2-j$, and
this is
a bijection onto the words ending with $\kappa=-2-j$. Summing over
$j\ge0$, the number of these words is the number of words ending with
$\kappa\le-2$, which is $(3^k-C_k)/2-C'_k$, where $C'_k$ is the number of
words of length $k$ ending with $\kappa=-1$. Hence $P(k)=C_k+C'_k$. A
word of length $k+1$ with $\kappa=0$ ends with the letter $1$ after a word
with $\kappa=0$, with the letter $0$ after a word with $\kappa=-1$, or
with the letter $2$ after a word with $\kappa=1$; exchanging the digits $0$
and $2$ shows that there are $C'_k$ words of each of the last two kinds.
Therefore
$C_{k+1}=C_k+2C'_k$, and $P(k)=(C_k+C_{k+1})/2$.

For the limit we use $\sqrt k\,C_k/3^k\to\sqrt3/(2\sqrt\pi)$, which is
shown in the proof of Theorem~\ref{varthm}. It gives $C_{k+1}/C_k\to3$,
hence $P(k)/C_k\to2$ and $P(k)/P(k-1)\to3$. Let
$S(n)=\sum_{k=0}^{n-2}(n-1-k)P(k)=\sum_{i=0}^{n-2}\sum_{k=0}^{i}P(k)$. If
$a_k>0$ and $a_{k+1}/a_k\to3$, then $a_i$ is eventually strictly increasing and
tends to infinity, and the partial sums $A_i=\sum_{k\le i}a_k$ satisfy
$A_i/a_i\to3/2$ by the theorem of Stolz and Ces\`aro, because
$(A_i-A_{i-1})/(a_i-a_{i-1})=1/(1-a_{i-1}/a_i)\to3/2$. Moreover,
$A_{i+1}/A_i=1+(a_{i+1}/a_i)(a_i/A_i)\to1+3\cdot\frac23=3$, so the same
statement applies to the sequence of the $A_i$. For $a_k=P(k)$ we have
$S(n)=\sum_{i\le n-2}A_i$, and the two statements give
$A_{n-2}/P(n-2)\to3/2$ and $S(n)/A_{n-2}\to3/2$. Hence
$S(n)/P(n-2)\to9/4$, so $S(n)/P(n-1)\to3/4$ and $N(n)/P(n-1)\to3/2$. Since
$P(n-1)=(C_{n-1}+C_n)/2$ and $C_n/C_{n-1}\to3$, we have
$P(n-1)\sqrt{\pi n}/3^{n-1}\to\sqrt3$, and hence
$N(n)\sqrt{\pi n}/3^n\to\sqrt3/2$. Dividing by the number $(3^n-1)/2$ of
sizes gives the last statement.
\end{proof}

\section{Infinite derivatives and Dini derivatives}
\label{sec:inf}

By Theorem~\ref{nodiff}, $T$ has no finite derivative at any
point, but the slopes of $T$ can still tend to
$+\infty$ or to $-\infty$. For Takagi's function, Allaart and
Kawamura~\cite[Theorem~3.1 and Corollary~3.2]{AllaartKawamura2010} and
Kr\"uppel~\cite[Propositions~3.1 and~4.5]{Kruppel2010a}
characterized the points with an infinite derivative from one side or
from both sides in terms of the binary digits. Kr\"uppel had earlier
given sufficient conditions~\cite[Section~5]{Kruppel2007}. Ferrera, G\'omez
Gil and Llorente~\cite[Section~3]{FerreraGomezGilLlorente2019} obtained the
corresponding characterization for the Takagi--van~der~Waerden
functions of every base. In this section we prove
Theorem~\ref{thm2}, the analogous statement for $T$.
The digit $1$ contributes nothing to the digit slopes, and the digit
slopes alone decide whether $T'_+(x)=+\infty$ and whether
$T'_-(x)=-\infty$. In the other two cases, the only other thing that can
prevent an infinite derivative is a long run of zeros or of twos.

\begin{definition}
\label{dini}
Let $x\in\R$. The right and left derivatives of $T$ at $x$ are the limits
\[
 T'_+(x)=\lim_{h\downarrow0}\frac{T(x+h)-T(x)}h,~
 T'_-(x)=\lim_{h\uparrow0}\frac{T(x+h)-T(x)}h
\]
in the extended real line, when they exist. We write $T'(x)=+\infty$ if
both equal $+\infty$, and $T'(x)=-\infty$ if both equal $-\infty$. The
four Dini derivatives of $T$ at $x$ are
\begin{align*}
 D^+T(x)&=\limsup_{h\downarrow0}\frac{T(x+h)-T(x)}h,&
 D_+T(x)&=\liminf_{h\downarrow0}\frac{T(x+h)-T(x)}h,\\
 D^-T(x)&=\limsup_{h\uparrow0}\frac{T(x+h)-T(x)}h,&
 D_-T(x)&=\liminf_{h\uparrow0}\frac{T(x+h)-T(x)}h.
\end{align*}
\end{definition}

Dini derivatives always exist in the extended real line, and they
describe the oscillation of $\Delta(x,x+h)$ when the right or the left
derivative does not exist.

Throughout the rest of this section we fix a point $x\in(0,1)$ that is
not a ternary rational, with ternary expansion
$x=\sum_{j\ge1}\xi_j3^{-j}$. For $n\ge0$ we write, as in the proof of
Theorem~\ref{rightmin},
\[
 k_n=\lfloor3^nx\rfloor,~ t_n=3^nx-k_n\in(0,1),~
 a_n=\frac{k_n}{3^n},~ c_n=a_n+3^{-n},
\]
so that $[a_n,c_n]$ is the ternary interval of level $n$ containing
$x$, and $k_n$ is the integer whose ternary digits, written from the
highest power of $3$ down and with leading zeros allowed, are
$\xi_1,\ldots,\xi_n$. In this notation the digit slopes of
Definition~\ref{digslope} are
\begin{equation}
 d_n(x)=2\sum_{j=1}^n(1-\xi_j)=2\bigl(n-s_3(k_n)\bigr),
 \label{dn}
\end{equation}
and $d_n(x)$ is the slope \eqref{slope} of $F_n$ on $[a_n,c_n]$; it also
equals $\Delta(a_n,c_n)$.

By \eqref{cell}, the slope of $T$ between two points of a ternary interval
of level $n$ is the slope of that interval plus the slope of $T$ between the
corresponding points of $[0,1]$.

\begin{lemma}
\label{lem:subtract}
Let $n\ge0$ and $y=(k_n+u)/3^n$ with $u\in[0,1]$, $u\ne t_n$. Then
\begin{equation}
 \frac{T(y)-T(x)}{y-x}
 =d_n(x)+\frac{T(u)-T(t_n)}{u-t_n}.
 \label{sub}
\end{equation}
In particular,
\begin{equation}
 \frac{T(x)-T(a_n)}{x-a_n}=d_n(x)+\frac{T(t_n)}{t_n},
 ~
 \frac{T(c_n)-T(x)}{c_n-x}=d_n(x)-\frac{T(1-t_n)}{1-t_n},
 \label{ends}
\end{equation}
and the slope of $T$ between $x$ and its reflection in
the midpoint of $[a_n,c_n]$, the point $(k_n+1-t_n)/3^n$, is exactly
$d_n(x)$ when $t_n\ne1/2$. If moreover $n\ge1$ and $\xi_n\ne0$,
then for $0<s\le3^{-n}$,
\begin{equation}
 \frac{T(a_n)-T(a_n-s)}{s}=d_n(x)+2-\frac{T(3^ns)}{3^ns}.
 \label{leftnb}
\end{equation}
\end{lemma}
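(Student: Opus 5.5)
The whole lemma follows from \eqref{cell} and \eqref{cell2}; everything reduces to subtracting two instances of the same cell equation. For \eqref{sub}, I apply \eqref{cell} at level $n$ with $m=k_n$, once with $t=u$ and once with $t=t_n$. Since $0\le k_n<3^n$, both instances are valid, and $x=(k_n+t_n)/3^n$. Subtracting gives
\[
 T(y)-T(x)=3^{-n}\bigl(2(n-s_3(k_n))(u-t_n)+T(u)-T(t_n)\bigr).
\]
Dividing by $y-x=3^{-n}(u-t_n)\ne0$ and using \eqref{dn} for $2(n-s_3(k_n))=d_n(x)$ yields \eqref{sub}.

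The special cases come from particular choices of $u$.
\begin{itemize}
\item Taking $u=0$ gives $y=a_n$. Because $T(0)=0$ and $\Delta$ is symmetric, this is the first equation in \eqref{ends}; note $t_n\in(0,1)$, so $u\ne t_n$.
\item Taking $u=1$ gives $y=c_n$. Because $T(1)=0$ and $T(t_n)=T(1-t_n)$, the correction term is $-T(1-t_n)/(1-t_n)$, which is the second equation in \eqref{ends}.
\item For the reflection, taking $u=1-t_n$ (distinct from $t_n$ exactly when $t_n\ne1/2$) makes the correction term vanish by $T(1-t_n)=T(t_n)$, so the slope is exactly $d_n(x)$.
\end{itemize}

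For \eqref{leftnb}, assume $n\ge1$ and $\xi_n\ne0$. Then $k_n\ge1$, so \eqref{cell2} applies at $m=k_n$ with $t=3^ns\in(0,1]$:
\[
 T(a_n-s)=T(a_n)+3^{-n}\bigl(2(s_3(k_n-1)-n)3^ns+T(3^ns)\bigr).
\]
Since the last digit $\xi_n$ of $k_n$ is nonzero, subtracting $1$ causes no borrow, so $s_3(k_n-1)=s_3(k_n)-1$. Hence $2(s_3(k_n-1)-n)=-d_n(x)-2$. Rearranging and dividing by $s$ gives
\[
 \frac{T(a_n)-T(a_n-s)}{s}=d_n(x)+2-\frac{T(3^ns)}{3^ns}.
\]

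The only step needing care is this digit-sum identity for $s_3(k_n-1)$, which is exactly where the hypothesis $\xi_n\ne0$ is used. The rest is bookkeeping of signs and of the orientation of the difference quotients.
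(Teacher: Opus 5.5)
Your proposal is correct and follows the paper's proof essentially step for step. You subtract two instances of \eqref{cell} to get \eqref{sub}, and the choices $u=0$, $u=1$ and $u=1-t_n$ give \eqref{ends} and the reflection claim. For \eqref{leftnb} you apply \eqref{cell2} with $m=k_n$ and $t=3^ns$, and use $s_3(k_n-1)=s_3(k_n)-1$, which is where the hypothesis $\xi_n\ne0$ enters.
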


\begin{proof}
We obtain \eqref{sub} by subtracting the two instances of \eqref{cell}
for $x$ and $y$. With $T(0)=T(1)=0$ and $T(1-t)=T(t)$, the cases
$u=0$ and $u=1$ give \eqref{ends}, and the case $u=1-t_n$ gives the claim
about the reflection. If $\xi_n\ne0$, the integer $k_n$ ends in a
nonzero digit, so $s_3(k_n-1)=s_3(k_n)-1$. By \eqref{slope}, the
level $n$ interval $[a_n-3^{-n},a_n]$ then has slope $d_n(x)+2$.
Now \eqref{cell2} with $m=k_n$ and $t=3^ns$ gives \eqref{leftnb}.
\end{proof}

\begin{remark}
If we subtract the instances of \eqref{cell} for two points $(k_n+u)/3^n$
and $(k_n+u')/3^n$ of $[a_n,c_n]$ with $u\ne u'$, we see that the slope of
$T$ between them is $d_n(x)+(T(u)-T(u'))/(u-u')$. Since $0\le T\le1$,
the slope of $T$ between two points of $[a_n,c_n]$ at distance
more than $3^{-n}/3$ lies between $d_n(x)-3$ and $d_n(x)+3$.
\end{remark}

For $y\in[0,1]$, $y\ne x$, let $n(y)$ be the largest $n$ with
$y\in[a_n,c_n]$. It exists because the intervals $[a_n,c_n]$ shrink to
$\{x\}$. Since $x$ is an interior point of each of these intervals,
$n(y)\to\infty$ as $y\to x$. If $y>x$, then $y>c_{n(y)+1}$, and if
$y<x$, then $y<a_{n(y)+1}$.

\begin{definition}
\label{runlength}
Let $x\in(0,1)$ be a point that is not a ternary rational. A run of
zeros of $x$ is a maximal string of consecutive ternary digits of $x$
equal to $0$, and its length is the number of these digits. Runs of twos
are defined in the same way. The runs of zeros of $x$ have bounded length
if there is an integer $b\ge0$ such that each of them has at most $b$
digits,
and similarly for the runs of twos.
\end{definition}

By the next lemma, the last term in \eqref{sub} is bounded when $n=n(y)$
if the runs of zeros and the runs of twos of $x$ have bounded length.

\begin{lemma}
\label{lem:runs}
Let $b\ge0$ be an integer, and suppose that every run of zeros and every
run of twos of $x$ has length at most $b$. Then for every $y\in[0,1]$,
$y\ne x$,
\begin{equation}
 \left|\frac{T(y)-T(x)}{y-x}-d_{n(y)}(x)\right|\le2b+7.
 \label{runs1}
\end{equation}
\end{lemma}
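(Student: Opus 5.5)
Let $n=n(y)$ and write $y=(k_n+u)/3^n$ with $u\in[0,1]$, $u\ne t_n$. By \eqref{sub} of Lemma~\ref{lem:subtract}, the quantity in \eqref{runs1} is exactly $\bigl|(T(u)-T(t_n))/(u-t_n)\bigr|$. So it suffices to bound the slope of $T$ between $t_n$ and $u$ on $[0,1]$ by $2b+7$.

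By maximality of $n$, $y$ does not lie in $[a_{n+1},c_{n+1}]$. Hence $u$ lies outside the third $[\xi_{n+1}/3,(\xi_{n+1}+1)/3]$ of $[0,1]$ that contains $t_n$. If $|u-t_n|>1/3$, the Remark after Lemma~\ref{lem:subtract} (or simply $0\le T\le1$) gives a bound of $3$. The difficult case is when $u$ is close to $t_n$ but on the other side of an endpoint $e\in\{1/3,2/3\}$, or close to $0$ or $1$ in the degenerate case when $t_n$ is in the first or last third.

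In that case, split at the endpoint $e$ that separates them. Since $e$ lies between $t_n$ and $u$, the slope between $t_n$ and $u$ is a convex combination of $\Delta(t_n,e)$ and $\Delta(e,u)$. So it suffices to bound both of these.

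For $\Delta(e,u)$ with $|u-e|\le1/3$, I would use \eqref{cell} or \eqref{cell2} at level $1$ with base point $e$. This writes the slope as $\pm2$ or $0$ plus $T(s)/s$ with $s=3|u-e|$. That term is unbounded as $s\to0$, so this direct approach fails.

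I would therefore compare with $t_n$ instead. Here $|u-e|\le|t_n-u|$, and the point is that $t_n$ stays at distance comparable to $3^{-b-2}$ from $e$. Indeed, $t_n-e$ is governed by the digits $\xi_{n+2},\xi_{n+3},\dots$. If $e$ is a left endpoint, then $t_n-e$ is at least about $3^{-(r+2)}$ where $r$ is the length of the run of zeros starting at $\xi_{n+2}$. Symmetrically, on the right side the relevant run is a run of twos. Since $r\le b$, we get $|t_n-e|\ge 3^{-b-2}$ roughly.

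Now the slope over $[t_n,u]$ is bounded by $|T(t_n)-T(e)|/|t_n-u|+|T(u)-T(e)|/|t_n-u|$. Each numerator is at most $T(|\cdot|)$ by Proposition~\ref{incr}(a), with distances at most $|t_n-u|$. By \eqref{Thbd}, $T(h)/h\le 2\log_3(1/h)+3$, and $h\mapsto T(h)\le$ concave-ish bound. Using $|t_n-u|\ge|t_n-e|\ge3^{-b-2}$ gives each ratio at most $2(b+2)+3$, and the constants should combine to at most $2b+7$.

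The main obstacle is this bookkeeping. It requires the right choice of $e$, in particular when $t_n$ itself sits near $0$ or $1$ relative to $u$, and the exact digit estimate of $|t_n-e|$ by run length. Beyond that, the sum of the two ratios must be organised more cleverly, for instance via $\Delta(t_n,u)$ expressed through \eqref{sub} at level $1$ and the bound on $T(h)/h$, so that the additive constant comes out as $7$ rather than double. In the proof I would first treat $t_n$ in the middle third, where the line has slope $0$ and both neighbours are handled by runs of zeros and twos respectively. I would then reduce the outer thirds to it using \eqref{branches}.
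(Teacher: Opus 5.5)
Your reduction via \eqref{sub} to the slope of $T$ between $t_n$ and $u$ matches the paper. So does your observation that $u$ lies outside the third of $[0,1]$ containing $t_n$, which gives $|u-t_n|\ge3^{-b-2}$. The gap is in the last step, which you leave unfinished. Splitting at the endpoint $e$ and bounding $|T(t_n)-T(e)|$ and $|T(u)-T(e)|$ separately by Proposition~\ref{incr}(a) leads to $\bigl(T(h_1)+T(h_2)\bigr)/(h_1+h_2)$ with $h_1=|t_n-e|$ and $h_2=|u-e|$. Estimating the two terms separately costs roughly a factor $2$ in the constant, as you notice yourself. You do not resolve this, and your plan via \eqref{branches} and the middle third does not address it.

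The missing observation is that no intermediate point is needed. Proposition~\ref{incr}(a) holds uniformly in the base point, so with $h=|u-t_n|\in(0,1]$ you get directly $|T(u)-T(t_n)|\le T(h)$. Then \eqref{Thbd} gives $T(h)/h\le2\lfloor\log_3(1/h)\rfloor+3$. Since $h\ge3^{-b-2}$, we have $\lfloor\log_3(1/h)\rfloor\le b+2$, and the bound is $2(b+2)+3=2b+7$. This one line also covers $|u-t_n|>1/3$ and $u$ near $0$ or $1$, so no case distinction is needed.

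For this you need $h\ge3^{-b-2}$ exactly, not ``roughly'', because losing one more power of $3$ would change the constant. It follows as you indicate:
\begin{itemize}
\item Let $i=\xi_{n+1}$. The point $w=3t_n-i\in(0,1)$ has the digits $\xi_{n+2},\xi_{n+3},\ldots$.
\item If these digits begin with $r\le b$ zeros followed by a nonzero digit, then $w\ge3^{-r-1}\ge3^{-b-1}$.
\item In the same way, the bound on runs of twos gives $1-w\ge3^{-b-1}$.
\item Hence $t_n=(i+w)/3$ lies at distance at least $3^{-b-2}$ from both ends of $[i/3,(i+1)/3]$.
\item Finally, $u$ lies outside this third because $y\notin[a_{n+1},c_{n+1}]$.
\end{itemize}
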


\begin{proof}
Let $n=n(y)$, write $y=(k_n+u)/3^n$ with $u\in[0,1]$, and let
$i=\xi_{n+1}$ be the first digit of $t_n$. The tail $3t_n-i\in(0,1)$
has the digits $\xi_{n+2},\xi_{n+3},\ldots$. Any run of zeros or of twos at
the start of these digits has length at most $b$, so the tail lies at
distance at
least $3^{-b-1}$ from $0$ and from $1$. Hence $t_n$ lies at distance at
least $3^{-b-2}$ from both endpoints of $[i/3,(i+1)/3]$. The point $y$
is not in the level $n+1$ interval $[a_{n+1},c_{n+1}]$, so
$u\notin(i/3,(i+1)/3)$ and $h=|u-t_n|\ge3^{-b-2}$. By
Proposition~\ref{incr}(a), the last term in \eqref{sub} is at most
$T(h)/h$ in absolute value, and by \eqref{Thbd} with
$\lfloor\log_3(1/h)\rfloor\le b+2$ this is at most $2(b+2)+3=2b+7$.
\end{proof}

Let $n_1<n_2<\cdots$ be the positions of the nonzero digits of $x$, as
in Theorem~\ref{thm2}, and put
\[
 \Phi_k=d_{n_k}(x)-2(n_{k+1}-n_k)+2\log_3(n_{k+1}-n_k),
\]
so that the first condition in Theorem~\ref{thm2}(b)
reads $\Phi_k\to+\infty$. In the proof of Theorem~\ref{thm2} we
also use the elementary bound \eqref{Thbd}. Divided by $h$, with $t$ in
place of $h$, it reads
\begin{equation}
 \frac{T(t)}{t}=2\Bigl\lfloor\log_3\frac1t\Bigr\rfloor+\theta,
 ~0\le\theta\le3,~0<t\le1.
 \label{theta}
\end{equation}

We first explain the idea of the proof of the first statements in (a) and
(b). To bound $\Delta(x,y)$ from below, we split the interval between $x$
and $y$ at a ternary rational close to $x$. Then $\Delta(x,y)$ is a
weighted mean of the slopes of $T$ over the two pieces, and we bound each
of them by Lemma~\ref{lem:subtract}, \eqref{cell} and \eqref{theta}. In
(a), where $y$ lies to the right of $x$, both slopes are at least a digit
slope minus a constant, so only the digit slopes matter. In (b), where
$y$ lies to the left of $x$, we split at $a_{n_k}$. A run of zeros
after position $n_k$ raises $\Delta(a_{n_k},x)$ by about twice the length of
the run, while the slopes of $T$ between $a_{n_k}$ and points to its left are
smaller. The weighted mean depends on the ratio of the lengths of the two
pieces, and the worst ratio produces the term $2\log_3(n_{k+1}-n_k)$ in
$\Phi_k$. For the converse statements we choose points $y$ for which
$\Delta(x,y)$ is at most $d_n(x)$ in (a) and less than $\Phi_k+10$ in (b).

\begin{proof}[Proof of Theorem~\ref{thm2}]
Replacing $x$ by $1-x$ complements the digits, $\xi_j\mapsto2-\xi_j$.
This changes $d_n(x)$ into $-d_n(x)$ and exchanges the positions $n_k$
with the positions $m_k$. It also gives $T'_+(1-x)=-T'_-(x)$ and
$T'_-(1-x)=-T'_+(x)$, because $T(1-x+h)-T(1-x)=T(x-h)-T(x)$. Therefore the
second statements in (a) and (b) follow from the first. Part
(c) follows from (a) and (b) once we know that the first condition in
(b) implies $d_n(x)\to+\infty$ and, by symmetry, the second implies
$d_n(x)\to-\infty$. Indeed, if $n_k\le n<n_{k+1}$, the digits
in positions $n_k+1,\ldots,n$ are zeros, so $d_n(x)\ge d_{n_k}(x)$, and
$d_{n_{k+1}}(x)\ge d_{n_{k+1}-1}(x)-2$. Put $g=n_{k+1}-n_k\ge1$. Since
$2g-2\log_3g\ge2$, it follows that $d_n(x)\ge d_{n_k}(x)-2\ge\Phi_k$ for
$n_k\le n\le n_{k+1}$. It remains to prove the first statements in (a)
and (b).

(a) Suppose that $d_n(x)\to+\infty$. Let $x<y\le1$, and put $n=n(y)$
and $N=n+1$, so that $y>c_N$. Then $\Delta(x,y)$ is a weighted mean of
$\Delta(x,c_N)$ and $\Delta(c_N,y)$. Let $\ell\ge0$ be the number of
consecutive twos immediately after position $N$ in the expansion of $x$.
This number is finite because $x$ is not a ternary rational. Then
$1-t_N\ge3^{-\ell-1}$, so by \eqref{ends} and \eqref{theta},
\[
 \frac{T(c_N)-T(x)}{c_N-x}
 \ge d_N(x)-2(\ell+1)-3=d_{N+\ell}(x)-5,
\]
where the last equality holds because the $\ell$ twos lower the digit
slope by $2\ell$. If $y\le c_N+3^{-N}$, then $y$ lies in the level $N$
interval to the right of $c_N$. This interval is contained in
$[a_n,c_n]$, because $c_N<y\le c_n$ and both $c_N$ and $c_n$ are
multiples of $3^{-N}$. Its slope is $2(N-s_3(k_N+1))\ge d_N(x)-2$, because
adding $1$ to an integer raises its ternary digit sum by at most $1$.
By \eqref{cell} on that interval, $\Delta(c_N,y)\ge
d_N(x)-2$. If $y>c_N+3^{-N}$, then $c_N$ and $y$ lie in $[a_n,c_n]$ at
distance more than $3^{-n}/3$. By the remark after
Lemma~\ref{lem:subtract}, $\Delta(c_N,y)\ge d_n(x)-3$. In all cases
we obtain
\[
 \frac{T(y)-T(x)}{y-x}\ge\min_{i\ge n(y)}d_i(x)-5,
\]
where the minimum exists because $d_i(x)\to+\infty$. Since
$n(y)\to\infty$ as $y\downarrow x$, we get $T'_+(x)=+\infty$.

Conversely, suppose that $T'_+(x)=+\infty$, and fix $n$. If $t_n<1/2$, the
reflection of $x$ in the midpoint of $[a_n,c_n]$, the
point $(k_n+1-t_n)/3^n$, lies in $(x,c_n]$, and the slope of $T$ between $x$
and this reflection is $d_n(x)$. If $t_n\ge1/2$, then $\Delta(x,c_n)\le
d_n(x)$ by \eqref{ends}. In both cases we have $d_n(x)\ge\Delta(x,y)$ for
some $y$ with $x<y\le x+3^{-n}$, so $d_n(x)\to+\infty$.

(b) Suppose that $\Phi_k\to+\infty$. Then $d_n(x)\to+\infty$, as shown
above. Let $0\le y<x$, and put $n=n(y)$ and $N=n+1$, so that $y<a_N$.
Since $y\ge a_n$, the point $a_N$ is not $a_n$, so $\xi_N\ne0$ and
$N=n_k$ for some $k$. Put $g=n_{k+1}-n_k$ and $L=g-1$. After position
$N$ the expansion continues with $L$ zeros and then a nonzero digit,
so $3^{-g}<t_N<3^{-L}$ and $\lfloor\log_3(1/t_N)\rfloor=L$. Let
$h=x-a_N=3^{-N}t_N$ and $s=a_N-y$.

Suppose first that $s\le3^{-N}$, and put $\lambda=s/h$ and
$v=3^Ns=\lambda t_N$. By
\eqref{ends}, \eqref{leftnb} and
\eqref{theta},
\[
 \sigma_1=\frac{T(x)-T(a_N)}{h}=d_N(x)+2L+\theta_1,
 ~
 \sigma_2=\frac{T(a_N)-T(y)}{s}=d_N(x)+2-2M-\theta_2,
\]
where $M=\lfloor\log_3(1/v)\rfloor$ and $0\le\theta_1,\theta_2\le3$.
Since $\log_3(1/v)=\log_3(1/t_N)-\log_3\lambda<L+1-\log_3\lambda$,
we have $2M+1<2L+3-2\log_3\lambda$. As $\theta_1\ge0$ and $\theta_2\le3$,
this gives
\[
 \sigma_1\ge d_N(x)+2L,
 ~
 \sigma_2\ge d_N(x)-(2M+1)>d_N(x)-\bigl(2L+3-2\log_3\lambda\bigr).
\]
Since $x-y=h+s$ and $s=\lambda h$, we have
$\Delta(y,x)=(h\sigma_1+s\sigma_2)/(h+s)=(\sigma_1+\lambda\sigma_2)/(1+\lambda)$,
and therefore
\begin{equation}
 \frac{T(x)-T(y)}{x-y}
 \ge d_N(x)+\frac{2L-\lambda(2L+3-2\log_3\lambda)}{1+\lambda}
 =d_N(x)-2L+\frac{4L+2\lambda\log_3\lambda-3\lambda}{1+\lambda}.
 \label{lam}
\end{equation}
We claim that the last fraction is at least $2\log_3(L+1)-4$ for all
$\lambda>0$. If $\lambda\le1$, then $2\lambda\log_3\lambda>-1$, so the
fraction is at least $(4L-4)/2=2L-2\ge2\log_3(L+1)-4$ when $L\ge1$,
and at least $-4$ when $L=0$. If $\lambda\ge1$, then
$\log_3\lambda\le(1+\lambda)/2$, and hence
$2\lambda\log_3\lambda/(1+\lambda)\ge2\log_3\lambda-1$. Since also
$3\lambda/(1+\lambda)\le3$, the fraction is at least
$4L/(1+\lambda)+2\log_3\lambda-4$. This is at least $-4$ when $L=0$.
When $L\ge1$, we write $\lambda=r(L+1)$ and use
$4L/(1+\lambda)\ge2L/\lambda\ge1/r$. Then the fraction is at least
$2\log_3(L+1)+1/r+2\log_3r-4\ge2\log_3(L+1)-4$. Here the last inequality
holds because the minimum of $1/r+2\log_3r$ over $r>0$ is attained at
$r=(\ln3)/2$ and equals $2/\ln3-2\log_3(2/\ln3)>0$. By \eqref{lam} and
the claim,
\[
 \frac{T(x)-T(y)}{x-y}\ge d_N(x)-2L+2\log_3(L+1)-4=\Phi_k-2.
\]
Suppose now that $s>3^{-N}$. Then $\xi_N=2$ and $y\in[a_n,a_n+3^{-N})$, because
$a_N-a_n=\xi_N3^{-N}$. Thus $y$ and $a_N$ lie in $[a_n,c_n]$ at
distance more than $3^{-n}/3$, and the remark after
Lemma~\ref{lem:subtract} gives $\Delta(y,a_N)\ge d_n(x)-3$. By
\eqref{ends}, $\Delta(a_N,x)\ge d_N(x)$, so
$\Delta(y,x)\ge\min\{d_n(x),d_N(x)\}-3$. In both cases we have
\[
 \frac{T(y)-T(x)}{y-x}\ge\min\{\Phi_k-2,\,d_n(x)-3,\,d_N(x)-3\},
\]
where $n=n(y)$ and $N=n_k$ tend to infinity as $y\uparrow x$. Hence
$T'_-(x)=+\infty$.

Conversely, suppose that $T'_-(x)=+\infty$. Fix $k$, let $N=n_k$,
let $g$, $L$, $h$ be as above, and put $\lambda=g$ and $y_k=a_N-\lambda h$.
Since $t_N<3^{-L}$ and $g\le3^{L}$, we have $\lambda h\le3^{-N}$, so
$y_k$ lies in $[a_N-3^{-N},a_N)$ and the formulas for $\sigma_1$ and
$\sigma_2$ apply. Now $\sigma_1\le d_N(x)+2L+3$, and since
$M\ge\log_3(1/v)-1>L-\log_3\lambda-1$, we also have
$\sigma_2<d_N(x)-2L+2\log_3\lambda+4$. Therefore
\[
 \frac{T(x)-T(y_k)}{x-y_k}
 <d_N(x)-2L+\frac{4L+3+2\lambda\log_3\lambda+4\lambda}{1+\lambda}
 =d_N(x)-2L+\frac{8L+7}{L+2}+\frac{2(L+1)}{L+2}\log_3(L+1),
\]
where we used $\lambda=L+1$. Since $(8L+7)/(L+2)<8$ and
$(L+1)/(L+2)<1$, the right side is at most
$d_N(x)-2L+2\log_3(L+1)+8=\Phi_k+10$. Finally,
$0<x-y_k=(g+1)h<(g+1)3^{-N-L}\le2\cdot3^{-N}$, which tends to $0$ as
$k\to\infty$. It follows that $\Delta(y_k,x)$ tends to $+\infty$, and hence
$\Phi_k\to+\infty$.
\end{proof}

The following explicit irrational point, whose runs of zeros have
unbounded length, illustrates Theorem~\ref{thm2}(c).

\begin{example}
\label{ex:squares}
Let $x=\sum_{k\ge1}3^{-k^2}=(0.100100001\ldots)_3$. Its digit in position
$j$ is $1$
if $j$ is a square and $0$ otherwise. Then $x$ is irrational,
$T'(x)=+\infty$ and $T'(1-x)=-\infty$.
\end{example}

\begin{proof}
The runs of zeros of $x$ grow without bound, and $x$ has infinitely many
digits $1$, so its expansion is not eventually periodic. Thus $x$ is
irrational, and in particular $x$ is not a ternary rational. The
nonzero digits of $x$ are at the positions $n_k=k^2$. In the digit
slopes, each digit $0$ adds $2$ and each digit $1$ adds nothing, so
$d_{n_k}(x)=2(k^2-k)$. With $n_{k+1}-n_k=2k+1$ this gives
\[
 \Phi_k=2k^2-6k-2+2\log_3(2k+1)\to+\infty.
\]
By Theorem~\ref{thm2}(c), $T'(x)=+\infty$. Replacing $x$ by $1-x$
changes the sign of every slope of $T$, as in the proof of
Theorem~\ref{thm2}, so $T'(1-x)=-\infty$.
\end{proof}

We record two special cases of Theorem~\ref{thm2}. In the first, the runs
of zeros, or the runs of twos, of $x$ have bounded length; this is the
ternary version of the bounded run criterion for Takagi's function.
Kr\"uppel~\cite[Proposition~5.3]{Kruppel2007} assumes that both the runs
of zeros and the runs of ones are bounded, while Allaart and
Kawamura~\cite[Example~3.4]{AllaartKawamura2010} and
Kr\"uppel~\cite[Remark~4.7]{Kruppel2010a} give the version that needs only
one kind of bounded run. The second case is the analogue of Kr\"uppel's
description of the rational points~\cite[Proposition~5.4]{Kruppel2007},
and Figure~\ref{fig6} illustrates it.

\begin{definition}
\label{balanced}
A rational point $x\in(0,1)$ that is not a ternary rational is balanced
if the period of its ternary expansion contains as many zeros as twos.
\end{definition}

\begin{corollary}
\label{rat}
Let $x\in(0,1)$ be a point that is not a ternary rational.
\begin{enumerate}
\item[(a)] If the runs of zeros of $x$ have bounded
 length, then $T'(x)=+\infty$ if and only if $d_n(x)\to+\infty$. If
 the runs of twos have bounded length, then $T'(x)=-\infty$ if and
 only if $d_n(x)\to-\infty$.
\item[(b)] Let $x$ be rational, with an eventually periodic expansion
 of period length $p$ and digit sum $\sigma$ over a period. Then
 $T'(x)=+\infty$ if $\sigma<p$ and $T'(x)=-\infty$ if $\sigma>p$. If
 $\sigma=p$, that is, if $x$ is balanced, then all four Dini derivatives
 of $T$ at $x$ are finite, and $T$ has neither a finite nor an infinite
 derivative at $x$.
\end{enumerate}
\end{corollary}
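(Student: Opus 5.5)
Both parts will be read off Theorem~\ref{thm2}, with Lemma~\ref{lem:runs} used for the Dini derivatives in the balanced case.

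For (a), suppose the runs of zeros have length at most $b$. Then $g_k=n_{k+1}-n_k\le b+1$ for every $k$, so the correction $-2g_k+2\log_3 g_k$ in $\Phi_k$ is bounded. Hence $\Phi_k\to+\infty$ if and only if $d_{n_k}(x)\to+\infty$. Between consecutive nonzero positions the digit slope only grows, and it drops by at most $2$ at each nonzero digit. So $d_{n_k}(x)\to+\infty$ is equivalent to $d_n(x)\to+\infty$: the direction $d_n\to\infty\Rightarrow d_{n_k}\to\infty$ is trivial, and conversely $d_n(x)\ge d_{n_k}(x)$ for $n_k\le n<n_{k+1}$, with infinitely many $n_k$ because $x$ is not a ternary rational. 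By Theorem~\ref{thm2}(c), $T'(x)=+\infty$ if and only if $d_n(x)\to+\infty$. The statement for bounded runs of twos follows in the same way, or from the symmetry $x\mapsto1-x$.

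For (b), let the period start after a preperiod of length $q$. Then $d_{q+jp+r}(x)=d_q(x)+2j(p-\sigma)+O(1)$, with the $O(1)$ uniform in $0\le r<p$. Since $x$ is not a ternary rational, the period is not all zeros and not all twos. Therefore every run of zeros and every run of twos has length at most $p$, except possibly one run inside the preperiod, and all runs are bounded.
\begin{itemize}
\item If $\sigma<p$, then $d_n\to+\infty$, and (a) gives $T'(x)=+\infty$.
\item If $\sigma>p$, then $d_n\to-\infty$, and (a) gives $T'(x)=-\infty$.
\item If $\sigma=p$, then $d_n(x)$ is bounded. Lemma~\ref{lem:runs}, applied with $b$ equal to the maximal run length, bounds $\Delta(x,y)$ by $\sup_n|d_n(x)|+2b+7$ for all $y\ne x$. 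So all four Dini derivatives are finite.
\end{itemize}

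In the balanced case there is no infinite derivative, because the Dini derivatives are finite. There is also no finite derivative, by Theorem~\ref{nodiff}.

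No step is a real obstacle. The only care needed is to justify the uniform bound on run lengths for an eventually periodic expansion that is not a ternary rational, and to treat the finite preperiod. A run straddling the preperiod and the period still has length at most $q+p$, so taking $b=q+p$ suffices.
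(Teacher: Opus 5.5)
Your proposal is correct and follows the paper's proof essentially step for step. In (a) you use the bounded run length to bound the correction term in $\Phi_k$ and then invoke Theorem~\ref{thm2}(c). The paper takes the converse direction from Theorem~\ref{thm2}(a), whereas you prove directly that $d_{n_k}(x)\to+\infty$ if and only if $d_n(x)\to+\infty$; this is only a cosmetic difference. In (b) you use the same ingredients as the paper: the linear growth of $d_n(x)$, bounded runs for a periodic expansion that is not a ternary rational, Lemma~\ref{lem:runs} in the balanced case, and Theorem~\ref{nodiff}.
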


\begin{proof}
(a) Suppose that the runs of zeros have length at most $b$. Then
$n_{k+1}-n_k\le b+1$ and $\Phi_k\ge d_{n_k}(x)-2(b+1)$. Thus
$d_n(x)\to+\infty$ implies $\Phi_k\to+\infty$, and
Theorem~\ref{thm2}(c) gives $T'(x)=+\infty$. The
converse is part of Theorem~\ref{thm2}(a), and the second
statement follows by symmetry.

(b) Since the digits before the period change $d_n(x)$ by a bounded
amount, we have
$d_n(x)=2(1-\sigma/p)\,n+O(1)$. The period is not a string of zeros
or a string of twos, because those expansions belong to ternary
rationals. Consequently the runs of zeros and the runs of twos have bounded
length, and (a) gives the two infinite cases. If $\sigma=p$, then
$d_n(x)$ is bounded, and \eqref{runs1} shows that $\Delta(x,y)$ is bounded
for $y\in[0,1]$, $y\ne x$. Hence the four Dini derivatives are finite and $T$
has no infinite derivative at $x$, while Theorem~\ref{nodiff}
excludes a finite derivative.
\end{proof}

\begin{figure}[htbp]
\centering
\includegraphics{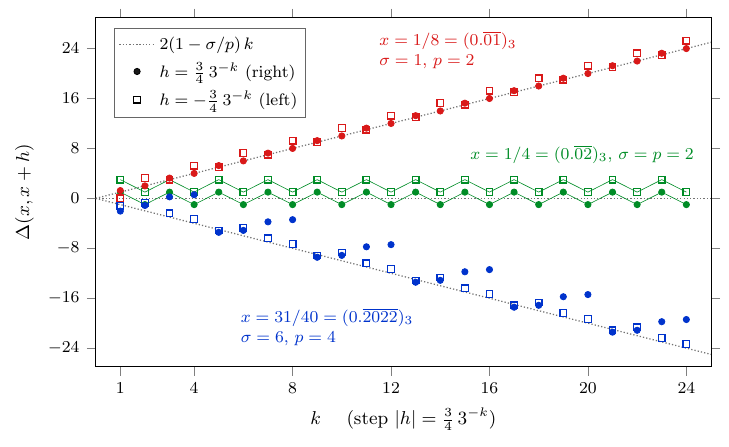}
\caption{The slopes $\Delta(x,x+h)$ of $T$ at $x=1/8$, $1/4$ and $31/40$,
for $h=\pm\frac34\,3^{-k}$.}
\label{fig6}
\end{figure}

For instance, the periods of $1/8=(0.\overline{01})_3$ and
$31/40=(0.\overline{2022})_3$ have the digit sums $1<2$ and $6>4$, so
Corollary~\ref{rat}(b) gives $T'(1/8)=+\infty$ and $T'(31/40)=-\infty$.
By the proof of Corollary~\ref{rat}, the digit slopes grow like
$2(1-\sigma/p)\,n$, that is, like $n$ at $1/8$ and like $-n$ at $31/40$.
Figure~\ref{fig6} shows the slopes $\Delta(x,x+h)$ at these two points and
at the balanced point $1/4$, for $h=\pm\frac34\,3^{-k}$ and $1\le k\le24$.
At $1/8$ and $31/40$ the slopes of $T$ follow the dotted lines $k$ and
$-k$. At $1/4$ they stay bounded without a limit: from the right they
alternate between $1$ and $-1$, and from the left between $3$ and $1$.
These are the four Dini derivatives computed in Proposition~\ref{quarter}
below. The factor $\frac34$ is chosen for the point $1/4$: for $k=1$ and
$k=2$ the points $1/4+h$ are $1/2$ and $1/3$ on the right and $0$ and
$1/6$ on the left. Since $T(1/4)=3/4$, $T(1/2)=1$, $T(1/3)=T(1/6)=2/3$
and $T(0)=0$, the slopes $\Delta(1/4,1/4+h)$ at these four points are
$1$, $-1$, $3$ and $1$, and by the proof of Proposition~\ref{quarter}
they do not change when $h$ is divided by $9$, that is, when $k$ is
replaced by $k+2$. At $1/8$ and $31/40$ we use the same steps for
comparison.

Example~\ref{ex:squares} gives irrational points with an infinite derivative
and with runs of unbounded length. In the next example we construct an
irrational point whose runs of zeros and runs of twos have bounded length,
so that Corollary~\ref{rat}(a) applies to it. Its ternary digits are
obtained by modifying the sequence of residues of $s_3(k)$ modulo $3$,
and the point is a ternary version of Kr\"uppel's example built from the
Thue--Morse sequence~\cite[Section~7.1]{Kruppel2007}. We check the bound on the runs
and the irrationality directly.

\begin{example}
\label{ex:mod3}
For $k\ge0$ let $\epsilon_k\in\{0,1,2\}$ be the residue of $s_3(k)$
modulo $3$,
and for $k\ge1$ let
\[
 \zeta_k=
 \begin{cases}
  2,&\text{if }k=3^\ell+2\text{ for some }\ell\ge1,\\
  \epsilon_k,&\text{otherwise},
 \end{cases}
 ~
 x=\sum_{k=1}^{\infty}\zeta_k3^{-k}.
\]
Then $x$ is irrational, $T'(x)=-\infty$, and $T'(1-x)=+\infty$.
\end{example}

\begin{proof}
Since $\epsilon_{3j+r}\equiv\epsilon_j+r\pmod3$ for $r=0,1,2$, each triple
$(\epsilon_{3j},\epsilon_{3j+1},\epsilon_{3j+2})$ is a permutation
of $0,1,2$, so the sum of $1-\epsilon_k$ over each triple is $0$. A
partial triple contributes between $-1$ and $1$. The term $k=0$
contributes $1$ but is absent from $x$, so
$|\sum_{k=1}^n(1-\epsilon_k)|\le2$ for all $n$. At a modified position
$k=3^\ell+2$ the original digit is $\epsilon_k=0$, so each modification
lowers the digit slope by $4$. Let $N_n$ be the number of modified positions
at most $n$. Then $d_n(x)=2\sum_{k=1}^n(1-\epsilon_k)-4N_n\to-\infty$.

Within a triple no two consecutive digits are equal, so no three consecutive
digits of the unmodified sequence are equal. Around a modified position
$3^\ell+2$, the five digits
$\epsilon_{3^\ell},\ldots,\epsilon_{3^\ell+4}$ are $1,2,0,2,0$, and after
the modification they are $1,2,2,2,0$. Modified positions are at least
$6$ apart, so these strings of five digits do not overlap. In each
string the new run of twos $2,2,2$ lies between a $1$ and a $0$. Thus runs
of zeros still have length at most $2$, and runs of twos have length at
most $3$. In particular the digits are neither eventually $0$ nor
eventually $2$, so $x$ is not a ternary rational. It is in fact
irrational, as we show below. Now Corollary~\ref{rat}(a) gives
$T'(x)=-\infty$, and $T'(1-x)=+\infty$ follows from $T(1-u)=T(u)$.

To prove that $x$ is irrational, suppose that $(\zeta_k)$ is eventually
periodic with period $p$. Then it also has the even period $q=2p$, so
$q\ne3$. We claim that for large $M$, neither $3^M-1$ nor $3^M-1+q$ is a
modified position. Indeed, the first would require $3^M-3^\ell=3$, and the
second $3^M-3^\ell=3-q$. Both right sides are nonzero: the first is $3$,
and the second is $3-q\ne0$. Since
$|3^M-3^\ell|\ge2\cdot3^{\max\{M,\ell\}-1}$ for $M\ne\ell$, each equation
has only finitely many solutions $(M,\ell)$. Periodicity then gives
$\epsilon_{3^M-1}=\epsilon_{3^M-1+q}$ for all large $M$. But the
ternary digits of $3^M-1$ are $M$ twos, so $s_3(3^M-1)=2M$. Also,
$3^M-1+q=3^M+(q-1)$ has digit sum $1+s_3(q-1)$ once $3^M>q$. Thus
$2M\equiv1+s_3(q-1)\pmod3$ for all large $M$, which is impossible,
because the left side takes every residue as $M$ varies. This
contradiction shows that the expansion is not eventually periodic, and
$x$ is irrational.
\end{proof}

In the balanced case of Corollary~\ref{rat}(b) the four Dini derivatives
are finite, and at the balanced point $1/4=(0.\overline{02})_3$ we
compute them exactly (see Figure~\ref{fig7}). Their values show that
$\Delta(1/4,y)$ has no
limit as $y\downarrow1/4$ or as $y\uparrow1/4$, so $T$ has no derivative
from either side, finite or infinite.
Kr\"uppel~\cite[Lemma~6.2]{Kruppel2007} carried out the corresponding
computation for Takagi's function at $1/3=(0.\overline{01})_2$.

\begin{proposition}
\label{quarter}
At $x=1/4$,
\[
 D^+T(x)=1,~ D_+T(x)=-1,~
 D^-T(x)=3,~ D_-T(x)=1.
\]
\end{proposition}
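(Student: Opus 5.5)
The plan is to use the self-similarity of $T$ at $1/4$, which reduces each Dini derivative to an extremal slope over a compact range of $y$. Then I compare $T$ with explicit lines using Proposition~\ref{T2x} and the concave hull of $T$.

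\emph{Self-similarity.} Since $9\cdot\frac14=2+\frac14$, the point $1/4$ lies in the level $2$ interval $[2/9,1/3]$, for which $m=2$ and $s_3(2)=2$, so its slope is $0$. Hence \eqref{cell} gives $T((2+t)/9)=T(2/9)+T(t)/9$ for $0\le t\le1$. Put $Q(h)=\Delta(1/4,1/4+h)$. Taking $t=1/4$ and $t=1/4+h$ and subtracting gives $Q(h/9)=Q(h)$ whenever $-1/4\le h\le3/4$, $h\ne0$. Iterating, every small $h>0$ is $9^{-j}h'$ with $h'\in(1/12,3/4]$, so $D^+T(1/4)$ and $D_+T(1/4)$ are the supremum and infimum of $\Delta(1/4,y)$ over $y\in(1/3,1]$. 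Likewise every small $h<0$ is $9^{-j}h'$ with $h'\in[-1/4,-1/36)$, so $D^-T(1/4)$ and $D_-T(1/4)$ are the supremum and infimum of $\Delta(1/4,y)$ over $y\in[0,2/9)$. Since these suprema and infima are attained in the ranges (see below), this step is rigorous.

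\emph{Right side.} With $T(1/4)=3/4$, the claim $-1\le\Delta(1/4,y)\le1$ on $(1/3,1]$ is equivalent to $1-y\le T(y)\le y+\tfrac12$ there.
\begin{itemize}
\item Lower bound: substitute $u=1-y\in[0,2/3)$ and use $T(1-u)=T(u)$. On $[0,1/2]$, Proposition~\ref{T2x} gives $T(u)\ge2u\ge u$. On $[1/2,2/3]$, the same proposition and symmetry give $T(u)\ge2(1-u)\ge u$.
\item Upper bound: for $y\ge1/2$ it follows from $T\le1$. For $y\in[1/3,1/2]$ I use the concave hull of Galkin and Galkina quoted before Lemma~\ref{lem:tilt}: its edge over $[1/6,1/2]$ has slope $1$ and passes through $(1/2,1)$, so $T(y)\le y+\tfrac12$.
\item Attainment: $y=1/2$ gives the value $1$, and $y=1$ gives $(0-3/4)/(3/4)=-1$.
\end{itemize}
So $D^+T(1/4)=1$ and $D_+T(1/4)=-1$.

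\emph{Left side.} For $y\in[0,2/9)$, $\Delta=(3/4-T(y))/(1/4-y)$.
\begin{itemize}
\item $\Delta\ge1$ is again $T(y)\le y+\tfrac12$, which holds by the hull (the hull lies below the line $y+\tfrac12$ of slope $1$ through $(1/2,1)$ everywhere, since it is concave with that line as a supporting edge). Equality holds at $y=1/6$, where $T(1/6)=2/3$ and $\Delta=1$.
\item $\Delta\le3$ is equivalent to $T(y)\ge3y$. Writing $y=v/3$ with $v\in[0,2/3)$, the first branch of \eqref{branches} turns this into $T(v)\ge v$, which was shown above. Equality holds at $y=0$, where $\Delta=3$.
\end{itemize}
So $D^-T(1/4)=3$ and $D_-T(1/4)=1$.

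The main obstacle is the upper bound $T(y)\le y+\tfrac12$, since odd slopes are not covered by Lemma~\ref{lem:tilt}. I would rely on the cited hull result, \cite[Theorem~9]{GalkinGalkina2020}. As a fallback I would prove the bound on $[0,1/2]$ directly by self-similarity:
\begin{itemize}
\item On $[1/3,1/2]$ the second branch of \eqref{branches} reduces it to the same inequality on $[0,1/2]$.
\item On $[0,1/3]$, with $y=v/3$, the first branch reduces it to $T(v)\le v+\tfrac32$, which holds since $T\le1$.
\end{itemize}
Then I would close the recursion by a sup argument: let $c=\sup_{[0,1/2]}(T(y)-y-\tfrac12)$ and show $c\le c/3$ or $c\le0$ from the two reductions, so that $c\le0$.
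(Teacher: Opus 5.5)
Your proof is correct, but it is organized differently from the paper's. The paper proves its comparison bounds on the whole period. It shows $T(u)\le u+\tfrac12$ on $[0,1]$ by a self-contained three-branch argument, namely $M\le(M+1)/3$ for $M=\max(T(u)-u)$. It shows $T(u)+u\ge1$ on all of $[1/4,1]$ by a minimum argument that uses the scaled copy of $T$ on $[1/4,1/3]\subset[2/9,1/3]$, and it shows $T(u)\ge3u$ on $[0,1/4]$. Only afterwards does it use the map $u\mapsto(2+u)/9$, to show that the four bounds are attained arbitrarily close to $1/4$ (at $1/2$, $1/3$, $1/5$, $2/9$ and their images).

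You apply the scaling first, which reduces each Dini derivative to an extremum over a single fundamental range, $y\in(1/3,1]$ or $y\in[0,2/9)$. On these ranges the lower bounds only require $T(u)\ge u$ on $[0,2/3]$, which is immediate from Proposition~\ref{T2x} and symmetry. So you avoid the paper's minimum argument on $[1/4,1/3]$ and the computation of $T(1/5)$, and your witnesses $1/2$, $1$, $1/6$, $0$ are simpler. In exchange, you import $T(y)\le y+\tfrac12$ from the Galkin--Galkina hull theorem, which the paper proves directly. You also lose the paper's global statement that the whole graph over $[0,1]$ stays between the four half lines from $(1/4,3/4)$ (Figure~\ref{fig7}).

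Your fallback for the upper bound contains a slip. On $[0,1/3]$, put $v=3y$. The first branch of \eqref{branches} gives $T(y)-y-\tfrac12=\tfrac13\bigl(T(v)+v-\tfrac32\bigr)$, so the reduced inequality is $T(v)\le\tfrac32-v$, not $T(v)\le v+\tfrac32$. The bound $T\le1$ settles it only for $v\le\tfrac12$. Near $y=1/6$, where $T(1/6)=\tfrac16+\tfrac12$ holds with equality, the inequality is genuinely nontrivial. For $v\in[\tfrac12,1]$, use $T(v)=T(1-v)$ to rewrite it as $T(w)\le w+\tfrac12$ with $w=1-v\in[0,\tfrac12]$. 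This part is then also bounded by $c/3$, and $c\le\max\{c/3,0\}$ still gives $c\le0$. With this repair your fallback is essentially the paper's own proof of $T(u)\le u+\tfrac12$.
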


\begin{proof}
The first branch of \eqref{branches} with $y=3/4$, together with
$T(3/4)=T(1/4)$, gives $T(1/4)=3/4$. We need two more values. First,
$T(2/9)=2/3$ by \eqref{T}, since $3^j\cdot2/9$ has
distance $2/9$, $1/3$ and then $0$ from $\Z$. Second, $T(1/5)=7/10$, which
we obtain by summing the geometric series over the period
$1/5,3/5,4/5,2/5$ of $x\mapsto3x\bmod1$:
\[
 T(1/5)=\frac{2/5+2/9+2/45+2/81}{1-3^{-4}}=\frac7{10}.
\]

We first show that $\Delta(1/4,u)\in[-1,1]$ for every $u\in(1/4,1]$, and
$\Delta(1/4,u)\in[1,3]$ for every $u\in[0,1/4)$. Let
$M=\max_{0\le u\le1}(T(u)-u)$ and write $u=(i+v)/3$ with $0\le v\le1$.
We bound $T(u)-u$ in the three cases $i=0,1,2$, using
\eqref{branches} and symmetry. For $i=0$ it equals $(T(v)+v)/3$,
which is at most $(M+1)/3$ because $T(v)+v=T(1-v)-(1-v)+1$. If $i=1$, it
equals $(1+T(v)-v)/3\le(M+1)/3$, and if $i=2$, it equals
$(T(v)-3v)/3\le M/3$. Hence $M\le(M+1)/3$, so $M\le1/2$, that is,
\begin{equation}
 T(u)\le u+\tfrac12~(0\le u\le1).
 \label{quarter1}
\end{equation}
Equality holds at $u=1/4$, since $T(1/4)-1/4=1/2$. By the symmetry
$T(1-u)=T(u)$, the bound \eqref{quarter1} together with this equality is
equivalent to the theorem of Galkin and
Galkina~\cite[Theorem~4]{GalkinGalkina2020} that the maximum of $T(x)+x$
on $[0,1]$ is $3/2$.

Moreover, $T(u)+u\ge1$ on $[1/4,1]$. On $[1/3,2/3]$ this holds
because $T(u)\ge\tau(u)=2/3$, and on $[2/3,1]$ because the third
branch gives $T(u)\ge2(1-u)$. The interval $[1/4,1/3]$ lies in
the level $2$ interval $[2/9,1/3]$, whose slope is zero because
$s_3(2)=2$. By \eqref{cell}, we have $T(u)+u=8/9+(T(v)+v)/9$ on
$[1/4,1/3]$, with $v=9u-2$, and then $v\in[1/4,1]$. Let $u_0\in[1/4,1]$
be a point at which $T(u)+u$ is smallest. If $T(u_0)+u_0<1$, then
$u_0<1/3$, and $v_0=9u_0-2$ gives
$T(v_0)+v_0-1=9\bigl(T(u_0)+u_0-1\bigr)<T(u_0)+u_0-1$, which is
impossible. Hence $T(u)+u\ge1$ on $[1/4,1]$, with equality for instance at
$u=1/4$.
Together with \eqref{quarter1}, this gives, for $1/4<u\le1$,
\[
 -1\le\frac{T(u)-T(1/4)}{u-1/4}\le1.
\]
We also have $T(v)\ge v$ for $0\le v\le3/4$. For $v\le2/3$
this follows from $T(v)\ge\tau(v)$, and for $2/3\le v\le3/4$ from
$T(v)=T(1-v)\ge1-(1-v)$, which is the bound $T(u)+u\ge1$ at
$u=1-v\in[1/4,1/3]$. The first branch then gives
$T(u)=2u+T(3u)/3\ge3u$ for $0\le u\le1/4$. Combined with
\eqref{quarter1}, this gives, for $0\le u<1/4$,
\[
 1\le\frac{T(u)-T(1/4)}{u-1/4}\le3.
\]

It remains to show that all four bounds are attained. The map
$u\mapsto(2+u)/9$ fixes $1/4$. On the interval $[2/9,1/3]$ the slope is
zero, so \eqref{cell} gives
\[
 \frac{T(1/4+h/9)-T(1/4)}{h/9}=\frac{T(1/4+h)-T(1/4)}h
 ~\text{whenever }1/4+h\in[0,1].
\]
Since $T(1/2)=1$ and $T(1/3)=\tau(1/3)=2/3$, we have $\Delta(1/4,1/2)=1$ and
$\Delta(1/4,1/3)=-1$. The values of $T(1/5)$ and $T(2/9)$ computed
above give $\Delta(1/4,1/5)=1$ and $\Delta(1/4,2/9)=3$. Dividing $h$
repeatedly by $9$ preserves each of these slopes and sends $h$ to $0$, so
the four bounds are the four Dini derivatives.
\end{proof}

By the proof of Proposition~\ref{quarter}, the graph of $T$ over $[0,1]$
stays in the
regions between the four half lines from $(1/4,3/4)$ with the slopes $1$
and $-1$ to the right and $3$ and $1$ to the left. It touches each of
these half lines, for instance at $1/5$, $2/9$, $1/3$ and $1/2$.
Figure~\ref{fig7} shows the half lines, with these regions shaded. By
\eqref{cell}, $T((2+t)/9)=2/3+T(t)/9$ for $0\le t\le1$, so the scaling by
$1/9$ about $(1/4,3/4)$ maps the whole picture onto the small rectangle,
which the inset enlarges. The contact points there are $11/45$, $20/81$,
$7/27$ and $5/18$.

\begin{figure}[htbp]
\centering
\includegraphics{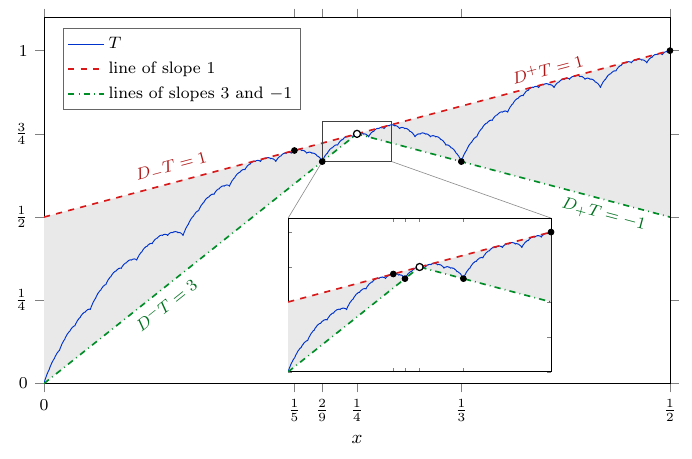}
\caption{The function $T$ on $[0,1/2]$ and the four half lines from
$(1/4,3/4)$ whose slopes are the Dini derivatives of $T$ at $1/4$.}
\label{fig7}
\end{figure}

Finally, we consider points with sparse ternary expansions, the ternary
version of the points $\sum2^{-a_j}$ studied by
Kr\"uppel~\cite[Section~7.2]{Kruppel2007} and by Allaart and
Kawamura~\cite[Section~2]{AllaartKawamura2010}. These points also give
simple examples in which the condition in Theorem~\ref{thm2}(b) that involves the
runs of zeros makes the two sides behave differently (see
Figure~\ref{sparsepic}).

\begin{proposition}
\label{sparse}
Let $p_1<p_2<\cdots$ be positive integers, not eventually consecutive,
and let $x=\sum_{j\ge1}2\cdot3^{-p_j}$. Then $T'_+(x)=+\infty$ if and
only if $p_k-2k\to+\infty$, and $T'(x)=+\infty$ if and only if
\[
 2p_k-p_{k+1}-2k+\log_3(p_{k+1}-p_k)\to+\infty.
\]
If $p_{k+1}\ge4p_k$ for all $k$, then $T'_+(x)=+\infty$, but
$D_-T(x)=-\infty$.
\end{proposition}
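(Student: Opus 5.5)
The plan is to read off the digit data of $x$ and apply Theorem~\ref{thm2}, together with the explicit test points built in the converse half of its proof.

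\emph{Setup.} The ternary digits of $x$ are $2$ at the positions $p_j$ and $0$ elsewhere. There are infinitely many twos, and they are not eventually consecutive, so the expansion is neither eventually $0$ nor eventually $2$. Hence $x\in(0,1)$ is not a ternary rational, and Theorem~\ref{thm2} applies. The nonzero digits sit at $n_k=p_k$. Each zero adds $2$ to the digit slope and each two subtracts $2$, so $d_n(x)=2(n-2\#\{j:p_j\le n\})$. In particular $d_{p_k}(x)=2(p_k-2k)$.

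\emph{Right derivative.} For $p_k\le n<p_{k+1}$ we have $d_n(x)=2(n-2k)\ge d_{p_k}(x)$, and also $d_{p_k-1}(x)=d_{p_k}(x)+2$. So $\min_{n\ge p_k}d_n(x)$ is comparable to $\min_{i\ge k}d_{p_i}(x)$, and $d_n(x)\to+\infty$ if and only if $p_k-2k\to+\infty$. Theorem~\ref{thm2}(a) then gives the first equivalence.

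\emph{Two-sided derivative.} Substituting into $\Phi_k$ gives
\[
\Phi_k=2(p_k-2k)-2(p_{k+1}-p_k)+2\log_3(p_{k+1}-p_k)=2\bigl(2p_k-p_{k+1}-2k+\log_3(p_{k+1}-p_k)\bigr).
\]
So the condition $\Phi_k\to+\infty$ of Theorem~\ref{thm2}(b) is exactly the displayed condition, and Theorem~\ref{thm2}(c) gives the second equivalence.

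\emph{The sparse case.} Now assume $p_{k+1}\ge4p_k$. Then $p_k\ge4^{k-1}p_1$, so $p_k-2k\to+\infty$ and $T'_+(x)=+\infty$ by the first part.

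For $D_-T(x)$, put $g=p_{k+1}-p_k\ge3p_k$. The function $g\mapsto -g+\log_3 g$ is decreasing for $g\ge1$, so
\[
2p_k-p_{k+1}+\log_3 g=p_k-g+\log_3 g\le -2p_k+1+\log_3 p_k .
\]
This tends to $-\infty$, hence $\Phi_k\to-\infty$.

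Next I reuse the construction in the converse half of the proof of Theorem~\ref{thm2}(b). With $N=n_k$, $\lambda=g$ and $y_k=a_N-\lambda h$, that argument shows $y_k<x$, $x-y_k\to0$ and $\Delta(y_k,x)<\Phi_k+10$. The derivation of this bound used only \eqref{ends}, \eqref{leftnb} and \eqref{theta}. It did not use the hypothesis $T'_-(x)=+\infty$, so it holds for every $k$.

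I expect the main point to check is exactly this: that the inequality $\Delta(y_k,x)<\Phi_k+10$ is unconditional, and that $y_k\in[a_N-3^{-N},a_N)$ still holds here. The latter needs $g\le3^{L}$ with $L=g-1$, which is always true. Granting that, $\Delta(y_k,x)\to-\infty$ along $y_k\uparrow x$, so $D_-T(x)=-\infty$.
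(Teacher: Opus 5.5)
Your proof is correct. For the two equivalences it is the paper's argument: $d_{p_k}(x)=2(p_k-2k)$, $\Phi_k$ is twice the displayed quantity, and Theorem~\ref{thm2}(a) and (c) finish.

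For $D_-T(x)=-\infty$ you take a different route.
\begin{itemize}
\item The paper does not reopen the proof of Theorem~\ref{thm2}. It takes the fixed ratio $\lambda=3$, i.e.\ $y_j=a_{p_j}-3h$, and uses $t=t_{p_j}<1/9$ together with the identity $T(3t)=3T(t)-6t$. This gives $\Delta(y_j,x)=d_{p_j}(x)+3-T(t)/(2t)$ exactly, which it bounds by $3p_j-p_{j+1}-4j+5\le-p_j-4j+5$.
\item You observe, correctly, that the converse half of the proof of Theorem~\ref{thm2}(b) never uses $T'_-(x)=+\infty$. It therefore proves unconditionally that $\Delta(y_k,x)<\Phi_k+10$ with $0<x-y_k<2\cdot3^{-n_k}$. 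In other words, $D_-T(x)\le\liminf_k\Phi_k+10$ for every $x$ that is not a ternary rational. The sparse case then reduces to the estimate $\Phi_k\le-4p_k-4k+2+2\log_3p_k$.
\end{itemize}

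Your route has three advantages. It is shorter. It yields a general criterion: any such $x$ with $\liminf_k\Phi_k=-\infty$ has $D_-T(x)=-\infty$. And because $\lambda=g$ is essentially the worst ratio, it gives a steeper bound for large gaps, about $2p_k-2(p_{k+1}-p_k)$ against the paper's $2p_k-(p_{k+1}-p_k)$.

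The paper's route is self-contained and uses only stated results. It also produces the explicit bound $-p_j-4j+5$, which is plotted in the example after the proposition.

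If you write yours up, state the unconditional inequality as a separate lemma rather than citing the interior of another proof.
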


\begin{proof}
The expansion of $x$ has the digit $2$ at the positions $p_j$ and $0$
elsewhere. The point $x$ is not a ternary rational, because the gaps
$p_{j+1}-p_j$ are not eventually equal to $1$. Its nonzero digits are at
the positions $n_k=p_k$, and $d_{p_k}(x)=2p_k-4k$. Between consecutive
positions $p_k$ the digit slope increases, so $d_n(x)\to+\infty$ if and
only if $p_k-2k\to+\infty$. Also,
$\Phi_k=2p_k-4k-2(p_{k+1}-p_k)+2\log_3(p_{k+1}-p_k)$, which is twice
the quantity displayed in the statement. Theorem~\ref{thm2}(a) and (c)
now give the two characterizations.

Now let $p_{k+1}\ge4p_k$ for all $k$. Then $p_k\ge4^{k-1}$, so
$p_k-2k\to+\infty$ and $T'_+(x)=+\infty$. Fix $j$ and put $N=p_j$,
$t=t_N$ and $h=x-a_N=3^{-N}t$. The exponents $N-p_i$, $i>j$, decrease
by at least $3$ at each step, so
$t=\sum_{i>j}2\cdot3^{N-p_i}<3^{-(p_{j+1}-N-1)}\le1/9$. The point
$y_j=a_N-3h$ lies in $[a_N-3^{-N},a_N)$, and $\xi_N=2\ne0$, so we may
apply \eqref{ends} and \eqref{leftnb}. With $T(3t)=3T(t)-6t$, which
follows from \eqref{fe} because $\tau(t)=2t$ for $0\le t\le1/3$, they give
\[
 \frac{T(x)-T(a_N)}{h}=d_N(x)+\frac{T(t)}t,
 ~
 \frac{T(a_N)-T(y_j)}{3h}=d_N(x)+2-\frac{T(3t)}{3t}
 =d_N(x)+4-\frac{T(t)}t,
\]
and $\Delta(y_j,x)$, which is the weighted mean of these two slopes with
weights $1$ and $3$, is
\[
 d_N(x)+3-\frac{T(t)}{2t}
 <2p_j-4j+3-\bigl(p_{j+1}-p_j-2\bigr)
 =3p_j-p_{j+1}-4j+5
 \le-p_j-4j+5.
\]
The first inequality holds by \eqref{Thbd}, because
$T(t)/(2t)\ge\lfloor\log_3(1/t)\rfloor>\log_3(1/t)-1>p_{j+1}-p_j-2$, and the
last one holds because $p_{j+1}\ge4p_j$. The right side tends to
$-\infty$, and $x-y_j=4h\to0$, so $D_-T(x)=-\infty$.
\end{proof}

For $p_k=k^2$, Proposition~\ref{sparse} shows that $T$ has the derivative
$+\infty$ at $\sum_{k\ge1}2\cdot3^{-k^2}$, the point of
Example~\ref{ex:squares} with the digit $2$ in place of $1$. Indeed, the
quantity displayed in Proposition~\ref{sparse} is then
$k^2-4k-1+\log_3(2k+1)$, which tends to $+\infty$.

To see how the slopes on the two sides behave, let $p_j=4^{j-1}$, so that
$x=2/3+2/81+2\cdot3^{-16}+2\cdot3^{-64}+\cdots$. By
Proposition~\ref{sparse}, $T'_+(x)=+\infty$ and $D_-T(x)=-\infty$. Let
$p_j\le n<p_{j+1}$. Then $a_n=a_{p_j}$, and the slope of $T$ between $a_n$
and $c_n$ is the digit slope $d_n(x)=2n-4j$. By \eqref{leftnb} at level
$p_j$, Proposition~\ref{incr}(b) and $T(1)=0$, the slope between
$a_n-3^{-n}$ and
$a_n$ is $4p_j-4j+2-2n$. Moving the three points by $h=x-a_n$ gives
$x-3^{-n}$, $x$ and $x+3^{-n}$, and by Proposition~\ref{incr}(a) this
shift changes each of the two slopes by at most $2\cdot3^nT(h)$. As
$2\cdot3^{-p_{j+1}}<h<3^{1-p_{j+1}}$, the bound \eqref{Thbd} gives
$2\cdot3^nT(h)\le2(2p_{j+1}+1)3^{n+1-p_{j+1}}$, which is small unless $n$
is close to $p_{j+1}$. Thus $\Delta(x,x+3^{-n})$ grows with the digit
slopes, while $\Delta(x-3^{-n},x)$ decreases along the numbers
$4p_j-4j+2-2n$ until $n$ comes close to $p_{j+1}$. Before $n$ reaches
$p_{j+1}$, the slope on the left falls far below the bound $-p_j-4j+5$
that the proof of Proposition~\ref{sparse} gives for the slope at
$y_j=a_{p_j}-3h$. Figure~\ref{sparsepic} shows both families of slopes,
and its wide grey lines show the digit slopes $d_n(x)$ and the numbers
$4p_j-4j+2-2n$. It also shows the slopes $\Delta(y_j,x)$ (diamonds),
plotted at $n=\log_3(1/(x-y_j))$, with their bounds $-7$,
$-23$ and $-75$
(bars), for $j=2,3$ in the left panel and $j=4$ in the right one. The
vertical lines mark the positions $p_2=4$, $p_3=16$, $p_4=64$ and
$p_5=256$ of the digits $2$, and the rectangle in the right panel is the
range of the left panel.

\begin{figure}[htbp]
\centering
\includegraphics{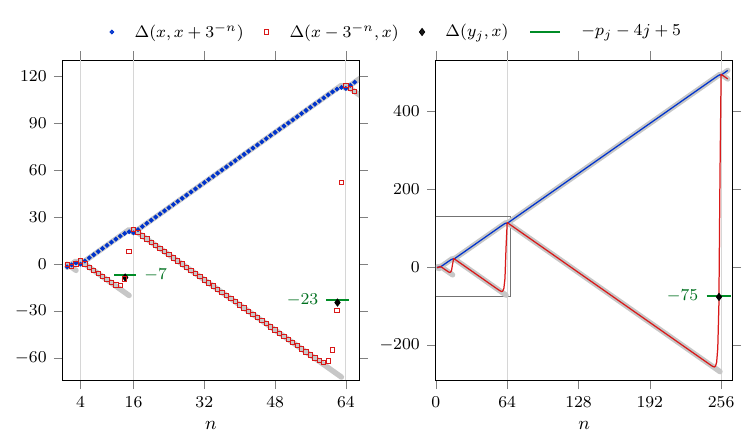}
\caption{The slopes $\Delta(x,x+3^{-n})$ and $\Delta(x-3^{-n},x)$ of $T$
at $x=\sum_{j\ge1}2\cdot3^{-4^{j-1}}$, for $n\le66$ (left) and $n\le262$
(right).}
\label{sparsepic}
\end{figure}

\section{The variation at each level}
\label{sec:var}

\begin{definition}
\label{variation}
For $n\ge1$, the variation of $T$ at level $n$ is the number
\begin{equation}
 V_n=\sum_{m=0}^{3^n-1}
 \left|T\!\left(\frac{m+1}{3^n}\right)-T\!\left(\frac m{3^n}\right)\right|
 =3^{-n}\sum_{m=0}^{3^n-1}\bigl|\xi_n(m+1)-\xi_n(m)\bigr|,
 \label{Vn}
\end{equation}
that is, the variation of $T$ over the partition of $[0,1]$ into the
ternary intervals of level $n$, or the average of the absolute values of
the slopes of these intervals. We put $V_0=0$.
\end{definition}

The second form of \eqref{Vn} follows from the sampling equation
\eqref{samp}. By Theorem~\ref{Fn}, $V_n$ is also the total variation of
$F_n$ on $[0,1]$. Since the partition of $[0,1]$ into the ternary
intervals of level $n+1$ refines the partition into those of level $n$,
the sequence $(V_n)$ is nondecreasing.
Let $C_n$ be the coefficient of $z^n$ in $(1+z+z^2)^n$, the central
trinomial coefficient. The first values $C_0,C_1,C_2,C_3,\ldots$ are
$1,1,3,7,19,51,\ldots$~\cite{OEIS-A002426}. We show that the increments
of $V_n$ are given by the central trinomial coefficients.

\begin{theorem}
\label{varthm}
For every $n\ge0$,
\begin{equation}
 V_{n+1}-V_n=\frac{4C_n}{3^{n+1}}.
 \label{Vrec}
\end{equation}
Consequently
\begin{equation}
 V_n=\frac43\sum_{j=0}^{n-1}\frac{C_j}{3^j}
 ~\text{and}~
 \lim_{n\to\infty}\frac{V_n}{\sqrt n}=\frac4{\sqrt{3\pi}}.
 \label{Vlim}
\end{equation}
In particular $T$ has infinite total variation on every interval of
positive length.
\end{theorem}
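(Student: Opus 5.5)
The approach is to write $V_n$ as a sum over the digit sums of $m$, and then compare level $n+1$ with level $n$ one ternary interval at a time. By \eqref{diff} and \eqref{Vn},
\[
 V_n=3^{-n}\sum_{m=0}^{3^n-1}2\,|n-s_3(m)|.
\]
The interval $[m/3^n,(m+1)/3^n]$ has slope $2d$ with $d=n-s_3(m)$. Its three thirds are the level $n+1$ intervals with $m'=3m+e$, $e=0,1,2$, so their slopes are $2(d+1)$, $2d$ and $2(d-1)$. Each third has length $3^{-n-1}$. Hence the contribution of this interval to $V_{n+1}-V_n$ is
\[
 3^{-n-1}\cdot2\bigl(|d+1|+|d|+|d-1|-3|d|\bigr).
\]
The bracket equals $|d+1|+|d-1|-2|d|$, which is $0$ for $d\ne0$ and $2$ for $d=0$. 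So $V_{n+1}-V_n=4\cdot3^{-n-1}\,\#\{0\le m<3^n:s_3(m)=n\}$.

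The count of such $m$ is the coefficient of $z^n$ in $(1+z+z^2)^n$, since each of the $n$ digits contributes $z^0$, $z^1$ or $z^2$. That coefficient is $C_n$, which gives \eqref{Vrec}. The check at $n=0$ works: $V_1=\frac{1}{3}(2+0+2)=\frac43=4C_0/3$. Summing from $V_0=0$ then gives the formula for $V_n$.

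For the limit I need the asymptotic $C_j\sim3^{j+1/2}/(2\sqrt{\pi j})$, that is, $\sqrt j\,C_j/3^j\to\sqrt3/(2\sqrt\pi)$. This is the main step that needs care, and Proposition~\ref{optcount} refers back to it. I would prove it with the local central limit theorem: take i.i.d.\ steps uniform on $\{-1,0,1\}$, with variance $2/3$, so that $C_j/3^j=P(S_j=0)\sim1/\sqrt{2\pi j\cdot2/3}=\sqrt3/(2\sqrt{\pi j})$. Alternatively, apply Laplace's method to $C_j=\frac1{2\pi}\int_{-\pi}^{\pi}(1+2\cos\theta)^j\,d\theta$, using $1+2\cos\theta\approx3e^{-\theta^2/3}$ near $0$ and the fact that $|1+2\cos\theta|<3$ is bounded away from $3$ elsewhere. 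Laplace's method is self-contained, so I would write it out. Then Stolz--Ces\`aro with $b_n=\sqrt n$, where $b_{n+1}-b_n\sim1/(2\sqrt n)$, gives
\[
 \frac{V_n}{\sqrt n}\to\frac43\cdot\frac{\sqrt3}{2\sqrt\pi}\cdot2=\frac{4}{\sqrt{3\pi}}.
\]

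For infinite variation on every interval, any interval of positive length contains a ternary interval $I=[m/3^n,(m+1)/3^n]$. By \eqref{cell}, $T$ on $I$ is a linear function plus $3^{-n}T(3^n\cdot-m)$. The variation of $T$ on $I$ over the refined partitions of level $n+k$ is therefore at least $3^{-n}V_k$ minus the bounded variation of the linear part, which is at most $2|d|3^{-n}$. Since $V_k\to\infty$, this tends to infinity.
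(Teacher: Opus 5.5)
Your proof is correct and is essentially the paper's argument. Splitting each level-$n$ interval of slope $2d$ into thirds of slopes $2(d+1)$, $2d$, $2(d-1)$ is the paper's recursion $A_{n+1}=3A_n+2C_n$, which comes from $|S-1|+|S|+|S+1|$ being $3|S|+2$ for $S=0$ and $3|S|$ otherwise; the paper adds the new digit at the top rather than the bottom, which changes nothing. The limit likewise rests on the same Laplace-type analysis of $C_n/3^n=\frac1{2\pi}\int_{-\pi}^{\pi}\bigl(\frac{1+2\cos\theta}3\bigr)^n\,d\theta$ followed by summing $C_j/3^j\sim\sqrt3/(2\sqrt{\pi j})$. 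The only detail to add is that an arbitrary interval contains an integer translate of a ternary interval, so you should invoke the periodicity of $T$ before applying \eqref{cell}. After that, your explicit lower bound $3^{-n}(V_k-2|d|)$ correctly spells out the final step.
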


\begin{proof}
By \eqref{Vn} and \eqref{diff} we have
\[
 V_n=\frac2{3^n}A_n,~ A_n=\sum_{m=0}^{3^n-1}\bigl|n-s_3(m)\bigr|.
\]
Let $x_0,\ldots,x_{n-1}$ be the ternary digits of $m$, as in
Section~\ref{sec:graph}, and
put $e_i=1-x_i\in\{-1,0,1\}$. As $m$ ranges over $0,\ldots,3^n-1$, the word
$e_{n-1}\ldots e_0$ takes each value in $\{-1,0,1\}^n$ exactly once, and
$n-s_3(m)=e_0+\cdots+e_{n-1}$. Thus $A_n$ is the sum of
$|e_0+\cdots+e_{n-1}|$ over all these words. Since the words with
$e_0+\cdots+e_{n-1}=0$ correspond to the words in $\{0,1,2\}^n$ with digit
sum $n$, there are $C_n$ of them. We write a word of length $n+1$ as one more
letter $e_n\in\{-1,0,1\}$
followed by a word of length $n$, with sum $S$ say. For every integer $S$,
\[
 |S-1|+|S|+|S+1|=
 \begin{cases}
  3|S|+2,&S=0,\\
  3|S|,&S\ne0.
 \end{cases}
\]
Hence $A_{n+1}=3A_n+2C_n$ for $n\ge0$, with $A_0=0$. Multiplying by
$2/3^{n+1}$ we obtain \eqref{Vrec}, and summing gives the first formula
in \eqref{Vlim}.

For the limit, note that $C_n$ is the constant term of $(z^{-1}+1+z)^n$.
Taking $z=e^{i\theta}$ gives
\[
 \frac{C_n}{3^n}=\frac1{2\pi}\int_{-\pi}^{\pi}g(\theta)^n\,d\theta,
 ~
 g(\theta)=\frac{1+2\cos\theta}3=1-\frac43\sin^2\frac\theta2.
\]
Since $|\sin(\theta/2)|\ge|\theta|/\pi$, we have
$0\le g(\theta)\le1-4\theta^2/(3\pi^2)\le
e^{-4\theta^2/(3\pi^2)}$ for $|\theta|\le2\pi/3$. On the remaining range
$2\pi/3<|\theta|\le\pi$ we have $|g(\theta)|\le1/3$, so this range
contributes at most
$3^{-n}$ in absolute value to $C_n/3^n$, and $\sqrt n\,3^{-n}\to0$. Since
$g(\theta)=1-\theta^2/3+O(\theta^4)$, the substitution
$\theta=t/\sqrt n$ and dominated convergence give
\[
 \sqrt n\,\frac{C_n}{3^n}\to\frac1{2\pi}\int_{-\infty}^{\infty}e^{-t^2/3}\,dt
 =\frac{\sqrt3}{2\sqrt\pi}~(n\to\infty).
\]
Together with $\sum_{1\le j<n}j^{-1/2}=2\sqrt n+O(1)$, this gives
$\sum_{j<n}C_j/3^j=(\sqrt3/\sqrt\pi+o(1))\sqrt n$. Then the first formula
in \eqref{Vlim} gives
$V_n/\sqrt n\to\frac43\cdot\frac{\sqrt3}{\sqrt\pi}=\frac4{\sqrt{3\pi}}$.
Finally, $V_n\to\infty$, so the total variation of $T$ on $[0,1]$ is
infinite. Every interval of positive length contains a ternary interval
shifted by an integer. By periodicity and \eqref{cell}, on this
shifted interval $T$ is a linear function plus a scaled copy of $T$, so the
variation of $T$ there is also infinite.
\end{proof}

The first values are $V_1=4/3$, $V_2=16/9$, $V_3=20/9$ and
$V_4=208/81$. Lagarias~\cite[Theorem~11.4]{Lagarias2012} uses the binary
version of the formula for $V_{n+1}-V_n$, with central binomial coefficients, to show that
the graph of Takagi's function has infinite length. Variation sums of
series of the form $\sum_{j\ge0}\alpha^j\varphi(b^jx)$, with an integer
$b\ge2$, over the partitions of $[0,1]$ into the intervals
$[m/b^n,(m+1)/b^n]$ have been
studied by Schied and Zhang~\cite{SchiedZhang2020}, by Han, Schied and
Zhang~\cite{HanSchiedZhang2021}, and by Barczy and
Kern~\cite{BarczyKern2026}. Our
generator $\tau$ satisfies the hypotheses of Barczy and Kern: it has period
$1$, is zero at the integers, is continuous and piecewise linear, and its
Fourier coefficients $\int_0^1\tau(x)e^{-2\pi ikx}\,dx$ are zero for every
nonzero multiple $k$ of $3$ (see the proof of Theorem~\ref{fourier}). The
limit in \eqref{Vlim} can also be derived from their
results~\cite[proof of Theorem~2.5 and Lemma~C.1]{BarczyKern2026}.

To measure the graph of $T$ at small scales we use, besides the Hausdorff
dimension, the following notions of
dimension~\cite[Sections~1.2 and~2.1]{Fraser2021}.

\begin{definition}
\label{assouad}
Let $F$ be a nonempty bounded subset of $\R$ or of $\R^2$. For $r>0$ let
$N_r(F)$ be the smallest number of sets of diameter at most $r$ that
cover $F$. The lower and upper box dimensions of $F$ are the lower and
upper limits of $\log N_r(F)/\log(1/r)$ as $r\downarrow0$. The Assouad
dimension of $F$ is the infimum of the numbers $\alpha\ge0$ for which
there is a constant $C$ such that
\begin{equation}
 N_r\bigl(F\cap B(z,R)\bigr)\le C\Bigl(\frac Rr\Bigr)^{\alpha}
 ~\text{for all }z\in F\text{ and }0<r<R,
 \label{acount}
\end{equation}
where $B(z,R)$ is the closed ball of radius $R$ about $z$.
\end{definition}

\begin{remark}
Box dimensions describe covers of the whole set $F$ by sets of diameter
at most $r$ as $r\downarrow0$. The Assouad dimension describes covers of
the part of $F$ in a
ball $B(z,R)$ by sets of diameter at most $r$, uniformly in the point
$z\in F$ and in the two scales $r<R$. By~\cite[Section~1.2]{Fraser2021},
the Hausdorff dimension of $F$ is at most its lower box dimension, which
is at most its upper box dimension. If $R$ is at least the diameter
of $F$, then $F\cap B(z,R)=F$, so \eqref{acount} shows that the upper box
dimension is at most the Assouad dimension.
\end{remark}

Let $b>1$ and let $\varphi\colon\R\to\R$ be bounded and Lipschitz. Mauldin and
Williams~\cite[Theorem~7]{MauldinWilliams1986} proved that the graph of
the function $\sum_{j\ge0}b^{-j}\varphi(b^jx)$ has Hausdorff dimension
$1$. This covers Takagi's function and $T$, and for $T$ we also get a
direct proof from the logarithmic modulus of continuity.

\begin{corollary}
\label{dim}
The graph $\{(x,T(x)):0\le x\le1\}$ has Hausdorff dimension $1$, lower
and upper box dimension $1$, and infinite length.
\end{corollary}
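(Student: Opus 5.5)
The three claims will be handled separately: infinite length, the upper bound on the box dimension, and the lower bound on the Hausdorff dimension. Because the graph $G=\{(x,T(x)):0\le x\le1\}$ is a subset of $\R^2$, its Hausdorff dimension is at most its lower box dimension, which is at most its upper box dimension (see the remark after Definition~\ref{assouad}). So it suffices to prove two things: the upper box dimension is at most $1$, and the Hausdorff dimension is at least $1$.

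\textbf{Lower bound.} The projection $(x,y)\mapsto x$ is $1$-Lipschitz and maps $G$ onto $[0,1]$. Lipschitz maps do not increase Hausdorff dimension, so $\dim_HG\ge\dim_H[0,1]=1$.

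\textbf{Infinite length.} Take the polygonal path through the points $(m/3^n,T(m/3^n))$, $0\le m\le3^n$. Its length is at least its total vertical variation, which is $V_n$. Every inscribed polygon has length at most the length of the curve, so the length of $G$ is at least $\sup_nV_n$. This supremum is $+\infty$ by Theorem~\ref{varthm}.

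\textbf{Upper bound on box dimension.} Let $r=3^{-n}$ and cover $G$ column by column. Over a ternary interval $[m/3^n,(m+1)/3^n]$, the graph lies in a rectangle of width $3^{-n}$ whose height is the oscillation of $T$ on that interval. By Proposition~\ref{incr}(a) and the upper bound in \eqref{levbd}, this oscillation is at most $\max_{0\le h\le3^{-n}}T(h)\le2\cdot3^{-n}\log_3(3^{n+1}/2)=O(n3^{-n})$. The rectangle can therefore be covered by $O(n)$ squares of side $3^{-n}$. Summing over the $3^n$ columns gives
\[
N_{\sqrt2\cdot3^{-n}}(G)=O(n3^n).
\]
Scales $r$ between consecutive powers of $3$ change this count only by a bounded factor. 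Hence
\[
\limsup_{r\downarrow0}\frac{\log N_r(G)}{\log(1/r)}\le\lim_{n\to\infty}\frac{\log(Cn3^n)}{n\log3}=1.
\]
Combined with the lower bound, all three dimensions equal $1$.

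No step here is a genuine obstacle. The one point that needs care is the column estimate. The oscillation over a column of width $3^{-n}$ must be bounded by $\max_{h\le3^{-n}}T(h)$, not by $T(3^{-n})$, because $T$ is not monotone near $0$. The logarithmic bound \eqref{levbd} on $T(h)$ for every $h\le3^{-n}$ handles this directly, and the factor $\log_3(3^{n+1}/2)$ it contributes is only polynomial in $n$, so it does not affect the exponent.
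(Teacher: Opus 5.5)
Your proposal is correct and is essentially the paper's own proof. You bound the oscillation over a column of width $3^{-n}$ by $\max_{h\le3^{-n}}T(h)$ via Proposition~\ref{incr}(a) and a logarithmic bound on $T(h)$, which gives $O(n3^n)$ covering squares. You get the Hausdorff lower bound from the horizontal projection, and infinite length from the inscribed polygons through $(m/3^n,T(m/3^n))$, whose lengths are at least $V_n\to\infty$. The one step you leave implicit is harmless: replacing $\max_{h\le3^{-n}}2h\log_3(3/(2h))$ by its value at $h=3^{-n}$ uses that $u\log_3(3/(2u))$ is increasing on $(0,3^{-n}]$, which the paper also uses in Section~\ref{sec:questions}.
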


\begin{proof}
By Proposition~\ref{incr}, the oscillation of $T$ on an interval
of length $\delta$ is at most $\max_{0<h\le\delta}T(h)
=O(\delta\log(1/\delta))$ as $\delta\to0$. We divide
$[0,1]$ into $\lceil1/\delta\rceil$ intervals of length at most
$\delta$. Above each of them the graph can be covered by
$O(\log(1/\delta))$ squares of side $\delta$. Hence the whole
graph can be covered by $O(\delta^{-1}\log(1/\delta))$ such
squares, and its upper box dimension is at most $1$. Projection onto
the horizontal axis is a Lipschitz map of the graph onto $[0,1]$, so
the Hausdorff dimension is at least $1$. The inequalities between the
dimensions stated after Definition~\ref{assouad} then give the three
equalities.
Finally, the polygonal path through the points
$(m/3^n,T(m/3^n))$, $0\le m\le3^n$, has length at least $V_n$, so by
Theorem~\ref{varthm} the graph has infinite length.
\end{proof}

Jiang~\cite[Theorem~1.1]{Jiang2025}
showed that the graph of $\sum_{j\ge0}c_j\dist(b^jx,\Z)$ over $[0,1]$
has Assouad dimension $1$ whenever $b\ge2$ is an integer and the numbers
$b^j|c_j|$ are bounded. His argument also applies to $T$, and with
\eqref{cell} it takes the following short form (see
Figure~\ref{assouadpic}).

\begin{proposition}
\label{assouadgraph}
The graph $G=\{(x,T(x)):0\le x\le1\}$ has Assouad dimension $1$.
\end{proposition}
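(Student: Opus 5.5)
The lower bound is immediate. The graph $G$ projects onto $[0,1]$ by a Lipschitz map, so its Hausdorff dimension, and hence its Assouad dimension, is at least $1$; this was already used in Corollary~\ref{dim}. For the upper bound we fix $\alpha>1$ and show that \eqref{acount} holds for $G$ with some constant $C=C_\alpha$.

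Fix a point $z=(x_0,T(x_0))\in G$ and scales $0<r<R$; we may assume $R\le1$. The part $G\cap B(z,R)$ lies above the interval $[x_0-R,x_0+R]$, which meets at most three ternary intervals of level $n$, where $3^{-n-1}<R\le3^{-n}$. It therefore suffices to cover, by sets of diameter $r$, the graph of $T$ above a single ternary interval $I=[m/3^n,(m+1)/3^n]$ intersected with the horizontal strip $|y-T(x_0)|\le R$.

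By \eqref{cell}, above $I$ we have $T(x)=\ell(x)+3^{-n}T(3^nx-m)$, where $\ell$ is linear with an integer slope $2d$. We divide $I$ into $K=\lceil3^{-n}/r\rceil\le 3R/r+1$ subintervals of length at most $r$. By Proposition~\ref{incr}(a) and \eqref{Thbd}, the oscillation of $3^{-n}T(3^n\cdot-m)$ on each subinterval is at most $3^{-n}T(3^nr)=O(r\log(3^{-n}/r))=O(r\log(R/r))$, since $T(h)$ over $h\le\delta$ is bounded by $O(\delta\log(1/\delta))$ uniformly. The linear part $\ell$ is the difficulty, because its slope $2d$ may be as large as $2n$, which is unbounded. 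We handle it by counting only the subintervals whose image meets the strip. Above a subinterval, the graph lies within $O(r\log(R/r))$ of the segment of $\ell$, whose vertical extent there is $2|d|r$. The part of the graph inside the strip of height $2R$ therefore needs $O(\log(R/r)+\min(|d|,R/r)+1)$ squares of side $r$.

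Summing over subintervals gives too much when $|d|$ is large. Instead we count directly: the set of $x\in I$ with $|\ell(x)-T(x_0)|\le R+3^{-n}$ is an interval of length at most $\min(3^{-n},(2R+2\cdot3^{-n})/(2|d|))=O(R/\max(1,|d|))$. It thus contains $O(R/(r\max(1,|d|))+1)$ subintervals, and each needs $O(\log(R/r)+|d|+1)$ squares. When $|d|\le R/r$, the total is $O\bigl((R/r)(\log(R/r)+1)\bigr)$. When $|d|>R/r$, only $O(1)$ subintervals remain, each needing $O(R/r+\log(R/r))$ squares. In both cases we get $N_r(G\cap B(z,R))=O\bigl((R/r)(1+\log(R/r))\bigr)\le C_\alpha(R/r)^\alpha$.

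The main obstacle is this unbounded slope $2d$. A naive bound that sums the oscillation of $\ell$ over all subintervals loses a factor $n\approx\log_3(1/R)$, which is not uniform in $R$. The key step is therefore to restrict to the $x$ whose graph values stay in the strip of height $2R$, which trades the slope against the length of the admissible range. I expect this bookkeeping, together with the uniform logarithmic modulus from \eqref{loghold}, to give the uniform constant.
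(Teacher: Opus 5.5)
Your argument is correct and is essentially the paper's proof. Both use the same $n$ with $3^{-n-1}<R\le3^{-n}$, the same reduction to at most three level-$n$ intervals, and the same key step: confining $x$ to a range of length $O(3^{-n}/|d|)$ on which the linear part of \eqref{cell} stays in the strip, so that a large slope is paid for by fewer columns, giving $O\bigl((R/r)(1+\log(R/r))\bigr)$.

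The differences are only technical. The paper bounds each column by applying \eqref{cell} on the level-$(n+m)$ ternary subintervals, whose slopes $2\kappa'$ satisfy $|\kappa'-\kappa|\le m$. You instead use subintervals of length about $r$ and the modulus $\max_{h\le3^nr}T(h)$ from \eqref{Thbd}; it must be this maximum rather than $T(3^nr)$, since $T$ is not monotone. Finally, the paper avoids your case split on whether $|d|\le R/r$ by applying the strip bound to only the two extreme columns.
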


\begin{proof}
By Corollary~\ref{dim} and the remark after
Definition~\ref{assouad}, the Assouad dimension of
$G$ is at least $1$. Since $G$ is compact, it is covered by finitely many
balls $B(z_1,1),\ldots,B(z_M,1)$ with $z_i\in G$. If $R>1$ and
$r'=\min\{r,1/2\}$, then
$N_r(G\cap B(z,R))\le\sum_{i=1}^MN_{r'}(G\cap B(z_i,1))$ and
$1/r'\le2R/r$. Hence it suffices to bound $N_r(G\cap B(z,R))$ for
$z=(x_0,y_0)\in G$ and $0<r<R\le1$. Choose
integers $n\ge0$ and $m\ge1$ with $3^{-n-1}<R\le3^{-n}$ and
$3^{-n-m}\le r<3^{1-n-m}$, so that $3^m<9R/r$, and put
$\delta=3^{-n-m}$. We cover $G\cap B(z,R)$ by squares of side $\delta$
of the grid $\delta\Z^2$. Each of them is the union of four squares of
diameter less than $r$.

Since $R\le3^{-n}$, the set $[x_0-R,x_0+R]\cap[0,1]$ is contained in the
union of at most three ternary intervals of level $n$. Let
$J=[k/3^n,(k+1)/3^n]$ be one of them, and let
$\kappa=n-s_3(k)$. Every ternary interval $I=[a,a+\delta]$ of level
$n+m$ in $J$ has slope $2\kappa'$, where $|\kappa'-\kappa|\le m$,
because each of the $m$ further digits changes the difference between the
numbers of zeros and of twos by at most $1$. By \eqref{cell},
$T(a+\delta u)=T(a)+\delta(2\kappa'u+T(u))$ for $0\le u\le1$, so $T$
varies on $I$ by at most $(2|\kappa'|+1)\delta$, and the points of $G$
above $I$ are covered by at most $2|\kappa|+2m+3$ squares of the grid in
the column above $I$. The points of $G\cap B(z,R)$ above $I$ have second
coordinates in an interval of length $2R$, so they are also covered by
at most $2R/\delta+2\le2\cdot3^m+2$ of these squares. If
$\kappa=0$, the $3^m$ intervals $I$ in $J$ give at most $(2m+3)3^m$
squares. Let $\kappa\ne0$, and write a point of $J$ as
$x=k/3^n+3^{-n}u$ with $0\le u\le1$. If $|T(x)-y_0|\le R$, then
\eqref{cell}, $R\le3^{-n}$ and $0\le T(u)\le1$ give
$c-2\le2\kappa u\le c+1$ with $c=3^n(y_0-T(k/3^n))$. These points
$x$ therefore lie in an interval of length $3\cdot3^{-n}/(2|\kappa|)$,
which meets
at most $3\cdot3^m/(2|\kappa|)+2$ of the intervals $I$. We use the first
bound for all but two of them, and for the remaining two the second
bound, which does not depend on $\kappa$; in this way the total does not
grow with $|\kappa|$, which can be as large as $n$. We obtain at most
\[
 \frac{3\cdot3^m}{2|\kappa|}\,(2|\kappa|+2m+3)+2(2\cdot3^m+2)
 \le(3m+16)3^m
\]
squares, since $|\kappa|\ge1$ gives $(2|\kappa|+2m+3)/(2|\kappa|)\le(2m+5)/2$.
Hence, in all cases,
\[
 N_r(G\cap B(z,R))\le4\cdot3(3m+16)3^m
 \le108\Bigl(3\log_3\frac Rr+22\Bigr)\frac Rr,
\]
because $m<2+\log_3(R/r)$. For every $\epsilon>0$ the right side is at
most $C_\epsilon(R/r)^{1+\epsilon}$, so the Assouad dimension of $G$ is
at most $1$.
\end{proof}

Figure~\ref{assouadpic} illustrates Definition~\ref{assouad} and the
proof of Proposition~\ref{assouadgraph}. It shows two discs $B(z,R)$ with
$R=1/18$, centred at the points $(5/18,7/9)$ and $(1/18,1/3)$ of the
graph. In the middle and right panels, the shaded squares are the squares
of the grid $\delta\Z^2$, $\delta=3^{-5}$, whose interiors contain points
of the graph in the disc. They cover the part of the graph in the disc,
and the rest of the graph is drawn lighter. In the notation of the proof,
$R=1/18$ gives $n=2$, and every $r$ with $3^{-5}\le r<3^{-4}$ gives $m=3$
and $\delta=3^{-5}$. Each disc lies above a single ternary interval of
level $2$, namely $[2/9,1/3]$ with slope $0$ and $[0,1/9]$ with slope
$4$. By \eqref{cell}, above each of these intervals the graph is a copy of
the graph of $T$ scaled by $1/9$ that sits on a line with the slope of
the interval. Hence it lies between two parallel lines at vertical distance
$1/9$, drawn dashed; in the middle panel only the upper line is visible,
since the lower one passes below the disc. Above the interval of slope
$0$ the disc contains the upper part of this copy, and the shaded squares
lie in $19$ columns of the grid, at most $5$ in each. Above the interval
of slope $4$ the graph passes steeply through the disc, and the shaded squares
lie in only $9$ columns, up to $7$ in one of them. The two discs thus
correspond to the two cases in the proof, an interval of slope zero and
an interval of nonzero slope.

\begin{figure}[htbp]
\centering
\includegraphics{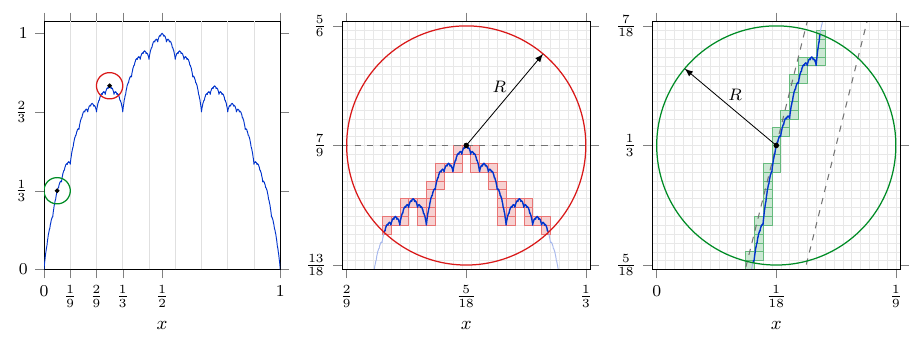}
\caption{Left: the graph of $T$ and two discs $B(z,R)$ with $R=1/18$.
Middle and right: the two discs enlarged, with the squares of the grid
$3^{-5}\Z^2$ that the graph passes through inside each disc.}
\label{assouadpic}
\end{figure}

\section{Fourier coefficients}
\label{sec:fourier}

To find the Fourier series of $T$, we write
$\widehat f(k)=\int_0^1f(x)e^{-2\pi ikx}\,dx$ for a continuous function
$f$ of period $1$ and $k\in\Z$. Each term $3^{-j}\tau(3^jx)$
of \eqref{T} has period $3^{-j}$, and its Fourier coefficients are those of
$\tau$, moved to the multiples of $3^j$ and multiplied by $3^{-j}$. Thus the
Fourier coefficients of $T$ follow from those of the trapezoid $\tau$,
just as the Fourier series of Takagi's function follows from that of the
tent function $\dist(x,\Z)$~\cite[Theorem~6.1]{Lagarias2012}.
Delange~\cite{Delange1975} wrote $S_3$ in terms of a continuous function of
period $1$, and he determined the Fourier series of this
function~\cite[Section~4]{Delange1975}. It can be expressed through $T$ as
follows.

\begin{definition}
\label{Psidef}
For $u\in\R$ let $\{u\}$ be the fractional part of $u$, and let
\[
 \Psi(u)=1-\{u\}-\tfrac12\,3^{1-\{u\}}\,T\bigl(3^{\{u\}-1}\bigr).
\]
\end{definition}

The function $\Psi$ has period $1$, and it is continuous, since
$T(1/3)=2/3$ and $T(1)=0$. If $m\ge1$ and $3^{n-1}\le m<3^n$, then
$n=\log_3m+1-\{\log_3m\}$ and $m/3^n=3^{\{\log_3m\}-1}$, and \eqref{count}
and \eqref{samp0} give
\[
 S_3(m)=nm-\tfrac12\,3^nT\Bigl(\frac m{3^n}\Bigr)=m\log_3m+m\Psi(\log_3m).
\]
This is Delange's formula for $S_3$, with $\Psi$ as its continuous function
of period $1$. In the notation of Section~\ref{sec:holder}, we have
$\Psi(\log_3x)=-\frac12f_3(x)$ for $0<x\le1$.

The next theorem gives the Fourier series of $T$, and
Figure~\ref{fourpic} shows some of its partial sums.

\begin{theorem}
\label{fourier}
The mean of $T$ is $\widehat T(0)=2/3$. For $k\ne0$, write $|k|=3^vm$
with $v\ge0$ and $3\nmid m$. Then
\begin{equation}
 \widehat T(k)=-\frac{3^{1-v}}{2\pi^2m^2}.
 \label{fcoef}
\end{equation}
Consequently
\begin{equation}
 T(x)=\frac23-\frac3{\pi^2}
 \sum_{v=0}^{\infty}3^{-v}
 \sum_{\substack{m\ge1\\3\nmid m}}\frac{\cos(2\pi3^vmx)}{m^2},
 \label{fser}
\end{equation}
where the double series converges absolutely and uniformly on $\R$.
\end{theorem}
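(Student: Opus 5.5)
The plan is to compute the Fourier coefficients of the trapezoid $\tau$ first, and then assemble those of $T$ term by term. The mean is immediate: $\widehat T(0)=\int_0^1T=2/3$, as already noted after \eqref{edgesum}. For $\tau$ itself I would use that it is even, continuous, piecewise linear with slopes $2,0,-2$ on the thirds of $[0,1]$, and has period $1$. Integrating by parts twice gives, for $k\ne0$, $\widehat\tau(k)=-\frac1{4\pi^2k^2}\sum_c (\text{jump of }\tau'\text{ at }c)e^{-2\pi ikc}$. The sum runs over the corners $c\in\{0,1/3,2/3\}$ of $[0,1)$, where the jumps of $\tau'$ are $4,-2,-2$. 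Hence
\[
 \widehat\tau(k)=-\frac{1}{4\pi^2k^2}\bigl(4-2e^{-2\pi ik/3}-2e^{-4\pi ik/3}\bigr).
\]
This equals $0$ when $3\mid k$, since $4-2-2=0$, and equals $-\frac{6}{4\pi^2k^2}=-\frac{3}{2\pi^2k^2}$ when $3\nmid k$, since then the two roots of unity sum to $-1$.

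Next I would compute the coefficients of the rescaled terms. The term $3^{-j}\tau(3^jx)$ has Fourier coefficient $3^{-j}\widehat\tau(k/3^j)$ at $k$ if $3^j\mid k$, and $0$ otherwise. This follows by the substitution $y=3^jx$ over the $3^j$ periods. Since the series \eqref{T} converges uniformly, the coefficients can be summed termwise: $\widehat T(k)=\sum_{j\le v}3^{-j}\widehat\tau(k/3^j)$, where $|k|=3^vm$ with $3\nmid m$. In this sum only $j=v$ survives, because $k/3^j$ is divisible by $3$ for $j<v$. The surviving term is $3^{-v}\cdot(-\frac{3}{2\pi^2m^2})=-\frac{3^{1-v}}{2\pi^2m^2}$, which is \eqref{fcoef}.

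For \eqref{fser}, the coefficients are even in $k$, and
\[
 \sum_{k\ne0}|\widehat T(k)|=\frac{3}{\pi^2}\sum_v3^{-v}\sum_{3\nmid m}m^{-2}<\infty.
\]
The Fourier series therefore converges absolutely and uniformly. Since $T$ is continuous, it converges to $T$, for instance by Fejér's theorem or by uniqueness of coefficients. Pairing $k$ with $-k$ turns the exponentials into cosines with total factor $2\cdot\frac{3^{1-v}}{2\pi^2m^2}=\frac{3}{\pi^2}\cdot\frac{3^{-v}}{m^2}$, which gives \eqref{fser}.

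The only delicate step is the corner-jump computation for $\widehat\tau$, namely getting the signs and the factor $4\pi^2$ right. As a check I would verify $\widehat\tau(0)=4/9$ and confirm consistency with $\int_0^1T=\frac23$ via $\sum_j3^{-j}\cdot\frac49=\frac23$.
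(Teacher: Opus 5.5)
Your proposal is correct and follows the paper's proof essentially step by step. Both compute $\widehat\tau(k)$ by two integrations by parts from the jumps $4,-2,-2$ of $\tau'$, and both note that $x\mapsto3^{-j}\tau(3^jx)$ contributes only at multiples of $3^j$, so that after termwise integration only the term $j=v$ survives. Both then conclude \eqref{fser} from absolute summability of the coefficients and by pairing $k$ with $-k$.
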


\begin{proof}
We begin with the trapezoid $\tau$, whose mean is $\widehat\tau(0)=4/9$.
Let $k\ne0$. On the intervals $(0,1/3)$, $(1/3,2/3)$ and $(2/3,1)$,
$\tau'$ takes the values $2$, $0$ and $-2$, respectively, so it has a jump of $4$ at $0$ and jumps of $-2$ at
$1/3$ and at $2/3$. Two integrations by parts over the period give
\[
 \widehat\tau(k)=\frac{4-2e^{-2\pi ik/3}-2e^{-4\pi ik/3}}{(2\pi ik)^2}
 =-\frac{1-\cos(2\pi k/3)}{\pi^2k^2}
 =\begin{cases}
   0,&3\mid k,\\
   -3/(2\pi^2k^2),&3\nmid k.
  \end{cases}
\]
Since the function $x\mapsto\tau(3^jx)$ has period $3^{-j}$, its
coefficient at frequency $k$ is zero unless $3^j\mid k$, and then it
equals $\widehat\tau(k/3^j)$. As the series \eqref{T} converges
uniformly, we may integrate it term by term. Let $|k|=3^vm$ with
$3\nmid m$. Then the terms with $j<v$ contribute nothing because $k/3^j$
is divisible by $3$, and the terms with $j>v$ contribute nothing because
$3^j\nmid k$. The term $j=v$ contributes
$3^{-v}\widehat\tau(\pm m)=3^{-v}\widehat\tau(m)$, since $\tau$ is even.
This proves \eqref{fcoef}, and the mean is
$\widehat T(0)=\frac49\sum_j3^{-j}=\frac23$.

Since $\sum_v3^{-v}\sum_m m^{-2}<\infty$, the coefficients \eqref{fcoef}
are absolutely summable, and hence the Fourier series of $T$ converges
absolutely and uniformly. Its sum is continuous and has the same
coefficients as $T$, so it equals $T$. Pairing the frequencies $k$ and
$-k$ gives \eqref{fser}.
\end{proof}

For example, at $x=0$ and at $x=1/2$ the series \eqref{fser} recovers the
values $T(0)=0$ and $T(1/2)=1$. Indeed, $\sum_{v\ge0}3^{-v}=3/2$. Since
$3^vm$ and $m$ have the same parity, the cosines at $x=1/2$ equal
$(-1)^m$. Finally,
\[
 \sum_{\substack{m\ge1\\3\nmid
m}}\frac1{m^2}=\Bigl(1-\frac19\Bigr)\frac{\pi^2}6=\frac{4\pi^2}{27},
 ~
 \sum_{\substack{m\ge1\\3\nmid
m}}\frac{(-1)^m}{m^2}=-\Bigl(1-\frac19\Bigr)\frac{\pi^2}{12}
 =-\frac{2\pi^2}{27},
\]
so \eqref{fser} gives $2/3-2/3=0$ at $x=0$ and $2/3+1/3=1$ at $x=1/2$.

Figure~\ref{fourpic} compares $T$ with the sums of $2/3$ and the terms
of \eqref{fser} with $3^vm\le N$. Every cosine in \eqref{fser} has a
negative coefficient, so the absolute difference between such a sum and
$T$ is at most the sum of the absolute values of the omitted coefficients,
with equality at $x=0$. Thus the largest absolute difference occurs at
$x=0$, and a
computation gives about $0.36$, $0.19$, $0.084$ and $0.036$ for $N=1$,
$3$, $9$ and $27$.

\begin{figure}[htbp]
\centering
\includegraphics{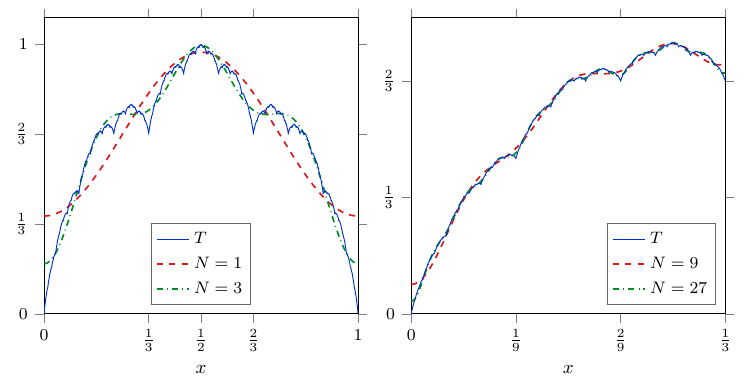}
\caption{$T$ and the sums of $2/3$ and the terms of \eqref{fser} with
$3^vm\le N$,
for $N=1$ and $3$ on $[0,1]$ (left) and for $N=9$ and $27$ on $[0,1/3]$
(right).}
\label{fourpic}
\end{figure}

If $|k|=3^vm$ with $3\nmid m$, then
\[
 |k|\,|\widehat T(k)|=\frac{3}{2\pi^2m},
\]
which equals $3/(2\pi^2)$ exactly when $|k|=3^v$ and tends to $0$ as
$|k|\to\infty$ with $3\nmid k$. In particular,
$\limsup_{k\to\infty}k|\widehat T(k)|=3/(2\pi^2)$.

The coefficients \eqref{fcoef} can also be obtained from a representation
of the Delange functions $D_q$ of Definition~\ref{Dq} due to Galkin and
Galkina~\cite[Proposition~2 and equation~(2.7)]{GalkinGalkina2020}. For
$q=3$ it reads
\[
 T(x)=\{x\}(1-\{x\})+2\sum_{j\ge0}3^{-j}\{3^jx\}\bigl(1-\{3^jx\}\bigr).
\]
The function $\{x\}(1-\{x\})$ has the mean $1/6$ and the Fourier
coefficients $-1/(2\pi^2k^2)$ for $k\ne0$. Integration term by term
gives $\widehat T(0)=\frac16+2\cdot\frac16\cdot\frac32=\frac23$ and, for
$|k|=3^vm$ with $3\nmid m$,
\[
 \widehat T(k)=-\frac1{2\pi^2k^2}\Bigl(1+2\sum_{j=0}^{v}3^j\Bigr)
 =-\frac{3^{v+1}}{2\pi^2k^2},
\]
which is \eqref{fcoef} again.

\section{Level sets}
\label{sec:level}

In this section we determine $L(3/4)$ completely and show that no level
set of $T$ has a larger upper box dimension than $L(3/4)$. We also show that $L(y)$ is
finite for almost every $y$, with a bound for the measure of the set of
heights at which $L(y)$ has at least $M$ points, and that $L(T(x))$ is
uncountable for almost every $x$. For Takagi's function, the level set at the maximum value,
which Kahane~\cite[Section~1]{Kahane1959} described by the binary digits,
has Hausdorff dimension $1/2$~\cite[Section~1]{deAmo2011}; de Amo
et~al.~\cite[Theorem~3.4]{deAmo2011} proved that no level set has a larger
dimension, and Buczolich~\cite[Theorem~9]{Buczolich2008} proved that the
level sets are finite for almost every value.
Allaart~\cite[Theorem~1.1]{AllaartLevelSets2014} extended Buczolich's
theorem to the Takagi--van~der~Waerden functions
$\sum_{j\ge0}r^{-j}\dist(r^jx,\Z)$ with every integer base $r\ge2$, and
showed that for almost every $x$ the level set through $x$ is uncountable.
For the last two results we follow Allaart's argument, which is simpler
for $T$ because of two properties. First, the graph of $T$ above a ternary
interval of slope zero is a scaled copy of the whole graph of $T$. Second,
$T$ is strictly increasing on the set of points that are not ternary
rationals and whose first $n$ digits contain more zeros than twos for
every $n\ge1$ (Lemma~\ref{monopos}).

\begin{definition}
\label{levelset}
For $0\le y\le1$, the level set of $T$ at height $y$ is
$L(y)=\{x\in[0,1]:T(x)=y\}$.
\end{definition}

By Lemma~\ref{lem:tilt}, $L(1)=\{1/2\}$, and $L(0)=\{0,1\}$, since the
first term of \eqref{T} is positive on $(0,1)$. The level set $L(3/4)$
contains the
following set.

\begin{definition}
\label{Eset}
Let $E$ be the set of points of $[0,1]$ whose ternary expansion consists
of infinitely many strings $02$ and $20$. For $k\ge0$ let $W_k$ be the set
of integers $0\le w<9^k$ whose ternary word of length $2k$ consists of
$k$ strings $02$ and~$20$, and for $w\in W_k$
let $E_w=[w/9^k,(w+1)/9^k]$; each integer $w$ lies in at most one of the
sets $W_k$, since for $k\ge1$ the word of $w\in W_k$ begins with $02$ or
$20$.
\end{definition}

Since
$s_3(2)=s_3(6)=2$ and $T(2/9)=T(2/3)=2/3$, equation~\eqref{cell} with
$n=2$ gives
\begin{equation}
 T\Bigl(\frac{m+t}9\Bigr)-\frac34=\frac{T(t)-3/4}9,
 ~m\in\{2,6\},\ 0\le t\le1.
 \label{Estep}
\end{equation}
Every point of $E$ has the form $(m+t)/9$ with $m\in\{2,6\}$ and $t\in E$,
so iterating \eqref{Estep} gives $|T-3/4|\le9^{-k}$ on $E$ for all
$k\ge0$, and hence $T=3/4$ on $E$. Moreover, $E$ is the union of its
images under the maps $t\mapsto(2+t)/9$ and $t\mapsto(6+t)/9$, which send
$[0,1]$ onto the disjoint intervals $[2/9,1/3]$ and $[2/3,7/9]$. For
every $k$, the intervals $E_w$ with $w\in W_k$ cover $E$, and any two of
them are at distance at least $3\cdot9^{-k}$. Indeed, for $w\in W_{k-1}$
the two intervals $E_{w'}$ with $w'\in W_k$ inside $E_w$ are $E_{9w+2}$
and $E_{9w+6}$, and they are separated by a gap of a third of the length
of $E_w$, while two intervals $E_{w'}$ with $w'\in W_k$ inside different
intervals $E_w$ with $w\in W_{k-1}$ are, by induction on $k$, at distance
at least $3\cdot9^{1-k}$. Now let $x$ lie in an interval $E_{w_k}$ with
$w_k\in W_k$ for every $k$; for each $k$ this interval is unique, since
these intervals are disjoint. For $k\ge1$, $E_{w_k}$ lies in the interval
$E_w$ with $w\in W_{k-1}$ whose word consists of the first $2k-2$ digits
of the word of $w_k$. This interval also contains $x$, so $w=w_{k-1}$, and
the word of $w_k$ is the word of $w_{k-1}$ followed by one more string
$02$ or $20$. Hence there are strings
$b_1,b_2,\ldots\in\{02,20\}$ such that the word of $w_k$ is
$b_1b_2\ldots b_k$ for every $k$. The point whose ternary expansion is
$0.b_1b_2b_3\ldots$ lies in $E$ and in every interval $E_{w_k}$, as $x$
does, and the lengths of these intervals tend to $0$, so this point is
$x$, and $x\in E$. Thus $E$ is the intersection over $k$ of the unions of
the intervals $E_w$ with $w\in W_k$, and $E$ is closed.

We show that $E$ has Hausdorff dimension $s=\log2/\log9$. In this
paragraph $|U|$ denotes the diameter of a set $U\subset\R$. The covers by
the intervals $E_w$ with $w\in W_k$ give $\sum_{w\in W_k}(9^{-k})^s=1$, so the
Hausdorff measure of dimension $s$ of $E$ is at most $1$. For the lower
bound, consider the product measure on the sequences of strings $02$ and
$20$ in which each string has weight $1/2$ at every position, and let
$\mu$ be its image under the map that sends a sequence to the point of $E$
with that expansion. This map is continuous, since sequences that agree
in their first $k$ strings are sent to points at distance at most
$9^{-k}$, so $\mu$ is a Borel probability measure on $E$. It gives the
mass $2^{-k}$ to each of the $2^k$ intervals $E_w$ with $w\in W_k$. If $k\ge0$ and $U$ is a closed set
with $|U|<9^{-k}$, then $U$ meets at most one interval $E_w$ with
$w\in W_k$, so $\mu(U)\le2^{-k}$. In particular, $\mu(U)=0$ if $|U|=0$, and
$\mu(U)\le2^{-k}=9^s(9^{-k-1})^s\le9^s|U|^s$ if $9^{-k-1}\le|U|<9^{-k}$.
Thus $\mu(U)\le9^s|U|^s$ for every closed set $U$ with $|U|<1$. Now let
$U_1,U_2,\ldots$ be sets of diameter less than $1$ that cover $E$. Their
closures $\widetilde U_i$ cover $E$ and satisfy $|\widetilde U_i|=|U_i|$,
so the countable subadditivity of $\mu$ gives
$1=\mu(E)\le\sum_i\mu(\widetilde U_i)\le9^s\sum_i|U_i|^s$. Hence the Hausdorff
measure of dimension $s$ of $E$ is at least $9^{-s}=1/2$, and the
Hausdorff dimension of $E$ is $s$. Since
$T=3/4$ on $E$, the level set $L(3/4)$ contains $E$. Note that $E$
contains the point $1/4=(0.\overline{02})_3$ of
Proposition~\ref{quarter}.

From now on, $x\in[0,1)$ has the ternary digits
$\xi_1,\xi_2,\ldots$, with the terminating expansion when $x$ is a
ternary rational, and for $n\ge0$ we put $a_n=\lfloor3^nx\rfloor/3^n$,
$c_n=a_n+3^{-n}$ and $t_n=3^n(x-a_n)$, as in Section~\ref{sec:inf}. Thus
$[a_n,c_n]$ is the ternary interval of level $n$ with $a_n\le x<c_n$, and
$t_n\in[0,1)$ is the point with the digits $\xi_{n+1},\xi_{n+2},\ldots$.
Recall from Definition~\ref{digslope} that
$\kappa_n(x)=\sum_{j\le n}(1-\xi_j)$ is the number of zeros minus the
number of twos among the first $n$ digits of $x$, and that $[a_n,c_n]$
has slope $2\kappa_n(x)$.

\begin{definition}
\label{ztime}
Let $x\in[0,1)$. We call $n\ge0$ a zero time of $x$ if $\kappa_n(x)=0$,
that is, if the first $n$ digits of $x$ contain as many zeros as twos. In
the same way, for a word $u=u_1\ldots u_k$ over $\{0,1,2\}$ and
$0\le j\le k$ we put $\kappa_j(u)=\sum_{i\le j}(1-u_i)$, and we call $j$
a zero time of $u$ if $\kappa_j(u)=0$.
\end{definition}

\begin{remark}
The number $0$ is a zero time of every point and of every word, and
two consecutive values $\kappa_{j-1}$ and $\kappa_j$ differ by at most
$1$. By \eqref{cell},
\begin{equation}
 T(a_n+3^{-n}u)=T(a_n)+3^{-n}\bigl(2\kappa_n(x)u+T(u)\bigr),~0\le u\le1.
 \label{lcell}
\end{equation}
At a zero time $n$ of $x$, \eqref{lcell} says that the graph of $T$ above
$[a_n,c_n]$ is a copy of the graph of $T$ over $[0,1]$, scaled by
$3^{-n}$. Hence $T([a_n,c_n])=[T(a_n),T(a_n)+3^{-n}]$, since
$0\le T\le1$ and $T(1/2)=1$. Moreover, $T$ takes the same value at $x$
and at the point $2a_n+3^{-n}-x$, the reflection of $x$ in the midpoint
of $[a_n,c_n]$, because $T(1-u)=T(u)$. If $x$ is not a ternary rational,
the digits of the reflected point are
$\xi_1,\ldots,\xi_n,2-\xi_{n+1},2-\xi_{n+2},\ldots$.
\end{remark}

Figure~\ref{levelpic} shows the level set $L(3/4)$ described in the next
theorem.

\begin{theorem}
\label{threequarters}
Let $p=337/972$, whose ternary expansion is $0.100100\overline{20}$, and
let $q=1-p=635/972$. With the sets $W_k$ of Definition~\ref{Eset},
\[
 L(3/4)=E\cup\Bigl\{\frac{w+p}{9^k},\frac{w+q}{9^k}:\ k\ge0,\ w\in W_k\Bigr\}.
\]
In particular $L(3/4)$ is the union of $E$ and a countable set, and its
Hausdorff dimension is $\log2/\log9$.
\end{theorem}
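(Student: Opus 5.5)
The plan is to reduce everything to one statement about $[0,1]$ minus the two slope-zero level $2$ intervals $[2/9,1/3]$ and $[2/3,7/9]$. Call it the key lemma:
\[
 L(3/4)\setminus\bigl([2/9,1/3]\cup[2/3,7/9]\bigr)=\{p,q\}.
\]
Given this, the theorem follows by iterating \eqref{Estep}. Take $x\in L(3/4)$. If $x\notin[2/9,1/3]\cup[2/3,7/9]$, then $x\in\{p,q\}$, which is the case $k=0$, $w=0$. Otherwise $x=(m+t)/9$ with $m\in\{2,6\}$, and \eqref{Estep} gives $T(t)=3/4$, so we repeat the argument with $t$. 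If the process stops after $k$ steps, then $x=(w+p)/9^k$ or $x=(w+q)/9^k$ with $w\in W_k$. If it never stops, then $x$ lies in some $E_{w_k}$, $w_k\in W_k$, for every $k$, and the argument after Definition~\ref{Eset} shows $x\in E$. Conversely, $T=3/4$ on $E$ was shown there, and \eqref{Estep} applied $k$ times transports $T(p)=T(q)=3/4$ to the points $(w+p)/9^k$ and $(w+q)/9^k$. The dimension statement then follows, since a countable set does not change Hausdorff dimension and $\dim E=\log2/\log9$ was proved above.

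For the key lemma, the symmetry $T(1-x)=T(x)$ exchanges $[2/9,1/3]$ with $[2/3,7/9]$ and $p$ with $q$. It therefore suffices to work on $[0,1/2]\setminus[2/9,1/3]$ and show that $p$ is the only solution there. I would exclude most of this set with Theorem~\ref{cellmax}:
\begin{itemize}
\item on $[0,1/9]$ the slope is $4$, so $\max T=T(1/9)+3^{-4}=4/9+1/81<3/4$;
\item on $[1/9,2/9]$ the slope is $2$, so $\max T=T(2/9)+3^{-3}=19/27<3/4$;
\item on $[4/9,1/2]\subset[4/9,5/9]$ the slope is $0$, so $T\ge T(4/9)=8/9$.
\end{itemize}
This leaves the level $2$ interval $[1/3,4/9]$, which has slope $2$. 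By \eqref{cell}, $T(1/3+s/9)=2/3+(2s+T(s))/9$, so we must show that $G(s)=T(s)+2s=3/4$ has exactly one solution $s\in[0,1]$, namely $s=9p-3=(0.0100\overline{20})_3$.

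The main obstacle is this uniqueness. I would peel off digits of $s$ one or two at a time with \eqref{branches} and \eqref{cell}, each time using Theorem~\ref{cellmax} and Lemma~\ref{lem:tilt} to exclude all but one subinterval. The first steps go as follows:
\begin{itemize}
\item on $[1/3,1]$, $G(s)\ge2s+\tfrac23>3/4$;
\item on $[0,1/3]$, $G(s)=4s+T(3s)/3$, and the endpoint and maximum values of $G$ on the level $2$ intervals inside $[0,1/3]$ (from \eqref{samp} and Theorem~\ref{cellmax}) leave only $[1/27,2/27]$, which is forced by the digits $010$;
\item there, \eqref{cell} rescales the equation to $T(u)+c\,u=\gamma$ for explicit constants $c$ and $\gamma$.
\end{itemize}
I expect that after finitely many steps the equation becomes $T(u)=3/4$ with $u$ confined to a region where only the point $(0.\overline{20})_3=3/4$ can satisfy it, for example by a tilt bound of the form in Lemma~\ref{tiltbelow} or \eqref{quarter1}. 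The delicate part is excluding the neighbouring subintervals at each step, because the margins are small (of order $3^{-k}$). I would first try to get these margins from the exact values of $T$ at ternary rationals via \eqref{samp}, Table~\ref{tab1} and the maximum formula \eqref{cellmax1}. Where that fails, I would compare with the half-lines through $(1/4,3/4)$ from Proposition~\ref{quarter}, which give sharp bounds of exactly the needed type.
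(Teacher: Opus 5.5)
The reduction to your key lemma via \eqref{Estep} is sound, and your exclusions of $[0,1/9]$, $[1/9,2/9]$ and $[4/9,1/2]$ are correct. The gap is in the step you call the main obstacle, the uniqueness of the solution of $G(s)=T(s)+2s=3/4$, and your forecast of how that step ends is wrong.

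Peeling digits with \eqref{cell} forces the first four digits of $s$ to be $0,1,0,0$. So the surviving level $3$ interval is $[1/9,4/27]$, not $[1/27,2/27]$. With $s=(9+u)/81$ the equation becomes $T(u)+8u=27/4$. The next two digits are then forced to be $2,0$: putting $u=(2+w)/3$ gives $T(w)+6w=9/4$, and putting $w=z/3$ gives $T(z)+8z=27/4$ again. So the rescaled problem is periodic with period two. It never turns into $T(u)=3/4$ on a confined region. The difficulty is also not that margins shrink: after rescaling by \eqref{cell} they are constants, and the same finitely many inequalities recur. Examples are $41/4>8+3^{-4}$, which excludes the digit $1$ after $T(u)+8u=27/4$ by \eqref{tilt2}, and $-5/4<0$, which excludes the digit $1$ after $T(w)+6w=9/4$. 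Hence no finite number of exclusion steps finishes the proof; you need an argument that controls the whole infinite tail. There is also a small slip: $G(s)\ge2s+\frac23$ is false near $s=1$. On $[2/3,1]$ use $T\ge\tau$ instead, which gives $G\ge2$.

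There are two ways to close this gap.

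\emph{The paper's way.} It closes the cycle by induction. It follows the pairs $(\kappa_n,y_n)$ under the constraint $y_n\in I(\kappa_n)$, with $I(\kappa)=[0,2\kappa+3^{-\kappa}]$ for $\kappa\ge0$ from \eqref{tilt2}, over all of $[0,1]$ at once. One finite table then yields both the loops that produce $E$ and the cycle $(3,\tfrac94)\leftrightarrow(4,\tfrac{27}4)$ that produces $p$.

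\emph{A one-shot argument within your route.} It uses the tool you listed only as a fallback. Since $T(3/4)=3/4$, the equation $T(u)+8u=27/4$ says $T(u)-T(3/4)=-8(u-3/4)$. On the other hand, Proposition~\ref{quarter} together with $T(1-u)=T(u)$ gives $\Delta(3/4,u)\in[-3,1]$ for every $u\in[0,1]$ with $u\ne3/4$. Hence $u=3/4$ and $s=9p-3$. Since every rescaling step was an equivalence, this also proves $T(p)=3/4$.

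With this repair your proof is a valid alternative to the paper's. It treats the self-similar part by \eqref{Estep} and the nested-interval argument after Definition~\ref{Eset} rather than by the automaton. It needs only four explicit digit exclusions, and it replaces the bookkeeping of the two-cycle by the Dini bounds at $1/4$. The paper's table, in exchange, is uniform and mechanical: it needs no auxiliary estimate such as Proposition~\ref{quarter}, only the ranges $I(\kappa)$.
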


\begin{proof}
First we show that the right side is contained in $L(3/4)$. The word of
$w\in W_k$ consists of $k$ strings $02$ and $20$, so $(w+u)/9^k$ is
obtained from $u\in[0,1]$ by $k$ applications of the maps
$t\mapsto(2+t)/9$ and $t\mapsto(6+t)/9$, and \eqref{Estep} gives
\begin{equation}
 T\Bigl(\frac{w+u}{9^k}\Bigr)-\frac34=\frac{T(u)-3/4}{9^k},~0\le u\le1.
 \label{Eblock}
\end{equation}
Hence it suffices to show that $T(p)=3/4$; then also $T(q)=T(1-p)=3/4$.
The first six digits of $p$ form the word of $252=(100100)_3$, and the
remaining digits give $t_6=(0.\overline{20})_3=3/4$. Since $s_3(252)=2$,
equation~\eqref{cell} with $n=6$ gives
\[
 T(p)=T\Bigl(\frac{252}{729}\Bigr)+\frac{8\cdot\frac34+T(3/4)}{729}
 =\frac{60}{81}+\frac{27}{4\cdot729}=\frac34,
\]
where $T(252/729)=T(28/81)=\xi_4(28)/81=60/81$ by \eqref{samp}, because
$S_3(28)=S_3(27)+s_3(27)=82$, and $T(3/4)=3/4$ because $3/4\in E$.

Now let $x\in L(3/4)$. By \eqref{samp} the values of $T$ at ternary
rationals have denominators that are powers of $3$, so $x$ is not a
ternary rational. Thus the digits of $x$ are unique and $t_n\in(0,1)$ for
all $n$. Put $y_n=3^n(3/4-T(a_n))$ for $n\ge0$. By \eqref{lcell} with
$u=t_n$,
\[
 y_n=2\kappa_nt_n+T(t_n),
\]
so $y_n$ lies in the range $I(\kappa_n)$ of the function
$u\mapsto2\kappa_nu+T(u)$ on $[0,1]$. For $\kappa\ge0$ we have
$I(\kappa)=[0,2\kappa+3^{-\kappa}]$: the minimum $0$ is attained at $u=0$
because $T\ge0$, and the maximum is given by \eqref{tilt2}. Replacing $u$
by $1-u$ and using $T(1-u)=T(u)$ shows that $I(\kappa)=2\kappa+I(-\kappa)$,
that is, $I(\kappa)=[2\kappa,3^{-|\kappa|}]$ for $\kappa<0$. Moreover,
\eqref{lcell} with $u=\xi_{n+1}/3$ gives
$T(a_{n+1})=T(a_n)+3^{-n-1}\bigl(2\kappa_n\xi_{n+1}+3T(\xi_{n+1}/3)\bigr)$,
and $3T(\xi/3)$ is $0$ for $\xi=0$ and $2$ for $\xi\in\{1,2\}$. Hence
$\kappa_{n+1}=\kappa_n+1-\xi_{n+1}$ and
\begin{equation}
 y_{n+1}=\begin{cases}
   3y_n,&\xi_{n+1}=0,\\
   3y_n-2\kappa_n-2,&\xi_{n+1}=1,\\
   3y_n-4\kappa_n-2,&\xi_{n+1}=2.
  \end{cases}
 \label{lrec}
\end{equation}
In words,
$y_n$ is the rise from $T(a_n)$ to $3/4$, measured in units of $3^{-n}$.
By \eqref{lcell}, $T([a_n,c_n])=T(a_n)+3^{-n}I(\kappa_n)$, so the
condition $y_n\in I(\kappa_n)$ says exactly that $T$ takes the value
$3/4$ somewhere on $[a_n,c_n]$. We start from $(\kappa_0,y_0)=(0,3/4)$
and follow \eqref{lrec} digit by digit, keeping only the digits
$\xi_{n+1}$ for which $y_{n+1}\in I(\kappa_{n+1})$.
Table~\ref{statetab} gives, for each pair $(\kappa_n,y_n)$ that occurs
with $\kappa_n\ge0$, the pairs $(\kappa_{n+1},y_{n+1})$ produced by the
three digits. A digit is excluded when its pair is printed in gray, that
is, when $y_{n+1}\notin I(\kappa_{n+1})$.
For instance, in the row of $(1,\frac94)$ the digit $0$ gives
$\kappa_{n+1}=2$ and $y_{n+1}=\frac{27}4$, which exceeds the largest
element $4+\frac19$ of $I(2)$, so $T$ does not take the value $3/4$ on the
corresponding interval of level $n+1$.

\begin{table}[htbp]
\centering
\caption{The pairs $(\kappa_n,y_n)$ with $\kappa_n\ge0$ in the proof of
Theorem~\ref{threequarters} and the pairs $(\kappa_{n+1},y_{n+1})$ given
by the three digits. The gray pairs, those with
$y_{n+1}\notin I(\kappa_{n+1})$, are excluded.}
\label{statetab}
\newcommand{\gr}[1]{\textcolor{black!50}{#1}}
\renewcommand{\arraystretch}{1.25}
\setlength{\tabcolsep}{8pt}
\begin{tabular}{@{}c@{\qquad}ccc@{}}
\toprule
 & \multicolumn{3}{c}{$(\kappa_{n+1},y_{n+1})$}\\
\cmidrule(l){2-4}
$(\kappa_n,y_n)$ & $\xi_{n+1}=0$ & $\xi_{n+1}=1$ & $\xi_{n+1}=2$\\
\midrule
$(0,\frac34)$ & $(1,\frac94)$ & $(0,\frac14)$ & $(-1,\frac14)$\\
$(1,\frac94)$ & \gr{$(2,\frac{27}4)$} & \gr{$(1,\frac{11}4)$} & $(0,\frac34)$\\
$(0,\frac14)$ & $(1,\frac34)$ & \gr{$(0,-\frac54)$} & $(-1,-\frac54)$\\
$(1,\frac34)$ & $(2,\frac94)$ & \gr{$(1,-\frac74)$} & \gr{$(0,-\frac{15}4)$}\\
$(2,\frac94)$ & \gr{$(3,\frac{27}4)$} & $(2,\frac34)$ & \gr{$(1,-\frac{13}4)$}\\
$(2,\frac34)$ & $(3,\frac94)$ & \gr{$(2,-\frac{15}4)$} & \gr{$(1,-\frac{31}4)$}\\
$(3,\frac94)$ & $(4,\frac{27}4)$ & \gr{$(3,-\frac54)$} & \gr{$(2,-\frac{29}4)$}\\
$(4,\frac{27}4)$ & \gr{$(5,\frac{81}4)$} & \gr{$(4,\frac{41}4)$} & $(3,\frac94)$\\
\bottomrule
\end{tabular}
\end{table}

Here we used $I(0)=[0,1]$, $I(1)=[0,7/3]$, $I(2)=[0,4+1/9]$, $I(3)=[0,6+1/27]$,
$I(4)=[0,8+1/81]$, $I(5)=[0,10+1/243]$ and $I(-1)=[-2,1/3]$. The pairs
with $\kappa_n<0$ are reduced to this table by symmetry. Indeed, if $x$ is
replaced by $1-x$, then every digit $\xi_j$ is replaced by $2-\xi_j$,
$\kappa_n$ changes sign, and $y_n$ is replaced by $y_n-2\kappa_n$,
because the interval of level $n$ containing $1-x$ is $[1-c_n,1-a_n]$ and
$T(1-c_n)=T(c_n)=T(a_n)+2\kappa_n3^{-n}$. Thus the pair
$(-1,\tfrac14)$ behaves like $(1,\tfrac94)$ with complemented digits,
and $(-1,-\tfrac54)$ behaves like $(1,\tfrac34)$. Figure~\ref{statepic}
shows, as a diagram, the digits of Table~\ref{statetab} that are not
excluded, together with the pairs with $\kappa_n<0$ obtained in this way.

Consequently the digit sequence of $x$ is an infinite path in this diagram
starting at $(0,\tfrac34)$. From $(0,\tfrac34)$ the digits $0$ and $2$
are followed by the forced digits $2$ and $0$, respectively, which lead
back to $(0,\tfrac34)$. The digit $1$ leads to $(0,\tfrac14)$, from
where the digit $0$ is followed by the forced digits
$0,1,0,0,2,0,2,0,2,\ldots$, and the digit $2$ by their complements
$2,1,2,2,0,2,0,2,0,\ldots$. Hence either the digit sequence of $x$
consists of infinitely many strings $02$ and $20$, so that $x\in E$, or
it consists of $k\ge0$ such strings, forming the word of some $w\in W_k$,
followed by $1,0,0,1,0,0,2,0,2,0,\ldots$ or by $1,2,2,1,2,2,0,2,0,2,
\ldots$, so that $x=(w+p)/9^k$ or $x=(w+q)/9^k$. In
Figure~\ref{statepic}, the first case consists of the paths that stay in
the two loops at the top, and the second of the paths that leave them by
the digit $1$ and end in one of the two cycles at the bottom.

Finally, the set of the points $(w+p)/9^k$ and $(w+q)/9^k$ with $k\ge0$
and $w\in W_k$ is countable, so $L(3/4)$ and
$E$ have the same Hausdorff dimension.
\end{proof}

\begin{figure}[htbp]
\centering
\includegraphics{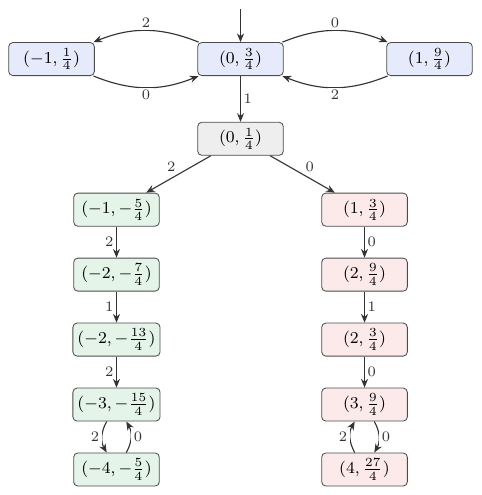}
\caption{The pairs $(\kappa_n,y_n)$ in the proof of
Theorem~\ref{threequarters}. An arrow labelled $\xi$ leads from
$(\kappa_n,y_n)$ to $(\kappa_{n+1},y_{n+1})$ when $\xi_{n+1}=\xi$.}
\label{statepic}
\end{figure}

The countable part of $L(3/4)$ does not change the Hausdorff dimension,
but it could change the Assouad dimension (Definition~\ref{assouad}):
the countable set $\{0\}\cup\{1/k:k\ge1\}$ has Assouad
dimension~$1$~\cite[Theorem~2.1.1]{Fraser2021}. The Assouad dimension of
$L(3/4)$ therefore depends on how the isolated points of $L(3/4)$
accumulate at $E$.
By
\eqref{Estep} and Theorem~\ref{threequarters}, $L(3/4)$ is the union of
its images under the maps $t\mapsto(2+t)/9$ and $t\mapsto(6+t)/9$ and
the set $\{p,q\}$, which lies in $(1/3,2/3)$. Consequently the compact set $K=L(3/4)$ satisfies
\[
 K=\Bigl\{\frac{2+t}9:t\in K\Bigr\}\cup\Bigl\{\frac{6+t}9:t\in K\Bigr\}
 \cup\{p,q\}.
\]
The right side, as a function of $K$, is a contraction with ratio $1/9$
on the space of nonempty compact subsets of $\R$ with the Hausdorff
metric, so $L(3/4)$ is the only nonempty compact set $K\subseteq\R$ that
satisfies this equation. Thus $L(3/4)$ is the set that
K\"aenm\"aki and Lehrb\"ack~\cite{KaenmakiLehrback2017} associate with
these two maps and the condensation set $\{p,q\}$. Their
condensation open set condition holds with the open set $(0,1)$: the
images $(2/9,1/3)$ and $(2/3,7/9)$ of $(0,1)$ are disjoint subsets of
$(0,1)$, and $p$ and $q$ lie in $(0,1)$ outside the closures of these
images. Hence their theorem~\cite[Theorem~4.1]{KaenmakiLehrback2017}
gives the first statement of the next proposition. We include a short direct
proof.

\begin{proposition}
\label{assouadlevel}
The level set $L(3/4)$ has Assouad dimension $\log2/\log9$. In
particular, its lower and upper box dimensions are $\log2/\log9$.
\end{proposition}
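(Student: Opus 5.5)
The plan is to prove the lower bound from Hausdorff dimension and the upper bound by direct counting. Write $s=\log2/\log9$ and $K=L(3/4)$. Since $E\subseteq K$ has Hausdorff dimension $s$, the remark after Definition~\ref{assouad} gives $\dim_H K\le\underline{\dim}_B K\le\overline{\dim}_B K\le\dim_A K$. So it suffices to prove that $\dim_A K\le s$. We aim for a bound $N_r(K\cap B(z,R))\le C(R/r)^s$ for all $z\in K$ and $0<r<R$. It is enough to treat $R<1/9$, since larger $R$ can be absorbed into the constant by the same covering trick as in the proof of Proposition~\ref{assouadgraph}.

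The first step is a level-$k$ description of $K$. By Theorem~\ref{threequarters}, $K$ is the union of the level-$k$ pieces $K\cap E_w=\{(w+t)/9^k:t\in K\}$ with $w\in W_k$, together with the finite set $P_k=\{(w'+p)/9^j,(w'+q)/9^j:j<k,\ w'\in W_j\}$. The isolated points of generation $j$ lie in the open gap between the two children of $E_{w'}$ with $w'\in W_j$. More precisely, $p$ and $q$ lie in $(1/3,2/3)$, at distance at least $337/972-1/3>1/81$ from the children $[2/9,1/3]$ and $[2/3,7/9]$. Scaled by $9^{-j}$, each generation-$j$ point is therefore at distance at least $c\,9^{-j}$ from $E$ and from every other point of $K$ outside its own gap, where $c=1/81$ works.

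Given $z\in K$ and $R$, choose $k$ with $9^{-k-1}\le R<9^{-k}$. The intervals $E_w$ with $w\in W_k$ are separated by gaps of length at least $3\cdot9^{-k}>2R$, so $B(z,R)$ meets at most one of them; call it $E_w$. The ball may also contain points of $P_k$ lying in the gaps of ancestors of $E_w$. A gap of generation $j<k$ has length comparable to $9^{-j}\ge 9^{1-k}>R$ in the relevant direction, and the two points $p,q$ in it are at distance at least $c\,9^{-j}$ from the ends of the gap. Hence only the generations $j$ with $9^{-j}\lesssim R/c$ can contribute points to $B(z,R)$. That leaves $O(1)$ generations, each contributing at most $2$ points, so $O(1)$ points in total.

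It remains to cover $K\cap E_w$ at scale $r$. By self-similarity, $K\cap E_w$ is a copy of $K$ scaled by $9^{-k}$. Let $m\ge0$ satisfy $9^{-k-m}\le r<9^{-k-m+1}$. The set $K\cap E_w$ is covered by the $2^m$ intervals $E_{w''}$ of level $k+m$ inside $E_w$, each of length at most $r$, together with the isolated points of generations $k\le j<k+m$ inside $E_w$. There are $2\sum_{i<m}2^i<2^{m+1}$ of these, each a single set of diameter $0$. So $N_r(K\cap B(z,R))\le 3\cdot2^m+O(1)$. Since $2^m=9^{ms}\le(81R/r)^s$ up to a constant, the required bound follows with $\alpha=s$.

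The main obstacle is the bookkeeping of the countable part. We must ensure that the isolated points of ancestor generations near $z$ do not accumulate within scale $R$ in more than boundedly many numbers. This is exactly where the fact that $p$ and $q$ keep a fixed proportional distance from the children intervals, which is the condensation open set condition, is used. I would first verify the numerical separation $p-1/3$, $2/3-q\ge 1/81$, and then run the counting above.
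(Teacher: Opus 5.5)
Your proposal is correct and follows essentially the paper's proof: at the scale $9^{-k-1}<R\le9^{-k}$ the ball meets at most one piece $L(3/4)\cap E_w$, a copy of $L(3/4)$ scaled by $9^{-k}$, together with boundedly many isolated points $(w'+p)/9^j$, $(w'+q)/9^j$, which stay proportionally far from everything else because $p-\frac13=\frac{13}{972}$. The paper counts these points through a uniform spacing $\frac{13}{108}9^{-k}$ of all of $P_k$ (giving fewer than $18$ of them) and transfers a global bound $N_\delta(L(3/4))<6\delta^{-s}$ to the piece by self-similarity, whereas you count by generations and cover the piece directly; the one case your gap argument does not literally cover is when $z$ is itself an isolated point, which adds only that single point, since its partner lies at distance $\frac{298}{972}9^{-j}$.
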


\begin{proof}
Put $L=L(3/4)$ and $s=\log2/\log9$, so that $9^s=2$. For $k\ge0$ let
$P_k$ be the set of the $2(2^k-1)$ points $(w+p)/9^j$ and $(w+q)/9^j$
with $j<k$ and $w\in W_j$. By Theorem~\ref{threequarters}, $L$ is
contained in the union of $P_k$ and the intervals $E_w$ with $w\in W_k$,
and by \eqref{Eblock}, $L\cap E_w$ is the image of $L$ under
$u\mapsto(w+u)/9^k$.

We claim that every point of $P_k$ has distance at least
$\frac{13}{108}9^{-k}$ from every other point of $P_k$ and from every
interval $E_w$ with $w\in W_k$. Consider the point $(w+p)/9^j$ with
$j<k$ and $w\in W_j$; it lies in $E_w$, at distance
$p\,9^{-j}$ from the left end and $(1-p)9^{-j}$ from the right end. The
other points of $P_k$ and the intervals $E_{w'}$ in $E_w$ lie in the two
intervals $E_{9w+2}=w/9^j+9^{-j}[2/9,1/3]$ and
$E_{9w+6}=w/9^j+9^{-j}[2/3,7/9]$, except for the point
$(w+q)/9^j$. The points of $P_k$ with an index $j'<j$ do not lie in this
interval, because their digit at the place $2j'+1<2j$ is $1$. Since
$p-1/3=13/972$, $2/3-p=311/972$ and $q-p=298/972$, the
distance in question is at least $\frac{13}{972}9^{-j}\ge\frac{13}{108}9^{-k}$.
The points $(w+q)/9^j$ are treated in the same way, since $1-q=p$.

Let $0<\delta<1$, and choose $j\ge1$ with $9^{-j}\le\delta<9^{1-j}$. Then
$L$ is covered by the $2^j$ intervals $E_w$ with $w\in W_j$ and the
$2(2^j-1)$ points of $P_j$, so $N_\delta(L)<3\cdot2^j=6\cdot9^{s(j-1)}
<6\delta^{-s}$.

Now let $x\in L$ and $0<r<R$. Since $L\subseteq[0,1]$, the set $L\cap[x-R,x+R]$
does not change when $R>1$ is replaced by $1$, and $N_r(L)=1$ when $r\ge1$,
since $L$ has diameter at most $1$; so we may assume that $r<R\le1$, and we
choose $k\ge0$ with $9^{-k-1}<R\le9^{-k}$. The interval
$[x-R,x+R]$ has length less than $3\cdot9^{-k}$, so it meets at most one
interval $E_w$ with $w\in W_k$, and by the claim it contains at most
$2R\cdot\frac{108}{13}9^k+1<18$ points of $P_k$. As $9^kr<9^kR\le1$,
we obtain
\[
 N_r\bigl(L\cap[x-R,x+R]\bigr)<18+N_{9^kr}(L)
 <18+6(9^kr)^{-s}=18+6(9^kR)^{-s}\Bigl(\frac Rr\Bigr)^s
 <30\Bigl(\frac Rr\Bigr)^s,
\]
because $(9^kR)^{-s}<9^s=2$ and $R/r>1$. Hence the Assouad dimension of
$L$ is at most $s$. It is at least the upper box dimension, which is at
least the lower box dimension, which is at least the Hausdorff dimension
$s$.
\end{proof}

In Figure~\ref{levelpic}, the dots are points of $E$. In the main panel,
the circles are the points $p$, $q$, $(w+p)/9$
and $(w+q)/9$ with $w\in W_1=\{2,6\}$, and each shaded rectangle is the
product of one of the intervals $E_2=[2/9,1/3]$ and $E_6=[2/3,7/9]$ with
the range $[2/3,7/9]$ of $T$ over it. By
\eqref{Estep}, the scaling by $1/9$ about $(1/4,3/4)$, which also appears
in Figure~\ref{fig7}, maps the graph of $T$ over $[0,1]$ onto the part of
the graph above $[2/9,1/3]$, and the map $t\mapsto(2+t)/9$ maps $L(3/4)$
onto $L(3/4)\cap[2/9,1/3]$. The inset, which enlarges the left
rectangle, therefore repeats the main picture. In it, $(2+p)/9$ and $(2+q)/9$
take the place of $p$ and $q$, and the points $(w+p)/81$ and $(w+q)/81$
with $w\in\{20,24\}$ take the place of $(w+p)/9$ and $(w+q)/9$ with
$w\in\{2,6\}$. In the proof of Proposition~\ref{assouadlevel} this
picture appears at every scale. An interval $[x-R,x+R]$ with
$9^{-k-1}<R\le9^{-k}$ meets at most one interval $E_w$ with $w\in W_k$,
and $L(3/4)\cap E_w$ is a copy of $L(3/4)$ scaled by
$9^{-k}$, as the inset shows for $k=1$. The other points of $L(3/4)$ in
$[x-R,x+R]$ are among the points $(w+p)/9^j$ and $(w+q)/9^j$ with $j<k$
and $w\in W_j$, and there are fewer than $18$ of them.

\begin{figure}[htbp]
\centering
\includegraphics{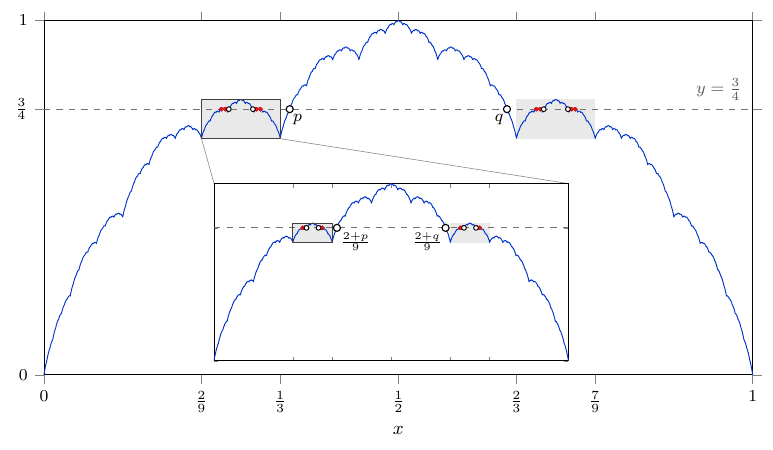}
\caption{The level set $L(3/4)$ on the graph of $T$: points of $E$ (dots)
and the points $(w+p)/9^k$ and $(w+q)/9^k$ with $w\in W_k$ (circles), for
$k\le1$ in the main panel and $k\le2$ in the inset. The inset shows the
left rectangle enlarged.}
\label{levelpic}
\end{figure}

For Takagi's function, de Amo et~al.~\cite[Section~3]{deAmo2011} count
the dyadic intervals of level $2k$ whose images meet an interval of length
$2\cdot4^{-k}$ containing a given height, and they show that this number at
most doubles when $k$ increases by $1$; this gives the bound $1/2$ for the
dimensions of the level sets. Allaart~\cite[Section~3]{AllaartGeneralized2014}
extended their method to a class of generalized Takagi functions. He
counts the linear pieces of the graphs of the partial sums in horizontal
strips, distinguishing the horizontal pieces from the others, and obtains
linear inequalities between these numbers at consecutive levels. We
follow the same plan to show that no level set of $T$ has a larger upper
box dimension than $L(3/4)$. For each level $n$ and each integer $j$ we
count the ternary intervals of level $n$ on which $F_n$ takes values close
to $2j/3^n$, sorted by their slopes (Definition~\ref{counts}). Nine such
numbers suffice, and the step from level $n$ to level $n+2$ comes down to
finitely many inequalities between explicit rational vectors.

\begin{definition}
\label{counts}
Let $n\ge0$ and $j\in\Z$. A ternary interval $I$ of level $n$ passes the
height $(2j+1)/3^n$ if this number lies in $F_n(I)$. Let $N_n^0(j)$ be
the number of ternary intervals of level $n$ on which $F_n=2j/3^n$. For
$d=1,2,3$, let $N_n^d(j)$ be the number of ternary intervals of level $n$
with slope $2d$ or $-2d$ on which the largest value of $F_n$ is
$2(j+1)/3^n$. Let $N_n^*(j)$ be the number of the other ternary intervals
of level $n$ that pass the height $(2j+1)/3^n$. Finally, let
$N_n(j)\in\Z^9$ be the vector
\[
 \bigl(N_n^1(j),N_n^2(j),N_n^3(j),N_n^*(j),N_n^1(j-1),
 N_n^2(j-1),N_n^3(j-1),N_n^*(j-1),N_n^0(j)\bigr).
\]
\end{definition}

\begin{remark}
By \eqref{interp} and \eqref{xi}, the value of $F_n$ at $m/3^n$ is
$\xi_n(m)/3^n=2(nm-S_3(m))/3^n$ for $n\ge1$, and $F_0=0$, so in both cases
it is an even multiple of $3^{-n}$. Hence a ternary interval of level $n$
with slope $2d\ne0$ passes exactly $|d|$ of the heights $(2j+1)/3^n$, and
an interval with slope $0$ passes none of them. The intervals counted by
$N_n^d(j)$ with $1\le d\le3$ pass the height $(2j+1)/3^n$, so the number
of ternary intervals of level $n$ that pass this height is
$N_n^1(j)+N_n^2(j)+N_n^3(j)+N_n^*(j)$. Thus the first four coordinates of
$N_n(j)$ concern the height $(2j+1)/3^n$, the next four the height
$(2j-1)/3^n$, and the last one the intervals on which $F_n=2j/3^n$.
Since $[0,1]$ is the only
ternary interval of level $0$, we have $N_0(0)=(0,0,0,0,0,0,0,0,1)$ and
$N_0(j)=0$ for $j\ne0$.
\end{remark}

Figure~\ref{thirdspic} illustrates the next lemma.

\begin{lemma}
\label{countstep}
Let $n\ge0$ and $j\in\Z$, and write $N^0,N^1,N^2,N^3,N^*$ for
$N_n^0(j),N_n^1(j),N_n^2(j),N_n^3(j),N_n^*(j)$. There is a number
$0\le\theta\le1$ such that the numbers $N_{n+1}^0(3j+r),\ldots,
N_{n+1}^*(3j+r)$ with $r\in\{0,1,2\}$ are given by the following
table:
\[
 \renewcommand{\arraystretch}{1.3}
 \begin{array}{c|ccc}
  & r=0 & r=1 & r=2\\ \hline
  N_{n+1}^1(3j+r) & 2N^0 & 0 & N^1+N^2\\
  N_{n+1}^2(3j+r) & 0 & N^1+N^2 & N^3\\
  N_{n+1}^3(3j+r) & N^3 & 0 & \theta N^*\\
  N_{n+1}^*(3j+r) & N^1+N^2+N^* & N^3+N^* & (1-\theta)N^*\\
  N_{n+1}^0(3j+r) & N_n^1(j-1) & N^0 & 0
 \end{array}
\]
\end{lemma}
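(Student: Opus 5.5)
The plan is to compute everything from the refinement rule of Theorem~\ref{Fn}. By \eqref{T}, $F_{n+1}=F_n+3^{-n}\tau(3^nx)$. Let $I=[a,a+3^{-n}]$ be a ternary interval of level $n$ with slope $2\kappa$ and $F_n(a)=2i/3^n$, which is an even multiple by the remark after Definition~\ref{counts}. Then the three thirds of $I$ are the level $n+1$ intervals inside $I$. They have the slopes $2(\kappa+1)$, $2\kappa$ and $2(\kappa-1)$, as in the proof of Theorem~\ref{rightmin}. Measured in units of $3^{-n-1}$, the values of $F_{n+1}$ at the four division points are
\[
 6i,\quad 6i+2\kappa+2,\quad 6i+4\kappa+2,\quad 6i+6\kappa .
\]
Everything in the lemma is then bookkeeping of which parent types produce which child types at the heights $6j+1$, $6j+3$ and $6j+5$, that is $2(3j+r)+1$, and at the flat values $2(3j+r)$.

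First I would treat the parents whose behaviour is forced, one type at a time.
\begin{itemize}
\item A flat parent counted by $N^0$, with $i=j$ and $\kappa=0$, has children with values $6j,6j+2,6j+2,6j$. Its two outer thirds have slope $\pm2$ and maximum $2(3j+1)$, so they are counted by $N^1_{n+1}(3j)$. Its middle third is flat at $2(3j+1)$, so it is counted by $N^0_{n+1}(3j+1)$.
\item A parent of slope $+2$ with maximum $2(j+1)$ runs from $6j$ to $6j+6$ with thirds of slopes $4,2,0$. The thirds land in $N^2_{n+1}(3j+1)$ and $N^1_{n+1}(3j+2)$, and the flat third at $6(j+1)$ lands in $N^0_{n+1}(3(j+1))$. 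The last one explains the entry $N_n^1(j-1)$ in the column $r=0$ after the shift $j\mapsto j-1$.
\item The parent of slope $-2$ is the mirror image.
\item Parents of slope $\pm4$ run over $[6j-6,6j+6]$ with thirds of slopes $6,4,2$ (or their negatives). Parents of slope $\pm6$ run over $[6j-12,6j+6]$ with thirds of slopes $8,6,4$. In each case I read off the maxima $6j+6$, $6j+2$, and so on, and place each third in the right row and column.
\end{itemize}
In each of these cases I must also record the thirds that pass a height $6j+2r+1$ without having maximum exactly $6j+2r+2$ with slope $\pm2,\pm4,\pm6$. These go into $N^*_{n+1}$, and this is where the sums $N^1+N^2+N^*$ and $N^3+N^*$ in the $N^*$ row come from.

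Next I would handle the $N^*$ parents, which pass $2j+1$ but are not of the three special kinds. If $\kappa\ge1$ the thirds have slopes $2(\kappa+1),2\kappa,2(\kappa-1)\ge0$, so $F_{n+1}$ is nondecreasing on the parent, and its range is the parent's range $[6i,6i+6\kappa]$, which contains $[6j,6j+6]$. Since all division values are even, each of the odd heights $6j+1$, $6j+3$, $6j+5$ is passed by exactly one third. Such a parent either has $|\kappa|\ge4$ or has maximum above $2j+2$. Then the thirds passing $6j+1$ and $6j+3$ have maximum at least $6j+8$ or slope of absolute value at least $6$, so they are of type $*$. The third passing $6j+5$ is the only one that can have maximum exactly $6j+6$ together with slope $\pm6$. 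I would define $\theta N^*$ as the number of $N^*$ parents for which this happens, so $0\le\theta\le1$, and the remaining $(1-\theta)N^*$ go to $N^*_{n+1}(3j+2)$. The case $\kappa\le-1$ follows by symmetry.

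Finally I need completeness: no other parent produces a child at these heights or flat values. Children's values exceed the parent's range only at the middle bump, by at most $2$ units. So a parent not counted in $N_n(j)$ and not of the type $N^1_n(j-1)$ has children whose ranges avoid $[6j+1,6j+5]$ and whose flat pieces are not at $6j$, $6j+2$ or $6j+4$. I expect this step, together with the placement of borderline thirds whose maximum coincides exactly with $6j+2r+2$, to be the main obstacle. I would try to handle both by the single observation that a child's maximum equals its parent's maximum except for a middle third, whose maximum is the parent's maximum $+2$ only when $\kappa=0$.
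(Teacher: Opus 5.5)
Your plan is correct in substance and follows the paper's proof closely. The paper also:
\begin{itemize}
\item computes $F_{n+1}$ at the four division points (its values $3\alpha,3\alpha+d+1,3\alpha+2d+1,3\alpha+3d$ in units of $2/3^{n+1}$ are your $6i,\dots,6i+6\kappa$ in units of $3^{-n-1}$);
\item reduces to nonnegative slope by reflection;
\item reads off the types of the thirds of parents of slopes $0,2,4,6,8$;
\item gets $\theta N^*$ from the slope-$6$ thirds of $N^*$ parents with maximum $6j+6$.
\end{itemize}

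The one real difference is the $N^*_{n+1}$ row. The paper never classifies the starred thirds one by one. It notes three facts: every parent passing $(2j+1)/3^n$ has exactly one third passing each of $6j+1$, $6j+3$, $6j+5$; each $N^0$ parent has two thirds passing $6j+1$; and no other third passes these heights. So the totals are $N^1+N^2+N^3+N^*+2N^0$ and twice $N^1+N^2+N^3+N^*$, and the fourth row follows by subtracting the first three rows. This avoids exactly the borderline checks you call the main obstacle. Your direct classification also works, but it needs a finite case check.

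Two of your justifications in that check are wrong as written.

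First, ``maximum at least $6j+8$ or slope of absolute value at least $6$, so they are of type $*$'' is not a valid inference. The middle third $[6j-4,6j+2]$ of an $N^3_n(j)$ parent has slope $6$ and passes $6j+1$, so it satisfies your disjunction, yet it is counted by $N^3_{n+1}(3j)$. What does hold for the $N^*$ parents of positive slope is stronger. These parents are those with $\kappa=2,\ i=j$, those with $\kappa=3,\ i\in\{j-1,j\}$, and those with $\kappa\ge4$. For each of them, every third passing $6j+1$ or $6j+3$ has slope at least $8$ or maximum at least $6j+6$, and only this yields type $*$.

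Second, the ``single observation'' in your last paragraph is false. For $\kappa\ge2$ the first and middle thirds have maxima $6i+2\kappa+2$ and $6i+4\kappa+2$, below the parent's maximum $6i+6\kappa$. Completeness needs only containment: for $\kappa\ne0$ the values of $F_{n+1}$ on a parent lie in its range, and for $\kappa=0$ they lie in $[6i,6i+2]$. Add the fact that flat thirds occur only for $\kappa\in\{-1,0,1\}$. The borderline placements must then be read off from the explicit division values, which you already have.
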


\begin{figure}[htbp]
\centering
\includegraphics{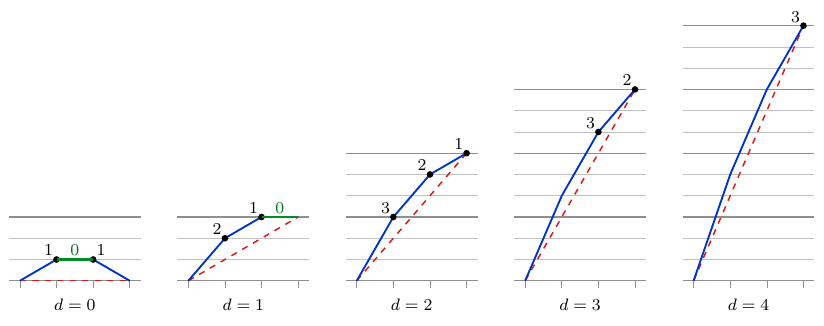}
\caption{The graphs of $F_n$ (dashed) and $F_{n+1}$ (solid) over a ternary
interval of level $n$ with slope $2d$, and the heights $2i/3^{n+1}$ (thin
lines) and $2i/3^n$ (thick lines). The numbers $1$, $2$, $3$ mark the
upper ends of the thirds with slopes $\pm2$, $\pm4$, $\pm6$, and the
number $0$ marks the thirds with slope $0$.}
\label{thirdspic}
\end{figure}

\begin{proof}
Let $I=[m/3^n,(m+1)/3^n]$ be a ternary interval of level $n$, let $2d$ be
its slope, so that $d=n-s_3(m)$, and let $2\alpha/3^n$ be the value of
$F_n$ at $m/3^n$. The ternary intervals of level $n+1$ contained in $I$
are its thirds $I_i=[(3m+i)/3^{n+1},(3m+i+1)/3^{n+1}]$ with $i=0,1,2$,
and every ternary interval of level $n+1$ is a third of exactly one
ternary interval of level $n$. Since $s_3(3m+i)=s_3(m)+i$, the third $I_i$
has slope $2(d+1-i)$. Moreover $F_{n+1}(x)=F_n(x)+3^{-n}\tau(3^nx)$, and
$\tau$ is zero at the integers, so $F_{n+1}(3m/3^{n+1})=F_n(m/3^n)$.
Hence the values of $F_{n+1}$ at the points $(3m+i)/3^{n+1}$,
$i=0,1,2,3$, are $2/3^{n+1}$ times
\begin{equation}
 3\alpha,\quad 3\alpha+d+1,\quad 3\alpha+2d+1,\quad 3\alpha+3d.
 \label{thirds}
\end{equation}
The reflection $x\mapsto1-x$ maps the ternary intervals of each level onto
the ternary intervals of the same level, and $F_n(1-x)=F_n(x)$ for every
$n$, since $\tau$ is even and has period $1$. The reflection maps $I$ onto
an interval of slope $-2d$, and the thirds of $I$ onto the thirds of that
interval, with opposite slopes and with the same images under $F_{n+1}$.
The numbers in Definition~\ref{counts} depend only on the absolute values
of the slopes and on the images of the intervals, so the thirds of $I$ are
counted by the same numbers as the thirds of its reflection. We may
therefore assume that $d\ge0$ when we determine the numbers that count the
thirds of $I$.

Suppose first that $d=0$. By \eqref{thirds}, the thirds $I_0$ and $I_2$
have slopes $2$ and $-2$, and the largest value of $F_{n+1}$ on each of
them is $2(3\alpha+1)/3^{n+1}$, so both are counted by
$N_{n+1}^1(3\alpha)$, while $F_{n+1}=2(3\alpha+1)/3^{n+1}$ on $I_1$, so
that $I_1$ is counted by $N_{n+1}^0(3\alpha+1)$.

Suppose now that $d\ge1$, and let $j_0=\alpha+d-1$. Then $I$ passes the
heights $(2k+1)/3^n$ with $\alpha\le k\le j_0$. If $d\le3$, it is counted
by $N_n^d(j_0)$ and by $N_n^*(k)$ for $\alpha\le k<j_0$, and if $d\ge4$,
it is counted by $N_n^*(k)$ for $\alpha\le k\le j_0$. The numbers
\eqref{thirds} increase, except that the last two are equal if $d=1$.
Hence each of the heights $(2i+1)/3^{n+1}$ with $3\alpha\le i\le3j_0+2$ is
passed by exactly one third of $I$, and no other height of this form is
passed by a third of $I$. By \eqref{thirds}, the thirds
of $I$ with slope at most $6$ are counted as follows:
\begin{itemize}
\item[$d=1$:] $I_0$ by $N_{n+1}^2(3j_0+1)$, $I_1$ by
 $N_{n+1}^1(3j_0+2)$, and $I_2$, which has slope $0$, by
 $N_{n+1}^0(3j_0+3)$;
\item[$d=2$:] $I_0$ by $N_{n+1}^3(3j_0-1)$, $I_1$ by
 $N_{n+1}^2(3j_0+1)$, and $I_2$ by $N_{n+1}^1(3j_0+2)$;
\item[$d=3$:] $I_1$ by $N_{n+1}^3(3j_0)$, and $I_2$ by
 $N_{n+1}^2(3j_0+2)$;
\item[$d=4$:] $I_2$ by $N_{n+1}^3(3j_0+2)$.
\end{itemize}
If $d\ge5$, all three thirds have slopes at least $8$.

Consider now the integer $j$ of the lemma. By reflection, we may treat the
intervals counted by $N^1$, $N^2$ and $N^3$ as intervals of slope
$2$, $4$ and $6$ with $j_0=j$. The case $d=0$ and the list above therefore
give
\[
 N_{n+1}^1(3j)=2N^0,\quad
 N_{n+1}^1(3j+2)=N_{n+1}^2(3j+1)=N^1+N^2,\quad
 N_{n+1}^2(3j+2)=N_{n+1}^3(3j)=N^3,
\]
and no third is counted by $N_{n+1}^1(3j+1)$, $N_{n+1}^2(3j)$ or
$N_{n+1}^3(3j+1)$. Up to reflection, the thirds counted by
$N_{n+1}^3(3j+2)$ are the first thirds of the intervals of slope $4$ with
$j_0=j+1$ and the last thirds of the intervals of slope $8$ with $j_0=j$,
one third for each such interval. These intervals pass the height
$(2j+1)/3^n$ and are counted by $N^*$, so $N_{n+1}^3(3j+2)\le N^*$, that
is, $N_{n+1}^3(3j+2)=\theta N^*$ for some $0\le\theta\le1$. This proves
the first three rows of the table.

We have also seen that each interval of level $n$ that passes the height
$(2j+1)/3^n$ has exactly one third that passes the height
$(2(3j+r)+1)/3^{n+1}$ for each $r$, that each interval counted by $N^0$
has two thirds that pass the height $(6j+1)/3^{n+1}$, and that no other
third passes one of these three heights. Hence the numbers of ternary
intervals of level $n+1$ that pass these three heights are
$N^1+N^2+N^3+N^*+2N^0$,
$N^1+N^2+N^3+N^*$ and $N^1+N^2+N^3+N^*$, and the fourth
row follows from the first three. Finally, up to reflection, a third of
slope $0$ is the middle third of an interval of slope $0$ or the last
third of an interval of slope $2$. In the first case it is counted by
$N_{n+1}^0(3\alpha+1)$, and in the second case by
$N_{n+1}^0(3j_0+3)$, where the interval is counted by $N_n^1(j_0)$.
This gives the last row.
\end{proof}

Figure~\ref{thirdspic} shows the graphs of $F_n$ and $F_{n+1}$ over a
ternary interval of level $n$ with slope $2d$, $0\le d\le4$. Heights are
measured from the value of $F_n$ at the left endpoint, in units of
$2/3^{n+1}$, so that by \eqref{thirds} the values of $F_{n+1}$ at the ends
of the thirds are $0$, $d+1$, $2d+1$ and $3d$. A number $1$, $2$ or $3$
at the upper end of the graph of $F_{n+1}$ over a third, where $F_{n+1}$
takes its largest value $2(i+1)/3^{n+1}$ on this third, means that the
third is counted by $N_{n+1}^1(i)$, $N_{n+1}^2(i)$ or $N_{n+1}^3(i)$, respectively,
and a $0$ above a third on which $F_{n+1}=2i/3^{n+1}$ means that the third
is counted by $N_{n+1}^0(i)$. For instance, for $d=2$ the interval passes
the heights $(2j_0-1)/3^n$ and $(2j_0+1)/3^n$, and its thirds are counted
by $N_{n+1}^3(3j_0-1)$, $N_{n+1}^2(3j_0+1)$ and $N_{n+1}^1(3j_0+2)$.

To apply Lemma~\ref{countstep} repeatedly, we write its three cases
$r=0,1,2$ as linear maps of $\R^9$. Lemma~\ref{omega} below shows that two
steps of these maps send the convex set $\Omega$ of the next definition
into $2\Omega$, and Theorem~\ref{boxbound} uses this to bound the vectors
$N_n(j)$.

\begin{definition}
\label{maps}
For $z=(z_1,\ldots,z_9)\in\R^9$ and $0\le\theta\le1$ let
\begin{align*}
 \Lambda_{0,\theta}(z)&=\bigl(2z_9,\,0,\,z_3,\,z_1+z_2+z_4,\,z_5+z_6,\,z_7,\,
 \theta z_8,\,(1-\theta)z_8,\,z_5\bigr),\\
 \Lambda_1(z)&=\bigl(0,\,z_1+z_2,\,0,\,z_3+z_4,\,2z_9,\,0,\,z_3,\,
 z_1+z_2+z_4,\,z_9\bigr),\\
 \Lambda_{2,\theta}(z)&=\bigl(z_1+z_2,\,z_3,\,\theta z_4,\,(1-\theta)z_4,\,0,\,
 z_1+z_2,\,0,\,z_3+z_4,\,0\bigr).
\end{align*}
Let $v_1,\ldots,v_{21}\in\R^9$ be the columns of the matrix
\begin{equation}
 \frac14\begin{pmatrix}
 4&4&4&0&4&4&4&4&2&2&2&0&0&0&2&4&4&4&4&2&0\\
 0&0&0&4&0&0&0&0&0&0&0&0&0&2&4&0&0&0&0&0&2\\
 0&0&0&0&0&4&0&0&0&6&0&4&4&0&0&0&0&0&0&0&0\\
 4&8&8&4&4&0&8&4&6&0&6&2&2&4&0&6&6&6&8&7&6\\
 4&2&2&4&0&0&2&4&0&0&2&2&2&0&0&2&2&2&4&3&4\\
 0&0&0&0&4&4&2&0&2&2&0&0&0&0&2&0&0&4&0&0&0\\
 0&0&2&0&0&0&0&4&0&0&6&0&4&4&0&0&4&0&0&0&0\\
 4&2&0&8&4&4&0&0&6&6&0&4&0&2&4&4&0&0&0&0&8\\
 4&2&2&2&0&0&2&4&0&0&2&0&0&0&0&2&2&2&2&3&2
 \end{pmatrix},
 \label{vectors}
\end{equation}
and let $\Omega$ be the set of points of $[0,\infty)^9$ that are
coordinatewise at most some point of the convex hull of
$v_1,\ldots,v_{21}$.
\end{definition}

Since $3j-1=3(j-1)+2$, Lemma~\ref{countstep}, applied to $j-1$ and to $j$,
gives
\begin{equation}
 N_{n+1}(3j)=\Lambda_{0,\theta}\bigl(N_n(j)\bigr),\quad
 N_{n+1}(3j+1)=\Lambda_1\bigl(N_n(j)\bigr),\quad
 N_{n+1}(3j+2)=\Lambda_{2,\theta'}\bigl(N_n(j)\bigr)
 \label{threemaps}
\end{equation}
where $\theta$ and $\theta'$ are the numbers $\theta$ of
Lemma~\ref{countstep} for $j-1$ and for $j$.

We found the vectors $v_1,\ldots,v_{21}$, and the coefficients listed in
Appendix~\ref{coefdata}, by linear programming.

\begin{lemma}
\label{omega}
Let $\Omega$ and the maps $\Lambda_{0,\theta}$, $\Lambda_1$ and
$\Lambda_{2,\theta}$ be as in Definition~\ref{maps}. Let each of
$\Lambda$ and $\Lambda'$ be one of these maps with $0\le\theta\le1$,
where $\Lambda$ and $\Lambda'$ may have different values of $\theta$. Then $\frac12(\Lambda'\circ\Lambda)(\Omega)\subseteq\Omega$.
\end{lemma}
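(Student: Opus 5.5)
The plan is to reduce the inclusion to finitely many checks on the generators $v_1,\ldots,v_{21}$, using linearity, monotonicity and convexity.

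First, each map $\Lambda_{0,\theta}$, $\Lambda_1$, $\Lambda_{2,\theta}$ is linear with nonnegative coefficients. It therefore maps $[0,\infty)^9$ into itself and is monotone for the coordinatewise order. So if $0\le z\le w$ coordinatewise, then $\Lambda'\Lambda z\le\Lambda'\Lambda w$. Since $\Omega$ is by definition a down-set in $[0,\infty)^9$ (the coordinatewise-below set of the convex hull), it suffices to show that $\frac12\Lambda'\Lambda w\in\Omega$ for $w$ in the convex hull. Because $\Lambda'\Lambda$ is linear and $\Omega$ is convex (the down-set of a convex set intersected with the orthant is convex), it suffices to check the generators $w=v_i$.

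Second, I would remove the parameters. For fixed $z$, each map is affine in its own $\theta$, so $\Lambda'_{\theta'}\Lambda_\theta v_i$ is bilinear in $(\theta,\theta')\in[0,1]^2$. The set of pairs for which the image lies in $2\Omega$ is convex in each variable separately, and a bilinear function on the square takes values in the convex hull of its values at the four corners. Hence it is enough to check $\theta,\theta'\in\{0,1\}$. Alternatively, I would observe that $\theta$ only splits a mass $z_8$ or $z_4$ between two coordinates, and handle it the same way. This leaves $5\cdot5=25$ compositions (each of $\Lambda,\Lambda'$ is one of $\Lambda_{0,0},\Lambda_{0,1},\Lambda_1,\Lambda_{2,0},\Lambda_{2,1}$) applied to 21 vectors, so 525 explicit rational vectors $u$.

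Third, for each such $u$ I would exhibit a certificate of $\frac12u\in\Omega$: convex coefficients $\lambda_1,\ldots,\lambda_{21}\ge0$ with $\sum\lambda_k=1$ and $\frac12u\le\sum\lambda_kv_k$ coordinatewise. These are the coefficients found by linear programming and recorded in Appendix~\ref{coefdata}. The proof then consists of verifying the listed inequalities, which is exact arithmetic with small rationals. Many cases collapse, since $\Lambda_1$ and $\Lambda_{2,\theta}$ kill several coordinates and many images coincide or are dominated by others. I would organize the check by first computing $\Lambda v_i$ for the five maps, and noting dominations among these before applying $\Lambda'$.

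The main obstacle is the sheer size of the finite verification and making it trustworthy in print. I would try to shrink it by proving that the images of a few generators dominate the others, so fewer certificates are needed. I would also state clearly that the reduction to corners and generators is rigorous, so that only the appendix data needs to be checked, which can be done by hand or by computer.
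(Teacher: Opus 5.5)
Your proposal is correct and is the paper's proof. Like the paper, you reduce to the generators $v_1,\ldots,v_{21}$ using linearity, the nonnegative coefficients and the convexity of the down-closed set $\Omega$, eliminate $\theta$ by writing $\Lambda_{0,\theta}$ and $\Lambda_{2,\theta}$ as convex combinations of the maps with $\theta\in\{0,1\}$, and verify the $525$ convex-combination certificates of Appendix~\ref{coefdata} in exact arithmetic; the extra pruning by domination you suggest is optional, since the paper simply checks all $525$ cases by an exact computer program.
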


\begin{proof}
The set $\Omega$ is convex, and it contains every point of $[0,\infty)^9$
that is coordinatewise at most one of its points. Let $z\in\Omega$, so
that $z\le a_1v_1+\cdots+a_{21}v_{21}$ coordinatewise for some numbers
$a_i\ge0$ with $a_1+\cdots+a_{21}=1$. The maps are linear with
nonnegative coefficients, so
$\frac12(\Lambda'\circ\Lambda)(z)\le\sum_ia_i\,\frac12(\Lambda'\circ\Lambda)(v_i)$
coordinatewise. It therefore suffices to show that
$\frac12(\Lambda'\circ\Lambda)(v_i)\in\Omega$ for $i=1,\ldots,21$. Moreover,
$\Lambda_{0,\theta}=(1-\theta)\Lambda_{0,0}+\theta\Lambda_{0,1}$ and
$\Lambda_{2,\theta}=(1-\theta)\Lambda_{2,0}+\theta\Lambda_{2,1}$, so
$\frac12(\Lambda'\circ\Lambda)(v_i)$ is a convex combination of the vectors of
the same form in which $\Lambda$ and $\Lambda'$ are among the five maps
$\Lambda_{0,0}$, $\Lambda_{0,1}$, $\Lambda_1$, $\Lambda_{2,0}$ and
$\Lambda_{2,1}$. We may therefore assume that $\Lambda$ and $\Lambda'$ are
among these five maps. For each of the resulting $525$ cases, the table in Appendix~\ref{coefdata} gives a convex
combination of $v_1,\ldots,v_{21}$ that is coordinatewise at least
$\frac12(\Lambda'\circ\Lambda)(v_i)$. We checked these $525$ inequalities between
vectors, that is, $4725$ inequalities between rational numbers, with a
computer program that computes exactly with fractions, without any
rounding; the program is listed in Appendix~\ref{coefdata}.
\end{proof}

\begin{theorem}
\label{boxbound}
Let $y\in[0,1]$, and let $M_n(y)$ be the number of ternary intervals of
level $n$ that meet $L(y)$. Then
\[
 M_{2k}(y)\le\tfrac{11}2\,2^k
 ~\text{and}~
 M_{2k+1}(y)\le\tfrac{15}2\,2^k
 ~\text{for}~k\ge0,
\]
and in particular $M_n(y)\le\frac{11}2\,2^{n/2}$ for every $n\ge0$.
Consequently $L(y)$ has upper box dimension, and hence Hausdorff
dimension, at most $\log2/\log9$.
\end{theorem}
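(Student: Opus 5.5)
We first relate $M_n(y)$ to the counts of Definition~\ref{counts}, then bound the vectors $N_n(j)$ by iterating Lemma~\ref{omega}.

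\textbf{Reduction to the counts.} Fix $y$ and $n$. By \eqref{eq:tail}, $F_n\le T\le F_n+3^{-n}$. If a ternary interval $I$ of level $n$ meets $L(y)$, then $F_n(I)$ meets $[y-3^{-n},y]$. Since the values of $F_n$ at the endpoints are even multiples of $3^{-n}$ (remark after Definition~\ref{counts}), I choose $j$ with $2j/3^n\le y<2(j+1)/3^n$. Then every such $I$ either passes one of at most two heights $(2j\pm1)/3^n$ adjacent to $y$, or is a slope-zero interval with $F_n=2j/3^n$. I expect a short check to show that $M_n(y)$ is at most the sum of the nine coordinates of $N_n(j)$, possibly with a shift of $j$ and a factor absorbed into the constants $\frac{11}2,\frac{15}2$. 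This bookkeeping step needs care: an interval of slope $\pm2$ whose top value is $2j/3^n$ may still meet $L(y)$ through the copy of $T$ above it.

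\textbf{Iteration.} I check that $N_0(j)$ and $N_1(j)$ lie in $\Omega$ for every $j$, and that they lie in $\frac32\Omega$ or a similar multiple. For $N_0(0)=e_9$ this holds because some $v_i$ dominates $e_9$; for example, $v_1$ has ninth coordinate $1$. By \eqref{threemaps}, each $N_{n+2}(j)$ is obtained from some $N_n(j')$ by a composition $\Lambda'\circ\Lambda$ of two of the maps. Lemma~\ref{omega} then gives by induction $N_{2k}(j)\in2^k\Omega$ and $N_{2k+1}(j)\in2^k\Omega_1$, where $\Omega_1$ contains the vectors $N_1(j)$; these can be listed explicitly from Lemma~\ref{countstep} applied to $N_0$. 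Alternatively, $N_{2k+1}=\Lambda(N_{2k})\in2^k\Lambda(\Omega)$. The coordinate sums on $\Omega$ and on $\Lambda(\Omega)$ are maximized at the vertices $v_i$, respectively at $\Lambda(v_i)$ with $\theta\in\{0,1\}$. Computing these maxima from the matrix \eqref{vectors} should give the constants $\frac{11}2$ and $\frac{15}2$. For instance, column $19$ has sum $\frac{22}4$, which matches $\frac{11}2$.

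\textbf{Dimension and the main difficulty.} Since $\frac{15}2\,2^k\le\frac{11}2\,2^{(2k+1)/2}$, the bound $M_n(y)\le\frac{11}2\,2^{n/2}$ holds for every $n$. A set of diameter $\delta$ with $3^{-n}\le\delta<3^{1-n}$ meets at most two ternary intervals of level $n$. Hence $N_\delta(L(y))\le M_n(y)=O(\delta^{-\log2/\log9})$, and the upper box bound follows; the Hausdorff bound follows from the remark after Definition~\ref{assouad}. I expect the main obstacle to be the first step: making the correspondence between the intervals meeting $L(y)$ and the nine counts exact enough that the coordinate sum of $N_n(j)$, rather than some larger combination, controls $M_n(y)$.
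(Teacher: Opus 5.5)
Your outline is the paper's proof. You show $N_{2k}(j)\in2^k\Omega$ by induction from $N_0(0)=e_9\le v_1$, \eqref{threemaps} and Lemma~\ref{omega}. You then take your ``alternatively'' route for odd levels: $N_{2k+1}(j)\in2^k\Lambda(\Omega)$, and the coordinate sums $z_1+\cdots+z_9$ have maximum $\tfrac{11}2$ over the $v_i$ and $\tfrac{15}2$ over the $\Lambda(v_i)$.

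\textbf{The reduction you flag as the main obstacle.} It closes exactly as you describe, with no shift of $j$ and no extra factor. Let $2j/3^n\le y<2(j+1)/3^n$, and write $F_n(I)=[2\alpha/3^n,2\beta/3^n]$ with integers $\alpha\le\beta$. If $F_n(I)$ meets $[y-3^{-n},y]\subseteq[(2j-1)/3^n,2(j+1)/3^n)$, then $\alpha\le j\le\beta$. There are three cases:
\begin{enumerate}
\item[(i)] if $\alpha<j$, then $I$ passes $(2j-1)/3^n$;
\item[(ii)] if $\alpha=j<\beta$, then $I$ passes $(2j+1)/3^n$;
\item[(iii)] if $\alpha=\beta=j$, then $F_n=2j/3^n$ on $I$.
\end{enumerate}
By the remark after Definition~\ref{counts}, these three kinds of intervals are counted exactly by the nine coordinates of $N_n(j)$. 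Hence $M_n(y)$ is at most their sum, which gives the stated constants. The interval of slope $\pm2$ with top value $2j/3^n$ that worried you passes $(2j-1)/3^n$, so it is counted by the fifth coordinate $N_n^1(j-1)$.

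\textbf{Slips to correct.}
\begin{enumerate}
\item[(a)] Drop your first route for the odd levels; it is false. We have $N_1(0)=\Lambda_{0,\theta}(e_9)=(2,0,\dots,0)$, while every $v_i$ has first coordinate at most $1$. So $N_1(0)\notin\Omega$ and $N_1(0)\notin\tfrac32\Omega$. From $N_1(j)\in2\Omega$ you would only get the bound $11\cdot2^k$, not $\tfrac{15}2\,2^k$.
\item[(b)] Column $19$ of \eqref{vectors} has sum $18/4=\tfrac92$, not $\tfrac{22}4$. The maximum $\tfrac{11}2$ is attained by $v_4$ and $v_{21}$.
\item[(c)] The bound $N_\delta(L(y))\le M_n(y)$ holds simply because the level $n$ intervals meeting $L(y)$ cover it and have length $3^{-n}\le\delta$. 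Your claim that a set of diameter $\delta<3^{1-n}$ meets at most two such intervals is false, since it can meet four, and it is not needed.
\end{enumerate}
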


\begin{proof}
For $z\in[0,\infty)^9$ let $\|z\|=z_1+\cdots+z_9$. We first show that
\begin{equation}
 N_{2k}(j)\in2^k\Omega~\text{for all}~k\ge0~\text{and}~j\in\Z.
 \label{invariant}
\end{equation}
For $k=0$ this holds because $N_0(j)$ is $0$ or $(0,0,0,0,0,0,0,0,1)$,
and $(0,0,0,0,0,0,0,0,1)\le v_1$. Every integer can be written as
$9j+3r+r'$ with $j\in\Z$ and $r,r'\in\{0,1,2\}$, and by \eqref{threemaps},
applied at the levels $2k$ and $2k+1$, we have
$N_{2k+2}(9j+3r+r')=(\Lambda'\circ\Lambda)\bigl(N_{2k}(j)\bigr)$ for maps $\Lambda$ and
$\Lambda'$ as in Lemma~\ref{omega}. Hence, by Lemma~\ref{omega}, the
inclusion \eqref{invariant} for $k$ implies the same inclusion for $k+1$.

The largest value of $\|z\|$ on $\Omega$ is the largest of the numbers
$\|v_i\|$, which is $11/2$. The largest value of $\|\Lambda(z)\|$ for
$z\in\Omega$ and $\Lambda$ one of the maps $\Lambda_{0,\theta}$,
$\Lambda_1$ and $\Lambda_{2,\theta}$ is the largest of the numbers
$\|\Lambda(v_i)\|$ with $\Lambda$ one of the five maps in the proof of
Lemma~\ref{omega}, which is $15/2$. By \eqref{invariant} and
\eqref{threemaps}, therefore,
\begin{equation}
 \|N_{2k}(j)\|\le\tfrac{11}2\,2^k
 ~\text{and}~
 \|N_{2k+1}(j)\|\le\tfrac{15}2\,2^k
 ~\text{for all}~k\ge0~\text{and}~j\in\Z.
 \label{normbound}
\end{equation}

Now let $n\ge0$, let $b$ be the integer with $2b/3^n\le y<2(b+1)/3^n$,
and let $I$ be a ternary interval of level $n$ that meets $L(y)$. By
\eqref{eq:tail} and $0\le T\le1$ we have $0\le T-F_n\le3^{-n}$, so
$F_n(I)$ meets $[y-3^{-n},y]$, which lies in $[(2b-1)/3^n,2(b+1)/3^n)$.
The endpoints of $F_n(I)$ are even multiples of $3^{-n}$, so either
$F_n=2b/3^n$ on $I$, or $I$ passes one of the heights $(2b-1)/3^n$ and
$(2b+1)/3^n$. By the remark after Definition~\ref{counts}, $M_n(y)$ is
therefore at most $\|N_n(b)\|$. Now
\eqref{normbound} gives the two bounds for $M_n(y)$, and since
$15<11\sqrt2$, they imply $M_n(y)\le\frac{11}2\,2^{n/2}$.

Finally, let $s=\log2/\log9$, so that $2^{n/2}=3^{ns}$. For
$0<\delta<1$ choose $n\ge1$ with $3^{-n}\le\delta<3^{1-n}$. The $M_n(y)$
ternary intervals of level $n$ that meet $L(y)$ cover $L(y)$ and have
length $3^{-n}\le\delta$, so $L(y)$ can be covered by at most
\[
 M_n(y)\le\tfrac{11}2\,3^{ns}<\tfrac{11}2\,3^s\delta^{-s}
\]
sets of diameter at most $\delta$. Hence the upper box dimension of $L(y)$ is at most $s$, and so is its
Hausdorff dimension, by the remark after Definition~\ref{assouad}.
\end{proof}

By Theorem~\ref{threequarters} and Proposition~\ref{assouadlevel}, the
bound $\log2/\log9$ in Theorem~\ref{boxbound} is attained for $y=3/4$. The proof also
bounds the number of intervals of slope $0$ at a given height. The fifth
and the ninth coordinates of $v_1,\ldots,v_{21}$ are at most $1$, so
\eqref{invariant} gives $N_{2k}^1(j-1)\le2^k$ and $N_{2k}^0(j)\le2^k$,
and by Lemma~\ref{countstep} also $N_{2k+1}^0(j)\le2^k$. The interval
$[m/3^n,(m+1)/3^n]$ has slope $0$ exactly when $s_3(m)=n$, and then
$F_n=\xi_n(m)/3^n$ on it by \eqref{interp}. Thus, for every $n\ge1$, at
most $2^{\lfloor n/2\rfloor}$ of the integers $0\le m<3^n$ with $s_3(m)=n$
have the same value of $\xi_n(m)$. This bound is attained, for $n=2k$ by
the $2^k$ elements of $W_k$ and for $n=2k+1$ by the numbers $3w+1$ with
$w\in W_k$: we have $s_3(w)=2k$ and $s_3(3w+1)=2k+1$, and by \eqref{Eblock}
with $u=0$ and $u=1/3$ the values $T(w/9^k)=\frac34-\frac3{4\cdot9^k}$ and
$T((3w+1)/3^{2k+1})=\frac34-\frac1{12\cdot9^k}$ do not depend on
$w\in W_k$.

The rest of this section treats $L(y)$ for almost every $y$ and
$L(T(x))$ for almost every $x$. We need two facts about zero times. The
first is that almost every $x$ has infinitely many zero times. Recall
from the proof of Theorem~\ref{varthm} that the number of words of
length $n$ over $\{0,1,2\}$ with as many zeros as twos is the central
trinomial coefficient $C_n$, and that
$\sqrt n\,C_n/3^n\to\sqrt3/(2\sqrt\pi)$.

\begin{lemma}
\label{zerotimes}
For almost every $x\in[0,1)$, the set of zero times of $x$ is infinite.
\end{lemma}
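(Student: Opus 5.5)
The digits $\xi_1,\xi_2,\ldots$ of a point chosen uniformly from $[0,1)$ are independent and uniform on $\{0,1,2\}$. So $\kappa_n(x)$ is a symmetric random walk with steps $1-\xi_j\in\{-1,0,1\}$, each step taken with probability $1/3$. A zero time is a return of this walk to $0$. The plan is to prove recurrence of this walk. I will use the classical criterion that the expected number of returns is infinite, combined with the fact that the walk restarts at each return.

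First, the expected number of zero times is
\[
\sum_{n\ge0}\Pr(\kappa_n=0)=\sum_{n\ge0}\frac{C_n}{3^n}.
\]
This sum diverges, because $\sqrt n\,C_n/3^n\to\sqrt3/(2\sqrt\pi)$ makes the terms comparable to $n^{-1/2}$.

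Second, I convert this into almost sure infiniteness. Let $\rho$ be the probability that $x$ has a zero time $n\ge1$. At a first return $n_1$, the event is determined by the first $n_1$ digits. The later digits $\xi_{n_1+1},\xi_{n_1+2},\ldots$ are again independent and uniform, and $\kappa_{n_1+j}(x)=\kappa_j(t_{n_1})$. In measure terms, the map $x\mapsto t_{n_1}=3^{n_1}x-k_{n_1}$ restricted to a cylinder is affine and pushes normalized Lebesgue measure to Lebesgue measure. Hence the number $Z$ of zero times satisfies $\Pr(Z\ge k+1)=\rho^k$ for $k\ge0$, counting the zero time $0$. If $\rho<1$, then $\mathbb E Z=1/(1-\rho)<\infty$, which contradicts the first step. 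So $\rho=1$, and $Z=\infty$ almost surely.

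The only step that needs care is the renewal (strong Markov) step. I would write it concretely: the set of $x$ with at least $k+1$ zero times is a disjoint union of cylinders $[k_n/3^n,(k_n+1)/3^n]$ indexed by the words $u$ whose $k$-th positive zero time is at the end of the word. Within each cylinder, the conditional measure of having a further zero time is exactly $\rho$. Summing over these disjoint cylinders gives $\Pr(Z\ge k+2)=\rho\Pr(Z\ge k+1)$. Ternary rationals form a null set, so the choice of expansion does not matter.
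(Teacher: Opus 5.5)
Your proposal is correct and follows essentially the paper's argument: the paper also splits the set of points with at least $r$ zero times $n\ge1$ into cylinders ending at the $r$th zero time, obtaining measure $\beta^r$, and then compares $\sum_{r\ge1}\beta^r$ with the divergent series $\sum_{n\ge1}C_n/3^n$ to force $\beta=1$. The only difference is your probabilistic phrasing, in which you count the zero time $0$ and write the expected number of zero times as $1/(1-\rho)$.
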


\begin{proof}
In this proof $|Q|$ denotes the Lebesgue measure of a set
$Q\subseteq[0,1)$. For $r\ge1$ let $Q_r$ be the set of $x\in[0,1)$ with at
least $r$ zero times $n\ge1$, and put $\beta=|Q_1|$. The
set $Q_r$ is the disjoint union, over the words $u$ with exactly $r$
zero times $j\ge1$, the last of which is the length $\ell$ of $u$, of the
sets of $x$ whose digits begin with $u$. Since $\kappa_\ell(u)=0$, a point
$x$ of this set has a further zero time exactly when the point whose
digits are those of $x$ after $u$ has a zero time $n\ge1$, and the set of
such $x$ has measure $3^{-\ell}\beta$. Summing over $u$ gives
$|Q_{r+1}|=\beta|Q_r|$, so $|Q_r|=\beta^r$. On the other hand, the set of
$x$ with $\kappa_n(x)=0$ has measure $C_n/3^n$, so
\[
 \sum_{n\ge1}\frac{C_n}{3^n}=\int_0^1\#\{n\ge1:\kappa_n(x)=0\}\,dx
 =\sum_{r\ge1}|Q_r|=\sum_{r\ge1}\beta^r.
\]
The left side is infinite because $\sqrt n\,C_n/3^n$ converges to a
positive limit. Hence $\beta=1$, every $Q_r$ has measure $1$, and so has
their intersection, which is the set of $x$ with infinitely many zero
times.
\end{proof}

The second fact is that the digits of a point between two of its zero
times may be replaced by their complements without changing the value of
$T$. For the digits after a single zero time, this is the reflection in
the remark after Definition~\ref{ztime}. Such a replacement changes only
the signs of some of the numbers $\kappa_j$, so the fact follows from the
next formula.

\begin{lemma}
\label{squares}
For every $x\in[0,1)$,
\[
 T(x)=1-\frac23\sum_{n\ge1}3^{-n}\kappa_n(x)^2.
\]
\end{lemma}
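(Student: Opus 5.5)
The plan is to derive the formula from the cell equation \eqref{lcell} by telescoping over the levels. Put $y_n=3^n\bigl(T(x)-T(a_n)\bigr)$; unlike the quantity in the proof of Theorem~\ref{threequarters}, here $y_n$ is measured against $T(x)$. We show that
\[
 T(a_{n+1})-T(a_n)=3^{-n-1}\bigl(2\kappa_n\xi_{n+1}+3T(\xi_{n+1}/3)\bigr),
\]
which is \eqref{lcell} with $u=\xi_{n+1}/3$. Here $3T(\xi/3)$ equals $0$ for $\xi=0$ and $2$ for $\xi\in\{1,2\}$, as noted in the proof of Theorem~\ref{threequarters}.

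Next we write the increment as a difference of squares. Put $e=1-\xi_{n+1}\in\{-1,0,1\}$, so that $\kappa_{n+1}=\kappa_n+e$. Then
\[
 \kappa_n^2-\kappa_{n+1}^2=-2\kappa_ne-e^2 .
\]
We check the three digits:
\begin{itemize}
 \item $\xi=0$ ($e=1$): the increment is $0$, and $\kappa_n^2-\kappa_{n+1}^2=-2\kappa_n-1$.
 \item $\xi=1$ ($e=0$): the increment is $3^{-n-1}(2\kappa_n+2)$, and $\kappa_n^2-\kappa_{n+1}^2=0$.
 \item $\xi=2$ ($e=-1$): the increment is $3^{-n-1}(4\kappa_n+2)$, and $\kappa_n^2-\kappa_{n+1}^2=2\kappa_n-1$.
\end{itemize}
In each case the increment equals $3^{-n-1}\bigl(2\kappa_n+1+\kappa_n^2-\kappa_{n+1}^2\bigr)$.

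Summing over $n=0,\dots,N-1$ with $T(a_0)=0$ and $\kappa_0=0$ gives
\[
 T(a_N)=\sum_{n<N}3^{-n-1}\bigl(2\kappa_n+1\bigr)+\sum_{n<N}3^{-n-1}\bigl(\kappa_n^2-\kappa_{n+1}^2\bigr).
\]
Summation by parts turns the second sum into
\[
 \sum_{n=1}^{N-1}\kappa_n^2\bigl(3^{-n-1}-3^{-n}\bigr)-3^{-N}\kappa_N^2=-\tfrac23\sum_{n=1}^{N-1}3^{-n}\kappa_n^2-3^{-N}\kappa_N^2 .
\]
The first sum contains $\sum_{n\ge0}3^{-n-1}=1/2$ and $\sum_{n\ge0} 2\cdot3^{-n-1}\kappa_n$, which at first looks wrong. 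This linear term is the likely main obstacle, and it needs a recheck.

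Recomputing the $\xi=0$ case resolves it: there the increment is $0$, while the candidate formula gives $3^{-n-1}(2\kappa_n+1-2\kappa_n-1)=0$, so the cases are consistent. The linear term therefore genuinely remains, and it should be rewritten in quadratic form using $\kappa_n=\sum_{i\le n}e_i$. Since $\sum_n 3^{-n-1}\kappa_n$ is not quadratic, we first test the target identity at small points. At $x=0$, $\kappa_n=n$ and
\[
 1-\tfrac23\sum_{n\ge1} n^2 3^{-n}=1-\tfrac23\cdot\tfrac32=0=T(0),
\]
and $1/2$ also checks. So the identity is plausible, and the obstacle is only the bookkeeping.

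The cleaner route is to verify the functional equation \eqref{fe} and use uniqueness from Theorem~\ref{limitthm}. Let $G(x)$ denote the right side. It is bounded on $[0,1)$ once extended by periodicity, though we must check that $G$ is bounded at all: $\kappa_n^2\le n^2$, so the series converges. Write $x=(\xi_1+x')/3$. Then $\kappa_n(x)=e_1+\kappa_{n-1}(x')$, and
\[
 G(x)=1-\tfrac23\Bigl(\tfrac{e_1^2}{3}+\tfrac13\sum_{m\ge1}3^{-m}\bigl(e_1+\kappa_m(x')\bigr)^2\Bigr).
\]
Expanding the square gives a term $\tfrac13\bigl(G(x')-1\bigr)$, constant terms, and a cross term $2e_1\sum_m 3^{-m}\kappa_m(x')$. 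The cross term is linear in $\kappa$, and one needs $\sum_m 3^{-m}\kappa_m(x')$ to be an affine function of $x'$ on the relevant range. This does hold: $\sum_m 3^{-m}\kappa_m=\sum_i e_i\,3^{-i}\cdot\tfrac32=\tfrac32\bigl(\tfrac12-x'\bigr)$. With this, $G(x)-\tfrac13G(3x)$ reduces to the piecewise form of $\tau$ in \eqref{branches}, namely $2x'/3$, $2/3$, or $(2-2x')/3$ for $\xi_1=0,1,2$.

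Finally, $G$ is well defined at ternary rationals because the terminating expansion is used, and both expansions give the same value by this computation. Uniqueness of bounded solutions in Theorem~\ref{limitthm} then gives $G=T$.
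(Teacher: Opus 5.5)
Your final argument is correct, and it takes a different route from the paper. You verify the functional equation \eqref{fe} for the right-hand side $G$ and then invoke the uniqueness of bounded solutions in Theorem~\ref{limitthm}. With $e_1=1-\xi_1$ and $\sum_{m\ge1}3^{-m}\kappa_m(x')=\frac32(\frac12-x')$ one gets
\[
 G(x)-\tfrac13G(x')=\tfrac23-\tfrac{e_1^2+e_1}3+\tfrac{2e_1x'}3 ,
\]
which equals $\tau(x)$ for each of $\xi_1=0,1,2$. The digit convention is compatible with the shift $x\mapsto3x-\xi_1$, because the digits are determined by $\lfloor3^nx\rfloor$.

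The telescoping attempt you abandoned contains a false claim. You wrote that the increment equals $3^{-n-1}(2\kappa_n+1+\kappa_n^2-\kappa_{n+1}^2)$ in every case. This fails for $\xi_{n+1}=1$, where it gives $2\kappa_n+1$ instead of $2\kappa_n+2$, and for $\xi_{n+1}=2$, where it gives $4\kappa_n$ instead of $4\kappa_n+2$. It is off by exactly $\xi_{n+1}$; your recheck only looked at $\xi_{n+1}=0$. The correct uniform expression is
\[
 3^{n+1}\bigl(T(a_{n+1})-T(a_n)\bigr)=\kappa_n^2-\kappa_{n+1}^2+3\kappa_n-\kappa_{n+1}+2 .
\]
Its linear part $3^{-n}\kappa_n-3^{-n-1}\kappa_{n+1}$ telescopes to $-3^{-N}\kappa_N$, so no linear term survives. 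Combined with your summation by parts, this gives $T(a_N)=1-3^{-N}(1+\kappa_N+\kappa_N^2)-\frac23\sum_{n=1}^{N-1}3^{-n}\kappa_n^2$. Letting $N\to\infty$ and using the continuity of $T$ finishes the proof, and this is exactly the paper's argument. The ``obstacle'' was an arithmetic slip, so the sentences claiming that the cases are consistent and that the linear term genuinely remains should be removed.

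Each route buys something different. The paper's route stays within the cell equation \eqref{lcell}. It yields, as a byproduct, an exact identity at every ternary rational $a_N$, and hence via \eqref{samp} a closed formula for the boundary counts in terms of the numbers $\kappa_n$ of the digits of $k$. It needs the continuity of $T$ only for the final limit. Your route needs only \eqref{fe}, uniqueness, and the linear identity for $\sum_m3^{-m}\kappa_m$. It uses no limit and no continuity, and requires a single one-digit case analysis. Your closing remark about the non-terminating expansion is not needed, since $G$ is defined with the terminating one.
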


\begin{proof}
Write $\kappa_n$ for $\kappa_n(x)$, and let $n\ge1$. Since
$a_n=a_{n-1}+3^{-n}\xi_n$, equation~\eqref{lcell} with $n-1$ in place of
$n$ and $u=\xi_n/3$ gives
\[
 3^n\bigl(T(a_n)-T(a_{n-1})\bigr)=2\kappa_{n-1}\xi_n+3T(\xi_n/3)
 =\xi_n(2\kappa_{n-1}+3-\xi_n),
\]
because $T(0)=0$ and $T(1/3)=T(2/3)=2/3$ by \eqref{T}. We have
$\xi_n=\kappa_{n-1}-\kappa_n+1$ and
$2\kappa_{n-1}+3-\xi_n=\kappa_{n-1}+\kappa_n+2$, so the right side equals
\[
 (\kappa_{n-1}-\kappa_n+1)(\kappa_{n-1}+\kappa_n+2)
 =3\bigl(\kappa_{n-1}^2+\kappa_{n-1}\bigr)-\bigl(\kappa_n^2+\kappa_n\bigr)
  +2-2\kappa_{n-1}^2.
\]
Let $N\ge1$. We multiply by $3^{-n}$ and add these equations for
$1\le n\le N$. Since $\kappa_0=0$, the terms
$3^{1-n}(\kappa_{n-1}^2+\kappa_{n-1})-3^{-n}(\kappa_n^2+\kappa_n)$ add up
to $-3^{-N}(\kappa_N^2+\kappa_N)$, and since $T(a_0)=T(0)=0$, we obtain
\[
 T(a_N)=1-3^{-N}-3^{-N}\bigl(\kappa_N^2+\kappa_N\bigr)
 -\frac23\sum_{n=1}^{N-1}3^{-n}\kappa_n^2.
\]
As $N\to\infty$, $a_N$ tends to $x$, and $|\kappa_N|\le N$, so the lemma
follows from the continuity of $T$.
\end{proof}

For example, the digits of $1/4=(0.\overline{02})_3$ give
$\kappa_n(1/4)=1$ for odd $n$ and $\kappa_n(1/4)=0$ for even $n$, and
Lemma~\ref{squares} gives
$T(1/4)=1-\frac23\bigl(\frac13+\frac1{27}+\frac1{243}+\cdots\bigr)=\frac34$.

Figure~\ref{foldpic} illustrates the next lemma.

\begin{lemma}
\label{flip}
Let $x\in[0,1)$ be a point that is not a ternary rational, and let
$0=n_0<n_1<n_2<\cdots$ be zero times of $x$, finitely or infinitely
many; if there are finitely many, say $n_0<\cdots<n_K$, put
$n_{K+1}=\infty$. Let $x'$ be the point obtained from $x$ by replacing,
for some of the indices $k$, every digit $\xi_j$ with $n_k<j\le n_{k+1}$
by $2-\xi_j$. Then $x'$ is not a ternary rational and $T(x')=T(x)$.
Moreover, $\kappa_j(x')=\kappa_j(x)$ if $n_k<j\le n_{k+1}$ and the digits
with these indices are kept, and $\kappa_j(x')=-\kappa_j(x)$ if they are
replaced.
\end{lemma}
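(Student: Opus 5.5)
The plan is to reduce everything to Lemma~\ref{squares}, which expresses $T(x)$ through the squares $\kappa_n(x)^2$ alone. First I would establish the statement about $\kappa_j(x')$, since the equality $T(x')=T(x)$ then follows at once. Note that $x'$ is written with the digits $\xi'_j$ obtained from the digits of $x$ by the stated replacement. For any $j$, let $k$ be the index with $n_k<j\le n_{k+1}$; such an index exists because $n_0=0$ and either the $n_k$ are unbounded or $n_{K+1}=\infty$. Then
\[
 \kappa_j(x')=\kappa_{n_k}(x')+\sum_{n_k<i\le j}(1-\xi'_i).
\]
By induction on $k$ I would show that $\kappa_{n_k}(x')=0$. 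At the start, $\kappa_{n_0}(x')=\kappa_0=0$. Suppose $\kappa_{n_k}(x')=0$. In the block $n_k<i\le j$ the terms $1-\xi'_i$ equal $1-\xi_i$ if the block is kept, and $1-(2-\xi_i)=-(1-\xi_i)$ if it is replaced. Since also $\kappa_{n_k}(x)=0$, this gives $\kappa_j(x')=\pm\kappa_j(x)$ with the stated sign. Taking $j=n_{k+1}$, which is a zero time of $x$, yields $\kappa_{n_{k+1}}(x')=0$, completing the induction.

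Next I would check that $x'$ is not a ternary rational. If it were, its digits would eventually be all $0$ or all $2$ (with the expansion we use, all zeros). Consider a tail of $x'$ lying in one block, or spanning several blocks. Within a single block, complementing the digits maps a tail of zeros to a tail of twos and vice versa. If there are infinitely many blocks, a constant tail of $x'$ would correspond to $x$ alternating between tails of zeros and tails of twos across block boundaries, which can still be ternary rational-free. So this case needs more care.

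In that case, a tail of all zeros in $x'$ makes $\kappa_j(x')$ strictly increasing. This contradicts $\kappa_{n_k}(x')=0$ for infinitely many $k$. The same argument works for a tail of twos. If there are finitely many zero times, the last block is infinite, and its digits in $x'$ are either those of $x$ or their complements. A constant tail there would then give a constant tail of zeros or twos in $x$, which contradicts the assumption that $x$ is not a ternary rational.

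Finally, since $\kappa_j(x')^2=\kappa_j(x)^2$ for every $j\ge1$, Lemma~\ref{squares} applied to $x$ and to $x'$ gives $T(x')=T(x)$. The only mildly delicate step is the non-ternary-rational claim in the case of infinitely many zero times. I would handle it through the monotonicity of $\kappa$ on a constant tail, as described above.
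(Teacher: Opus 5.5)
Your proof is correct and is essentially the paper's. The blockwise computation giving $\kappa_{n_k}(x')=0$ and $\kappa_j(x')=\pm\kappa_j(x)$, followed by Lemma~\ref{squares}, is exactly how the paper argues; note that once $x'$ is known not to be a ternary rational, $\xi'_1,\xi'_2,\ldots$ is its only expansion, so these $\kappa_j(x')$ are the ones that lemma uses.

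For the non-ternary-rational claim the paper phrases it differently: each finite block of $x'$ has as many zeros as twos, hence is neither all $0$ nor all $2$. This is equivalent to your observation that a constant tail would make $\kappa_j(x')$ eventually strictly monotone, which is incompatible with $\kappa_{n_k}(x')=0$ for infinitely many $k$.
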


\begin{proof}
If $n_{k+1}<\infty$, the digits $\xi_j$ with $n_k<j\le n_{k+1}$ contain
as many zeros as twos, because $\kappa_{n_k}(x)=\kappa_{n_{k+1}}(x)=0$.
Their complements have the same property, so the digits of $x'$ with
these indices also contain as many zeros as twos; in particular they are
neither all $0$ nor all $2$. Hence, if there are infinitely many zero
times $n_k$, the digits of $x'$ are not eventually all $0$ or all $2$.
If there are finitely many, the digits of $x'$ after the place $n_K$ are
those of $x$ or their complements, and again they are not eventually all
$0$ or all $2$, because $x$ is not a ternary rational. Since every
ternary expansion of a ternary rational is eventually all $0$ or all
$2$, the point $x'$ is not a ternary rational, and these digits are its
ternary digits.

Now let $n_k<j\le n_{k+1}$. By the first paragraph, the first $n_k$
digits of $x'$ contain as many zeros as twos, so $\kappa_j(x')$ is the
number of zeros minus the number of twos among the digits of $x'$ with
indices $n_k<i\le j$. For the digits of $x$ with these indices this
number is $\kappa_j(x)$, because $\kappa_{n_k}(x)=0$, and replacing the
digits by their complements changes its sign. Hence $\kappa_j(x')$ is
$\kappa_j(x)$ or $-\kappa_j(x)$ depending on whether the digits with indices
$n_k<i\le n_{k+1}$ are kept or replaced. In both cases
$\kappa_j(x')^2=\kappa_j(x)^2$, and Lemma~\ref{squares} gives
$T(x')=T(x)$.
\end{proof}

\begin{theorem}
\label{levelthm}
\begin{enumerate}
\item[(a)] For almost every $y\in[0,1]$, the level set $L(y)$ is finite.
\item[(b)] Let $x\in[0,1]$ be a point that is not a ternary rational and
has infinitely many zero times $n$ with $\xi_{n+1}\ne1$. Then $L(T(x))$
is uncountable. Almost every $x\in[0,1]$ has this property.
\end{enumerate}
\end{theorem}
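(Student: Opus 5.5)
Part (b) is the easier half, so I would do it first. Let $x$ be as in (b), and let $n_1<n_2<\cdots$ be its zero times with $\xi_{n_k+1}\ne1$. Passing to a subsequence, I may assume $n_{k+1}>n_k+1$. For every subset $S\subseteq\mathbb N$ I define $x_S$ by Lemma~\ref{flip}: complement the digits in the block $n_k<j\le n_{k+1}$ exactly when $k\in S$, using the chosen zero times together with $n_0=0$. Lemma~\ref{flip} gives $T(x_S)=T(x)$. To see that distinct $S$ give distinct points, look at the first digit $\xi_{n_k+1}$ of each block. It is $0$ or $2$, and complementing it swaps these two values. Hence $x_S$ determines $S$, since points that are not ternary rationals have unique expansions. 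This produces uncountably many points of $L(T(x))$.

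For the almost-everywhere statement in (b), I would refine the argument of Lemma~\ref{zerotimes}. After each zero time, the next digit is independent of the past, so the "renewal" argument shows the following: almost surely, among the infinitely many zero times, infinitely many are followed by a digit in $\{0,2\}$. Concretely, consider the zero times $n$ at which the digit $\xi_{n+1}$ is the first non-$1$ digit after $n$. Every zero time $n$ is followed by a string of $1$'s and then a zero time $n'\ge n$ with $\xi_{n'+1}\ne1$, because $\kappa$ is unchanged while the digits are $1$. Since almost every $x$ has a digit different from $1$ infinitely often, infinitely many zero times yield infinitely many such $n'$. Ternary rationals form a null set, so this finishes (b).

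For (a) I would follow Allaart. Let $P$ be the set of points that are not ternary rationals and have $\kappa_n\ge1$ for all $n\ge1$, and let $N$ be the corresponding set with $\kappa_n\le-1$ for all $n\ge1$. By Lemma~\ref{monopos} (announced in the introduction), $T$ is strictly increasing on $P$, and by symmetry strictly decreasing on $N$. Now take a point $z\in L(y)$ that is not a ternary rational, and let $n$ be its last zero time. If no last zero time exists, $z$ has infinitely many zero times; the set of such $z$ is the exceptional part, and I must show that its image under $T$ is null. If a last zero time $n$ exists, then $z=a_n+3^{-n}u$ with $u\in P\cup N$. Equation \eqref{lcell} gives $T(z)=T(a_n)+3^{-n}T(u)$, because $\kappa_n=0$. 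The pair $(a_n,\cdot)$ ranges over countably many zero-slope intervals. On each such interval, the map $u\mapsto T(a_n)+3^{-n}T(u)$ is injective on $P$ and on $N$. So for a fixed height $y$, the number of such points equals the number of zero-slope intervals whose image contains $y$ and which contribute a preimage. The bound I would aim for is the one in Corollary~\ref{levelcount}: the set of $y$ hit by at least $M$ zero-slope cells has measure at most $3/(\log_2(M+2)-1)$. My plan is to estimate $\int_0^1\#\{\text{zero-slope cells of level }n\text{ contributing at }y\}\,dy$ by summing $3^{-n}\cdot|\{m:s_3(m)=n,\ \text{cell is a first-return}\}|$. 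This is a series in the first-return probabilities of the walk $\kappa$, and it is finite in a weighted form once the counts are nested carefully. The resulting Markov-type bound on the measure shows that for almost every $y$ only finitely many cells contribute.

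The main obstacle is the remaining piece: showing that $T(\{z:\text{infinitely many zero times}\})$ has measure zero, even though this set of $z$ has full measure. The approach I would try first uses the fact that at each zero time $n$, the image $T([a_n,c_n])$ has length $3^{-n}$. Covering the heights by these images, summed over the first-return structure, the total image measure should be bounded by the expected value of $3^{-n_k}$ along deeper and deeper zero times. Using Lemma~\ref{squares}, I would show that this expected value tends to $0$, in the spirit of Buczolich and Allaart. If that fails, I would fall back on the injectivity count above together with the level-$n$ counting of Theorem~\ref{boxbound}.
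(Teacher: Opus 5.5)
Your part (b) is correct and is essentially the paper's argument. The subsequence step is unnecessary, since $\xi_{n_k+1}\ne1$ already forces $\kappa_{n_k+1}=\pm1$. You should flip only the blocks with $k\ge1$, because $\xi_1$ may equal $1$.

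Part (a) has a genuine gap, at both steps you leave open.

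\textbf{Points with a last zero time.} You bound them by twice the number of zero-slope cells whose image contains $y$, but no Markov argument can make this finite. There are $C_n$ such cells at level $n$, each with an image of length $3^{-n}$, so the integral over $y$ is $\sum_nC_n3^{-n}=\infty$. Restricting to first-return cells is not justified: the cell at the last zero time of a point is usually not one. For example, the digits $0202$ followed by $0111\ldots$ give last zero time $4$ but first return $2$. If you instead recurse through first-return cells by self-similarity, each generation has total weight exactly $1$, because returns are almost sure, so nothing decays.

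\textbf{Points with infinitely many zero times.} Call this set $Z$. Covering $T(Z)$ by the images of the cells at the $k$th zero time gives total length equal to the measure of the set of points with at least $k$ zero times. That measure is $1$ for every $k$. The expectation of $3^{-n_k}$ weights each cell by $9^{-n}$ and does not bound this union.

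\textbf{The missing idea is folding.} By Lemma~\ref{flip}, complementing the digits between consecutive zero times does not change $T$. So each $x\in L(y)$ can be replaced by $x'\in L(y)$ with $\kappa_j(x')=|\kappa_j(x)|$. By Lemma~\ref{squares}, zero-slope cells that differ by such flips have the same image. Hence only the cells of the set $\mathcal A$ of Definition~\ref{pairs} matter, and these have finite total weight: $\sum_{\mathcal A}3^{-n}=3$ by Lemma~\ref{aplus}.

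Equivalently, after identifying flipped cells, each excursion between zero times has weight $\frac13+\frac13=\frac23$. The distinct images at the $k$th zero time then have total length at most $(2/3)^k$, which is what your covering argument for $T(Z)$ needs.

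The paper then argues as follows:
\begin{enumerate}
\item[(i)] $\int_0^1\#\mathcal A(y)\,dy=3$, so $\mathcal A(y)$ is finite for almost every $y$.
\item[(ii)] Every zero time of $x'$ gives a distinct element of $\mathcal A(y)$. So such level sets contain no point with infinitely many zero times, and no separate null-set argument is needed.
\item[(iii)] Lemma~\ref{monopos}, applied at the last zero time (this is your injectivity on $P$), together with the $2^z$ choices of flips, gives $\#L(y)\le2^{r+1}-2$ (Proposition~\ref{finitelevel}).
\end{enumerate}
Your fallback to Theorem~\ref{boxbound} cannot replace this. It only bounds $M_n(y)$ by a quantity growing like $2^{n/2}$, which gives a dimension bound but neither finiteness nor a measure estimate.
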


Part (a) concerns almost every height $y$ and part (b) almost every point
$x$, so the two parts do not contradict each other. Together they show
that $T$ maps a set of full Lebesgue measure into a set of Lebesgue
measure zero, namely the set of heights $y$ for which $L(y)$ is
uncountable.

\begin{proof}[Proof of (b)]
Let $1\le n_1<n_2<\cdots$ be zero times of $x$ with $\xi_{n_k+1}\ne1$, and
put $n_0=0$. For a sequence $\epsilon=(\epsilon_1,\epsilon_2,\ldots)$ of
zeros and ones, let $x_\epsilon$ be the point obtained from $x$ by
replacing the digits $\xi_j$ with $n_k<j\le n_{k+1}$ by $2-\xi_j$ for
every $k\ge1$ with $\epsilon_k=1$. By Lemma~\ref{flip},
$T(x_\epsilon)=T(x)$, and $x_\epsilon$ is not a ternary rational, so the
digits that define $x_\epsilon$ form its only ternary expansion. If
$\epsilon\ne\epsilon'$, choose $k$ with $\epsilon_k\ne\epsilon'_k$. Then
the digits of $x_\epsilon$ and $x_{\epsilon'}$ at the place $n_k+1$ are
$\xi_{n_k+1}$ and $2-\xi_{n_k+1}$ in some order, and these digits differ
because $\xi_{n_k+1}\ne1$. Hence $x_\epsilon\ne x_{\epsilon'}$, and
$L(T(x))$ contains uncountably many points.

By Lemma~\ref{zerotimes}, almost every $x$ has infinitely many zero
times. If only finitely many of them were followed by a digit different
from $1$, then from some point on every zero time $n$ would be followed
by $\xi_{n+1}=1$, which makes $n+1$ a zero time, and so the digits of
$x$ would be eventually $1$. The points with eventually constant digits
and the ternary rationals form a countable set. This proves the last
statement.
\end{proof}

For the proof of (a), let $y$ be a height that is not a value of $T$ at
a ternary rational, and let $x\in L(y)$. For consecutive zero times
$n<n'$ of $x$, the numbers $\kappa_j(x)$ with $n<j<n'$ have a constant
sign, and so do the numbers $\kappa_j(x)$ with $j>n$ if $n$ is the last
zero time. If we replace $\xi_j$ by $2-\xi_j$ for $n<j\le n'$, or for
$j>n$, whenever this sign is negative, Lemma~\ref{flip} gives a point
$x'\in L(y)$ with $\kappa_j(x')=|\kappa_j(x)|$ for all $j$. For each zero time $n$ of
$x'$, the ternary interval of level $n$ containing $x'$ has slope $0$,
every ternary interval containing it has a nonnegative slope, and $T$
takes the value $y$ on it. The next definition collects these intervals,
and Lemma~\ref{aplus} implies that for almost every $y$ there are only
finitely many of them. Proposition~\ref{finitelevel} then shows that
$L(y)$ is finite; its proof uses that $T$ is strictly increasing on the
set of points described in Lemma~\ref{monopos}.

Figure~\ref{foldpic} shows an example. The point
$x=(0.0220\overline{01})_3$ has the zero times $0$, $2$ and $4$, and
$\kappa_j(x)<0$ only for $j=3$. Replacing $\xi_3=2$ and $\xi_4=0$ by their
complements gives $x'=(0.0202\overline{01})_3$, and
$\kappa_j(x')=|\kappa_j(x)|$ for all $j$. In the left panel, the shaded
strip marks these two digits, and the dashed line shows $\kappa_j(x')$
where it differs from $\kappa_j(x)$. By Lemma~\ref{flip}, the eight points
obtained from $x$ by complementing the digits $\xi_j$ in some of the
ranges $1\le j\le2$, $3\le j\le4$ and $j\ge5$ lie in $L(T(x))$. Four of
them lie in the ternary interval $[2/9,1/3]$ of level $2$ containing $x$,
and they are shown in the right panel. They form two pairs, and the two
points of each pair are reflections of each other in the midpoint of one
of the ternary intervals $[20/81,21/81]$ and $[24/81,25/81]$, which have
level $4$ and slope $0$. The other four points, for which $\xi_1$ and
$\xi_2$ are complemented, lie in $[2/3,7/9]$. The line $y=T(x)$ also meets
the graph at points that are not obtained from $x$ in this way, for
instance in $(21/81,22/81)$, since $T(21/81)<T(x)<T(22/81)$.

\begin{definition}
\label{pairs}
Let $\mathcal A$ be the set of pairs $(n,m)$ of integers with $n\ge0$
and $0\le m<3^n$ such that the ternary interval $[m/3^n,(m+1)/3^n]$ has
slope $0$ and every ternary interval containing it has a nonnegative
slope. For $y\in[0,1]$ let $\mathcal A(y)$ be the set of pairs
$(n,m)\in\mathcal A$ such that $T$ takes the value $y$ on
$[m/3^n,(m+1)/3^n]$.
\end{definition}

Let $v$ be the word of $m$, of length $n$ with leading zeros allowed. For
$j\le n$, the left endpoint of the ternary interval of level $j$
containing $[m/3^n,(m+1)/3^n]$ has the first $j$ letters of $v$ as its
ternary digits, so the slope of this interval is $2\kappa_j(v)$. Thus $(n,m)\in\mathcal A$ means that
$\kappa_j(v)\ge0$ for $j\le n$ and $\kappa_n(v)=0$, that is, for every
$j\le n$ the first $j$ letters of $v$ contain at least as many zeros as
twos, and $v$ itself has as many zeros as twos. For $(n,m)\in\mathcal A$ we have
$T([m/3^n,(m+1)/3^n])=[T(m/3^n),T(m/3^n)+3^{-n}]$ by the remark after
Definition~\ref{ztime}.

\begin{lemma}
\label{aplus}
$\sum_{(n,m)\in\mathcal A}3^{-n}=3$.
\end{lemma}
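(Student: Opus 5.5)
The plan is to recognise the pairs in $\mathcal A$ as Motzkin paths and to evaluate a generating function at its radius of convergence. By the discussion after Definition~\ref{pairs}, $(n,m)\in\mathcal A$ exactly when the word $v$ of $m$, of length $n$, satisfies $\kappa_j(v)\ge0$ for all $j\le n$ and $\kappa_n(v)=0$. Read the letters $0,1,2$ as the steps $(1,1)$, $(1,0)$, $(1,-1)$. These words are then exactly the lattice paths of length $n$ from height $0$ to height $0$ that never go below $0$. Let $M_n$ be their number and $M(z)=\sum_{n\ge0}M_nz^n$. The claim is $M(1/3)=3$.

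First I will derive the functional equation from the first-return decomposition. A nonempty such word either begins with the letter $1$, followed by an arbitrary word of the same kind. Otherwise it begins with $0$; let $i$ be the first index $i\ge1$ with $\kappa_i(v)=0$. Then the letter at position $i$ is $2$, and the word has the form $0\,w\,2\,w'$ with $w,w'$ words of the same kind. This decomposition is a bijection, so as formal power series
\[
 M(z)=1+zM(z)+z^2M(z)^2 .
\]
Since $M_n\le3^n$, the series converges for $0\le z<1/3$. On this range it satisfies the quadratic, whose roots are
\[
 \frac{1-z\pm\sqrt{1-2z-3z^2}}{2z^2},
\]
with $1-2z-3z^2=(1-3z)(1+z)>0$ for $0<z<1/3$.

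Next I have to identify the correct root. The two roots are distinct for $0<z<1/3$, and $M$ is continuous there. As $z\downarrow0$ the root with the plus sign tends to $+\infty$, while $M(z)\to M_0=1$, so $M$ is the root with the minus sign throughout $(0,1/3)$. As $z\uparrow1/3$ this root tends to $(1-\tfrac13)/(2/9)=3$. All $M_n$ are nonnegative, so monotone convergence gives $\sum_nM_n3^{-n}=\lim_{z\uparrow1/3}M(z)=3$. The left side is exactly $\sum_{(n,m)\in\mathcal A}3^{-n}$.

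The only delicate point is this branch selection together with the passage to the boundary $z=1/3$, where the square root vanishes. Both are handled by continuity and monotone convergence as above, so I expect no real obstacle.

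As a cross-check, I can argue probabilistically. Consider a uniform random digit sequence, so $\kappa_n$ is a symmetric lazy random walk. The sum $\sum_{(n,m)\in\mathcal A}3^{-n}$ is then the expected number of times $n$ at which $\kappa_n=0$ and $\kappa_j\ge0$ for all $j\le n$. This route also gives $3$, but the generating-function route is cleaner.
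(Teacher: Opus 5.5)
Your proof is correct, and it takes a genuinely different route from the paper.

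The paper does not decompose the paths in $\mathcal A$ themselves. Instead it decomposes \emph{all} digit sequences by their first zero time $n\ge1$. The words beginning with $0$ whose first positive zero time is their length are exactly the words $0v2$ with $v$ the word of some $(n,m)\in\mathcal A$. Their complements are the corresponding words beginning with $2$, and the only such word beginning with $1$ is the word $1$. Almost every point has a zero time $n\ge1$ by Lemma~\ref{zerotimes}, which is recurrence of the lazy walk and rests on $\sum_nC_n3^{-n}=\infty$. Hence the measures of these cylinder sets add up to $1$, which gives the linear equation $1=\frac13+\frac{2S}9$.

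You instead use the first-return decomposition inside $\mathcal A$. This gives the quadratic $M=1+zM+z^2M^2$ for the Motzkin generating function. You then pass to the critical point $z=1/3$ by selecting the branch and using monotone convergence. At $z=1/3$ the quadratic becomes $(M/3-1)^2=0$. That double root is where the recurrence the paper takes from Lemma~\ref{zerotimes} appears in your argument.

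What each route buys:
\begin{itemize}
\item Your argument is self-contained. It avoids the central trinomial asymptotics and the measure-theoretic lemma, and it yields the whole generating function. The price is the analysis of the branch and of the boundary behaviour, which you handle correctly. Plugging in $z=1/3$ directly would first require knowing that $M(1/3)<\infty$.
\item The paper's argument is shorter, since Lemma~\ref{zerotimes} is needed anyway for Theorem~\ref{levelthm}(b), and it involves only a linear equation. Its measure-theoretic form also matches how the lemma is then used in the proof of Theorem~\ref{levelthm}(a).
\end{itemize}
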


\begin{proof}
Let $S$ be the sum. For $(n,m)\in\mathcal A$ let $v$ be the word of $m$.
The word $u=0v2$ of length $n+2$ has $\kappa_1(u)=1$,
$\kappa_{j+1}(u)=1+\kappa_j(v)\ge1$ for $1\le j\le n$, and
$\kappa_{n+2}(u)=0$, so its smallest zero time $j\ge1$ is its length.
Conversely, let $u$ be a word that begins with $0$ and whose smallest
zero time $j\ge1$ is its length $\ell$. Then $\kappa_j(u)\ge1$ for
$1\le j<\ell$, so the last letter of $u$ is $2$, and $u=0v2$, where $v$
is the word of $m$ for some $(\ell-2,m)\in\mathcal A$. Replacing every
letter by its complement, we obtain the words with the same property that
begin with $2$, and the only such word that begins with $1$ is the word
$1$. The digits of a point of $[0,1)$ begin with at most one of all these
words, and they begin with one of them if the point has a zero time
$n\ge1$, which holds for almost every point by Lemma~\ref{zerotimes}. The
points whose digits begin with a given word of length $\ell$ form an
interval of length $3^{-\ell}$, so
\[
 1=\frac13+\frac S9+\frac S9,
\]
and hence $S=3$.
\end{proof}

\begin{figure}[htbp]
\centering
\includegraphics{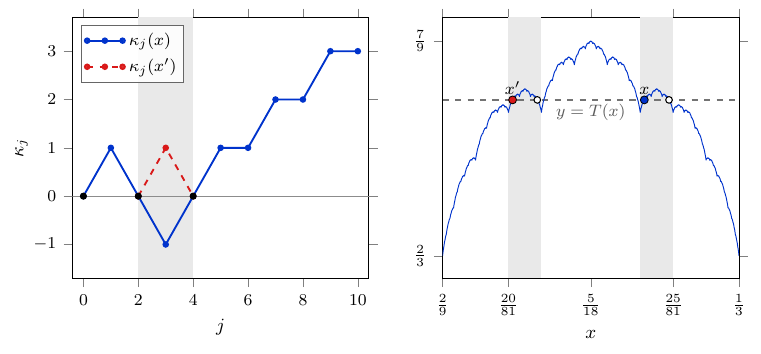}
\caption{Left: $\kappa_j(x)$ for $x=(0.0220\overline{01})_3$ and
$\kappa_j(x')$ for $x'=(0.0202\overline{01})_3$. Right: $T$ on $[2/9,1/3]$
and four points of $L(T(x))$ given by Lemma~\ref{flip}.}
\label{foldpic}
\end{figure}

The next two lemmas show that $T$ is strictly increasing on the set of
points that are not ternary rationals and whose numbers $\kappa_j$ with
$j\ge1$ are all positive. This is used in the proof of Proposition~\ref{finitelevel}.

\begin{lemma}
\label{tiltpos}
Let $k\ge0$ be an integer, and let $w\in(0,1)$ be a point that is not a
ternary rational and satisfies $\kappa_j(w)\ge-k$ for all $j\ge1$. Then
\[
 T(w)+(2k+2)w\le2k+2.
\]
\end{lemma}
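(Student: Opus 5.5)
The plan is to reduce the statement to Lemma~\ref{tiltbelow} by the reflection $w\mapsto1-w$. Put $u=1-w$. Since $w$ is not a ternary rational, neither is $u$, and the digits of $u$ are $2-\xi_j$. Hence $\kappa_j(u)=-\kappa_j(w)\le k$ for every $j\ge1$. Because $T(u)=T(w)$, the claimed inequality $T(w)\le(2k+2)(1-w)$ is the inequality
\[
 T(u)\le2(k+1)u.
\]
By Lemma~\ref{tiltbelow} with $k+1$ in place of $k$, this inequality holds as soon as $u\ge1/(2\cdot3^k)$.

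So it suffices to show that the hypothesis $\kappa_j(u)\le k$ for all $j$ forces $u\ge1/(2\cdot3^k)$. Suppose instead that $u<1/(2\cdot3^k)$. The number $1/(2\cdot3^k)$ has the ternary expansion $0.0\ldots0111\ldots$, with $k$ zeros followed by ones only. The expansion of $u$ is unique and different from this one, because $u$ is not a ternary rational. Compare the two expansions at the first position where they differ. Since $u$ is the smaller number, its digit there is the smaller one. There are two cases.
\begin{enumerate}
\item[(i)] The first difference lies among the first $k$ positions. Then the digit of $1/(2\cdot3^k)$ there is $0$, and $u$ would need a smaller digit, which is impossible.
\item[(ii)] The first difference lies at a position $j>k$. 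Then the digits of $u$ at positions $1,\ldots,j$ are $k$ zeros, then $j-k-1$ ones, then a $0$ in place of the $1$.
\end{enumerate}
Only case (ii) can occur. In that case $\kappa_j(u)=k+1>k$, which contradicts the hypothesis. Hence $u\ge1/(2\cdot3^k)$, and Lemma~\ref{tiltbelow} gives the claim.

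The only step that needs care is the digit comparison, in particular the edge case $k=0$. There the threshold is $1/2=0.111\ldots$, and $u<1/2$ means that the first digit of $u$ that is not $1$ is a $0$, which gives $\kappa_j(u)=1$ at that position. The argument covers this case uniformly. I do not expect a real obstacle: the exceptional points $(1-3^{-r})/(2\cdot3^{k})$ of Lemma~\ref{tiltbelow} are not needed, since the argument already yields $u\ge1/(2\cdot3^k)$.
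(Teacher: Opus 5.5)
Your proof is correct and is essentially the paper's argument. The paper compares $w$ directly with $1-1/(2\cdot3^k)=0.2\cdots2\,111\ldots$ and reflects only at the end to apply Lemma~\ref{tiltbelow} with $k+1$ in place of $k$, whereas you reflect first and compare $u=1-w$ with $1/(2\cdot3^k)$; the digit comparison and the use of Lemma~\ref{tiltbelow} are the same.
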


\begin{proof}
The point $1-1/(2\cdot3^k)$ has the ternary expansion
$0.2\cdots2\,111\ldots$ with $k$ digits $2$. Suppose that $w$ is larger
than this point. Then at the first place $j$ where the digits of the two
points differ, the digit of $w$ is the larger one. Hence $j>k$, the $j$th
digit of $w$ is $2$, and the first $j-1$ digits of $w$ are $k$ twos
followed by ones, so $\kappa_j(w)=-k-1$, contrary to the hypothesis.
Therefore $1-w\ge1/(2\cdot3^k)$, and Lemma~\ref{tiltbelow} with $k+1$ in
place of $k$ gives $T(1-w)\le(2k+2)(1-w)$. Since $T(w)=T(1-w)$, this is
the claim.
\end{proof}

\begin{lemma}
\label{monopos}
Let $x<x'$ be points of $[0,1)$ that are not ternary rationals and
satisfy $\kappa_j(x)\ge1$ and $\kappa_j(x')\ge1$ for all $j\ge1$. Then
$T(x)<T(x')$.
\end{lemma}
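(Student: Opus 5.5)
Let $n\ge0$ be the first index at which the digits of $x$ and $x'$ differ. Then the digits $e=\xi_{n+1}$ of $x$ and $e'=\xi'_{n+1}$ of $x'$ satisfy $e<e'$, because $x<x'$. The two points share the level $n$ interval $[a_n,c_n]$, with $\kappa=\kappa_n(x)=\kappa_n(x')\ge0$. In the level $n+1$ intervals, $x$ lies in the third $[a_{n+1},c_{n+1}]$ with index $e$, and $x'$ lies in the third $[a'_{n+1},c'_{n+1}]$ with index $e'$, where $c_{n+1}\le a'_{n+1}$. I will prove the chain
\[
 T(x)\le T(c_{n+1})\le T(a'_{n+1})<T(x').
\]

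\emph{Step 1: $T(x)\le T(c_{n+1})$.} Put $w=t_{n+1}(x)\in(0,1)$. This point is not a ternary rational, and $\kappa_j(w)=\kappa_{n+1+j}(x)-\kappa_{n+1}(x)\ge1-\kappa_{n+1}(x)$. So Lemma~\ref{tiltpos} applies with $k=\kappa_{n+1}(x)-1\ge0$ and gives $T(w)+2\kappa_{n+1}(x)w\le2\kappa_{n+1}(x)$. By \eqref{lcell} at level $n+1$ and the symmetry $T(1-w)=T(w)$,
\[
 T(c_{n+1})-T(x)=3^{-n-1}\bigl(2\kappa_{n+1}(x)(1-w)-T(w)\bigr)\ge0.
\]

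\emph{Step 2: $T(c_{n+1})\le T(a'_{n+1})$.} The points $c_{n+1}$ and $a'_{n+1}$ are joined by the thirds of $[a_n,c_n]$ with indices $i$, $e<i<e'$; if $e'=e+1$ the two points coincide. The third with index $i$ has slope $2(\kappa+1-i)$. For $i<e'$ this slope is at least $2(\kappa+1-e')+2=2\kappa_{n+1}(x')+2>0$, using $\kappa_{n+1}(x')\ge1$. By \eqref{samp}, the slope of a ternary interval is the slope of $T$ between its endpoints, so the values of $T$ increase along these endpoints.

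\emph{Step 3: $T(a'_{n+1})<T(x')$.} By \eqref{lcell} with $u=t_{n+1}(x')\in(0,1)$,
\[
 T(x')-T(a'_{n+1})=3^{-n-1}\bigl(2\kappa_{n+1}(x')u+T(u)\bigr)>0,
\]
since $\kappa_{n+1}(x')\ge1$ and $u>0$.

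The main obstacle is Step 1. There the point $x$ may be followed by a long run of twos, which could make $T(x)$ exceed $T(c_{n+1})$. This is exactly what the lower bound $\kappa_j(x)\ge1$ rules out, through Lemma~\ref{tiltpos}, and the main care needed is in checking that the shifted hypothesis $\kappa_j(w)\ge-k$ holds with the right value of $k$. The strict inequality in the conclusion comes from Step 3.
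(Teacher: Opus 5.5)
Your proof is correct and is essentially the paper's argument: the paper shows that $g(t)=2\kappa t+T(t)$ lies below $(2\kappa+2)/3$, between $(2\kappa+2)/3$ and $(4\kappa+2)/3$, or above $(4\kappa+2)/3$ according to the first digit of $t$. These thresholds are exactly $3^n(T(\cdot)-T(a_n))$ at the two inner endpoints of the thirds of $[a_n,c_n]$, and the upper bounds come from Lemma~\ref{tiltpos} with the same values of $k$ as in your Step~1. (Minor points: $n$ should be the length of the common prefix, so that the first differing digit is at position $n+1$, and the symmetry $T(1-w)=T(w)$ is not needed in Step~1.)
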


\begin{proof}
Both points have the first digit $0$, and their digit sequences differ.
Let $n\ge1$ be the largest index such that $x$ and $x'$ have the same first
$n$ digits, so that $\kappa=\kappa_n(x)=\kappa_n(x')\ge1$, and let $t$ and
$t'$ be the points whose digits are those of $x$ and of $x'$ after the
place $n$. Then $t<t'$, the first digits $\xi$ of $t$ and $\xi'$ of $t'$
satisfy $\xi<\xi'$, and by \eqref{lcell}, applied to $x$ and to $x'$,
\[
 3^n\bigl(T(x')-T(x)\bigr)=g(t')-g(t),~g(v)=2\kappa v+T(v).
\]
We show that $g(t)\le(2\kappa+2)/3$ if $\xi=0$, that
$(2\kappa+2)/3<g(t)\le(4\kappa+2)/3$ if $\xi=1$, and that
$g(t)>(4\kappa+2)/3$ if $\xi=2$. The same holds for $t'$, and the lemma
follows. The digits of $t$ are $\xi_{n+1},\xi_{n+2},\ldots$ of $x$, so
$\kappa_j(t)=\kappa_{n+j}(x)-\kappa\ge1-\kappa$ for $j\ge1$. Put
$w=3t-\xi$, a point of $(0,1)$ that is not a ternary rational, with
$\kappa_j(w)=\kappa_{j+1}(t)-\kappa_1(t)$ for $j\ge1$, and recall
\eqref{branches}. If $\xi=0$, then $\kappa_j(w)\ge-\kappa$, and
Lemma~\ref{tiltpos} gives $3g(t)=T(w)+(2\kappa+2)w\le2\kappa+2$. If
$\xi=1$, then $\kappa_j(w)\ge1-\kappa$, and
$3g(t)=2\kappa+2+T(w)+2\kappa w$ exceeds $2\kappa+2$ because $T(w)>0$,
and is at most $4\kappa+2$ by Lemma~\ref{tiltpos} with $k=\kappa-1$. If
$\xi=2$, then $3g(t)=4\kappa+2+(2\kappa-2)w+T(w)>4\kappa+2$.
\end{proof}

Lemma~\ref{monopos} has no analogue for the Takagi--van~der~Waerden
function $\sum_{j\ge0}3^{-j}\dist(3^jx,\Z)$:
Allaart~\cite[Remark~4.7]{AllaartLevelSets2014} gives two points,
$17/108$ and $37/108$, at which all slopes of the partial sums are
positive and the function takes the same value $3/8$.
Allaart~\cite[Lemma~4.6]{AllaartLevelSets2014} shows that between two
such points there is an interval on which a partial sum has slope zero and
whose image under the function contains their common value, and he uses
this in the proof of
his Proposition~4.9. For Takagi's function,
Lagarias and Maddock~\cite[Theorem~4.8]{LagariasMaddock2012} proved that
the function is nondecreasing on the set of points whose first $n$ binary
digits contain more zeros than ones for every $n\ge1$, and strictly
increasing on this set away from a countable set of rationals.
Lemma~\ref{monopos} is the ternary analogue of their theorem.
Lemma~\ref{tiltpos} plays the role of the claim in the proof of part~(1)
of their theorem, and they attribute that proof to Allaart.

\begin{proposition}
\label{finitelevel}
Let $y\in[0,1]$ be a number that is not a value of $T$ at a ternary
rational, and suppose that $\mathcal A(y)$ is finite, with $r$ elements.
Then $L(y)$ has at most $2^{r+1}-2$ points.
\end{proposition}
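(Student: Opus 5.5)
Take $x\in L(y)$. Since $y$ is not a value of $T$ at a ternary rational, $x$ is not a ternary rational. By the folding argument after Theorem~\ref{levelthm}(b) and Lemma~\ref{flip}, there is a point $x'\in L(y)$ with $\kappa_j(x')=|\kappa_j(x)|$ for all $j$. The point $x$ is recovered from $x'$ by complementing the digits in some of the blocks between consecutive zero times of $x'$; the last block is the tail after its last zero time, if there is one. The proof therefore counts the possible points $x'$ and, for each, the possible sign patterns.

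Step one bounds the zero times. Let $n$ be a zero time of $x'$ and $m=\lfloor 3^nx'\rfloor$. Then $(n,m)\in\mathcal A$, and $T$ takes the value $y$ at $x'$ in $[m/3^n,(m+1)/3^n]$, so $(n,m)\in\mathcal A(y)$. Distinct zero times of $x'$ give distinct pairs. Hence the zero times $n\ge1$ of all folded points $x'$ are encoded by chains in $\mathcal A(y)$. In particular each $x'$ has at most $r$ zero times $n\ge1$, counting $n=0$ through the pair $(0,0)$.

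Step two shows that the zero times determine $x'$. Suppose $x'$ has last zero time $N$ and its level-$N$ interval is $[a,a+3^{-N}]$. By \eqref{lcell} with slope $0$, $T(x')=T(a)+3^{-N}T(t)$, where $t$ is the tail. Every $\kappa_j$ of the tail is $\ge1$ for $j\ge1$, since $N$ is the last zero time and the signs are positive. If the tail is empty of further zero times and is not a ternary rational, Lemma~\ref{monopos} says $T$ is strictly increasing on such tails. So at most one tail gives the value $3^N(y-T(a))$. The prefix is fixed by the pair $(N,m)$.

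Thus each $x'$ is determined by a pair $(N,m)\in\mathcal A(y)$ that is its last zero time. This gives at most $r$ choices for $x'$, and the zero times of $x'$ form a chain of pairs in $\mathcal A(y)$ whose levels are nested. Counting sign patterns: if $x'$ has zero times $0=n_0<n_1<\cdots<n_K$, the blocks $(n_k,n_{k+1}]$ together with the tail give $K+1$ blocks. Each block can be flipped or not, so at most $2^{K+1}$ preimages $x$ come from this $x'$.

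The count is delicate because pairs whose intervals are nested may contribute to several $x'$. I would organize it as a tree: the elements of $\mathcal A(y)$ ordered by interval inclusion form a forest under the root $(0,0)$. A point $x'$ corresponds to a node $(N,m)$ with chain length $K$, where the root counts only if it lies in $\mathcal A(y)$, giving at most $2^{K+1}$ points. One would then need to show, by induction on $r$, that $\sum$ over nodes of $2^{\text{depth}}$ is at most $2^{r+1}-2$. The extremal case is a single chain, which gives $\sum_{K=1}^{r}2^K=2^{r+1}-2$. A branching configuration gives less, since $2^a+2^b\le 2^{a+b}$-type inequalities hold. The induction removes a leaf of maximal depth $d\le r$; it contributes $2^d$, and the remaining $r-1$ nodes contribute at most $2^r-2$ by hypothesis, with total $\le 2^r-2+2^r$.

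The main obstacle is the bookkeeping in step two. The flips must not double count points: distinct flip patterns give distinct points when the block begins with a non-$1$ digit, and in general they may coincide, which only helps. One must also handle the case $N=0$, where the root is not a genuine slope-zero pair in $\mathcal A(y)$. There the exponent bound $2^{K+1}$ has to be matched with $r$ excluding the root; the off-by-one there is where the constant $-2$ comes from.
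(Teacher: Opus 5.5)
Your proof is essentially the paper's. You fold $x$ to $x'$ with $\kappa_j(x')=|\kappa_j(x)|$, map the zero times of $x'$ injectively into $\mathcal A(y)$, use Lemma~\ref{monopos} to see that $x'$ is determined by the pair of its last zero time, and bound the number of flip patterns by $2^{z}$, where $z$ is the number of zero times. The only difference is the final count: the paper orders $\mathcal A(y)$ as $e_1,\ldots,e_r$ with nondecreasing first entries, so the unique $x'$ attached to $e_i$ has at most $i$ zero times, and the total $\sum_{i=1}^r2^i$ follows without your leaf-removal induction.

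Your last paragraph needs a correction. The pair $(0,0)$ always lies in $\mathcal A(y)$, because $[0,1]$ has slope $0$ and $T([0,1])=[0,1]\ni y$. So there is no special case $N=0$ and no off-by-one: the root is one of the $r$ elements, as your chain count $\sum_{K=1}^r2^K$ already assumes. For the same reason, each $x'$ has at most $r$ zero times when $n=0$ is counted.
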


\begin{proof}
Let $x\in L(y)$. Then $x$ is not a ternary rational, so $0<x<1$. Let
$0=n_0<n_1<\cdots$ be the zero times of $x$, and if there are finitely
many, say $n_0<\cdots<n_K$, put $n_{K+1}=\infty$. For $n_k<j<n_{k+1}$ the
numbers $\kappa_j(x)$ are nonzero, and since consecutive values of
$\kappa$ differ by at most $1$, they all have the same sign. Let $x'$ be
the point obtained from $x$ by replacing $\xi_j$ by $2-\xi_j$ for all $j$
in those intervals $n_k<j\le n_{k+1}$ that contain some $j$ with
$\kappa_j(x)<0$. By Lemma~\ref{flip}, $x'$ is not a ternary rational,
$T(x')=y$, and $\kappa_j(x')=|\kappa_j(x)|$ for all $j$; in particular
$x'$ has the same zero times as $x$.

Every zero time $n$ of $x'$ gives the pair
$(n,\lfloor3^nx'\rfloor)\in\mathcal A(y)$. Indeed, with
$m=\lfloor3^nx'\rfloor$, the ternary intervals containing
$[m/3^n,(m+1)/3^n]$ are the ternary intervals of the levels $j\le n$ that
contain $x'$, their slopes $2\kappa_j(x')$ are nonnegative, the slope for
$j=n$ is $0$, and $T$ takes the value $y=T(x')$ at the point $x'$ of
$[m/3^n,(m+1)/3^n]$. Distinct zero times give distinct
pairs, so $x$ has finitely many zero times. If it has $z$ of them, there
are $z$ intervals $n_k<j\le n_{k+1}$, the last of which is infinite. The
point $x$ is obtained from $x'$ by replacing the digits in some of these
intervals by their complements, and the intervals depend only on the
zero times of $x'$. Therefore at most $2^z$ points of $L(y)$ give the
same point $x'$.

Let $N$ be the largest zero time of $x'$ and $M=\lfloor3^Nx'\rfloor$, so
that $(N,M)\in\mathcal A(y)$, and let $u=3^Nx'-M$, the point whose digits
are those of $x'$ after the place $N$. It is not a ternary rational, and
$\kappa_j(u)=\kappa_{N+j}(x')\ge1$ for $j\ge1$, because
$\kappa_{N+j}(x')\ge0$ and $N$ is the last zero time of $x'$. By
\eqref{lcell}, applied to $x'$, $T(u)=3^N\bigl(y-T(M/3^N)\bigr)$. Suppose
that another
point of $L(y)$ gives in this way a point $x''$ with the same pair
$(N,M)$. Then $3^Nx''-M$ has the same properties as $u$ and the same
value under $T$, so $3^Nx''-M=u$ by Lemma~\ref{monopos}, and hence
$x''=x'$.

Now order the elements of $\mathcal A(y)$ as $e_1,\ldots,e_r$ so that
their first entries do not decrease. By the previous paragraph, for each
$i$ at most one point $x'$ arises with $(N,M)=e_i$. Every other zero time
$n$ of this point gives an element $(n,\lfloor3^nx'\rfloor)$ of
$\mathcal A(y)$ with $n<N$, which therefore comes before $e_i$. Hence $x'$
has at most $i$ zero times, and at most $2^i$ points of $L(y)$ give $x'$.
Therefore $L(y)$ has at most $\sum_{i=1}^r2^i=2^{r+1}-2$ points.
\end{proof}

\begin{proof}[Proof of Theorem~\ref{levelthm}(a)]
For $(n,m)\in\mathcal A$ the set $T([m/3^n,(m+1)/3^n])$ is an interval
of length $3^{-n}$. The number of elements of $\mathcal A(y)$ is
$\sum_{(n,m)\in\mathcal A}\mathbf 1_{T([m/3^n,(m+1)/3^n])}(y)$, where
$\mathbf 1_B$ is the indicator function of a set $B$. By monotone
convergence and Lemma~\ref{aplus}, its integral over $0\le y\le1$ is
$\sum_{(n,m)\in\mathcal A}3^{-n}=3$. Hence, on average over $0\le y\le1$,
the set $\mathcal A(y)$ has three elements, and it is finite for almost
every $y$. Since $T$ takes countably many values at ternary
rationals, Proposition~\ref{finitelevel} completes the proof.
\end{proof}

Proposition~\ref{finitelevel} also bounds the measure of the set of
heights at which the level set is large.

\begin{corollary}
\label{levelcount}
For every integer $M\ge2$, the set of heights $y\in[0,1]$ for which
$L(y)$ has at least $M$ points has Lebesgue measure at most
\[
 \frac3{\log_2(M+2)-1}.
\]
\end{corollary}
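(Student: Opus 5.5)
We combine Proposition~\ref{finitelevel} with the integral identity from the proof of Theorem~\ref{levelthm}(a). Let $B_M$ be the set of heights $y\in[0,1]$ for which $L(y)$ has at least $M$ points. We remove the countable set of values of $T$ at ternary rationals, which has measure zero. For every remaining $y\in B_M$, Proposition~\ref{finitelevel} gives the following: if $\mathcal A(y)$ is finite with $r$ elements, then $M\le\#L(y)\le2^{r+1}-2$. Hence $2^{r+1}\ge M+2$, that is, $r\ge\log_2(M+2)-1$. If $\mathcal A(y)$ is infinite, the same inequality holds trivially. So on $B_M$, apart from a null set, the counting function $\#\mathcal A(y)$ is at least $\log_2(M+2)-1$. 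This number is positive, since $M\ge2$ gives $\log_2 4-1=1$.

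Next we use Markov's inequality. In the proof of Theorem~\ref{levelthm}(a), monotone convergence and Lemma~\ref{aplus} showed that
\[
 \int_0^1\#\mathcal A(y)\,dy=\sum_{(n,m)\in\mathcal A}3^{-n}=3.
\]
Therefore
\[
 \bigl(\log_2(M+2)-1\bigr)\,|B_M|\le\int_{B_M}\#\mathcal A(y)\,dy\le3,
\]
and dividing by $\log_2(M+2)-1$ gives the claimed bound.

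The only points that need care are measurability and the exceptional heights. The function $y\mapsto\#\mathcal A(y)$ is a countable sum of indicator functions of compact intervals, so it is measurable. For $B_M$ itself, if its measurability is in doubt, we may state the bound for the outer measure. Alternatively, we replace $B_M$ by the measurable superset
\[
 \{y:\#\mathcal A(y)\ge\log_2(M+2)-1\}\cup V,
\]
where $V$ is the countable set of values of $T$ at ternary rationals. This superset has the same bound on its measure, so the estimate holds either way. There is no real obstacle here. The main point is to read Proposition~\ref{finitelevel} contrapositively, so that a large level set forces many elements of $\mathcal A(y)$, and then to apply the average-three identity.
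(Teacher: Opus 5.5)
Your proof is correct and is the paper's argument. You read Proposition~\ref{finitelevel} contrapositively to get $\#\mathcal A(y)\ge\log_2(M+2)-1$ off a countable set, then apply Markov's inequality with $\int_0^1\#\mathcal A(y)\,dy=3$. The only difference is measurability: instead of falling back on outer measure, the paper notes that the set is the union of the compact sets $T(J_1)\cap\cdots\cap T(J_M)$ over pairwise disjoint closed intervals $J_1,\ldots,J_M\subseteq[0,1]$ with rational endpoints, hence Borel, so your outer-measure bound is in fact a bound on its Lebesgue measure.
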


\begin{proof}
Let $Y$ be this set. It is the union of the compact sets
$T(J_1)\cap\cdots\cap T(J_M)$ over all choices of pairwise disjoint
closed intervals $J_1,\ldots,J_M\subseteq[0,1]$ with rational endpoints,
so it is a Borel set. Put $t=\log_2(M+2)-1$, so that $t\ge1$, and
$2^{s+1}-2<M$ for every integer $s<t$. By Proposition~\ref{finitelevel}, a
height $y$ that is not a value of $T$ at a ternary rational and for which
$\mathcal A(y)$ has fewer than $t$ elements does not lie in $Y$. Hence $Y$
is contained, up to a countable set, in the set of $y$ for which
$\mathcal A(y)$ has at least $t$ elements. In the proof of
Theorem~\ref{levelthm}(a) we saw that the number of elements of
$\mathcal A(y)$ has integral $3$ over $0\le y\le1$, so this set, and
hence $Y$, has measure at most $3/t$.
\end{proof}

\section{Other bases and further questions}
\label{sec:questions}

\begin{definition}[Galkin and Galkina {\cite[equations~(1.1), (1.2)
and~(2.2)]{GalkinGalkina2020}}]
\label{Dq}
Let $q\ge2$ be an integer, and let $D_{q,0}$ be the function of period
$1$ that is zero at the integers and linear with slope $q-1-2k$ on
$[k/q,(k+1)/q]$ for $0\le k<q$. The Delange function of base $q$ is
\[
 D_q(x)=\sum_{j\ge0}q^{-j}D_{q,0}(q^jx).
\]
\end{definition}

Thus $D_2$ is Takagi's function and $D_3=T$.
Mohanty et~al.~\cite[Theorem~3.1 and equation~(3.3)]{Mohanty2023} use $D_q$
to express the largest number of edges among $m$ vertices of the Hamming
graph over an alphabet of size $q$. Galkin and Galkina showed that the
maximum of $D_q$ on $[0,1]$ is attained only at $1/2$ when $q$ is odd, and
on a Cantor set of Hausdorff dimension $1/(2\log_2q)$ when $q$ is
even~\cite[Theorem~2 and Corollary~1]{GalkinGalkina2020}. In the odd case,
$D_{q,0}$ is constant on
$[(q-1)/(2q),(q+1)/(2q)]$~\cite[Lemma~3]{GalkinGalkina2020}, as $\tau$ is on
$[1/3,2/3]$. Let $s_q(m)$ be the sum of the digits of $m$ in base $q$. In
the next proposition we show that the description of the local maxima in
Theorem~\ref{thm1} carries over to every odd base. As in
base $3$, the first statement of the proposition, on the maximum of
$D_q(t)-2dt$, can be read off from the concave hull of $D_q$ that Galkin
and Galkina determined~\cite[Theorem~9]{GalkinGalkina2020}; in the proof
we derive it from their results and then use it to find the local
maxima. Figure~\ref{q5pic} shows the case $q=5$.

\begin{proposition}
\label{oddq}
Let $q=2r+1\ge3$ be odd. For an integer $d\ge0$ write $d=ar+b$ with
integers $a\ge0$ and $0\le b<r$. Then the maximum of $D_q(t)-2dt$ on $[0,1]$
equals $((q-2b)^2+q)/(4q^{a+1})$ and is attained only at
$t=(q-2b)/(2q^{a+1})$. The local maxima of $D_q$ are exactly the points
$(K+1/2)/q^N+k$, where $k$, $K$ and $N$ are integers with $N\ge0$,
$0\le K<q^N$ and $s_q(K)=rN$, and all of them are strict.
\end{proposition}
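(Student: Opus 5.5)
The plan is to follow the base-$3$ argument of Lemma~\ref{lem:tilt}, Theorem~\ref{cellmax} and the proof of Theorem~\ref{thm1}, with $D_q$ in place of $T$. First I would record the analogues of the basic identities. Since $D_{q,0}$ has slope $q-1-2\delta$ on the piece of digit $\delta$, the partial sum of order $n$ of $D_q$ is linear on each $q$-adic interval $[m/q^n,(m+1)/q^n]$. Its slope there is $\sum_\delta(q-1-2\delta)=2(rn-s_q(m))$, summed over the digits $\delta$ of $m$ written with $n$ digits. The tail is $q^{-n}D_q(q^nx)$, which gives the cell equation
\[
D_q\Bigl(\frac{m+t}{q^n}\Bigr)=D_q\Bigl(\frac m{q^n}\Bigr)+q^{-n}\bigl(2(rn-s_q(m))t+D_q(t)\bigr),\qquad 0\le t\le1,
\]
exactly as in Proposition~\ref{prop-cell}. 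We also have the symmetry $D_q(1-t)=D_q(t)$, because $D_{q,0}$ is odd about $0$ with period $1$ and hence even about $1/2$. The ingredients that carry over directly are the evenness of all slopes, the unique maximum of $D_q$ at $1/2$ for odd $q$ \cite[Theorem~2]{GalkinGalkina2020}, and $D_{q,0}(y)\le(q-1)y=2ry$ for $y\ge0$, with equality exactly on $[0,1/q]$.

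For the first statement I would take the concave hull of $D_q$ on $[0,1]$ from \cite[Theorem~9]{GalkinGalkina2020} and compute the slopes of its edges, as was done for $q=3$ before Lemma~\ref{lem:tilt}. I expect the corners to lie over the points $(q-2b)/(2q^{a+1})$, with $a\ge0$ and $0\le b<r$, together with their mirror images and $0,1$. I also expect the edge slopes to be odd integers, so that the line of even slope $2d$ supports the hull at exactly one corner; the uniqueness of the maximizer follows from this. The task is then to identify that corner and evaluate $D_q$ there.

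To evaluate, I would use $D_q(t)=2rat+q^{-a}D_q(q^at)$ for $t\le q^{-a}$, which follows by iterating the functional equation, since $D_{q,0}(y)=2ry$ on $[0,1/q]$. This gives $D_q(t)-2dt=q^{-a}\bigl(D_q(u)-2bu\bigr)$ with $u=q^at$, so the case of general $d$ reduces to $d=b<r$. For $0\le b<r$ the point $u=(q-2b)/(2q)=(r-b+\frac12)/q$ is the midpoint of the piece with digit $k=r-b$. On that piece the slope of $D_{q,0}$ is $2b$, so $D_q(u)-2bu$ equals the value of $D_{q,0}(x)-2bx$ at $k/q$, namely $k(q-k)/q-2bk/q$, plus $q^{-1}D_q(1/2)=(q+1)/(4q)$. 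Simplifying should give $((q-2b)^2+q)/(4q)$, and dividing by $q^a$ gives the stated value.

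If the reference to the hull turns out to be too indirect, I would instead argue directly on the $q$ pieces. On the piece with digit $k$, the function $D_q(t)-2bt$ is a line of slope $q-1-2k-2b$ plus $q^{-1}D_q(qt-k)$. The lines have concave endpoint values, and for $k\ne r-b$ I would bound the bump by a recursive tilt estimate. I expect this direct route to be the main obstacle, because odd slopes and a constant bump must be compared exactly.

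For the local maxima I would then copy the proof of Theorem~\ref{thm1}. The $q$-adic rationals are not local maxima: at an interior point $x$, the cell equation writes $D_q$ on each interval containing $x$ as a line plus a positive bump, so one endpoint has a smaller value. Next, the tilt statement with $d$ replaced by $|d|$, together with the symmetry for slopes of the other sign, gives the analogue of Theorem~\ref{cellmax}: on each $q$-adic interval the maximum is attained at a unique point of the form $\bigl(K+\frac12\bigr)/q^N$ with $s_q(K)=rN$. Here I would append $a$ digits $0$ or $q-1$ as appropriate and then one digit $r-b$ or its mirror $r+b$; a short check shows that the digit sum reaches $rN$ exactly. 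Conversely, at a point $\bigl(K+\frac12\bigr)/q^N$ with $s_q(K)=rN$ the cell slope is $0$, so the graph over that interval is a scaled copy of $D_q$, whose maximum is attained only at $1/2$; hence the point is a strict local maximum. Finally, a local maximum must be the maximizer of a small $q$-adic interval around it, and therefore has the stated form.
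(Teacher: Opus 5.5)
Your route for the first statement is the paper's: the cell equation, the concave hull of Galkin and Galkina with its odd edge slopes, and a unique supporting corner. The evaluation differs slightly. The paper uses the closed formula $q^nD_q(p_{n,i})=2rni-i^2+rn+\frac{r+1}2$. You instead use $D_q(t)-2dt=q^{-a}\bigl(D_q(q^at)-2bq^at\bigr)$ on $[0,q^{-a}]$ and evaluate at the midpoint of the piece with digit $r-b$. This is correct and gives $((q-2b)^2+q)/(4q^{a+1})$.

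You still have to compute the edge slopes, not just ``expect'' them to be odd. Your corner values (level-one midpoints plus the scaling identity) are enough. They give the slopes $2(rn-i)+1$ and $2(rn-i)-1$ on either side of the corner $(i+\frac12)/q^n$.

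The digit bookkeeping in the second part also checks out: append $a$ digits $0$ or $q-1$, then the digit $r-b$ or $r+b$.

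One step fails as written. To show that the $q$-adic rationals are not local maxima, you argue that at an interior point $x$ the cell equation gives a line plus a positive bump, ``so one endpoint has a smaller value.'' That is the argument from the proof of Theorem~\ref{thm1} showing that points which are not ternary rationals are not local \emph{minima}. It says nothing about whether an endpoint $y=K/q^n$ can be a local maximum, because the smaller endpoint may be the other one. You need points arbitrarily close to $y$ with values above $D_q(y)$.

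Here is one fix. For $N\ge n$ the interval $[y,y+q^{-N}]$ has slope $2(rN-s_q(K))$, which is positive for large $N$. The cell equation then gives $D_q(y+uq^{-N})>D_q(y)$ for $0<u<1$. Alternatively, your own first statement, together with the symmetry, shows that the maximum of $D_q$ on any $q$-adic interval is attained only at an interior point. The paper instead uses $D_q(u)\ge(q-1)Lu$ for $0<u\le q^{-L}$.

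A minor slip: $D_{q,0}$ is even, not odd. An odd function of period $1$ would satisfy $D_{q,0}(1-x)=-D_{q,0}(x)$. The symmetry $D_q(1-t)=D_q(t)$ holds because the slopes on the pieces $k$ and $q-1-k$ are opposite and $D_{q,0}(0)=D_{q,0}(1)=0$.
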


If $q=3$, then $r=1$, $b=0$ and $a=d$, so we recover
equation~\eqref{tilt1} of Lemma~\ref{lem:tilt} with its maximizer, and the
statement on local maxima in Theorem~\ref{thm1}.

\begin{proof}
For $0\le K<q^n$ and $0\le u\le1$ we have
\begin{equation}
 D_q\Bigl(\frac{K+u}{q^n}\Bigr)=D_q\Bigl(\frac K{q^n}\Bigr)
 +q^{-n}\bigl(2(rn-s_q(K))u+D_q(u)\bigr).
 \label{qcell}
\end{equation}
Since $D_q$ is a Knopp function of the kind described in
Section~\ref{sec:T}, this equation is contained
in~\cite[Theorem~5.2]{Kruppel2010b}. To see it directly, we split the
series at $j=n$, as in the proof of Proposition~\ref{prop-cell}. For
$j<n$ the term $q^{-j}D_{q,0}(q^jx)$ is
linear on $[K/q^n,(K+1)/q^n]$ with slope $q-1-2\eta$, where $\eta$ is the
digit of $K$ at the place of $q^{n-1-j}$. Hence these terms together have
slope $(q-1)n-2s_q(K)=2(rn-s_q(K))$ there. At $x=(K+u)/q^n$ the terms with
$j\ge n$ add up to $q^{-n}D_q(u)$.

Since $D_{q,0}(v)=(q-1)v$ for $0\le v\le1/q$ and
$D_{q,0}(i/q)=(i(q-1)-i(i-1))/q$, the definition gives
$D_q(i/q^n)=q^{-n}((q-1)ni-i(i-1))$ for $n\ge1$ and $0\le i<q$. Put
$p_{n,i}=(i+1/2)/q^n$ for $n\ge1$ and $0\le i\le r$. Then $p_{1,r}=1/2$ and
$p_{n,r}=p_{n-1,0}$ for $n\ge2$. From \eqref{qcell} and
$D_q(1/2)=(q+1)/4$~\cite[Theorem~2]{GalkinGalkina2020} we obtain
\[
 q^nD_q(p_{n,i})=2rni-i^2+rn+\frac{r+1}2.
\]
By~\cite[Theorem~9, part~1d, Proposition~5 and
Theorem~8]{GalkinGalkina2020}, the points
$p_{n,i}$ are exactly the points of $(0,1/2]$ above which the concave hull
of $D_q$ on $[0,1]$ has a corner, and the hull equals $D_q$ at these
points. Consequently
the hull is
linear between consecutive points $p_{n,i}$, and from the values above we see
that its slope on $[p_{n,i},p_{n,i+1}]$ is $(q-1)n-2i-1$ for $0\le i<r$.
These slopes are odd, and the corner above $p_{a+1,r-b}=(q-2b)/(2q^{a+1})$
joins an edge of slope $2d+1$ on its left to an edge of slope $2d-1$ on its
right. For $d=0$ the latter is the edge of slope $-1$ to the right of $1/2$,
by the symmetry $D_q(1-t)=D_q(t)$, which follows from the period $1$ and
the evenness of $D_{q,0}$~\cite[Proposition~1]{GalkinGalkina2020}.
Let $M$ be the maximum of $D_q(t)-2dt$ on $[0,1]$. Then the line
$t\mapsto M+2dt$ lies above $D_q$, hence above the hull, and it touches the
hull only above $p_{a+1,r-b}$, where the hull and $D_q$ agree. Consequently,
$p_{a+1,r-b}$ is the only point at which $D_q(t)-2dt=M$. Since
$d=r(a+1)-(r-b)$, the formula for $q^nD_q(p_{n,i})$ with $n=a+1$ and $i=r-b$
gives $q^{a+1}M=(r-b)^2+(r-b)+(r+1)/2=((q-2b)^2+q)/4$.

Let $y=K/q^n$ with $n\ge0$ and $0\le K<q^n$. Since
$D_{q,0}\ge0$~\cite[Lemma~3]{GalkinGalkina2020}, it follows from the
definition that $D_q(u)\ge(q-1)Lu$ for every integer $L\ge1$ and
$0<u\le q^{-L}$. By \eqref{qcell}, $D_q(y+u/q^n)>D_q(y)$ for all small
$u>0$, so $y$ is not a local maximum, and by periodicity no point $K/q^n$
with an integer $K$ is.
Now let $x$ be a local maximum. By periodicity we may assume $0\le x<1$, and
then $0<x<1$ and $x$ is not of the form $K/q^n$. For every large $n$, the
interval $[K_n/q^n,(K_n+1)/q^n]$ with $0\le K_n<q^n$ that contains $x$ lies
in a neighbourhood on which $x$ is a maximum. By \eqref{qcell}, $u=q^nx-K_n$
maximizes $D_q(u)+2(rn-s_q(K_n))u$ on $[0,1]$. If $s_q(K_n)=rn+d$ with
$d\ge0$, we write $d=ar+b$ as in the proposition, and its first statement
gives $u=(r-b+1/2)/q^{a+1}$. Hence $x=(K+1/2)/q^N$ with $N=n+a+1$ and
$K=K_nq^{a+1}+r-b$, and $s_q(K)=s_q(K_n)+r-b=rN$. If $s_q(K_n)=rn-d$ with
$d>0$, we obtain $u=1-(r-b+1/2)/q^{a+1}$ from the symmetry
$D_q(1-u)=D_q(u)$, so $x=(K+1/2)/q^N$ with $K=(K_n+1)q^{a+1}-(r-b)-1$. Here
the last $a+1$ digits of $K$ are $a$ digits $q-1$ followed by the digit
$r+b$, so $s_q(K)=s_q(K_n)+2ar+r+b=rN$. Conversely, suppose that
$s_q(K)=rN$. Then \eqref{qcell} at level $N$ gives
$D_q((K+u)/q^N)=D_q(K/q^N)+q^{-N}D_q(u)$ for $0\le u\le1$, and $D_q$ attains
its maximum on $[0,1]$ only at the interior point $u=1/2$. Thus
$(K+1/2)/q^N$ is a strict local maximum, and by periodicity so is
$(K+1/2)/q^N+k$ for every integer $k$.
\end{proof}

Consider the case $q=5$ of Proposition~\ref{oddq}
(Figure~\ref{q5pic}). Here $r=2$ and $D_{5,0}$ has the slopes
$4,2,0,-2,-4$. The corners $p_{n,i}=(i+1/2)/5^n$ of the hull,
with $n\ge1$ and $0\le i\le2$, satisfy
$5^nD_5(p_{n,i})=4ni-i^2+2n+3/2$; for instance, $D_5(1/10)=7/10$,
$D_5(3/10)=13/10$ and $D_5(1/2)=3/2$. Above $(0,1/2]$ the edges of the hull
have the odd slopes $\ldots,7,5,3,1$, and by the symmetry
$D_5(1-t)=D_5(t)$ those above $[1/2,1)$ have the slopes $-1,-3,-5,\ldots$.
The dots in the left panel are the corners with $n\le2$ and their
reflections in $1/2$, and the slopes $3$, $1$, $-1$ and $-3$ are written
next to the hull. Since the hull has only odd slopes,
a line of slope $2d$ that supports the hull from above touches it only at the corner
where the slope drops from $2d+1$ to $2d-1$, which is $1/2$, $3/10$, $1/10$
or $3/50$ for $d=0,1,2,3$. By Proposition~\ref{oddq}, these are the only
maximizers of $D_5(t)-2dt$ on $[0,1]$, and the maxima are $3/2$, $7/10$,
$3/10$ and $7/50$. The right panel shows these functions on $[0,3/5]$, with
dotted lines at their maxima. Since \eqref{qcell}
with $n=1$ and $K=0$ gives $D_5(t/5)=(4t+D_5(t))/5$, we have
$D_5(t/5)-2(d+2)t/5=(D_5(t)-2dt)/5$ for $0\le t\le1$. In other words, on
$[0,1/5]$ the
curves for $d=2$ and $d=3$ are those for $d=0$ and $d=1$ scaled by $1/5$
in both directions. Finally, the local maxima $(K+1/2)/5^N$ of $D_5$ in
$(0,1)$ with $N\le2$, marked by circles in the left panel, are $1/2$,
which comes from $(N,K)=(0,0)$, $(1,2)$ and $(2,12)$, and the points
$9/50$, $17/50$, $33/50$ and $41/50$, which come from $N=2$ and
$K=4,8,16,20$.

\begin{figure}[htbp]
\centering
\includegraphics{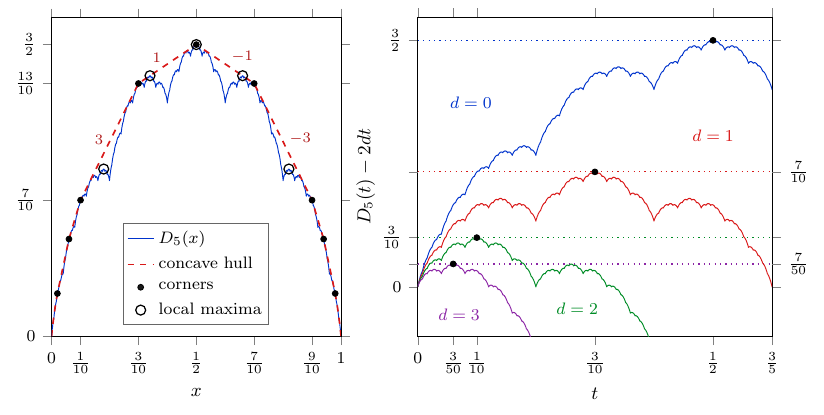}
\caption{Left: $D_5$, its concave hull and its local maxima
$(K+1/2)/5^N$ with $N\le2$. Right: the functions $D_5(t)-2dt$ for
$d=0,1,2,3$.}
\label{q5pic}
\end{figure}

If $q\ge4$ is even, we expect that no local maximum of $D_q$ is strict, as
for Takagi's function~\cite[Proposition~4.4]{Kruppel2007}, but we do not
know which points are local maxima. For the Takagi--van~der~Waerden
functions $\sum_{j\ge0}q^{-j}\dist(q^jx,\Z)$ with even $q\ge2$, Ferrera,
G\'omez Gil and Llorente~\cite[Theorems~1.3 and~1.4 and
Corollary~4.6]{FerreraGomezGilLlorente2022} describe the local maxima in
terms of the base $q$ digits of $x$. It would also be interesting to find
analogues of Theorem~\ref{thm2} for $D_q$ with $q\ge4$ and of
Corollary~\ref{blockmax} for the Hamming graph over an alphabet of size
$q$. We expect that the runs of the digits $0$ and $q-1$ play
the same role for $D_q$ as the runs of zeros and of twos play for $T$ in
Theorem~\ref{thm2}.

By Theorems~\ref{threequarters} and~\ref{boxbound} and
Proposition~\ref{assouadlevel}, $\log2/\log9$ is the largest Hausdorff
dimension, and also the largest upper box dimension, of a level set of
$T$. This value is attained at a dense set of heights. Indeed, if
$[m/3^n,(m+1)/3^n]$ has slope $0$, then by \eqref{cell} the level set at
height $T(m/3^n)+\frac34\,3^{-n}$ contains the set $(m+L(3/4))/3^n$, a
copy of $L(3/4)$ scaled by $3^{-n}$. By
Lemma~\ref{zerotimes}, almost every $x$ has infinitely many zero times
$n$. For each of them the ternary interval $[a_n,c_n]$ of level $n$ that
contains $x$ has slope $0$, and $T(x)$ lies within $3^{-n}$ of the height
$T(a_n)+\frac34\,3^{-n}$, because $T([a_n,c_n])=[T(a_n),T(a_n)+3^{-n}]$
by the remark after Definition~\ref{ztime}. Since $T$ is continuous and
maps $[0,1]$ onto $[0,1]$, the values $T(x)$ at these points $x$ are
dense in $[0,1]$, and hence so are the heights just described. On the
other hand, $L(y)$ is finite for almost every $y$ by
Theorem~\ref{levelthm}(a), so the heights $y$ for which $L(y)$ has
positive Hausdorff dimension form a set of Lebesgue measure zero. It
would be natural to determine the Hausdorff dimension of this set, and
more generally of the set of heights $y$ for which $L(y)$ has Hausdorff
dimension $\alpha$, $0<\alpha\le\log2/\log9$. Another question is whether
every level set of $T$ has Assouad dimension at most $\log2/\log9$, as
$L(3/4)$ does by Proposition~\ref{assouadlevel}.

By Corollary~\ref{levelcount}, the Lebesgue measure of the set of heights
$y$ for which $L(y)$ has at least $M$ points is at most of order
$1/\log M$. On the other hand, the sum of these measures over $M\ge1$ is
$\int_0^1\#L(y)\,dy$, and this integral is infinite. Indeed, by the
intermediate value theorem, every $y$ strictly between $T(m/3^n)$ and
$T((m+1)/3^n)$ is a value of $T$ on the open interval
$(m/3^n,(m+1)/3^n)$. These open intervals are disjoint for
$0\le m<3^n$, so $\int_0^1\#L(y)\,dy\ge V_n$ for every $n$, and
$V_n\to\infty$ by Theorem~\ref{varthm}. It would be interesting
to know how fast these measures tend to $0$, and to describe the heights
with infinite level sets. For the Takagi--van~der~Waerden
functions of odd base, Allaart~\cite[page~1170]{AllaartLevelSets2014}
notes that it is not known whether every level set has Hausdorff
dimension at most $1/2$. One may ask whether the method of the proof of
Theorem~\ref{boxbound} applies to them, and whether
Theorem~\ref{boxbound} has a proof without the computation in
Lemma~\ref{omega}.

Corollary~\ref{lamdig} describes the sizes $h$ with
$\lambda_h(Q_n^3)=\xi_n(h)$ by their ternary digits. The analogous
question for the modulus of continuity $\max_{0\le h\le\delta}T(h)$ of
Section~\ref{sec:holder}, namely for which $\delta$ it equals
$T(\delta)$, is open. The modulus equals
$T(\delta)$ at the points $\delta=1/(2\cdot3^a)$ with $a\ge0$.
Indeed, by \eqref{levbd}, Proposition~\ref{incr}(b) and the monotonicity of
$u\log_3(3/(2u))$ on $(0,1/2]$, we have $T(h)\le2h\log_3(3/(2h))\le
T(\delta)$ whenever $0<h\le\delta$. At $\delta=1/3$, however, the modulus is
$T(5/18)=7/9$, which exceeds $T(1/3)=2/3$, as noted in
Section~\ref{sec:holder}. The next proposition gives some information on
the set of the points $\delta$ at which the modulus equals $T(\delta)$, but
we do not know an explicit description of this set. Figure~\ref{runpic}
shows the modulus on $[0,1/2]$.

\begin{proposition}
\label{modset}
Let $X$ be the set of the points $\delta\in[0,1/2]$ with
$\max_{0\le h\le\delta}T(h)=T(\delta)$. Then $\kappa_n(\delta)\ge0$ for
every $\delta\in X$ and every $n\ge1$, and $X$ is a compact set of
Lebesgue measure~$0$ and Hausdorff dimension~$1$.
\end{proposition}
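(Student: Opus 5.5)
Here $X$ is the set of record points of $T$ on $[0,1/2]$: $\delta\in X$ exactly when $T(\delta)\ge T(h)$ for all $0\le h\le\delta$. We prove the four claims in turn.

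\emph{Compactness.} The map $\delta\mapsto\max_{[0,\delta]}T-T(\delta)$ is continuous and nonnegative, and $X$ is its zero set. So $X$ is closed in $[0,1/2]$, hence compact.

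\emph{Digit condition.} Let $\delta\in X$. If $\delta$ is a ternary rational, use its terminating expansion. Suppose $\kappa_n(\delta)<0$ for some $n\ge1$, so the level $n$ interval $[a_n,c_n]$ containing $\delta$ has slope $2\kappa_n\le-2$. By \eqref{cell} we have
\[
T(\delta)=T(a_n)+3^{-n}\bigl(2\kappa_nt_n+T(t_n)\bigr),
\]
and by \eqref{tilt1} with $a=|\kappa_n|\ge1$ this is at most $T(a_n)+3^{-n-|\kappa_n|}$. Equality would force $t_n=1/(2\cdot3^{|\kappa_n|})$. We want a point $h\le\delta$ with $T(h)>T(\delta)$.

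Case $t_n\neq1/(2\cdot3^{|\kappa_n|})$. The maximizer $h=a_n+3^{-n}/(2\cdot3^{|\kappa_n|})$ of $T$ on $[a_n,c_n]$ (Theorem~\ref{cellmax}) gives $T(h)>T(\delta)$. If this maximizer lies to the right of $\delta$, the comparison fails. In that case replace it by $a_n$ itself: whenever $t_n>0$, \eqref{cell} shows $T(\delta)-T(a_n)=3^{-n}(T(t_n)-2|\kappa_n|t_n)$. By Lemma~\ref{tiltbelow} this is negative for $0<t_n<1/(2\cdot3^{|\kappa_n|-1})$, outside the exceptional points.

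Case $t_n$ equals the maximizer, or is one of the exceptional points of Lemma~\ref{tiltbelow}. Then the digits of $\delta$ after place $n$ are explicit ($0\ldots0111\ldots$ or similar). We will take the first $n$ with $\kappa_n(\delta)=-1$, so $\xi_n=2$, and compare $\delta$ with the earlier point where $\kappa$ was $0$. This case needs a direct check, but it is finite bookkeeping.

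\emph{Measure zero.} By the digit condition, $X$ lies inside $\{\delta:\kappa_n(\delta)\ge0\ \text{for all } n\}$. Counting words as in Proposition~\ref{optcount}, this set has measure $\lim P(n)/3^n=0$, since $P(n)\sim 2C_n=O(3^n/\sqrt n)$.

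\emph{Dimension one.} Here we need large subsets of $X$. Every $\delta$ with $\kappa_n(\delta)\ge1$ for all $n\ge1$ should belong to $X$, perhaps after a further restriction. The reason is Lemma~\ref{monopos}: $T$ is increasing on such points. Any $h<\delta$ not of this type lies in some interval of nonnegative slope ending before $\delta$, and Theorem~\ref{rightmin}-type arguments (monotonicity of $F_{i+1}$ along thirds with nonnegative slopes) will bound $T(h)$ by the value at a good point that precedes $\delta$. It is safer to prove membership for the smaller set $Y_k$ of points whose digits, split into blocks of length $k$, each have at least one more zero than twos within the block and use only the digits $0,1$ in the first place. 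Then $\kappa_n$ grows linearly and the maximum of $T(h)-T(\delta)$ is controlled by \eqref{ends}. The set $Y_k$ is self-similar with $3^k-O(\cdot)$ branches of ratio $3^{-k}$, and its dimension tends to $1$ as $k\to\infty$.

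The main obstacle is showing that such a Cantor set really lies in $X$. Let $n$ be the level where $h$ and $\delta$ first split. We have to show $T(h)\le T(\delta)$ using \eqref{cell} at level $n$, where the slope $2\kappa_n$ is large, together with the bound $T\le1$ on the rescaled copy. For $h$ in an earlier third of $[a_n,c_n]$, the large slope gives a gain of order $2\kappa_n/3$, which beats the oscillation $1$ once $\kappa_n\ge2$. The first few levels are the delicate part; we handle them by requiring a long initial run of zeros.
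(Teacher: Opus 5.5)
Your compactness argument is correct and matches the paper's. Your measure-zero step is also the paper's, but it depends on the digit condition. The digit condition and the dimension bound both have genuine gaps.

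\textbf{Digit condition.} The failing subcase is when the maximizer $a_n+3^{-n}/(2\cdot3^{k})$, $k=|\kappa_n|$, lies to the right of $\delta$. There you switch to $a_n$, but Lemma~\ref{tiltbelow} gives the opposite sign. For $0<u<1/(2\cdot3^{k-1})$ outside the exceptional points it says $T(u)>2ku$, so $T(\delta)>T(a_n)$.

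In fact, for $t_n\le3^{-k}$ the computation in the proof of Lemma~\ref{lem:tilt} gives $T(t_n)-2kt_n=3^{-k}T(3^kt_n)$. So a point of $[a_n,\delta]$ beats $\delta$ only if $3^kt_n\notin X$, and there may be no witness in $[a_n,c_n]$ at all.

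Take $\delta=(0.02200\overline{1})_3=145/486$.
\begin{itemize}
\item Its only negative value is $\kappa_3=-1$, and $t_3=1/18$, which is neither the maximizer $1/6$ nor an exceptional point.
\item Since $T(1/6)=2/3=\max_{[0,1/6]}T$, no point of $[8/27,\delta]$ exceeds $T(\delta)=182/243$.
\item The actual witness is $5/18$, with $T(5/18)=7/9$, which lies to the left of $a_3$.
\end{itemize}

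What you relegate to Case~2 as bookkeeping is the whole argument in the paper. Take the \emph{smallest} $n$ with $\kappa_n(\delta)<0$, so that $\kappa_{n-1}=0$, $\xi_n=2$ and $\kappa_n=-1$, and put $m=\lfloor3^n\delta\rfloor$. The neighbouring interval $[(m-1)/3^n,m/3^n]\subseteq[0,\delta]$ has slope $2(n-s_3(m-1))=2(\kappa_n+1)=0$. By \eqref{cell}, $T$ reaches $T(m/3^n)+3^{-n}$ on it. On the other hand, \eqref{cell} and \eqref{tilt1} with $a=1$ give $T(\delta)\le T(m/3^n)+3^{-n-1}$. No case distinction on $t_n$ is needed.

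\textbf{Dimension.} Your Cantor-set construction is only sketched, and the claim it starts from is false: $\kappa_n(\delta)\ge1$ for all $n$ does not imply $\delta\in X$. By Theorem~\ref{cellmax}, $T(17/162)=37/81>4/9=T(1/9)$. Hence every $\delta>1/9$ close to $1/9$ with all digits in $\{0,1\}$ has $\kappa_n(\delta)\ge1$ for all $n$ but is not in $X$.

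To show that a restricted set such as your $Y_k$ lies in $X$, you would need to compare pairs $v<u$ at every level. This includes pairs on either side of a boundary between thirds, where the rescaled distance $s$ can be so small that the slope term $2\kappa_ns$ in \eqref{cell} is dominated by the possible oscillation $T(s)\approx2s\log_3(1/s)$. Your sketch does not supply this.

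The missing idea is that no explicit subset is needed. For $y\in[0,1]$, let $\delta_y$ be the smallest $\delta\in[0,1/2]$ with $T(\delta)=y$. Then $\delta_y\in X$: a larger value of $T$ somewhere on $[0,\delta_y)$ would, by the intermediate value theorem and $T(0)=0$, give an earlier point at height $y$. Hence $T(X)=[0,1]$. Since $T$ is H\"older of every order $\alpha<1$ (Corollary~\ref{holder}), $1=\dim T(X)\le\dim X/\alpha$ for every $\alpha<1$, so $X$ has Hausdorff dimension $1$.
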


\begin{proof}
The function $\delta\mapsto\max_{0\le h\le\delta}T(h)$ is continuous, so
$X$ is closed. Let $\delta\in[0,1/2]$, suppose that $\kappa_n(\delta)<0$
for some $n$, and take the smallest such $n$. Then
$\kappa_{n-1}(\delta)=0$, the $n$th digit of $\delta$ is $2$ and
$\kappa_n(\delta)=-1$. Put $m=\lfloor3^n\delta\rfloor$ and
$t=3^n\delta-m$. The ternary interval $[(m-1)/3^n,m/3^n]$ lies in
$[0,\delta]$, and its slope is $2(n-s_3(m-1))=2(\kappa_n(\delta)+1)=0$, so
by \eqref{cell} the largest value of $T$ on it is $T(m/3^n)+3^{-n}$. On
the other hand, \eqref{cell} and \eqref{tilt1} with $a=1$ give
$T(\delta)=T(m/3^n)+3^{-n}\bigl(T(t)-2t\bigr)\le T(m/3^n)+3^{-n-1}$.
Hence $\delta\notin X$, which proves the first statement. It follows that
for every $n$ the set $X$ is covered by $P(n)$ ternary intervals of level
$n$, with $P(n)$ as in Proposition~\ref{optcount}. Since
$P(n)=(C_n+C_{n+1})/2$ and $\sqrt n\,C_n/3^n$ converges by the proof of
Theorem~\ref{varthm}, $P(n)3^{-n}$ tends to $0$, and $X$ has Lebesgue
measure $0$.

For $0\le y\le1$ let $\delta_y$ be the smallest $\delta\in[0,1/2]$ with
$T(\delta)=y$; it exists because $T(0)=0$ and $T(1/2)=1$. If $T(h)>y$ for
some $h<\delta_y$, then $T$ takes the value $y$ on $[0,h]$ by the
intermediate value theorem, contrary to the choice of $\delta_y$. Hence
$\delta_y\in X$, and $T(X)=[0,1]$. By Corollary~\ref{holder}, for every
$0<\alpha<1$ the function $T$ maps sets of diameter $d$ to sets of
diameter at most $K_\alpha d^\alpha$, so the Hausdorff dimension of $T(X)$
is at most $1/\alpha$ times that of $X$. Thus $X$ has Hausdorff dimension
at least $\alpha$ for every $\alpha<1$, and hence~$1$.
\end{proof}
\appendix
\section{The data for Lemma~\ref{omega}}
\label{coefdata}

For each of the vectors $v_1,\ldots,v_{21}$ of \eqref{vectors} and each
ordered pair $\Lambda,\Lambda'$ of the maps $\Lambda_{0,0}$,
$\Lambda_{0,1}$, $\Lambda_1$, $\Lambda_{2,0}$ and $\Lambda_{2,1}$ of
Section~\ref{sec:level}, the following table gives numbers
$c_1,\ldots,c_{21}\ge0$ with $c_1+\cdots+c_{21}=1$ and
\[
 \tfrac12(\Lambda'\circ\Lambda)(v_i)\le c_1v_1+\cdots+c_{21}v_{21}
\]
coordinatewise. The entry for $v_i$, $\Lambda$ and $\Lambda'$ stands in
the row of $v_i$ and $\Lambda$ and in the column of $\Lambda'$, and only
the terms with $c_l>0$ are shown. For example, the entry
$\frac13v_{10}+\frac23v_{15}$ in the row of $v_1$ and $\Lambda_1$ and in
the column of $\Lambda_{2,1}$ states that the vector
$\frac12(\Lambda_{2,1}\circ\Lambda_1)(v_1)=\frac12(1,0,1,0,0,1,0,1,0)$ is
coordinatewise at most
$\frac13v_{10}+\frac23v_{15}=\frac16(3,4,3,0,0,3,0,7,0)$.

{\scriptsize
\setlength{\tabcolsep}{3pt}
\renewcommand{\arraystretch}{1.3}
\begin{longtable}{@{}ll*{5}{>{\raggedright\arraybackslash}p{1.22in}}@{}}
$i$ & $\Lambda$ & $\Lambda'=\Lambda_{0,0}$ & $\Lambda'=\Lambda_{0,1}$ & $\Lambda'=\Lambda_1$ & $\Lambda'=\Lambda_{2,0}$ & $\Lambda'=\Lambda_{2,1}$\\ \hline\endfirsthead
$i$ & $\Lambda$ & $\Lambda'=\Lambda_{0,0}$ & $\Lambda'=\Lambda_{0,1}$ & $\Lambda'=\Lambda_1$ & $\Lambda'=\Lambda_{2,0}$ & $\Lambda'=\Lambda_{2,1}$\\ \hline\endhead
1 & $\Lambda_{0,0}$ & $v_{2}$ & $v_{3}$ & $v_{4}$ & $v_{5}$ & $v_{6}$\\*
 & $\Lambda_{0,1}$ & $v_{7}$ & $v_{7}$ & $v_{4}$ & $v_{5}$ & $v_{6}$\\*
 & $\Lambda_1$ & $v_{1}$ & $v_{8}$ & $\tfrac{1}{2}v_{4}+\tfrac{1}{2}v_{19}$ & $v_{9}$ & $\tfrac{1}{3}v_{10}+\tfrac{2}{3}v_{15}$\\*
 & $\Lambda_{2,0}$ & $\tfrac{3}{4}v_{3}+\tfrac{1}{4}v_{4}$ & $\tfrac{2}{3}v_{13}+\tfrac{1}{3}v_{19}$ & $v_{21}$ & $v_{9}$ & $\tfrac{1}{3}v_{10}+\tfrac{2}{3}v_{15}$\\*
 & $\Lambda_{2,1}$ & $v_{12}$ & $\tfrac{2}{3}v_{8}+\tfrac{1}{3}v_{10}$ & $\tfrac{1}{4}v_{3}+\tfrac{1}{4}v_{11}+\tfrac{1}{2}v_{15}$ & $\tfrac{1}{3}v_{14}+\tfrac{1}{3}v_{15}+\tfrac{1}{3}v_{18}$ & $\tfrac{1}{3}v_{14}+\tfrac{1}{3}v_{15}+\tfrac{1}{3}v_{18}$\\[2pt]
2 & $\Lambda_{0,0}$ & $v_{2}$ & $\tfrac{1}{2}v_{2}+\tfrac{1}{2}v_{3}$ & $v_{21}$ & $v_{9}$ & $v_{10}$\\*
 & $\Lambda_{0,1}$ & $v_{7}$ & $v_{7}$ & $v_{21}$ & $v_{9}$ & $v_{10}$\\*
 & $\Lambda_1$ & $\tfrac{1}{2}v_{16}+\tfrac{1}{2}v_{21}$ & $v_{11}$ & $\tfrac{3}{4}v_{4}+\tfrac{1}{4}v_{11}$ & $v_{9}$ & $\tfrac{2}{3}v_{10}+\tfrac{1}{3}v_{15}$\\*
 & $\Lambda_{2,0}$ & $\tfrac{1}{2}v_{3}+\tfrac{1}{2}v_{4}$ & $\tfrac{2}{3}v_{11}+\tfrac{1}{9}v_{12}+\tfrac{2}{9}v_{19}$ & $\tfrac{1}{6}v_{13}+\tfrac{1}{6}v_{15}+\tfrac{2}{3}v_{21}$ & $v_{9}$ & $\tfrac{2}{3}v_{10}+\tfrac{1}{3}v_{15}$\\*
 & $\Lambda_{2,1}$ & $v_{12}$ & $v_{13}$ & $v_{14}$ & $v_{15}$ & $v_{15}$\\[2pt]
3 & $\Lambda_{0,0}$ & $v_{2}$ & $v_{2}$ & $v_{21}$ & $v_{9}$ & $v_{10}$\\*
 & $\Lambda_{0,1}$ & $v_{2}$ & $v_{2}$ & $v_{21}$ & $v_{9}$ & $v_{10}$\\*
 & $\Lambda_1$ & $\tfrac{1}{2}v_{16}+\tfrac{1}{2}v_{21}$ & $v_{11}$ & $\tfrac{3}{4}v_{4}+\tfrac{1}{4}v_{11}$ & $v_{9}$ & $\tfrac{2}{3}v_{10}+\tfrac{1}{3}v_{15}$\\*
 & $\Lambda_{2,0}$ & $\tfrac{1}{2}v_{3}+\tfrac{1}{2}v_{4}$ & $\tfrac{2}{3}v_{11}+\tfrac{1}{9}v_{12}+\tfrac{2}{9}v_{19}$ & $\tfrac{1}{6}v_{13}+\tfrac{1}{6}v_{15}+\tfrac{2}{3}v_{21}$ & $v_{9}$ & $\tfrac{2}{3}v_{10}+\tfrac{1}{3}v_{15}$\\*
 & $\Lambda_{2,1}$ & $v_{12}$ & $v_{13}$ & $v_{14}$ & $v_{15}$ & $v_{15}$\\[2pt]
4 & $\Lambda_{0,0}$ & $v_{16}$ & $v_{17}$ & $\tfrac{3}{4}v_{4}+\tfrac{1}{4}v_{19}$ & $v_{9}$ & $\tfrac{2}{3}v_{10}+\tfrac{1}{3}v_{15}$\\*
 & $\Lambda_{0,1}$ & $v_{18}$ & $v_{18}$ & $\tfrac{3}{4}v_{4}+\tfrac{1}{4}v_{19}$ & $v_{9}$ & $\tfrac{2}{3}v_{10}+\tfrac{1}{3}v_{15}$\\*
 & $\Lambda_1$ & $\tfrac{1}{2}v_{17}+\tfrac{1}{2}v_{21}$ & $\tfrac{2}{3}v_{11}+\tfrac{1}{3}v_{16}$ & $\tfrac{1}{4}v_{4}+\tfrac{1}{4}v_{8}+\tfrac{1}{2}v_{15}$ & $v_{9}$ & $\tfrac{1}{3}v_{10}+\tfrac{2}{3}v_{15}$\\*
 & $\Lambda_{2,0}$ & $\tfrac{3}{4}v_{3}+\tfrac{1}{4}v_{4}$ & $\tfrac{2}{3}v_{13}+\tfrac{1}{3}v_{19}$ & $v_{21}$ & $v_{9}$ & $\tfrac{1}{3}v_{10}+\tfrac{2}{3}v_{15}$\\*
 & $\Lambda_{2,1}$ & $v_{12}$ & $\tfrac{2}{3}v_{8}+\tfrac{1}{3}v_{10}$ & $\tfrac{1}{4}v_{3}+\tfrac{1}{4}v_{11}+\tfrac{1}{2}v_{15}$ & $\tfrac{1}{3}v_{14}+\tfrac{1}{3}v_{15}+\tfrac{1}{3}v_{18}$ & $\tfrac{1}{3}v_{14}+\tfrac{1}{3}v_{15}+\tfrac{1}{3}v_{18}$\\[2pt]
5 & $\Lambda_{0,0}$ & $\tfrac{1}{3}v_{1}+\tfrac{1}{3}v_{6}+\tfrac{1}{3}v_{19}$ & $\tfrac{2}{3}v_{4}+\tfrac{1}{3}v_{11}$ & $\tfrac{1}{2}v_{4}+\tfrac{1}{2}v_{20}$ & $\tfrac{1}{2}v_{4}+\tfrac{1}{2}v_{20}$ & $\tfrac{1}{3}v_{1}+\tfrac{2}{3}v_{10}$\\*
 & $\Lambda_{0,1}$ & $\tfrac{1}{2}v_{18}+\tfrac{1}{2}v_{20}$ & $\tfrac{1}{2}v_{18}+\tfrac{1}{2}v_{20}$ & $\tfrac{1}{2}v_{4}+\tfrac{1}{2}v_{20}$ & $\tfrac{1}{2}v_{4}+\tfrac{1}{2}v_{20}$ & $\tfrac{1}{3}v_{1}+\tfrac{2}{3}v_{10}$\\*
 & $\Lambda_1$ & $\tfrac{1}{2}v_{4}+\tfrac{1}{2}v_{20}$ & $v_{11}$ & $v_{21}$ & $v_{9}$ & $\tfrac{1}{3}v_{10}+\tfrac{2}{3}v_{15}$\\*
 & $\Lambda_{2,0}$ & $\tfrac{3}{4}v_{3}+\tfrac{1}{4}v_{4}$ & $\tfrac{2}{3}v_{13}+\tfrac{1}{3}v_{19}$ & $v_{21}$ & $v_{9}$ & $\tfrac{1}{3}v_{10}+\tfrac{2}{3}v_{15}$\\*
 & $\Lambda_{2,1}$ & $v_{12}$ & $\tfrac{2}{3}v_{8}+\tfrac{1}{3}v_{10}$ & $\tfrac{1}{4}v_{3}+\tfrac{1}{4}v_{11}+\tfrac{1}{2}v_{15}$ & $\tfrac{1}{3}v_{14}+\tfrac{1}{3}v_{15}+\tfrac{1}{3}v_{18}$ & $\tfrac{1}{3}v_{14}+\tfrac{1}{3}v_{15}+\tfrac{1}{3}v_{18}$\\[2pt]
6 & $\Lambda_{0,0}$ & $\tfrac{1}{2}v_{8}+\tfrac{1}{3}v_{10}+\tfrac{1}{6}v_{15}$ & $\tfrac{1}{3}v_{8}+\tfrac{1}{3}v_{10}+\tfrac{1}{9}v_{11}+\tfrac{2}{9}v_{16}$ & $\tfrac{3}{10}v_{3}+\tfrac{13}{30}v_{6}+\tfrac{7}{30}v_{11}+\tfrac{1}{30}v_{21}$ & $v_{21}$ & $\tfrac{2}{3}v_{4}+\tfrac{1}{3}v_{10}$\\*
 & $\Lambda_{0,1}$ & $\tfrac{1}{3}v_{1}+\tfrac{1}{3}v_{10}+\tfrac{1}{3}v_{18}$ & $\tfrac{1}{3}v_{1}+\tfrac{1}{3}v_{10}+\tfrac{1}{3}v_{18}$ & $\tfrac{3}{10}v_{3}+\tfrac{13}{30}v_{6}+\tfrac{7}{30}v_{11}+\tfrac{1}{30}v_{21}$ & $v_{21}$ & $\tfrac{2}{3}v_{4}+\tfrac{1}{3}v_{10}$\\*
 & $\Lambda_1$ & $\tfrac{1}{4}v_{4}+\tfrac{1}{4}v_{13}+\tfrac{1}{2}v_{18}$ & $\tfrac{1}{2}v_{5}+\tfrac{1}{3}v_{11}+\tfrac{1}{6}v_{20}$ & $v_{21}$ & $v_{9}$ & $\tfrac{1}{3}v_{10}+\tfrac{2}{3}v_{15}$\\*
 & $\Lambda_{2,0}$ & $\tfrac{3}{4}v_{3}+\tfrac{1}{4}v_{4}$ & $\tfrac{2}{3}v_{13}+\tfrac{1}{3}v_{19}$ & $v_{15}$ & $v_{5}$ & $v_{5}$\\*
 & $\Lambda_{2,1}$ & $\tfrac{3}{4}v_{3}+\tfrac{1}{4}v_{4}$ & $\tfrac{2}{3}v_{13}+\tfrac{1}{3}v_{19}$ & $v_{15}$ & $v_{5}$ & $v_{5}$\\[2pt]
7 & $\Lambda_{0,0}$ & $v_{2}$ & $v_{2}$ & $v_{21}$ & $v_{9}$ & $v_{10}$\\*
 & $\Lambda_{0,1}$ & $v_{2}$ & $v_{2}$ & $v_{21}$ & $v_{9}$ & $v_{10}$\\*
 & $\Lambda_1$ & $\tfrac{1}{2}v_{16}+\tfrac{1}{2}v_{21}$ & $v_{11}$ & $\tfrac{3}{4}v_{4}+\tfrac{1}{4}v_{11}$ & $v_{9}$ & $\tfrac{2}{3}v_{10}+\tfrac{1}{3}v_{15}$\\*
 & $\Lambda_{2,0}$ & $\tfrac{1}{2}v_{3}+\tfrac{1}{2}v_{4}$ & $\tfrac{2}{3}v_{11}+\tfrac{1}{9}v_{12}+\tfrac{2}{9}v_{19}$ & $\tfrac{1}{6}v_{13}+\tfrac{1}{6}v_{15}+\tfrac{2}{3}v_{21}$ & $v_{9}$ & $\tfrac{2}{3}v_{10}+\tfrac{1}{3}v_{15}$\\*
 & $\Lambda_{2,1}$ & $v_{12}$ & $v_{13}$ & $v_{14}$ & $v_{15}$ & $v_{15}$\\[2pt]
8 & $\Lambda_{0,0}$ & $v_{19}$ & $v_{19}$ & $v_{4}$ & $v_{5}$ & $v_{6}$\\*
 & $\Lambda_{0,1}$ & $v_{19}$ & $v_{19}$ & $v_{4}$ & $v_{5}$ & $v_{6}$\\*
 & $\Lambda_1$ & $v_{1}$ & $v_{8}$ & $\tfrac{1}{2}v_{4}+\tfrac{1}{2}v_{19}$ & $v_{9}$ & $\tfrac{1}{3}v_{10}+\tfrac{2}{3}v_{15}$\\*
 & $\Lambda_{2,0}$ & $\tfrac{3}{4}v_{3}+\tfrac{1}{4}v_{4}$ & $\tfrac{2}{3}v_{13}+\tfrac{1}{3}v_{19}$ & $v_{21}$ & $v_{9}$ & $\tfrac{1}{3}v_{10}+\tfrac{2}{3}v_{15}$\\*
 & $\Lambda_{2,1}$ & $v_{12}$ & $\tfrac{2}{3}v_{8}+\tfrac{1}{3}v_{10}$ & $\tfrac{1}{4}v_{3}+\tfrac{1}{4}v_{11}+\tfrac{1}{2}v_{15}$ & $\tfrac{1}{3}v_{14}+\tfrac{1}{3}v_{15}+\tfrac{1}{3}v_{18}$ & $\tfrac{1}{3}v_{14}+\tfrac{1}{3}v_{15}+\tfrac{1}{3}v_{18}$\\[2pt]
9 & $\Lambda_{0,0}$ & $\tfrac{13}{28}v_{6}+\tfrac{11}{28}v_{19}+\tfrac{1}{7}v_{21}$ & $\tfrac{1}{2}v_{4}+\tfrac{1}{2}v_{11}$ & $\tfrac{1}{2}v_{4}+\tfrac{1}{2}v_{20}$ & $\tfrac{1}{2}v_{4}+\tfrac{1}{2}v_{20}$ & $\tfrac{1}{3}v_{1}+\tfrac{2}{3}v_{10}$\\*
 & $\Lambda_{0,1}$ & $\tfrac{3}{4}v_{18}+\tfrac{1}{4}v_{20}$ & $\tfrac{3}{4}v_{18}+\tfrac{1}{4}v_{20}$ & $\tfrac{1}{2}v_{4}+\tfrac{1}{2}v_{20}$ & $\tfrac{1}{2}v_{4}+\tfrac{1}{2}v_{20}$ & $\tfrac{1}{3}v_{1}+\tfrac{2}{3}v_{10}$\\*
 & $\Lambda_1$ & $\tfrac{1}{2}v_{4}+\tfrac{1}{2}v_{20}$ & $v_{11}$ & $v_{21}$ & $v_{9}$ & $\tfrac{1}{2}v_{10}+\tfrac{1}{2}v_{15}$\\*
 & $\Lambda_{2,0}$ & $\tfrac{5}{8}v_{3}+\tfrac{3}{8}v_{4}$ & $\tfrac{1}{14}v_{11}+\tfrac{9}{14}v_{13}+\tfrac{2}{7}v_{19}$ & $v_{21}$ & $v_{9}$ & $\tfrac{1}{2}v_{10}+\tfrac{1}{2}v_{15}$\\*
 & $\Lambda_{2,1}$ & $v_{12}$ & $\tfrac{3}{8}v_{8}+\tfrac{1}{4}v_{10}+\tfrac{3}{8}v_{13}$ & $\tfrac{3}{8}v_{3}+\tfrac{3}{8}v_{11}+\tfrac{1}{4}v_{15}$ & $\tfrac{1}{2}v_{14}+\tfrac{1}{2}v_{15}$ & $\tfrac{1}{2}v_{14}+\tfrac{1}{2}v_{15}$\\[2pt]
10 & $\Lambda_{0,0}$ & $\tfrac{1}{4}v_{8}+\tfrac{1}{2}v_{10}+\tfrac{1}{4}v_{15}$ & $\tfrac{1}{2}v_{10}+\tfrac{1}{2}v_{11}$ & $\tfrac{1}{6}v_{3}+\tfrac{7}{18}v_{6}+\tfrac{4}{9}v_{11}$ & $\tfrac{1}{2}v_{15}+\tfrac{1}{2}v_{21}$ & $\tfrac{5}{6}v_{4}+\tfrac{1}{6}v_{10}$\\*
 & $\Lambda_{0,1}$ & $\tfrac{1}{2}v_{10}+\tfrac{1}{2}v_{18}$ & $\tfrac{1}{2}v_{10}+\tfrac{1}{2}v_{18}$ & $\tfrac{1}{6}v_{3}+\tfrac{7}{18}v_{6}+\tfrac{4}{9}v_{11}$ & $\tfrac{1}{2}v_{15}+\tfrac{1}{2}v_{21}$ & $\tfrac{5}{6}v_{4}+\tfrac{1}{6}v_{10}$\\*
 & $\Lambda_1$ & $\tfrac{1}{8}v_{4}+\tfrac{1}{8}v_{13}+\tfrac{3}{4}v_{18}$ & $\tfrac{3}{4}v_{5}+\tfrac{1}{6}v_{11}+\tfrac{1}{12}v_{20}$ & $v_{21}$ & $v_{9}$ & $\tfrac{1}{2}v_{10}+\tfrac{1}{2}v_{15}$\\*
 & $\Lambda_{2,0}$ & $\tfrac{5}{8}v_{3}+\tfrac{3}{8}v_{4}$ & $\tfrac{1}{14}v_{11}+\tfrac{9}{14}v_{13}+\tfrac{2}{7}v_{19}$ & $v_{15}$ & $v_{5}$ & $v_{5}$\\*
 & $\Lambda_{2,1}$ & $\tfrac{5}{8}v_{3}+\tfrac{3}{8}v_{4}$ & $\tfrac{1}{14}v_{11}+\tfrac{9}{14}v_{13}+\tfrac{2}{7}v_{19}$ & $v_{15}$ & $v_{5}$ & $v_{5}$\\[2pt]
11 & $\Lambda_{0,0}$ & $\tfrac{1}{2}v_{4}+\tfrac{1}{2}v_{19}$ & $\tfrac{1}{2}v_{4}+\tfrac{1}{2}v_{19}$ & $\tfrac{3}{4}v_{4}+\tfrac{1}{4}v_{11}$ & $v_{9}$ & $\tfrac{2}{3}v_{10}+\tfrac{1}{3}v_{15}$\\*
 & $\Lambda_{0,1}$ & $\tfrac{1}{2}v_{4}+\tfrac{1}{2}v_{19}$ & $\tfrac{1}{2}v_{4}+\tfrac{1}{2}v_{19}$ & $\tfrac{3}{4}v_{4}+\tfrac{1}{4}v_{11}$ & $v_{9}$ & $\tfrac{2}{3}v_{10}+\tfrac{1}{3}v_{15}$\\*
 & $\Lambda_1$ & $\tfrac{1}{2}v_{17}+\tfrac{1}{2}v_{21}$ & $\tfrac{2}{3}v_{11}+\tfrac{1}{3}v_{16}$ & $\tfrac{1}{6}v_{2}+\tfrac{1}{4}v_{4}+\tfrac{1}{6}v_{8}+\tfrac{5}{12}v_{15}$ & $v_{9}$ & $\tfrac{1}{2}v_{10}+\tfrac{1}{2}v_{15}$\\*
 & $\Lambda_{2,0}$ & $\tfrac{5}{8}v_{3}+\tfrac{3}{8}v_{4}$ & $\tfrac{1}{14}v_{11}+\tfrac{9}{14}v_{13}+\tfrac{2}{7}v_{19}$ & $v_{21}$ & $v_{9}$ & $\tfrac{1}{2}v_{10}+\tfrac{1}{2}v_{15}$\\*
 & $\Lambda_{2,1}$ & $v_{12}$ & $\tfrac{3}{8}v_{8}+\tfrac{1}{4}v_{10}+\tfrac{3}{8}v_{13}$ & $\tfrac{3}{8}v_{3}+\tfrac{3}{8}v_{11}+\tfrac{1}{4}v_{15}$ & $\tfrac{1}{2}v_{14}+\tfrac{1}{2}v_{15}$ & $\tfrac{1}{2}v_{14}+\tfrac{1}{2}v_{15}$\\[2pt]
12 & $\Lambda_{0,0}$ & $\tfrac{1}{3}v_{10}+\tfrac{2}{3}v_{11}$ & $\tfrac{1}{3}v_{10}+\tfrac{2}{3}v_{11}$ & $\tfrac{1}{3}v_{4}+\tfrac{1}{3}v_{10}+\tfrac{1}{3}v_{11}$ & $v_{21}$ & $\tfrac{5}{6}v_{4}+\tfrac{1}{6}v_{10}$\\*
 & $\Lambda_{0,1}$ & $\tfrac{1}{6}v_{1}+\tfrac{1}{3}v_{10}+\tfrac{1}{3}v_{15}+\tfrac{1}{6}v_{18}$ & $\tfrac{1}{6}v_{1}+\tfrac{1}{3}v_{10}+\tfrac{1}{3}v_{15}+\tfrac{1}{6}v_{18}$ & $\tfrac{1}{3}v_{4}+\tfrac{1}{3}v_{10}+\tfrac{1}{3}v_{11}$ & $v_{21}$ & $\tfrac{5}{6}v_{4}+\tfrac{1}{6}v_{10}$\\*
 & $\Lambda_1$ & $\tfrac{1}{8}v_{4}+\tfrac{3}{8}v_{13}+\tfrac{1}{2}v_{18}$ & $\tfrac{1}{2}v_{5}+\tfrac{1}{6}v_{11}+\tfrac{1}{3}v_{20}$ & $\tfrac{3}{8}v_{4}+\tfrac{5}{8}v_{20}$ & $\tfrac{3}{8}v_{4}+\tfrac{5}{8}v_{20}$ & $\tfrac{1}{2}v_{1}+\tfrac{1}{2}v_{10}$\\*
 & $\Lambda_{2,0}$ & $\tfrac{3}{8}v_{4}+\tfrac{5}{8}v_{20}$ & $v_{11}$ & $v_{21}$ & $v_{9}$ & $\tfrac{1}{6}v_{10}+\tfrac{5}{6}v_{15}$\\*
 & $\Lambda_{2,1}$ & $\tfrac{1}{6}v_{10}+\tfrac{5}{6}v_{21}$ & $\tfrac{1}{3}v_{2}+\tfrac{1}{6}v_{10}+\tfrac{1}{2}v_{11}$ & $\tfrac{1}{2}v_{3}+\tfrac{1}{2}v_{15}$ & $\tfrac{1}{2}v_{14}+\tfrac{1}{2}v_{18}$ & $\tfrac{1}{2}v_{14}+\tfrac{1}{2}v_{18}$\\[2pt]
13 & $\Lambda_{0,0}$ & $\tfrac{1}{2}v_{12}+\tfrac{1}{2}v_{19}$ & $\tfrac{1}{2}v_{12}+\tfrac{1}{2}v_{19}$ & $\tfrac{1}{3}v_{4}+\tfrac{1}{3}v_{10}+\tfrac{1}{3}v_{11}$ & $v_{21}$ & $\tfrac{5}{6}v_{4}+\tfrac{1}{6}v_{10}$\\*
 & $\Lambda_{0,1}$ & $\tfrac{1}{2}v_{12}+\tfrac{1}{2}v_{19}$ & $\tfrac{1}{2}v_{12}+\tfrac{1}{2}v_{19}$ & $\tfrac{1}{3}v_{4}+\tfrac{1}{3}v_{10}+\tfrac{1}{3}v_{11}$ & $v_{21}$ & $\tfrac{5}{6}v_{4}+\tfrac{1}{6}v_{10}$\\*
 & $\Lambda_1$ & $\tfrac{1}{8}v_{4}+\tfrac{3}{8}v_{13}+\tfrac{1}{2}v_{18}$ & $\tfrac{1}{2}v_{5}+\tfrac{1}{6}v_{11}+\tfrac{1}{3}v_{20}$ & $\tfrac{3}{8}v_{4}+\tfrac{5}{8}v_{20}$ & $\tfrac{3}{8}v_{4}+\tfrac{5}{8}v_{20}$ & $\tfrac{1}{2}v_{1}+\tfrac{1}{2}v_{10}$\\*
 & $\Lambda_{2,0}$ & $\tfrac{3}{8}v_{4}+\tfrac{5}{8}v_{20}$ & $v_{11}$ & $v_{21}$ & $v_{9}$ & $\tfrac{1}{6}v_{10}+\tfrac{5}{6}v_{15}$\\*
 & $\Lambda_{2,1}$ & $\tfrac{1}{6}v_{10}+\tfrac{5}{6}v_{21}$ & $\tfrac{1}{3}v_{2}+\tfrac{1}{6}v_{10}+\tfrac{1}{2}v_{11}$ & $\tfrac{1}{2}v_{3}+\tfrac{1}{2}v_{15}$ & $\tfrac{1}{2}v_{14}+\tfrac{1}{2}v_{18}$ & $\tfrac{1}{2}v_{14}+\tfrac{1}{2}v_{18}$\\[2pt]
14 & $\Lambda_{0,0}$ & $\tfrac{7}{8}v_{3}+\tfrac{1}{8}v_{4}$ & $\tfrac{5}{6}v_{13}+\tfrac{1}{6}v_{19}$ & $\tfrac{3}{8}v_{4}+\tfrac{5}{8}v_{20}$ & $\tfrac{3}{8}v_{4}+\tfrac{5}{8}v_{20}$ & $\tfrac{1}{2}v_{1}+\tfrac{1}{2}v_{10}$\\*
 & $\Lambda_{0,1}$ & $\tfrac{1}{4}v_{5}+\tfrac{3}{4}v_{19}$ & $\tfrac{1}{4}v_{5}+\tfrac{3}{4}v_{19}$ & $\tfrac{3}{8}v_{4}+\tfrac{5}{8}v_{20}$ & $\tfrac{3}{8}v_{4}+\tfrac{5}{8}v_{20}$ & $\tfrac{1}{2}v_{1}+\tfrac{1}{2}v_{10}$\\*
 & $\Lambda_1$ & $\tfrac{3}{8}v_{4}+\tfrac{5}{8}v_{20}$ & $v_{11}$ & $v_{21}$ & $v_{9}$ & $\tfrac{1}{3}v_{10}+\tfrac{2}{3}v_{15}$\\*
 & $\Lambda_{2,0}$ & $\tfrac{3}{4}v_{3}+\tfrac{1}{4}v_{4}$ & $\tfrac{5}{6}v_{13}+\tfrac{1}{6}v_{19}$ & $v_{21}$ & $v_{9}$ & $\tfrac{1}{3}v_{10}+\tfrac{2}{3}v_{15}$\\*
 & $\Lambda_{2,1}$ & $v_{12}$ & $\tfrac{2}{3}v_{8}+\tfrac{1}{3}v_{10}$ & $\tfrac{5}{8}v_{3}+\tfrac{1}{8}v_{11}+\tfrac{1}{4}v_{15}$ & $\tfrac{2}{3}v_{14}+\tfrac{1}{6}v_{15}+\tfrac{1}{6}v_{18}$ & $\tfrac{2}{3}v_{14}+\tfrac{1}{6}v_{15}+\tfrac{1}{6}v_{18}$\\[2pt]
15 & $\Lambda_{0,0}$ & $\tfrac{1}{12}v_{1}+\tfrac{7}{12}v_{6}+\tfrac{1}{3}v_{19}$ & $\tfrac{2}{3}v_{4}+\tfrac{1}{3}v_{11}$ & $\tfrac{3}{8}v_{4}+\tfrac{5}{8}v_{20}$ & $\tfrac{3}{8}v_{4}+\tfrac{5}{8}v_{20}$ & $\tfrac{1}{2}v_{1}+\tfrac{1}{2}v_{10}$\\*
 & $\Lambda_{0,1}$ & $\tfrac{1}{2}v_{18}+\tfrac{1}{2}v_{20}$ & $\tfrac{1}{2}v_{18}+\tfrac{1}{2}v_{20}$ & $\tfrac{3}{8}v_{4}+\tfrac{5}{8}v_{20}$ & $\tfrac{3}{8}v_{4}+\tfrac{5}{8}v_{20}$ & $\tfrac{1}{2}v_{1}+\tfrac{1}{2}v_{10}$\\*
 & $\Lambda_1$ & $\tfrac{3}{8}v_{4}+\tfrac{5}{8}v_{20}$ & $v_{11}$ & $\tfrac{1}{4}v_{1}+\tfrac{3}{4}v_{15}$ & $\tfrac{1}{2}v_{5}+\tfrac{1}{2}v_{7}$ & $\tfrac{1}{2}v_{5}+\tfrac{1}{2}v_{7}$\\*
 & $\Lambda_{2,0}$ & $v_{1}$ & $v_{1}$ & $\tfrac{1}{4}v_{1}+\tfrac{3}{4}v_{15}$ & $\tfrac{1}{2}v_{5}+\tfrac{1}{2}v_{7}$ & $\tfrac{1}{2}v_{5}+\tfrac{1}{2}v_{7}$\\*
 & $\Lambda_{2,1}$ & $v_{1}$ & $v_{1}$ & $\tfrac{1}{4}v_{1}+\tfrac{3}{4}v_{15}$ & $\tfrac{1}{2}v_{5}+\tfrac{1}{2}v_{7}$ & $\tfrac{1}{2}v_{5}+\tfrac{1}{2}v_{7}$\\[2pt]
16 & $\Lambda_{0,0}$ & $\tfrac{7}{8}v_{2}+\tfrac{1}{8}v_{10}$ & $\tfrac{5}{8}v_{2}+\tfrac{1}{24}v_{10}+\tfrac{1}{3}v_{11}$ & $\tfrac{1}{8}v_{3}+\tfrac{5}{8}v_{4}+\tfrac{1}{4}v_{21}$ & $\tfrac{5}{16}v_{4}+\tfrac{1}{16}v_{6}+\tfrac{3}{8}v_{9}+\tfrac{1}{4}v_{18}$ & $\tfrac{5}{6}v_{10}+\tfrac{1}{12}v_{11}+\tfrac{1}{12}v_{18}$\\*
 & $\Lambda_{0,1}$ & $\tfrac{7}{8}v_{7}+\tfrac{1}{8}v_{10}$ & $\tfrac{7}{8}v_{7}+\tfrac{1}{8}v_{10}$ & $\tfrac{1}{8}v_{3}+\tfrac{5}{8}v_{4}+\tfrac{1}{4}v_{21}$ & $\tfrac{5}{16}v_{4}+\tfrac{1}{16}v_{6}+\tfrac{3}{8}v_{9}+\tfrac{1}{4}v_{18}$ & $\tfrac{5}{6}v_{10}+\tfrac{1}{12}v_{11}+\tfrac{1}{12}v_{18}$\\*
 & $\Lambda_1$ & $\tfrac{1}{4}v_{1}+\tfrac{1}{4}v_{17}+\tfrac{1}{2}v_{21}$ & $\tfrac{1}{12}v_{1}+\tfrac{1}{12}v_{10}+\tfrac{5}{6}v_{11}$ & $\tfrac{5}{8}v_{4}+\tfrac{3}{8}v_{11}$ & $v_{9}$ & $\tfrac{1}{2}v_{10}+\tfrac{1}{2}v_{15}$\\*
 & $\Lambda_{2,0}$ & $\tfrac{3}{8}v_{3}+\tfrac{3}{8}v_{4}+\tfrac{1}{4}v_{13}$ & $\tfrac{1}{2}v_{11}+\tfrac{1}{3}v_{12}+\tfrac{1}{6}v_{19}$ & $\tfrac{1}{4}v_{13}+\tfrac{1}{4}v_{15}+\tfrac{1}{2}v_{21}$ & $v_{9}$ & $\tfrac{1}{2}v_{10}+\tfrac{1}{2}v_{15}$\\*
 & $\Lambda_{2,1}$ & $v_{12}$ & $\tfrac{3}{8}v_{8}+\tfrac{1}{4}v_{10}+\tfrac{3}{8}v_{13}$ & $\tfrac{1}{2}v_{11}+\tfrac{1}{2}v_{15}$ & $\tfrac{1}{4}v_{7}+\tfrac{3}{4}v_{15}$ & $\tfrac{1}{4}v_{7}+\tfrac{3}{4}v_{15}$\\[2pt]
17 & $\Lambda_{0,0}$ & $\tfrac{1}{2}v_{2}+\tfrac{1}{4}v_{4}+\tfrac{1}{4}v_{19}$ & $\tfrac{1}{2}v_{2}+\tfrac{1}{4}v_{4}+\tfrac{1}{4}v_{19}$ & $\tfrac{1}{8}v_{3}+\tfrac{5}{8}v_{4}+\tfrac{1}{4}v_{21}$ & $\tfrac{5}{16}v_{4}+\tfrac{1}{16}v_{6}+\tfrac{3}{8}v_{9}+\tfrac{1}{4}v_{18}$ & $\tfrac{5}{6}v_{10}+\tfrac{1}{12}v_{11}+\tfrac{1}{12}v_{18}$\\*
 & $\Lambda_{0,1}$ & $\tfrac{1}{2}v_{2}+\tfrac{1}{4}v_{4}+\tfrac{1}{4}v_{19}$ & $\tfrac{1}{2}v_{2}+\tfrac{1}{4}v_{4}+\tfrac{1}{4}v_{19}$ & $\tfrac{1}{8}v_{3}+\tfrac{5}{8}v_{4}+\tfrac{1}{4}v_{21}$ & $\tfrac{5}{16}v_{4}+\tfrac{1}{16}v_{6}+\tfrac{3}{8}v_{9}+\tfrac{1}{4}v_{18}$ & $\tfrac{5}{6}v_{10}+\tfrac{1}{12}v_{11}+\tfrac{1}{12}v_{18}$\\*
 & $\Lambda_1$ & $\tfrac{1}{4}v_{1}+\tfrac{1}{4}v_{17}+\tfrac{1}{2}v_{21}$ & $\tfrac{1}{12}v_{1}+\tfrac{1}{12}v_{10}+\tfrac{5}{6}v_{11}$ & $\tfrac{5}{8}v_{4}+\tfrac{3}{8}v_{11}$ & $v_{9}$ & $\tfrac{1}{2}v_{10}+\tfrac{1}{2}v_{15}$\\*
 & $\Lambda_{2,0}$ & $\tfrac{3}{8}v_{3}+\tfrac{3}{8}v_{4}+\tfrac{1}{4}v_{13}$ & $\tfrac{1}{2}v_{11}+\tfrac{1}{3}v_{12}+\tfrac{1}{6}v_{19}$ & $\tfrac{1}{4}v_{13}+\tfrac{1}{4}v_{15}+\tfrac{1}{2}v_{21}$ & $v_{9}$ & $\tfrac{1}{2}v_{10}+\tfrac{1}{2}v_{15}$\\*
 & $\Lambda_{2,1}$ & $v_{12}$ & $\tfrac{3}{8}v_{8}+\tfrac{1}{4}v_{10}+\tfrac{3}{8}v_{13}$ & $\tfrac{1}{2}v_{11}+\tfrac{1}{2}v_{15}$ & $\tfrac{1}{4}v_{7}+\tfrac{3}{4}v_{15}$ & $\tfrac{1}{4}v_{7}+\tfrac{3}{4}v_{15}$\\[2pt]
18 & $\Lambda_{0,0}$ & $v_{20}$ & $v_{20}$ & $\tfrac{1}{8}v_{3}+\tfrac{5}{8}v_{4}+\tfrac{1}{4}v_{21}$ & $\tfrac{5}{16}v_{4}+\tfrac{1}{16}v_{6}+\tfrac{3}{8}v_{9}+\tfrac{1}{4}v_{18}$ & $\tfrac{5}{6}v_{10}+\tfrac{1}{12}v_{11}+\tfrac{1}{12}v_{18}$\\*
 & $\Lambda_{0,1}$ & $v_{20}$ & $v_{20}$ & $\tfrac{1}{8}v_{3}+\tfrac{5}{8}v_{4}+\tfrac{1}{4}v_{21}$ & $\tfrac{5}{16}v_{4}+\tfrac{1}{16}v_{6}+\tfrac{3}{8}v_{9}+\tfrac{1}{4}v_{18}$ & $\tfrac{5}{6}v_{10}+\tfrac{1}{12}v_{11}+\tfrac{1}{12}v_{18}$\\*
 & $\Lambda_1$ & $\tfrac{1}{4}v_{1}+\tfrac{1}{4}v_{17}+\tfrac{1}{2}v_{21}$ & $\tfrac{1}{12}v_{1}+\tfrac{1}{12}v_{10}+\tfrac{5}{6}v_{11}$ & $\tfrac{5}{8}v_{4}+\tfrac{3}{8}v_{11}$ & $v_{9}$ & $\tfrac{1}{2}v_{10}+\tfrac{1}{2}v_{15}$\\*
 & $\Lambda_{2,0}$ & $\tfrac{3}{8}v_{3}+\tfrac{3}{8}v_{4}+\tfrac{1}{4}v_{13}$ & $\tfrac{1}{2}v_{11}+\tfrac{1}{3}v_{12}+\tfrac{1}{6}v_{19}$ & $\tfrac{1}{4}v_{13}+\tfrac{1}{4}v_{15}+\tfrac{1}{2}v_{21}$ & $v_{9}$ & $\tfrac{1}{2}v_{10}+\tfrac{1}{2}v_{15}$\\*
 & $\Lambda_{2,1}$ & $v_{12}$ & $\tfrac{3}{8}v_{8}+\tfrac{1}{4}v_{10}+\tfrac{3}{8}v_{13}$ & $\tfrac{1}{2}v_{11}+\tfrac{1}{2}v_{15}$ & $\tfrac{1}{4}v_{7}+\tfrac{3}{4}v_{15}$ & $\tfrac{1}{4}v_{7}+\tfrac{3}{4}v_{15}$\\[2pt]
19 & $\Lambda_{0,0}$ & $v_{2}$ & $v_{2}$ & $v_{21}$ & $v_{9}$ & $v_{10}$\\*
 & $\Lambda_{0,1}$ & $v_{2}$ & $v_{2}$ & $v_{21}$ & $v_{9}$ & $v_{10}$\\*
 & $\Lambda_1$ & $\tfrac{1}{2}v_{16}+\tfrac{1}{2}v_{21}$ & $v_{11}$ & $\tfrac{3}{4}v_{4}+\tfrac{1}{4}v_{11}$ & $v_{9}$ & $\tfrac{2}{3}v_{10}+\tfrac{1}{3}v_{15}$\\*
 & $\Lambda_{2,0}$ & $\tfrac{1}{2}v_{3}+\tfrac{1}{2}v_{4}$ & $\tfrac{2}{3}v_{11}+\tfrac{1}{9}v_{12}+\tfrac{2}{9}v_{19}$ & $\tfrac{1}{6}v_{13}+\tfrac{1}{6}v_{15}+\tfrac{2}{3}v_{21}$ & $v_{9}$ & $\tfrac{2}{3}v_{10}+\tfrac{1}{3}v_{15}$\\*
 & $\Lambda_{2,1}$ & $v_{12}$ & $v_{13}$ & $v_{14}$ & $v_{15}$ & $v_{15}$\\[2pt]
20 & $\Lambda_{0,0}$ & $\tfrac{15}{16}v_{2}+\tfrac{1}{16}v_{10}$ & $\tfrac{15}{16}v_{2}+\tfrac{1}{16}v_{10}$ & $\tfrac{1}{16}v_{3}+\tfrac{13}{16}v_{4}+\tfrac{1}{8}v_{21}$ & $\tfrac{3}{16}v_{5}+\tfrac{3}{16}v_{6}+\tfrac{1}{2}v_{9}+\tfrac{1}{8}v_{18}$ & $\tfrac{3}{8}v_{6}+\tfrac{1}{2}v_{10}+\tfrac{1}{8}v_{18}$\\*
 & $\Lambda_{0,1}$ & $\tfrac{15}{16}v_{2}+\tfrac{1}{16}v_{10}$ & $\tfrac{15}{16}v_{2}+\tfrac{1}{16}v_{10}$ & $\tfrac{1}{16}v_{3}+\tfrac{13}{16}v_{4}+\tfrac{1}{8}v_{21}$ & $\tfrac{3}{16}v_{5}+\tfrac{3}{16}v_{6}+\tfrac{1}{2}v_{9}+\tfrac{1}{8}v_{18}$ & $\tfrac{3}{8}v_{6}+\tfrac{1}{2}v_{10}+\tfrac{1}{8}v_{18}$\\*
 & $\Lambda_1$ & $\tfrac{5}{8}v_{1}+\tfrac{1}{8}v_{17}+\tfrac{1}{4}v_{21}$ & $\tfrac{1}{8}v_{1}+\tfrac{3}{8}v_{8}+\tfrac{1}{2}v_{11}$ & $\tfrac{9}{16}v_{4}+\tfrac{7}{16}v_{11}$ & $v_{9}$ & $\tfrac{7}{12}v_{10}+\tfrac{5}{12}v_{15}$\\*
 & $\Lambda_{2,0}$ & $\tfrac{13}{48}v_{3}+\tfrac{7}{16}v_{4}+\tfrac{7}{24}v_{13}$ & $\tfrac{7}{12}v_{11}+\tfrac{7}{18}v_{12}+\tfrac{1}{36}v_{19}$ & $\tfrac{7}{16}v_{13}+\tfrac{9}{16}v_{21}$ & $v_{9}$ & $\tfrac{7}{12}v_{10}+\tfrac{5}{12}v_{15}$\\*
 & $\Lambda_{2,1}$ & $v_{12}$ & $\tfrac{3}{16}v_{8}+\tfrac{1}{8}v_{10}+\tfrac{11}{16}v_{13}$ & $\tfrac{1}{4}v_{3}+\tfrac{1}{2}v_{11}+\tfrac{1}{4}v_{15}$ & $\tfrac{1}{4}v_{14}+\tfrac{3}{4}v_{15}$ & $\tfrac{1}{4}v_{14}+\tfrac{3}{4}v_{15}$\\[2pt]
21 & $\Lambda_{0,0}$ & $v_{16}$ & $v_{17}$ & $\tfrac{3}{4}v_{4}+\tfrac{1}{4}v_{19}$ & $v_{9}$ & $\tfrac{2}{3}v_{10}+\tfrac{1}{3}v_{15}$\\*
 & $\Lambda_{0,1}$ & $v_{18}$ & $v_{18}$ & $\tfrac{3}{4}v_{4}+\tfrac{1}{4}v_{19}$ & $v_{9}$ & $\tfrac{2}{3}v_{10}+\tfrac{1}{3}v_{15}$\\*
 & $\Lambda_1$ & $\tfrac{1}{2}v_{17}+\tfrac{1}{2}v_{21}$ & $\tfrac{2}{3}v_{11}+\tfrac{1}{3}v_{16}$ & $\tfrac{1}{6}v_{2}+\tfrac{1}{4}v_{4}+\tfrac{1}{6}v_{8}+\tfrac{5}{12}v_{15}$ & $v_{9}$ & $\tfrac{1}{2}v_{10}+\tfrac{1}{2}v_{15}$\\*
 & $\Lambda_{2,0}$ & $\tfrac{5}{8}v_{3}+\tfrac{3}{8}v_{4}$ & $\tfrac{1}{14}v_{11}+\tfrac{9}{14}v_{13}+\tfrac{2}{7}v_{19}$ & $v_{21}$ & $v_{9}$ & $\tfrac{1}{2}v_{10}+\tfrac{1}{2}v_{15}$\\*
 & $\Lambda_{2,1}$ & $v_{12}$ & $\tfrac{3}{8}v_{8}+\tfrac{1}{4}v_{10}+\tfrac{3}{8}v_{13}$ & $\tfrac{3}{8}v_{3}+\tfrac{3}{8}v_{11}+\tfrac{1}{4}v_{15}$ & $\tfrac{1}{2}v_{14}+\tfrac{1}{2}v_{15}$ & $\tfrac{1}{2}v_{14}+\tfrac{1}{2}v_{15}$\\
\end{longtable}
}

The following Python program checks that
the table contains each of the $525$ cases exactly once and that in each
entry the vectors are distinct, the coefficients are positive, have sum
$1$ and give a combination that bounds
$\frac12(\Lambda'\circ\Lambda)(v_i)$ coordinatewise. It also checks the numbers
$11/2$ and $15/2$ used in the proof of Theorem~\ref{boxbound}. It
computes with the class \texttt{Fraction} from the Python standard
library, so all its computations are exact, without any rounding. The program reads the table from the file
\texttt{coefficients.txt} in its own folder, with one entry per line. Each
line contains $i$, then $\Lambda$, then $\Lambda'$, and then the terms
$c_lv_l$ of the entry, written as \texttt{c*vl}. The maps
$\Lambda_{0,0}$, $\Lambda_{0,1}$, $\Lambda_1$, $\Lambda_{2,0}$ and
$\Lambda_{2,1}$ are written \texttt{00}, \texttt{01}, \texttt{1},
\texttt{20} and \texttt{21}, and an entry consisting of a single vector
$v_l$ is written with the coefficient $1$. For instance, the entry
discussed above is the line \texttt{1 1 21 1/3*v10 2/3*v15}, and the
entry $v_2$ in the row of $v_1$ and $\Lambda_{0,0}$ and in the column of
$\Lambda_{0,0}$ is the line \texttt{1 00 00 1*v2}. In the program,
\texttt{cols} lists the columns of the integer matrix in \eqref{vectors},
read from top to bottom, and \texttt{lam} computes the maps of
Definition~\ref{maps}. We ran the program with
Python~3.9.6. The program, the file \texttt{coefficients.txt} and
instructions for running it are available as ancillary files of
this paper on arXiv.
{\small
\begin{verbatim}
from fractions import Fraction
from pathlib import Path

if not __debug__:
    raise SystemExit("run this program without the option -O")

cols = """400440044 400820022 400820202 040440082 400404040 404004040 400822002
          400440404 200602060 206002060 200620602 004220040 004220400 020400420
          240002040 400620042 400620402 400624002 400840002 200730003 020640082"""
v = [None] + [tuple(Fraction(int(d), 4) for d in c) for c in cols.split()]

def lam(name, z):
    z1, z2, z3, z4, z5, z6, z7, z8, z9 = z
    t = int(name[-1])
    if name[0] == "0":
        return (2*z9, 0, z3, z1+z2+z4, z5+z6, z7, t*z8, (1-t)*z8, z5)
    if name == "1":
        return (0, z1+z2, 0, z3+z4, 2*z9, 0, z3, z1+z2+z4, z9)
    return (z1+z2, z3, t*z4, (1-t)*z4, 0, z1+z2, 0, z3+z4, 0)

names = ("00", "01", "1", "20", "21")
cases = []
path = Path(__file__).with_name("coefficients.txt")
for line in open(path):
    i, a, b, *terms = line.split()
    c = {int(l): Fraction(w) for w, l in (u.split("*v") for u in terms)}
    assert len(c) == len(terms) and set(c) <= set(range(1, 22))
    assert a in names and b in names and min(c.values()) > 0
    assert sum(c.values()) == 1
    lhs = [x / 2 for x in lam(b, lam(a, v[int(i)]))]
    rhs = [sum(w * v[l][k] for l, w in c.items()) for k in range(9)]
    assert all(x <= y for x, y in zip(lhs, rhs))
    cases.append((int(i), a, b))
assert sorted(cases) == [(k, a, b) for k in range(1, 22) for a in names for b in names]

assert max(sum(x) for x in v[1:]) == Fraction(11, 2)
assert max(sum(lam(a, x)) for x in v[1:] for a in names) == Fraction(15, 2)
print("all 525 inequalities hold; the two maxima are 11/2 and 15/2")
\end{verbatim}
}

\bibliographystyle{amsplain0}
\bibliography{references}
\end{document}